\numberwithin{equation}{section}
\theoremstyle{plain}
\newtheorem{lemma}[equation]{Lemma}
\newtheorem{theorem}[equation]{Theorem}
\newtheorem{proposition}[equation]{Proposition}
\newtheorem{corollary}[equation]{Corollary}
\newtheorem{question}[equation]{Question}
\newtheorem{remark}[equation]{Remark}
\theoremstyle{definition}
\newtheorem{definition}[equation]{Definition}
\newtheorem{example}[equation]{Example}
\newcommand{\R}{\mathbb{R}}
\newcommand{\Q}{\mathbb{Q}}
\newcommand{\Z}{\mathbb{Z}}
\newcommand{\N}{\mathbb{N}}
\newcommand{\B}{\mathcal{L}}
\newcommand{\T}{\mathbb{T}}
\newcommand{\I}{\mathbb{I}}
\newcommand{\id}{\mathrm{id}}
\newcommand{\fr}[1]{\mathrm{Frac}\left(#1\right)} 
\newcommand{\orb}[1]{\mathcal{O}(#1)} 
\newcommand{\orbc}[1]{\overline{\orb{#1}}} 
\newcommand\xqed[1]{%
  \leavevmode\unskip\penalty9999 \hbox{}\nobreak\hfill
  \quad\hbox{#1}}
\newcommand\qee{\xqed{\fullmoon}}
\newcommand{\Aut}{\mathrm{Aut}}
\newcommand{\real}{\mathrm{real}}
\newcommand{\ttfrac}[2]{\ensuremath{\frac{\text{#1}}{\text{#2}}}}
\newcommand{\ccfrac}[2]{\ensuremath{(#1:#2)}}
\newcommand{\Homeo}{\mathrm{Homeo}}
\newcommand{\abs}[1]{{\left\vert #1 \right\vert}}
\newcommand{\cl}[1]{\overline{#1}} 
\newcommand{\bla}{\,\mbox{\textvisiblespace}\,}
\newcommand{\bin}{\mathrm{bin}} 
\title{Sofically presented dynamical systems}
\author{Johan Kopra \and Ville Salo}
\begin{document}
\maketitle

\begin{abstract}
Systems obtained by quotienting a subshift of finite type (SFT) by another SFT are called finitely presented in the literature. Analogously, if a sofic shift is quotiented by a sofic equivalence relation, we call the resulting system sofically presented. Generalizing an observation of Fried, for all discrete countable monoids $M$, we show that \ttfrac{$M$-subshift}{SFT} systems are precisely the expansive dynamical $M$-systems, where \ttfrac{$S_1$}{$S_2$} denotes the class of systems obtained by quotienting subshifts in $S_1$ by (relative) subshifts in $S_2$. We show that for all finitely generated infinite monoids $M$,
\[ \text{$M$-SFT} \subsetneq \text{$M$-sofic} \subsetneq \ttfrac{$M$-SFT}{SFT} = \ttfrac{$M$-sofic}{SFT} \subsetneq \ttfrac{$M$-SFT}{sofic} = \ttfrac{$M$-sofic}{sofic}, \]
and that Ma\~n\'e's theorem about the dimension of expansive systems characterizes the virtually cyclic groups. 

In the case of one-dimensional actions, Ma\~n\'e's theorem generalizes to sofically presented systems, which also have finite topological dimension. The basis of this is the construction of an explicit metric for a sofically presented system. We show that any finite connected simplicial complex is a connected component of a finitely presented system, and prove that conjugacy of one-dimensional sofically presented dynamical systems is undecidable. A key idea is the introduction of so-called automatic spaces. We also briefly study the automorphism groups and periodic points of these systems.

We also perform two case studies. First, in the context of $\beta$-shifts, we define the $\beta$-kernel -- the least subshift relation that identifies $1$ with its orbit. We give a classification of the $\beta$-shift/$\beta$-kernel pair as a function of $\beta$. Second, we revisit the classical study of toral automorphisms in our framework, and in particular for the toral automorphism $\left(\begin{smallmatrix} 1 & 1 \\ 1 & 0 \end{smallmatrix}\right)$, we explicitly compute the kernel of the standard presentation.
\end{abstract}

\section{Introduction}

\subsection{Mission statement and motivation}

Symbolic dynamics originated as a tool for studying other dynamical systems, and this is still an important aspect of the theory. For example, it is classical that the toral automorphism defined by $\left(\begin{smallmatrix} 1 & 1 \\ 1 & 0 \end{smallmatrix}\right)$ is a factor of the golden mean shift, and the difference -- the kernel of the factor map -- is small enough that one can study the former using the latter. One can also study symbolic dynamics as an independent subject, and the golden mean shift is an important example on its own. It is easy to show that the kernel of the factor map is itself a subshift of the Cartesian square of the golden mean shift, and thus, once the kernel is also given a purely symbolic form, we have, in some sense, presented a classical system (say, a homeomorphism acting on a manifold) internally in the category of SFTs, which is an interesting connection between a priori very different categories. One of our starting points for this paper was the observation that no one has apparently clearly stated what the kernel is in this classical example, so the complete internal presentation has not actually been given.

Finitely presented systems -- SFT quotients of SFTs -- were introduced in \cite{Fr87}, and the toral automorphism described above is the prime example of such a system. In many respects, such systems resemble SFTs and sofics, and one can work with explicit combinatorial representations to prove properties of the systems. Generalizing the case of a sofic shift, the $\zeta$-function is shown to be rational in these systems in \cite{Fr87}. In the context of thermodynamical formalism it has been shown that equilibrium states coincide with invariant Gibbs states in topologically mixing finitely presented systems \cite{Ba91}. Unlike SFTs and sofics, finitely presented systems are not restricted to be zero-dimensional, making them more attractive for the more classically minded if studied for their own sake rather than as a tool. Generalizing the example, it is known that all hyperbolic automorphisms of the two-dimensional torus are finitely presented.

In this paper, we take the idea further in a direction that is natural from the point of view of the general theory of symbolic dynamics \cite{LiMa95}, and study the more general class of sofically presented dynamical systems, defined by taking a sofic quotient of a sofic shift. These systems turn out to have some similarities to the class of finitely presented systems: their underlying spaces are finite-dimensional, and they can be presented by combinatorial objects -- indeed they are also ``finitely presented'' in the broad sense of the term.

Returning to the representation of classical systems by symbolic means, in general finding combinatorial representations can be useful for computational purposes (whether the point of view is algebraic or recursion-theoretic); in particular this allows the discussion of decidability questions. Sofically presented systems over a one dimensional acting monoid are presented by automata theoretic objects, and the decidability of the monadic second-order logic of the integers under the successor relation implies the computability of many basic manipulations.  For example in our second case study, which is about the toral automorphism described above, we find this useful: we define the kernel as a sofic shift before computing it explicitly (and thereby seeing that it is an SFT, which we would have known in advance by~\cite{Fr87}), and sofically presented systems seem like the correct conceptual intermediate object in this calculation.

Perhaps the most interesting question about combinatorial representations of dynamical systems is how the underlying space and the dynamical properties of the transformation are reflected in the combinatorial object, and whether this can help understand the concepts themselves. As an optimistic analog, we can see that a torus has a hole in the middle just by looking at a sufficiently good drawing of it, but mathematically this is usually expressed by algebraic invariants. Somewhat similarly, perhaps some properties of a dynamical system are easier to see or prove from its automata-theoretic presentations (perhaps with the help of a computer). For example, expansivity of a sofically presented system can be easily solved by checking whether the kernel is an SFT, and this is easy to implement in practice on the computer (and we have done so).

We are optimistic that for the class of sofically presented dynamical systems it will be possible to realize this goal to a reasonable extent, at least in smaller dimensions. We are also optimistic that sofically presented systems can cover some classes of classical examples which are not finitely-presented, or that at least the study of such representability is fruitful. Even if this mission ``fails'', we believe that many interesting problems, examples and counterexamples will be found in pursuing it, and we report our initial findings in this document.

Sofic presentability and finite presentability also suggest the more general notion of an ``\ttfrac{$S_1$}{$S_2$}'' system, namely one obtained by quotienting subshifts in $S_1$ by a (relative) subshift in $S_2$, and thus sofically presented systems are a natural link between finitely-presented systems and the class of \ttfrac{subshift}{subshift} systems, which have also been studied before (since they are just the factors of subshifts), see for example \cite{BoFiFi02}. Such formal quotients provide an interesting way to categorize dynamical systems, and for example we do not know whether all \ttfrac{subshift}{subshift} systems are \ttfrac{subshift}{sofic} (but they are not all \ttfrac{sofic}{subshift}). Examples of \ttfrac{subshift}{sofic} system are irrational rotations, and here the difficulty of seeing the shape of the underlying space by purely combinatorial means is well known (as noted in the remark following Exercise~6.2.13 in~\cite{Fogg02}).

\subsection{Definitions of the classes}

A \emph{topological dynamical system} (TDS) is a continuous action of a discrete monoid $M$ on a compact metrizable space $X$. We may also say that $(X,M)$ is a TDS or that $X$ is an $M$-system. Often $M=\N$ (resp. $M=\Z$), in which case there is a single continuous map (resp. homeomorphism) $T:X\to X$ such that each $m\in M$ acts on $X$ by the map $T^m$. Then we also say that $(X,T)$ is a TDS. If $T : M \to M$, the underlying monoid (either $\N$ or $\Z$) of $(X,T)$ will be clear from the context or explicitly stated. For arbitrary sets $A,B$ we denote by $B^A$ the set of all functions with domain $A$ and range $B$. Particularly when $\Sigma$ is a finite alphabet and $M$ is a monoid, for $x\in \Sigma^M$ we denote by $x_n$ the value of $x$ at $n\in\N$. A \emph{subshift} over a monoid $M$ is a subset of $\Sigma^M$ which is topologically closed and closed under the translations $mx_n = x_{nm}$ (which are an $M$-action on $\Sigma^M$).

We liberally use the identification $(A \times B)^M \cong A^M \times B^M$, in particular subshifts over a pair alphabet can be seen as defining a subset of the Cartesian product of two subshifts, with the diagonal action. Through this identification, if $Y \subset \Sigma^M$ is a subshift and $Z \subset (\Sigma^2)^M$ is a subshift, then we can see $Z \cap Y^2$ as defining a binary relation on $Y$. The notation $Y/Z$ is defined when $Z \cap Y^2$ is a closed equivalence relation on $Y$, and in that case denotes the dynamical system whose points are the topological quotient $Y/(Z \cap Y^2)$, i.e.\ equivalence classes $[y]_{\equiv}$ of points $y \in Y$ under the relation $y \equiv y' \iff (y, y') \in Z$, with the action $m[y]_{\equiv} = [my]_{\equiv}$ (see Section~\ref{sec:Definitions}).

\begin{definition}
Let $S_1$ and $S_2$ be classes of subshifts. We write \ttfrac{$S_1$}{$S_2$}, $S_1/S_2$ or $S_1$-per-$S_2$ for the class of dynamical systems $X$ such that $X \cong Y/Z$ for some subshifts $S_1 \ni Y \subset \Sigma^M$, $S_2 \ni Z \subset (\Sigma^2)^M$ such that the notation $Y/Z$ is defined, i.e.\ $Z \cap Y^2$ is an equivalence relation on $Y$.
\end{definition}

Our emphasis is on the classes \emph{SFT} (the class of subshifts of finite type), \emph{sofic} (the class of sofic shifts), and \emph{subshift} (the class of all subshifts).

In general, if $X$ and $X'$ are subshifts and $X'\in S$ for some class of subshifts $S$, we say that $X' \cap X$ is a relative $S$-subshift (in $X$). In the previous definition, we are quotienting a subshift $Y \in S_1$ by a relative $S_2$-subshift of $Y^2$, in particular $\ttfrac{subshift}{SFT}$ more precisely consists of quotients of subshifts by \emph{relative} SFTs. It is true on all monoids that SFTs are precisely the relative SFTs of full shifts, and a relative SFT of an SFT is an SFT.

\begin{definition}
Dynamical systems in \ttfrac{SFT}{SFT} are called \emph{finitely presented} \cite{Fr87}. We call dynamical systems in \ttfrac{sofic}{sofic} \emph{sofically presented}.
\end{definition}

Often we restrict to $M = \N$ or $M = \Z$. This will be clear from context, or is specified with $S_1$, e.g. \ttfrac{$\Z$-SFT}{SFT}.

Note that though we call these ``finitely presented systems'', they are more correctly ``finitely presentable systems'', and for example in our terminology, for \emph{every} subshift $X$, \ttfrac{$X$}{$X^2$} is \ttfrac{SFT}{SFT}, i.e. finitely-presented, since it is the trivial system. On occasion we do discuss presentations as first-class objects. We write formal quotients of subshifts as $\ccfrac{Y}{Z}$, and say such a quotient is \ttfrac{$S_1$}{$S_2$} if and only if $Y \subset \Sigma^M$ and $Z \subset (\Sigma^2)^M$ for some $\Sigma$, $Z \cap Y^2$ is an equivalence relation on $Y$, and $Y \in S_1$ and $Z \in S_2$. Of course for a subshift $X$, \ccfrac{X}{X^2} is \ttfrac{SFT}{SFT} if and only if $X$ is an SFT.

\subsection{The results}

The following slightly extends a result from \cite{Fr87}, and shows that ``\ttfrac{subshift}{SFT}'' is a synonym for ``expansive'' for all monoid actions (on compact metrizable spaces), that expansive systems have no \ttfrac{subshift}{subshift} representations other than \ttfrac{subshift}{SFT} ones, and that (at least) in the group case, the cover can always be picked to be almost $1$-$1$.

\begin{theorem}
\label{thm:ExpansivityCharacterization}
Let $M$ be any discrete countable monoid and $X$ a dynamical $M$-system. The following are equivalent:
\begin{itemize}
\item $X$ is expansive,
\item $X$ is \ttfrac{subshift}{SFT}, and
\item if $X \cong Y/Z$, then $\ccfrac{Y}{Z}$ is \ttfrac{subshift}{SFT}.
\end{itemize}
If $M$ a group, then all are equivalent to the following:
\begin{itemize}
\item $X$ is conjugate to $Y/Z$ for a subshift $Y$ and a relative SFT $Z$ in $Y$ such that the set $\{ y \in Y \;|\; \exists y' \in Y \setminus \{y\}: (y, y') \in Z \}$ is meager in $Y$.
\end{itemize}
\end{theorem}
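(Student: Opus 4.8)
The plan is to prove the cycle of implications ``expansive $\Rightarrow$ \ttfrac{subshift}{SFT}'', ``\ttfrac{subshift}{SFT} $\Rightarrow$ (the conjugacy-to-$Y/Z$ with meager identification set condition, in the group case)'', and finally ``(any $X \cong Y/Z$) $\Rightarrow$ $\ccfrac{Y}{Z}$ is \ttfrac{subshift}{SFT} $\Rightarrow$ expansive'', since the last of these is the easiest and closes the loop. For the first implication, I would start from an expansive constant $\varepsilon$ witnessing expansivity of the $M$-action on $(X,d)$, pick a finite cover $\mathcal{U} = \{U_1,\dots,U_k\}$ of $X$ by open sets of diameter $< \varepsilon$, and define the coding map $\pi : X \to \{1,\dots,k\}^M$ by sending $x$ to any $y$ with $mx \in U_{y_m}$ for all $m \in M$ (using the axiom of choice, or a refinement into a partition into sufficiently small Borel pieces to make $\pi$ canonical). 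Let $Y = \overline{\pi(X)}$; the key points are that $\pi$ is injective by expansivity (two points with the same itinerary stay $\varepsilon$-close under every $m$, hence coincide), that $\pi$ is equivariant by construction, and that $Y$ is a subshift. To see that the induced relation $Z$ on $Y$ — namely the closure of $\{(\pi(x),\pi(x)) : x \in X\}$, or more precisely the relation identifying $y, y'$ when they code the same point — is a relative SFT, one uses that $X$ is the image of the shift-commuting closed map and that the ``coding the same point'' relation is determined by a uniformity argument: $y \equiv y'$ iff for every $n$ the window $y_{[-n,n]}$ and $y'_{[-n,n]}$ are compatible with a common point of $X$, and expansivity forces this to be detectable with a fixed finite window once the cover is chosen fine enough. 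This is the standard argument that a factor of a subshift onto an expansive system has an SFT kernel when the cover has small enough diameter; I would cite or re-derive the relevant lemma from Section~\ref{sec:Definitions} or from~\cite{Fr87,LiMa95}.

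For the converse direction, suppose $X \cong Y/Z$ with $Y \subset \Sigma^M$ a subshift and $Z \subset (\Sigma^2)^M$ with $Z \cap Y^2$ a relative SFT equivalence relation; I want to show $X$ is expansive. Fix the window size $W \subset M$ (a finite set) that defines $Z \cap Y^2$, meaning $(y,y') \in Z$ iff every $W$-translate of the pattern of $(y,y')$ lies in a fixed finite allowed set. Equip $Y$ with the standard metric and $X = Y/Z$ with the quotient (Hausdorff-type) metric $\bar d([y],[y']) = \inf\{ d(y'',y''') : y'' \equiv y, y''' \equiv y'\}$, or rather the sup-metric making the quotient map uniformly continuous. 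The expansivity constant for $X$ then comes from: if $[y] \neq [y']$, i.e.\ $(y,y') \notin Z$, then some translate $m$ has $(my, my')$ disagreeing with all allowed $W$-patterns, which since $Y$ is closed and $W$ finite means $my$ and $my'$ (and everything equivalent to them) differ on the finite window $W$ in a way bounded below in the metric — so some uniform $\varepsilon$ separates $m[y]$ from $m[y']$. The main obstacle in both directions is handling the \emph{relative} nature of $Z$ and the fact that equivalence classes can be large: I must make sure the separation is witnessed \emph{after} passing to the quotient, i.e.\ that no choice of representatives can make $m[y]$ and $m[y']$ close, which is exactly where the SFT (finite-window) description of $Z$ is used as opposed to a general closed relation — a general closed sub-shift relation would only give expansiveness of $Y/Z$ in the weaker sense and is in fact why ``\ttfrac{subshift}{subshift}'' does not coincide with ``expansive''.

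For the group case and the meager identification set: given $X$ expansive, I already have $X \cong Y/Z$ with $Z$ a relative SFT in $Y$ from the first part. The set $E = \{ y \in Y : \exists y' \neq y,\ (y,y') \in Z\}$ is the set of points whose $Z$-class is nontrivial; I want to replace $Y$ by a (possibly different) subshift cover so that this set is meager. The idea is a standard ``thinning'' construction: take a free/almost-free action argument, or more concretely, embed $Y$ into a larger subshift $\tilde Y \subset (\Sigma \times \Sigma')^M$ by pairing each $y$ with an auxiliary coordinate from a subshift $\Sigma'^M$-ish free-enough system (e.g.\ a full shift or a sufficiently generic minimal/transitive system) chosen so that the generic point of $\tilde Y$ has its auxiliary track breaking all nontrivial identifications, while the map $\tilde Y \to X$ is still a factor map with a relative-SFT kernel. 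Because $M$ is a group the shift action on the auxiliary full shift has a comeager set of points with trivial stabilizer-like genericity, and one arranges that the set of ``coincidence'' points projects to a nowhere dense closed set. Concretely, for $M = \Z$ one can use the classical marker/almost-1-1 extension technology. The hard part here — and the reason the statement is restricted to groups — is that the thinning must preserve the factor map onto $X$ and keep the kernel a \emph{relative SFT}, so the auxiliary coordinate has to be SFT-compatible (or at least sofic, then passed through Theorem~\ref{thm:ExpansivityCharacterization}'s already-proven equivalences) and the genericity argument has to be robust enough to survive intersection with the closed set $Y$; I would handle this by making the auxiliary subshift a full shift (so the product with $Y$ is clean) and showing directly that $\{(y,s) : s$ fails to separate the class of $y\}$ is a countable union of nowhere dense sets using that each nontrivial identification is ``localized'' by the finite window $W$ of $Z$ and can be destroyed on a dense open set of auxiliary configurations.
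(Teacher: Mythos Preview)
Your handling of the equivalence between ``expansive'' and ``\ttfrac{subshift}{SFT}'' is essentially what the paper does, just presented less cleanly. Your coding map $\pi$ via a small-diameter open cover, once made rigorous, amounts to the paper's \emph{large orbit subshift}: one defines $Y = \{y \in \Sigma^M : \bigcap_{m} m^{-1}\overline{U_{y_m}} \neq \emptyset\}$ and the projection $Y \to X$ sends $y$ to the unique point in that intersection. The paper packages the converse (any quotient $Y/Z$ with $Z$ relative SFT is expansive, and conversely any subshift quotient of an expansive system has relative-SFT kernel) as a single lemma (Lemma~\ref{lem:IffRelativeSFT}); your argument for it is correct in outline, though the paper's version avoids metrics and works directly with the forbidden clopen set $C$ defining $Z$, building an expansive index as a $K^2$-saturated open neighbourhood of the diagonal via normality of the quotient. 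Your cycle of implications is also slightly muddled: the third bullet (``every presentation of $X$ has relative-SFT kernel'') does not by itself imply expansivity, since it is vacuously true when $X$ has no subshift-quotient presentation at all --- you need to bundle existence with it, as the paper does in its restatement.

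The genuine gap is the group case. Your ``thinning by an auxiliary track'' idea does not work: if the factor map $\tilde Y \to X$ ignores the auxiliary coordinate, then two points $(y,s)$ and $(y,s')$ with the same first track are \emph{always} identified, so the set of points with nontrivial $Z$-class becomes all of $\tilde Y$, not meager. There is no way for the extra track to ``break'' identifications that the factor map cannot see. The paper's approach is entirely different and much simpler: refine the open cover $\mathcal{U}$ to a \emph{topological partition} (pairwise disjoint open sets whose closures still cover $X$), and use the \emph{small orbit subshift} $X_{\mathcal{U}}^s = \{y : \forall\text{ finite }F,\ \bigcap_{m \in F} m^{-1}U_{y_m} \neq \emptyset\}$ instead of the large one. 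A point $y$ with nontrivial fibre must satisfy $\pi(y) \in m^{-1}(\overline{U_a} \cap \overline{U_b})$ for some $m$ and $a \neq b$; since the $U_i$ are disjoint the boundary set $\bigcup_{a\neq b}\overline{U_a}\cap\overline{U_b}$ is closed nowhere dense, and because group translations are quasi-open its preimages under each $m$ remain nowhere dense, so the non-injective locus is meager. The group hypothesis enters exactly here (quasi-openness), not through any free-action or marker argument.
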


This is proved in Section~\ref{sec:DivisionBySFT}. The techniques used are existing ones: in the group case, we use the construction of the cover from \cite{Ad98,LiMa95}, which we call the \emph{small orbit subshift}, while in the proper monoid case we use what we call the \emph{large orbit subshift}, which is used in \cite{Re68}. We include a technical discussion and comparison of these covers, and also study a third cover in-between which we call the \emph{medium orbit subshift}.

The main new observation is that the need for multiple constructions seems to be essential, even for simple choices of $M$: all these covering subshifts can differ even when computed for a $\Z$-subshift (when using a non-standard cover), and the small orbit subshift, which gives the good covers in the group case, may fail to give a subshift cover at all for a surjective $\N$-subshift.

The following is proved in Theorem~\ref{thm:ProperSofic}.

\begin{theorem}
\label{thm:RelationsBetweenClasses}
For $M$ any finitely generated infinite monoid,
\[ \text{$M$-SFT} \subsetneq \text{$M$-sofic} \subsetneq \ttfrac{$M$-SFT}{SFT} = \ttfrac{$M$-sofic}{SFT} \subsetneq \ttfrac{$M$-SFT}{sofic} = \ttfrac{$M$-sofic}{sofic}. \]
\end{theorem}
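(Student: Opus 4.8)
The plan is to establish the chain of (non-)inclusions piece by piece, working from left to right, reusing Theorem~\ref{thm:ExpansivityCharacterization} wherever an ``\ttfrac{$-$}{SFT}'' class appears. First, $\text{$M$-SFT} \subseteq \text{$M$-sofic}$ is immediate (SFTs are sofic), and $\text{$M$-sofic} \subseteq \ttfrac{$M$-SFT}{SFT}$ because a sofic shift is by definition a factor of an SFT, and the kernel of a one-block factor map between SFTs is a relative SFT of the square; moreover these factor maps are conjugacies onto their image in the quotient, so the quotient is conjugate to the sofic shift itself. The equalities $\ttfrac{$M$-SFT}{SFT} = \ttfrac{$M$-sofic}{SFT}$ and $\ttfrac{$M$-SFT}{sofic} = \ttfrac{$M$-sofic}{sofic}$ should follow from the general principle that a sofic cover can always be ``lifted'' to an SFT cover: given a sofic shift $Y$ with SFT extension $\pi\colon \hat Y \to Y$ and a relative $S_2$-subshift $Z$ giving the equivalence relation, pull $Z$ back along $\pi \times \pi$ to get a relative $S_2$-subshift $\hat Z$ of $\hat Y^2$ (here one uses that preimages of sofic, resp.\ SFT, relations under block maps are sofic, resp.\ SFT), check $\hat Z \cap \hat Y^2$ is an equivalence relation, and verify $\hat Y/\hat Z \cong Y/Z$; this last conjugacy holds because $\pi$ descends to a continuous equivariant bijection on quotients, which is automatically a homeomorphism by compactness. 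For $\ttfrac{$M$-SFT}{SFT}$ this shows it equals $\ttfrac{$M$-sofic}{SFT}$; the same argument with $S_2 = \text{sofic}$ handles the second equality.

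Next come the three strict inclusions. For $\text{$M$-SFT} \subsetneq \text{$M$-sofic}$ we exhibit a sofic shift that is not an SFT; since $M$ is infinite and finitely generated it contains an element of infinite order or at least supports the even shift (one takes the preimage of the standard even shift on $\N$ or $\Z$ under a fixed homomorphism $M \to \N$ coming from a generator, or, more robustly, argues that the one-dimensional sofic-not-SFT example embeds as an $M$-subshift using finite generation). For $\ttfrac{$M$-SFT}{SFT} \subsetneq \ttfrac{$M$-SFT}{sofic}$, by Theorem~\ref{thm:ExpansivityCharacterization} the left side is exactly the class of expansive $M$-systems, so it suffices to produce a \ttfrac{$M$-SFT}{sofic} system that is not expansive; a natural candidate is a quotient of a full shift by a sofic relation that collapses, say, all points to a single orbit closure or otherwise destroys expansivity, and one must check the collapsed relation is sofic but not a relative SFT. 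The middle strict inclusion $\text{$M$-sofic} \subsetneq \ttfrac{$M$-SFT}{SFT}$ requires an expansive $M$-system (equivalently a \ttfrac{subshift}{SFT} system) that is not sofic—for instance, as emphasized in the introduction, the toral automorphism $\left(\begin{smallmatrix} 1 & 1 \\ 1 & 0 \end{smallmatrix}\right)$ is finitely presented but its underlying space is a $2$-torus, hence not zero-dimensional, so it cannot be a sofic shift; for general $M$ one takes an appropriate restriction-of-scalars or product construction to realize a positive-dimensional expansive $M$-system.

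I expect the main obstacle to be uniformity in $M$: the paper asserts these (non-)inclusions for \emph{every} finitely generated infinite monoid, not merely for $\N$ and $\Z$, so each separating example must be produced in a way that survives the passage to an arbitrary such $M$. The key tool should be that finite generation gives a surjection from a free monoid, and infiniteness gives an element generating an infinite subsemigroup $\cong \N$; pulling back one-dimensional examples along the induced map (and checking the pullback of an SFT is an SFT, of a sofic shift is sofic, and that the relevant separating property—non-soficity, positive dimension, or non-expansivity—is preserved) is the crux. A secondary technical point is verifying in each lifting argument that ``$\hat Z \cap \hat Y^2$ is an equivalence relation'': symmetry and reflexivity pull back cleanly, but transitivity of the pulled-back relation needs the observation that two points of $\hat Y$ with the same image and related partners are themselves related, which follows because $\hat Z$ contains the pullback of the diagonal, i.e.\ all pairs $(\hat y, \hat y')$ with $\pi(\hat y) = \pi(\hat y')$.
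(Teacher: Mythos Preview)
Your treatment of the two equalities is essentially the paper's argument (Lemma~\ref{lem:soficSFTSFTSFT} and Lemma~\ref{lem:EqsBetweenClasses}): pull the kernel back along an SFT cover of the sofic numerator, using that both SFTs and sofic shifts are closed under block-map preimages. Your remark about transitivity of the pulled-back relation is correct and handled the same way.

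Your strategy for the strict inclusions over a general $M$ has a genuine gap. You assert that ``infiniteness gives an element generating an infinite subsemigroup $\cong \N$'' and propose to pull back one-dimensional examples along a resulting homomorphism $M \to \N$. This is false: infinite finitely generated \emph{torsion} groups (Grigorchuk's group, infinite Burnside groups, Tarski monsters) have no element of infinite order and admit no nontrivial homomorphism to $\N$ or $\Z$. For such $M$ your pull-back scheme has nothing to pull back along, and your alternative phrasing (``the one-dimensional example embeds as an $M$-subshift using finite generation'') is not an argument.

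The paper replaces the nonexistent infinite cyclic subgroup by an infinite \emph{ray} in the left Cayley graph, which always exists by K\H{o}nig's lemma: a sequence $s_1,s_2,\ldots$ of generators with $1_M, s_1, s_2 s_1, s_3 s_2 s_1,\ldots$ all distinct. It then builds an auxiliary ``path-drawing'' SFT over the alphabet $S\cup\{0\}$ whose configurations record at each cell either nothing or a generator pointing forward along a one-way path, and overlays the relevant one-dimensional construction (the subshift $X_{\leq 1}$, the binary coding of $[0,1]$ with kernel $A_{\mathrm{req}}$, or $X_{\leq 2}$ with kernel $X_{\leq 1}^2\cup\Delta$) on the bit layer along each such path. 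The separating properties (proper sofic, nonzero dimension, non-expansive) are then witnessed using the configuration supported on the single infinite ray. This mechanism is what lets the argument go through uniformly for torsion $M$; nothing analogous appears in your proposal.
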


The equalities, proved in Lemma~\ref{lem:EqsBetweenClasses}, are not difficult, and essentially follow from the definitions. After proving the equalities, we see that the four classes are separated by dynamical properties: it is classical that SFTs are separated from proper sofics by the shadowing property \cite{LiMa95} and sofics from finitely presented systems by having zero dimension, and Theorem~\ref{thm:ExpansivityCharacterization} shows that systems are finitely presented exactly when they are expansive. To realize the separating systems, we use SFTs that draw paths on the monoid, and overlay one-dimensional systems on the paths.

A theorem of Ma\~n\'e \cite{Ma79} states that cyclic groups admit expansive actions only on finite-dimensional spaces. Since expansive dynamical systems are precisely the \ttfrac{subshift}{SFT} systems, Ma\~n\'e's theorem becomes the statement that all \ttfrac{subshift}{SFT} on $\Z$ have finite topological dimension. A variant of the proof of Theorem~\ref{thm:RelationsBetweenClasses} gives a converse.

\begin{theorem}
\label{thm:ManeGroups}
Let $G$ be a finitely generated group. The following are equivalent:
\begin{itemize}
\item $G$ is not virtually cyclic,
\item $G$ acts expansively on an infinite-dimensional compact metrizable space,
\item there is an infinite-dimensional \ttfrac{$G$-subshift}{SFT},
\item there is a \ttfrac{$G$-SFT}{SFT} which contains a homeomorphic copy of every compact metrizable space.
\end{itemize}
\end{theorem}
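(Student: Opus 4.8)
The plan is to prove Theorem~\ref{thm:ManeGroups} by establishing a cycle of implications, with the combinatorial constructions concentrated in the step from ``not virtually cyclic'' to the strongest conclusion (realizing every compact metrizable space inside a single \ttfrac{$G$-SFT}{SFT}). The implications ``(4) $\Rightarrow$ (3)'' and ``(3) $\Rightarrow$ (2)'' are immediate: by Theorem~\ref{thm:ExpansivityCharacterization}, \ttfrac{subshift}{SFT} systems (and in particular \ttfrac{SFT}{SFT} systems) are exactly the expansive ones, and a subsystem containing a homeomorphic copy of, say, the Hilbert cube is infinite-dimensional, so it witnesses both an infinite-dimensional \ttfrac{$G$-subshift}{SFT} and an infinite-dimensional expansive $G$-action. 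For ``(2) $\Rightarrow$ (1)'' I would invoke Ma\~n\'e's theorem \cite{Ma79}: a virtually cyclic group has a finite-index copy of $\Z$ (or is finite), and an expansive action of $G$ restricts to an expansive action of this $\Z$, which by Ma\~n\'e must live on a finite-dimensional space; since topological dimension is unchanged by passing to a finite-index subgroup's action on the same space, $G$ itself can only act expansively on finite-dimensional spaces. (The finite case is trivial: a finite group acting on a space cannot make it ``more expansive'' than the identity, and anyway has no infinite-dimensional expansive action because expansivity for a finite monoid forces the space to be finite.)

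The substantive direction is ``(1) $\Rightarrow$ (4)''. Here I would reuse the machinery flagged in the discussion after Theorem~\ref{thm:RelationsBetweenClasses}: an SFT over $G$ that ``draws paths on the monoid'', onto which one overlays a one-dimensional system. The key structural fact about a finitely generated group that is not virtually cyclic is that it has \emph{more than two ends} in the relevant coarse sense --- concretely, one can find, for every $n$, embedded finite tree-like or grid-like patterns of unbounded branching in the Cayley graph, or equivalently the group is not a quasi-line. The idea is to build a \ttfrac{$G$-SFT}{SFT} in which a point encodes: (a) a combinatorial ``skeleton'' drawn inside the Cayley graph of $G$ using finitely many local rules (this is the SFT part, and it is where non-virtual-cyclicity is used, to guarantee that arbitrarily large finite branching configurations appear as sub-patterns), and (b) along that skeleton, data specifying a point of the Hilbert cube $[0,1]^{\N}$, with the $n$-th coordinate stored on the $n$-th ``branch''. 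The relation to quotient by (the SFT part) is the standard ``glue adjacent cells that agree'' relation used to turn a symbolic encoding of real parameters into an actual quotient carrying the Euclidean topology --- the same device that appears in the construction of the metric / the $\beta$-shift-type encodings elsewhere in the paper. One then checks that the resulting quotient contains a subsystem homeomorphic to $[0,1]^{\N}$ (hence to every compact metrizable space, by Urysohn's embedding theorem), and that the whole thing is genuinely \ttfrac{SFT}{SFT}, i.e.\ both the cover and the relation are of finite type over $G$.

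The main obstacle I expect is making the ``skeleton'' SFT work \emph{uniformly} for an arbitrary finitely generated non-virtually-cyclic group, using only finitely many local rules, and simultaneously ensuring the quotient relation stays an SFT and that the quotient topology on the encoded parameters is exactly the product topology on $[0,1]^{\N}$ rather than something coarser or finer. In the free-group or $\Z^2$ case this is routine (one literally has a tree or a grid to write on), but a general non-virtually-cyclic group need not contain $F_2$ and need not be one-ended, so the construction must be robust to the coarse geometry; I would likely reduce to the statement ``$G$ is not quasi-isometric to a line'' and extract from that a purely local-rules-expressible notion of ``a vertex from which two long disjoint paths emanate'', iterate it to get unbounded branching, and then argue the branches can be enumerated in an SFT-compatible way (perhaps with a sacrifice: the branches might not be canonically ordered, so one may need to store the Hilbert-cube coordinates \emph{unordered} and argue the quotient still contains $[0,1]^{\N}$ up to reindexing). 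A secondary subtlety is the boundary between ``contains a homeomorphic copy of every compact metrizable space'' and merely ``is infinite-dimensional'': the former is strictly stronger and requires the Hilbert cube, not just an infinite-dimensional continuum, so the encoding must really produce a full $[0,1]^{\N}$ factor, which constrains how much independent real data each branch must carry.
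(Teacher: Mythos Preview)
Your handling of the easy directions matches the paper exactly: (4)$\Rightarrow$(3)$\Rightarrow$(2) are immediate, and (2)$\Rightarrow$(1) is Ma\~n\'e applied to the finite-index $\Z$-subaction.

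For (1)$\Rightarrow$(4) you are on the right track in reusing the path-drawing SFT from the proof of Theorem~\ref{thm:RelationsBetweenClasses} and overlaying real data along paths, but there is a real gap in the combinatorial input you propose. You frame the needed fact as ``more than two ends'' or ``unbounded branching expressible by local rules'', and then worry (correctly) that this may not work uniformly across all non-virtually-cyclic groups, especially torsion groups. The paper sidesteps all of this with a cleaner graph-theoretic statement (Lemma~\ref{lem:Rays}): a finitely generated group is virtually cyclic if and only if its Cayley graph does \emph{not} contain an infinite family of pairwise disjoint rays. This is the key lemma you are missing, and it dissolves your concerns about ordering, canonicity, and local-rule expressibility.

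The point is that the branching structure need not be enforced by the SFT rules at all. The SFT $Y$ has alphabet $\{\#\}\cup(\{0,1\}\times A)$ with the single rule ``if the symbol at $g$ is $(b,a)$ then the symbol at $ag$ is not $\#$''; this allows \emph{arbitrary} collections of forward paths through the Cayley graph, each carrying a bit sequence. The SFT relation $Z$ just checks that the path structure agrees in both coordinates and that corresponding bit sequences satisfy $A_{\mathrm{req}}$ along each path. Now fix, once and for all, any infinite family of disjoint rays (whose existence is exactly what Lemma~\ref{lem:Rays} provides), and look at the single $Z$-invariant closed subset of $Y$ consisting of configurations whose non-$\#$ cells are precisely those rays. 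Writing the binary expansion of the $i$-th coordinate of a point in $(S^1)^\omega$ on the $i$-th ray gives a continuous injection of the Hilbert torus (hence the Hilbert cube) into $Y/Z$. There is no ordering problem because you have fixed the rays in advance; the rays are not part of the SFT rules but part of a specific configuration.
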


This is proved in Section~\ref{sec:ManeGroups}. In \cite{MeTs19}, a ``mean'' version of Ma\~n\'e's theorem is proved for actions of $\Z^d$, and it is observed that Ma\~n\'e's theorem as such fails for $\Z^d$-actions. The difficult case in the proof of Theorem~\ref{thm:ManeGroups} is when $G$ is a torsion group. In this case, the \ttfrac{subshift}{subshift} point of view is used essentially, as the construction method comes from symbolic dynamics on groups.

The group-theoretic basis of the construction is the existence of infinite rays (injective paths) in any group that is not virtually cyclic. This is Lemma~\ref{lem:Rays}, for which we cite Ballier and Stein \cite{BaSt18} who deduce this from results of Woess \cite{Wo89}. We also include another argument in Appendix~\ref{sec:RaysAlternative} based on results of Halin \cite{Ha65} and an appeal to Stallings theorem about ends of groups \cite{St68,St71} (where the use of Stallings theorem can be replaced by an appeal to an argument of Tointon and Yadin \cite{ToYa16}).

We show that a simple modification of the formula $d(x, y) = \inf \{2^{-n} \;|\; x_{[-n,n]} = y_{[-n,n]} \}$ for the Cantor metric leads to a metric in one-dimensional \ttfrac{sofic}{sofic} systems, i.e. \ttfrac{$M$-sofic}{sofic} systems with $M \in \{\N, \Z\}$, more precisely:

\begin{theorem}
\label{thm:SoficSoficMetric}
If $\ccfrac{Y}{Z}$ is \ttfrac{$\Z$-sofic}{sofic}, then for some $c \in \N$, there is a unique maximal function $d : Y^2 \to \R_+$ satisfying
\[ \forall n: d(x, y) \leq d_{cn}(x_{[-cn,cn]}, y_{[-cn,cn]}) / 2^{n - 1}, \]
where $d_m$ is the path metric in the graph with nodes the words of $Y$ of length $2m+1$, with an edge $(u, v)$ if and only if $([u] \times [v]) \cap Z \neq \emptyset$. This function $d$ metrizes $Y/Z$, is upper semicomputable, and given points $x, y$ and $\epsilon > 0$, one can effectively compute a rational $r$ such that $d(x, y) \in [r, \frac52 r+\epsilon]$. The obvious analog holds for \ttfrac{$\N$-sofic}{sofic} systems.
\end{theorem}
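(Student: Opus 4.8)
The plan is to build the metric $d$ by a direct fixed-point / contraction argument on the functional inequality
\[ d(x,y) \leq d_{cn}(x_{[-cn,cn]}, y_{[-cn,cn]})/2^{n-1} \quad \text{for all } n, \]
and then to verify separately that (i) the resulting $d$ is well-defined and finite, (ii) it is a metric, (iii) it induces the quotient topology on $Y/Z$, and (iv) it enjoys the stated computability properties. First I would fix the constant $c$: since $\ccfrac{Y}{Z}$ is \ttfrac{$\Z$-sofic}{sofic}, both $Y$ and the relation $Z$ are images of SFTs under block maps, so one can choose $c$ (a radius absorbing the window sizes of the defining block maps and an SFT recoding) so that the relation ``there is an edge between length-$(2m+1)$ words $u,v$'' stabilizes nicely under extension and restriction — concretely, so that an edge $([u]\times[v])\cap Z\ne\emptyset$ at scale $cn$ can be extended to scale $c(n+1)$ at the cost of at most doubling (which is exactly what licenses the $2^{n-1}$ denominators and the eventual factor $\tfrac52$). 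The key point is that $Z$ being sofic (not just a subshift) is what makes the edge relation between finite words a sensible finite-state object and gives the needed extension property; I expect defining $c$ correctly and proving this extension lemma to be the technical heart of the construction.

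Next I would define, for each pair $(x,y)\in Y^2$, the candidate value
\[ d(x,y) = \inf_{n\geq 1} \; \Big( \min_{\gamma} \; \textstyle\sum_i 2^{-n_i} \Big), \]
where the inner minimum is over finite ``chains'' $\gamma$ from $x$ to $y$ built by concatenating edge-steps at various scales $cn_i$, each contributing $2^{-(n_i-1)}$ — i.e. $d$ is the largest function dominated by all the right-hand sides and closed under the triangle inequality. Equivalently, $d$ is obtained as the supremum of all functions $Y^2\to\R_+$ satisfying the displayed inequalities; this supremum exists because the family of such functions is nonempty (the zero function, or rather: one first checks the right-hand sides themselves form a consistent system) and closed under pointwise sup, giving the asserted \emph{unique maximal} $d$. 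Symmetry is immediate from symmetry of $Z$; the triangle inequality is built in; that $d(x,y)=0\iff (x,y)\in Z$ requires showing that points in different $Z$-classes are separated — here I would use that $Y/Z$ is Hausdorff (the relation is closed), so $[x]\ne[y]$ means the classes are at positive ``combinatorial distance'' at some finite scale, and the extension lemma propagates a uniform lower bound down all scales. That $d$ factors through $Y/Z$ (i.e. $d(x,y)=0$ when $(x,y)\in Z$ and $d$ descends to the quotient) follows since an edge at every scale forces all the $d_{cn}$-terms to be $\le 1$, hence $d\le 2^{-(n-1)}\to 0$.

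Then I would prove $d$ metrizes $Y/Z$: continuity of the quotient map $Y\to(Y/Z,d)$ is clear from the inequality (cylinders of radius $cn$ map into $d$-balls of radius $\approx 2^{-(n-1)}$), and since $Y$ is compact and $Y/Z$ is a Hausdorff quotient, the induced map from the quotient-topology space to $(Y/Z,d)$ is a continuous bijection between compact Hausdorff spaces, hence a homeomorphism — provided $(Y/Z,d)$ is Hausdorff, which is exactly $d(x,y)>0$ for $(x,y)\notin Z$, already established. Finally, for the effectivity claims: $d_{cn}$ is a path metric in an explicitly computable finite graph (the automaton presentations of $Y$ and $Z$ let one list length-$(2m+1)$ words and test the edge relation), so each bound $d_{cn}(x_{[-cn,cn]},y_{[-cn,cn]})/2^{n-1}$ is a rational computable from a finite prefix of $x,y$; taking the infimum over $n$ (really over all finite chains up to scale $n$) gives a decreasing sequence of rational over-approximations converging to $d(x,y)$, so $d$ is upper semicomputable. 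For the two-sided estimate, the extension lemma gives that the scale-$cn$ chain-distance over-estimates $d$ by a factor at most $\tfrac52$ in the limit (the $2$ from the doubling in the extension step, the extra $\tfrac12$ from summing the tail geometric series $\sum_{k\ge 0}2^{-k}$); so computing the scale-$cn$ value for $n$ large enough that the tail is below $\epsilon$ yields a rational $r$ with $d(x,y)\in[r,\tfrac52 r+\epsilon]$. The $\N$-case is identical using one-sided windows $x_{[0,cn]}$. The main obstacle, as noted, is pinning down $c$ and the extension/restriction lemma for the edge relation that simultaneously gives the lower bound separating $Z$-classes and the $\tfrac52$ upper bound; everything else is bookkeeping with geometric series and the compact-Hausdorff quotient formalism.
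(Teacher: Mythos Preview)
Your overall architecture matches the paper's: define $d$ as the infimum of costs of ``chains'' (the paper calls these \emph{itineraries}) across all scales, observe that this is automatically the unique maximal function satisfying the inequalities, and get metrization by the continuous-bijection-from-compact-to-Hausdorff argument. That part is fine.

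The genuine gap is in how you propose to find the constant $c$. You write that $c$ should be ``a radius absorbing the window sizes of the defining block maps and an SFT recoding'' so that the edge relation ``stabilizes nicely under extension and restriction.'' This intuition is appropriate for an \emph{SFT} relation $Z$, but not for a sofic one: a sofic $Z$ has no finite window size, so no recoding radius of that sort exists. The property you actually need is what the paper calls \emph{virtual hyperbolicity}: there is a fixed $m$ (independent of $n$) such that whenever two length-$(2(n+m)+1)$ words are at graph distance $\leq 2$, their central length-$(2n+1)$ subwords are at graph distance $\leq 1$. The paper proves this by a pigeonhole / pumping argument on the \emph{states of a finite-state automaton for $Z$}: take $m > q_Z^2$; if $d_{n+m}(sut,s'vt')=2$ via an intermediate word $w$, then pump the left and right $m$-margins simultaneously (using that a pair of accepting runs of length $>q_Z^2$ must repeat a state-pair) to produce actual bi-infinite points witnessing $(x,z),(z,y)\in Z$ with $x\in[u],y\in[v]$; transitivity of $Z$ then gives $d_n(u,v)\leq 1$. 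This is the step where soficness of $Z$ is genuinely used, and it is the one concrete idea your proposal is missing.

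A secondary point: your explanation of the $\tfrac52$ factor (``$2$ from doubling plus $\tfrac12$ from a geometric tail'') is not how the paper obtains it. The paper introduces \emph{$m$-truncated itineraries} (itineraries in which all steps deeper than level $m$ are collapsed to a single placeholder step with a carefully chosen lower cost), proves two-sided comparisons between truncated and honest itinerary costs, and extracts $\tfrac52$ from the inequality $c(I)\leq 2c(I')+t\cdot 2^{-m+1}$ together with $c(I')\geq (t-1)2^{-m+2}$, where $t$ is the number of collapsed steps. Your sketch gestures at the right order of magnitude but does not set up the bookkeeping that actually yields the constant.
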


It follows from general topological principles that the first sentence of the statement holds, with $n \mapsto cn$ replaced by a general function. The fact that it can be chosen linear seems to be less trivial, and follows from a type of hyperbolicity, which holds in the sofic-per-sofic case by an application of the pigeonhole principle. We show in Section~\ref{sec:Metrization} that for general \ttfrac{subshift}{sofic} systems, the function used in this construction may need to grow arbitrarily fast.

As a simple corollary, we obtain in Section \ref{sec:SoficSoficMane} a version of Ma\~n\'e's theorem for one-dimensional sofically presented systems.

\begin{theorem}
\label{thm:TopDimFinite}
Suppose $M \in \{\N, \Z\}$ and $X$ is sofically presented on $M$. Then the topological dimension of $X$ is finite.
\end{theorem}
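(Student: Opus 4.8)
The plan is to reduce the statement to the explicit metric furnished by Theorem~\ref{thm:SoficSoficMetric} and to a dimension bound for the intermediate graph metrics $d_m$. Fix a presentation $\ccfrac{Y}{Z}$ realizing $X$ with $Y$ sofic over $M \in \{\N,\Z\}$; by Theorem~\ref{thm:SoficSoficMetric} there is a constant $c$ and a metric $d$ on $Y/Z$ with $d(x,y) \le d_{cn}(x_{[-cn,cn]}, y_{[-cn,cn]})/2^{n-1}$ for all $n$, where $d_m$ is the path metric of the finite graph $G_m$ whose vertices are the length-$(2m+1)$ words of $Y$. The key point is that each $G_m$ is a \emph{finite} graph, hence carries a metric space structure of dimension at most $1$, and more importantly, the number of vertices of $G_m$ grows only exponentially in $m$ (at most $|\Sigma|^{2m+1}$). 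I would like to conclude that $Y/Z$ embeds, up to the metric distortion controlled by the displayed inequality, into an inverse limit of the $G_m$ in a way that keeps the covering dimension bounded.

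Concretely, first I would show that for each fixed $n$ the map $\pi_n : Y/Z \to G_{cn}$ sending (the class of) $x$ to the vertex $x_{[-cn,cn]}$ is well-defined up to distance $1$ in $d_{cn}$ — i.e. if $x \equiv y$ then $\pi_n(x)$ and $\pi_n(y)$ are adjacent or equal, by definition of the edge relation of $G_{cn}$ — and that $\pi_n$ is ``coarsely Lipschitz'' from $(Y/Z, d)$ to $(G_{cn}, d_{cn})$ with the explicit constant from the theorem. The second step is the standard fact that covering dimension is bounded by the existence, for every $\epsilon > 0$, of a finite open cover of mesh $< \epsilon$ and order $\le k+1$ for a uniform $k$; I would produce such covers by pulling back the (vertex-)stars of $G_{cn}$ under $\pi_n$, choosing $n$ large enough that $\mathrm{diam}(G_{cn})/2^{n-1} < \epsilon$. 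The order of this cover is governed not by the size of $G_{cn}$ but by the maximal degree of its ``thickened'' incidence pattern, and here is where I expect the real work: I must bound, uniformly in $n$, how many stars of $G_{cn}$ a single point of $Y/Z$ can lie in. A naive bound in terms of vertex degree in $G_{cn}$ need not be uniform, so the argument must instead exploit that $Y$ is sofic: the follower/predecessor structure is governed by a fixed finite automaton, so the ``branching'' visible in the window $[-cn,cn]$ is, after collapsing by $Z$, bounded by a constant depending only on the automaton and on $Z$'s own (finite) SFT/sofic data. I would make this precise using a pigeonhole/Ramsey argument on the automaton states, analogous to the hyperbolicity argument underlying Theorem~\ref{thm:SoficSoficMetric}.

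The main obstacle, then, is exactly this uniform order bound: translating ``$Y$ and $Z$ are sofic'' into a constant $k$ such that every point of $Y/Z$ sees at most $k$ distinct equivalence classes of windows at every scale. I expect this to follow by the following scheme. Consider two points $x, x'$ with $d(x,x')$ small; lift to $Y$ and look at their windows of length $2cn+1$. Soficity means these windows are labelled paths in a fixed finite graph, and $Z$ being sofic means the relation between such windows is also recognized by a fixed finite transducer. If a single class $[x]$ met more than $k$ stars for $k$ larger than the total number of transducer states squared, then two of the corresponding window-pairs would traverse the same transducer state, and a cut-and-paste (surgery on the sofic presentations of $Y$ and $Z$) would produce a shorter relating word, contradicting that the points were in distinct stars — or rather, would show the stars coincide. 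This surgery is routine for sofic shifts but must be carried out carefully so that it respects the equivalence relation $Z \cap Y^2$; once it is in place, the dimension bound $\dim(X) \le k-1$ (or a similar explicit function of the presentation sizes) falls out of the open-cover criterion. Finally, I would note the $\N$-case is identical with one-sided windows, and that the bound obtained is effective in the presentations, matching the computability remarks in Theorem~\ref{thm:SoficSoficMetric}.
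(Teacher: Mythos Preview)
Your proposal correctly identifies the starting point (the metric of Theorem~\ref{thm:SoficSoficMetric}) and correctly identifies the obstacle, but the proposed resolution of that obstacle does not work, and in fact the paper avoids the obstacle altogether by a much simpler route.

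The gap is in the ``uniform order bound'' step. You want to show that, for covers of $Y/Z$ by pullbacks of vertex stars in $G_{cn}$, the order is bounded independently of $n$. As you note, this order is exactly the degree (plus one) of the vertex $x_{[-cn,cn]}$ in $G_{cn}$, and you are right that this degree is not uniformly bounded in general: take for instance $Y$ a full shift and $Z$ collapsing a large subshift to a point. Your surgery fix claims that if many distinct words $u_1,\dots,u_k$ are all adjacent to $v = x_{[-cn,cn]}$, then a pigeonhole on transducer states forces two of their stars to coincide. But the star of $u_i$ is determined by the full neighbour set of $u_i$ in $G_{cn}$, not just by the single edge $(u_i,v)$; two words $u_i,u_j$ can share a transducer-state profile along the path witnessing $(u_i,v),(u_j,v)\in E_{cn}$ while having completely different neighbour sets elsewhere. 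So cut-and-paste on that one witnessing path does not force the stars to agree, and there is no evident way to repair this without essentially re-proving a dimension bound by other means.

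The paper's proof sidesteps the order-of-cover issue entirely: rather than bounding Lebesgue covering dimension directly, it bounds the upper box dimension with respect to the metric $d$, using only the \emph{count} of covering balls (not their overlaps). From $d(x,y)\le d_{cn}(x_{[-cn,cn]},y_{[-cn,cn]})/2^{n-1}$ one sees that the cylinders $[w]$, $w\in B_{2cn+1}(Y)$, are $d$-balls of radius $\le 1/2^{n-1}$, so $N(X,2^{-(n-1)})\le |B_{2cn+1}(Y)|$. Since $|B_m(Y)|$ grows at most exponentially (entropy), one gets $\overline{C}_d(X)\le 2c\,h(Y)/\log 2 < \infty$, and then the chain $\dim_{\mathrm{top}}\le \dim_{\mathrm{Haus}}\le \overline{C}_d$ (the first inequality is classical, e.g.\ Hurewicz--Wallman) finishes. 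The point you missed is that finiteness of box dimension needs only a ball \emph{count}, which is immediate from the word count, whereas bounding the covering dimension via explicit covers of bounded order is genuinely harder and is not what is done here.
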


It is an open problem whether it is decidable whether two given subshifts of finite type, or more generally two sofic shifts, are topologically conjugate. Homeomorphism of sofic shifts is decidable \cite{He91}. For finitely presented systems, we also do not know whether conjugacy is decidable. However, we do have semidecidability.

\begin{theorem}
For $M \in \{\N, \Z\}$, the conjugacy of finitely presented $M$-systems is semidecidable.
\end{theorem}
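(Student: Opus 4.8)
The plan is to show that a topological conjugacy between two finitely presented $M$-systems is always \emph{witnessed} by an SFT of a restricted kind whose validity can be checked algorithmically when $M\in\{\N,\Z\}$; semidecidability then follows by enumerating all candidate witnesses and testing them one at a time.

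Fix \ttfrac{SFT}{SFT} presentations $\ccfrac{Y_1}{Z_1}$ and $\ccfrac{Y_2}{Z_2}$ of the two systems $X_1,X_2$, with quotient maps $\pi_i\colon Y_i\to X_i$. Call an SFT $F\subseteq Y_1\times Y_2$ \emph{admissible} if: (i) $F$ is saturated, i.e.\ $(y_1,y_2)\in F$ and $(y_1,y_1')\in Z_1$ imply $(y_1',y_2)\in F$, and symmetrically $(y_1,y_2)\in F$ and $(y_2,y_2')\in Z_2$ imply $(y_1,y_2')\in F$; (ii) both coordinate projections $F\to Y_1$ and $F\to Y_2$ are surjective; (iii) $(y_1,y_2),(y_1,y_2')\in F$ imply $(y_2,y_2')\in Z_2$; (iv) $(y_1,y_2),(y_1',y_2)\in F$ imply $(y_1,y_1')\in Z_1$. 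A short compactness argument then shows that if $F$ is admissible the image $\overline{F}:=(\pi_1\times\pi_2)(F)\subseteq X_1\times X_2$ is a closed, shift-invariant, total, single-valued, surjective and injective relation between the compact metrizable spaces $X_1$ and $X_2$, hence the graph of an equivariant homeomorphism $X_1\to X_2$. Thus, if an admissible $F$ exists, then $X_1\cong X_2$.

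For the converse I would verify that an arbitrary conjugacy $\phi\colon X_1\to X_2$ yields an admissible SFT, namely $F_\phi:=\{(y_1,y_2)\in Y_1\times Y_2 : \phi[y_1]=[y_2]\}$, which is the preimage of the graph of $\phi$ under $\pi_1\times\pi_2$. Conditions (i)--(iv) for $F_\phi$ are immediate from the definition. The substantive point is that $F_\phi$ is of finite type and not just a subshift, and this is where finite presentability is used, through expansivity: form the SFT $Y_1\sqcup Y_2$ and the factor map onto $X_2$ that equals $\phi\circ\pi_1$ on the summand $Y_1$ and $\pi_2$ on the summand $Y_2$; its fiber relation $R$ satisfies $X_2\cong(Y_1\sqcup Y_2)/R$, so since $X_2$ is expansive (being finitely presented), Theorem~\ref{thm:ExpansivityCharacterization} forces $R$ to be a relative SFT in $(Y_1\sqcup Y_2)^2$, hence (as $Y_1\sqcup Y_2$ is an SFT) an SFT. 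Intersecting $R$ with the clopen SFT $Y_1\times Y_2\subseteq(Y_1\sqcup Y_2)^2$ recovers exactly $F_\phi$, which is therefore an SFT.

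Finally I would run the search: enumerate all finite sets of forbidden patterns over $\Sigma_1\times\Sigma_2$, replace each resulting SFT by its intersection with $Y_1\times Y_2$, and test admissibility of each. Each of (i)--(iv) unwinds into an inclusion between sofic shifts built from $F,Z_1,Z_2$ by finitely many products, intersections, and images and preimages under sliding block codes, and all these operations together with the inclusion test are effective for $M\in\{\N,\Z\}$ (the standard effectivity of one-dimensional symbolic dynamics, equivalently the decidability of the monadic second-order theory of the integers with successor). Hence admissibility is decidable, and the procedure halts and reports ``conjugate'' precisely when an admissible $F$ turns up, which by the two directions above is precisely when $X_1\cong X_2$. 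The only real obstacle is the finite-type-ness of $F_\phi$ isolated above; granting it, the rest is bookkeeping with the definitions plus one-dimensional decidability.
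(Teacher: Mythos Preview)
Your proof is correct and complete. The route you take is genuinely different from the paper's, though both rest on the same automata-theoretic decidability for $M\in\{\N,\Z\}$.

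The paper works with $\Z$-transducers rather than subshifts: it shows that every morphism $X/K\to Y/L$ between finitely presented systems is realized by a deterministic transducer $T:X\to Y$ (obtained by taking a sufficiently fine SFT approximation of the full graph and then determinizing), and that conjugacy amounts to the existence of a pair $(T,T')$ of such transducers satisfying a finite list of conditions (well-definedness, shift-commutation up to $K,L$, and mutual inverseness up to $K,L$) each of which is expressible in $MF_2(\Z,0,\prec)$ and hence decidable. One then enumerates transducer pairs and tests.

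Your argument instead enumerates a single SFT witness $F\subseteq Y_1\times Y_2$ encoding the graph of the conjugacy, and the crux is your observation that the canonical lift $F_\phi$ is itself an SFT: you obtain this cleanly by embedding $F_\phi$ inside the kernel of the factor map $Y_1\sqcup Y_2\to X_2$ and invoking Theorem~\ref{thm:ExpansivityCharacterization} to force that kernel to be SFT. This is a nice use of the expansivity characterization that the paper does not make here; the paper's transducer machinery is more general (it handles morphisms that need not be conjugacies, and transducers need not be shift-invariant), while your approach is more direct for the conjugacy problem specifically and avoids the determinization step entirely. Your admissibility conditions (i)--(iv) are exactly the relational translation of the paper's transducer conditions, and their decidability reduces to the same sofic-inclusion tests.
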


The reason for this is that morphisms between finitely presented systems have a combinatorial presentation, and we can decide properties of these morphisms using automata theory (on infinite words). In Proposition~\ref{prop:DiscreteInCompactOpen}, we prove a result similar in spirit, that \ttfrac{subshift}{SFT} systems have countable automorphism groups which are discrete in the compact-open topology.

We define \emph{automatic spaces}, a class of topological spaces. We show that the underlying space of a \ttfrac{sofic}{sofic} system on $M \in \{\N, \Z\}$ is automatic, realize simplicial complexes as automatic spaces and realize a large class of automatic spaces as sofically presented systems. This gives the following:

\begin{theorem}
\label{thm:ConjugacyUndecidable}
Let $M = \N$ or $M = \Z$. Given two sofically presented systems $X, Y$, the following properties are recursively inseparable:
\begin{itemize}
\item $X$ and $Y$ are conjugate,
\item $X$ and $Y$ are not homeomorphic.
\end{itemize}
\end{theorem}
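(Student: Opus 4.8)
The plan is to reduce an undecidable problem about finitely presented automata/spaces to the conjugacy problem for sofically presented systems, exploiting the three ingredients promised just before the statement: that \ttfrac{sofic}{sofic} systems on $M\in\{\N,\Z\}$ have automatic underlying spaces, that finite simplicial complexes are realizable as automatic spaces, and that a large class of automatic spaces is realizable by sofically presented systems. The natural source of recursive inseparability is the homeomorphism problem for finite simplicial complexes, which is undecidable (by the classical unsolvability of the homeomorphism problem for manifolds of dimension $\geq 4$, via the unsolvability of the group-triviality/isomorphism problem, as used for instance by Markov); more precisely one wants a pair of computable sequences of finite simplicial complexes $(K_i)$, $(L_i)$ such that $\{i : K_i\cong L_i\}$ and $\{i : K_i\not\cong L_i\}$ are recursively inseparable. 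First I would fix such a family, together with an index $n$ ranging over Turing machine computations, so that for halting inputs $K_n\cong L_n$ and for non-halting inputs $K_n\not\cong L_n$ (or the reverse), giving the required inseparability at the level of finite complexes.

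Next I would push this through the realization chain. Given $n$, one effectively builds the two automatic spaces corresponding to $K_n$ and $L_n$, and then effectively builds sofically presented systems $X_n$, $Y_n$ whose underlying spaces are (homeomorphic to) these automatic spaces — here I would use the "realize a large class of automatic spaces as sofically presented systems'' result, choosing the realization so that the dynamics does not interfere with the distinction I am trying to detect. The cleanest way to arrange this is to make the $M$-action on $X_n$ and on $Y_n$ literally identical in structure — e.g. trivial, or some fixed action depending only on the combinatorics of the realization and not on $n$ in any way that affects conjugacy type — so that \emph{$X_n$ and $Y_n$ are conjugate as dynamical systems if and only if they are homeomorphic as spaces if and only if $K_n\cong L_n$}. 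Then on halting inputs $X_n$ and $Y_n$ are conjugate, and on non-halting inputs they are not even homeomorphic; since the halting set and its complement are recursively inseparable, so are the two stated properties of the output pair $(X_n,Y_n)$. The construction $n\mapsto(X_n,Y_n)$ being effective is what makes this a genuine recursive-inseparability statement rather than mere undecidability.

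The main obstacle, and the step I would spend the most care on, is controlling the dynamics in the realization so that conjugacy collapses to homeomorphism of the underlying spaces. Sofically presented systems come with a genuine $M$-action, and a priori two systems could be homeomorphic without being conjugate, or the realization procedure could introduce action-dependent invariants (periodic points, entropy, etc.) that accidentally separate $X_n$ from $Y_n$ for reasons unrelated to $K_n\cong L_n$. I would handle this by making the realization "dynamically inert'' on the part that carries the topological information — for instance realizing each complex as a disjoint summand, or a clopen piece, on which the action is the identity (one can always adjoin a fixed subshift carrying the action to keep everything in \ttfrac{sofic}{sofic}), so that any homeomorphism between the underlying spaces can be upgraded to a conjugacy, and conversely a conjugacy is in particular a homeomorphism. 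A secondary technical point is checking that finite simplicial complexes really do fall into the "large class'' of automatic spaces that the earlier results can realize, and that the composite realization map is computable uniformly in $n$; both should follow directly from the cited constructions once the classes are set up, but they need to be verified rather than asserted.
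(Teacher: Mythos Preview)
Your high-level strategy matches the paper's: reduce from the undecidability of homeomorphism of finite simplicial complexes, and arrange that homeomorphism of the complexes is equivalent to both conjugacy and homeomorphism of the associated sofically presented systems. The gap is in the execution: your proposed mechanism of realizing the complex as a clopen piece with trivial action does not obviously work in the sofic/sofic setting, and the paper uses a different device.

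Concretely, the paper introduces a ``carousel'' construction $\overset{\leftrightarrow}{X}$ (for $M=\Z$; there is an $\N$-analog $\overset{\rightarrow}{X}$): take countably many disjoint copies of the automatic space $X$ indexed by $\Z$, one-point-compactify, and let the $\Z$-action shift the index. This is shown to be sofically presented whenever $X$ is a compact automatic space. Given connected simplicial complexes $S_1,S_2$, one builds $\overset{\leftrightarrow}{S_1}$ and $\overset{\leftrightarrow}{S_2}$. If $S_1\cong S_2$ the carousels are obviously conjugate. Conversely, if the carousels are merely homeomorphic as spaces, then their infinite connected components correspond, and these are exactly the copies of $S_1$ (resp.\ $S_2$), forcing $S_1\cong S_2$. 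Connectedness of the complexes is what makes this recovery step work.

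Your suggested ``identity action on a clopen piece'' approach runs into a real obstruction proved elsewhere in the paper: an infinite \ttfrac{$\Z$-sofic}{subshift} system must contain a point with infinite orbit, so the whole system cannot have trivial dynamics. You anticipated this by proposing to adjoin a nontrivial piece, but you have not explained how to realize an arbitrary simplicial complex as a clopen set of \emph{fixed points} inside a sofically presented system, nor how to recover the homeomorphism type of the complex from the ambient space afterward. The carousel construction sidesteps both issues: the dynamics is genuinely nontrivial (it permutes the copies), yet so rigid that conjugacy reduces to homeomorphism of $X$, and the complex is recovered via connected components rather than via any dynamical invariant.
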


In particular, both conjugacy and homeomorphism of given sofically presented systems are undecidable. We do not know if these problems are even semidecidable. We illustrate the non-local nature of morphisms between such systems by showing that the automorphism group of a countable \ttfrac{$\Z$-sofic}{sofic} system can be uncountable and need not be locally compact in the compact-open topology.

The class \ttfrac{$\Z$-subshift}{subshift} and its relation to general dynamical systems has been studied previously, see e.g. \cite{BoDo04,BoFiFi02,DoNe05}. It follows from this work that the identity map on any compact metrizable spaces is \ttfrac{$\Z$-subshift}{subshift} (we also give a self-contained proof). This shows that Ma\~n\'e's theorem does not extend to this class. It also shows that while \ttfrac{$\Z$-subshift}{subshift} systems have finite topological entropy, there is no restriction on the number of periodic points of a given period in such systems, since we can have uncountably many points of period $1$. We show that the compact ordinal $\omega+1$ under trivial dynamics is \ttfrac{$\Z$-subshift}{sofic}, and infinite \ttfrac{$\Z$-subshift}{countable sofic} systems can consist entirely of periodic points, but infinite \ttfrac{$\Z$-sofic}{subshift} systems always have an infinite orbit.

Sofics appears naturally in the study of $\beta$-shifts: Given a real number $\beta > 1$, one can define the $\beta$-shift $S_\beta$ as the orbit closure of all greedy $\beta$-expansions of real numbers. A special role is played by $d^*_\beta(1)$, the greedy expansion of the number $1$. It is natural to try to induce a dynamical system back onto the interval, to obtain a dynamical system where the action is ``multiplication by $\beta$''. There is a canonical way to do so, in the sense that there exists a unique quotient that makes this well-defined and makes a minimal number of identifications in the sense of a universal property. Let $K_\beta$ be the uniquely determined equivalence relation that makes these identifications.

It is known that $S_\beta$ is sofic if and only if $d^*_\beta(1)$ is eventually periodic, and SFT if and only if $d^*_\beta(1)$ is totally periodic. The following theorem extends this, and classifies all possible soficity/SFTness types of the pair $\ccfrac{S_\beta}{K_\beta}$.

\begin{theorem}
\label{thm:BetaShiftClassification}
Let $\beta > 1$. Then the following hold:
\begin{enumerate}
\item $\ccfrac{S_\beta}{K_\beta}$ is \ttfrac{SFT}{SFT} if and only if $\beta \in \N$,
\item $\ccfrac{S_\beta}{K_\beta}$ is \ttfrac{SFT}{proper sofic} if and only if $\beta \notin \N$ and $d^*_\beta(1)$ is totally periodic,
\item $\ccfrac{S_\beta}{K_\beta}$ is \ttfrac{proper sofic}{proper sofic} if and only if $d^*_\beta(1)$ is strictly eventually periodic,
\item $\ccfrac{S_\beta}{K_\beta}$ is \ttfrac{non-sofic}{SFT}
if and only if $\{\sigma^i(d^*_\beta(1)) \;|\; i\in\N\}$ is dense in $S_\beta$.
\item $\ccfrac{S_\beta}{K_\beta}$ is \ttfrac{non-sofic}{non-sofic} in other cases.
\end{enumerate}
In particular, $\ccfrac{S_\beta}{K_\beta}$ is never \ttfrac{SFT}{non-sofic}, \ttfrac{non-sofic}{proper sofic} or \ttfrac{proper sofic}{SFT}.
\end{theorem}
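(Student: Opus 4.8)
The plan is to separate the numerator $S_\beta$ from the denominator $K_\beta$, to organize everything around the periodicity type of $d^* := d^*_\beta(1)$ and its shift-orbit closure $\overline{\mathcal O} := \overline{\{\sigma^i d^* : i \in \N\}}$, and then to read off the five cases and the three ``never'' statements by combination. For the numerator I would invoke the classical Parry dichotomy (cited as known in the excerpt): $S_\beta$ is an SFT iff $d^*$ is totally periodic, a proper sofic shift iff $d^*$ is strictly eventually periodic, and non-sofic otherwise; moreover $d^*$ is totally periodic of period $1$ exactly when $\beta\in\N$ (in which case $S_\beta$ is a full shift), since $d^* = c^\infty$ forces $c/(\beta-1)=1$. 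I would also record two facts used repeatedly: (a) the maximal point of a sofic subshift is eventually periodic (a greedy walk through a finite graph), so if $d^*$ is not eventually periodic then $\overline{\mathcal O}$, whose maximum is $d^*$, is not sofic; and (b) the orbit of $d^*$ is dense in $S_\beta$ iff $L(S_\beta)$ is exactly the factor set of $d^*$, and then $d^*$ cannot be eventually periodic (else $S_\beta=\overline{\mathcal O}$ would be finite), so $S_\beta$ is non-sofic.

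For the denominator I would use the combinatorial presentation of $K_\beta$ established earlier in the paper: as a relative subshift of $S_\beta^2$ it is the ``diagonal plus carry'' equivalence relation generated, as a closed shift-invariant equivalence relation, by the carry identifications $(ub0^\infty,\ u(b-1)d^*)$ (for $b\ge 1$ with both sides in $S_\beta$), which identify the two $\beta$-expansions of the reals whose orbit meets a discontinuity point. Shifting the basic pair $(10^\infty, 0d^*)$ already forces $0^\infty$ to be $K_\beta$-equivalent to every $\sigma^i d^*$, so $\overline{\mathcal O}\cup\{0^\infty\}$ is a single $K_\beta$-class and $K_\beta \supseteq (\overline{\mathcal O}\cup\{0^\infty\})^2$. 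The core is then the dichotomy for the relative complexity of $K_\beta$. When $\beta\in\N$, $K_\beta$ is the ``ambiguous base-$\beta$ expansion'' relation, carved out of the pair full shift by finitely many local rules (each symbol is diagonal, or is a carry symbol $(b,b-1)$ which may be followed only by $(0,\beta-1)$-symbols, etc.), hence an SFT. When $d^*$ is eventually periodic but $\beta\notin\N$ (so $\overline{\mathcal O}$ is a finite union of $\ge 2$ periodic orbits), after a carry the second track is forced to trace out the periodic tail of $d^*$, a finite-memory constraint, so $K_\beta$ is sofic; it is not of finite type, which I would prove by a follower-set/pumping argument, exploiting that a carry symbol is followed on the second track by an arbitrarily long word looking locally like a diagonal stretch (carry and diagonal symbols share the plain symbols $(a,a)$), so no bounded window detects a carry. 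Finally, when $d^*$ is not eventually periodic: if $\overline{\mathcal O}=S_\beta$ then $0^\infty$ is $K_\beta$-equivalent to everything and $K_\beta=S_\beta^2$, trivially a relative SFT of $S_\beta^2$; if $\overline{\mathcal O}\subsetneq S_\beta$ then $K_\beta$ is not even relatively sofic, since a regular relative language for $K_\beta$ would make the set of words that can legitimately follow a carry on the second track — essentially the prefix set of $d^*$ — regular, forcing $d^*$ eventually periodic.

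Assembling: the numerator is SFT exactly in cases 1--2, i.e. $\beta\in\N$ (denominator SFT, case 1) and $\beta\notin\N$ with $d^*$ totally periodic (denominator proper sofic, case 2); it is proper sofic exactly in case 3, where $d^*$ is strictly eventually periodic (denominator proper sofic by the same eventually-periodic analysis); it is non-sofic exactly in cases 4--5, split by whether the orbit of $d^*$ is dense, i.e. $\overline{\mathcal O}=S_\beta$ (case 4, denominator a relative SFT) or not (case 5, denominator relatively non-sofic). The three impossibilities follow: if the numerator is SFT then $d^*$ is totally periodic, so the denominator is SFT or proper sofic, never non-sofic; if the numerator is proper sofic then $d^*$ is strictly eventually periodic with $\beta\notin\N$, so the denominator is proper sofic, never SFT; if the numerator is non-sofic then $\overline{\mathcal O}$ is non-sofic, so the denominator is relatively non-sofic unless $\overline{\mathcal O}=S_\beta$, in which case $K_\beta=S_\beta^2$ is a relative SFT — never proper sofic in between.

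The main obstacle I anticipate is the SFT-versus-proper-sofic separation for $K_\beta$ throughout the eventually-periodic regime (cases 1--3): one needs an explicit finite defining set of forbidden patterns in the integer case and a robust follower-set argument in the non-integer case, and this is delicate precisely because carry symbols and diagonal symbols overlap on the plain symbols $(a,a)$, which makes it tempting to wrongly conclude the relation is of finite type. The second delicate point is the dense-orbit case 4: one must verify that orbit density is exactly what collapses $K_\beta$ to all of $S_\beta^2$, so that the non-soficity of $\overline{\mathcal O}$ is harmlessly ``absorbed'', and conversely that without density the residual constraint on the second track after a carry genuinely recovers the prefix set — hence the non-eventual-periodicity — of $d^*$ inside $K_\beta$.
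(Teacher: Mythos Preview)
Your proposal is correct and follows essentially the same architecture as the paper: separate the numerator via the Parry dichotomy, analyze $K_\beta$ through the explicit decomposition $K_\beta=\Delta_{S_\beta}\cup\Delta'\cup(S'\times S')$ (where $S'$ is the $K_\beta$-class of $0^\infty$, containing $\overline{\mathcal O}$ and its companions), and split into the same five cases. Your key observations---that shifting the carry pair puts all of $\overline{\mathcal O}\cup\{0^\infty\}$ in one class, and that density of the orbit collapses $K_\beta$ to $S_\beta^2$---match the paper's Lemma~\ref{Kgen} and Theorem~\ref{thm:DenseCase} exactly.

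One place where your sketch is thinner than the paper's execution is case~5 (aperiodic, non-dense). Your heuristic ``soficity of $K_\beta$ would make the prefix set of $d^*$ regular'' is the right intuition, but it does not go through directly: after a carry symbol the second track is not forced to equal $d^*$, because the pair might instead lie in the $(S'\times S')$ block, and $S'$ contains companions of orbit-closure points, not just $d^*$ itself. The paper's argument (Theorem~\ref{thm:NotDenseNotSofic}) handles this by first choosing a word $w$ that does \emph{not} occur in $d^*$ (using non-density), building a pair $(w0^m10^m10^\infty,\,w0^m10^m0d^*)\in\Delta'$, pumping via Lemma~\ref{lem:Pumping} to replace $d^*$ by some $y\neq d^*$ with $\sigma^n y\le d^*$ for all $n$, and then arguing that the resulting pair cannot lie in $\Delta\cup\Delta'$ and so must lie in $S'\times S'$---which forces $w$ to appear in $d^*$, a contradiction. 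Your obstacle paragraph already flags this as delicate, so you have the right instinct; just be aware that the $(S'\times S')$ component is what makes the pumping argument two-step rather than one-step.
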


This theorem is proved in Section~\ref{sec:BetaShifts}.

Another case study we perform is the study of expansive two-dimensional toral automorphisms. It is well-known that these maps are precisely the expansive $\Z$-actions on the torus. It is well-known that they have SFT covers \cite{SiVe98}, and by a result of Fried \cite{Fr87}, the kernels of the maps must be SFT by expansivity.

As there is a natural finite description (or G\"odel numbering) of sofically presented dynamical systems, one can ask decidability questions about them, and in particular one can ask to what extent the SFT covers and SFT kernels can be computed. We show that expansive toral automorphisms on the $2$-torus form a recursively enumerable set:

\begin{theorem}
\label{thm:TorusClassification}
Every expansive homeomorphism on the two-dimensional torus is finitely presented. The class of finitely presented dynamical systems (represented as pairs $\ccfrac{Y}{Z}$ in \ttfrac{$\Z$-SFT}{SFT}) that are systems on the two-dimensional torus is recursively enumerable.
\end{theorem}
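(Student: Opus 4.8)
The plan is to treat the two assertions separately, leaning on Theorem~\ref{thm:ExpansivityCharacterization}, the classical structure theory of expansive homeomorphisms of the torus, the existence of Markov partitions, and the semidecidability of conjugacy of finitely presented systems. For the first assertion, let $f\colon\T^2\to\T^2$ be an expansive homeomorphism. By the classification of expansive homeomorphisms of compact surfaces, $f$ is topologically conjugate to a hyperbolic toral automorphism $A\in\mathrm{GL}_2(\Z)$ (this is precisely the statement, recalled above, that the expansive $\Z$-actions on the torus are the hyperbolic toral automorphisms). Such an $A$ admits a Markov partition, hence an SFT cover $\pi\colon Y\to(\T^2,A)$ with $Y$ a $\Z$-SFT \cite{SiVe98}, so $(\T^2,A)\cong Y/Z$ with $Z=\Ker\pi\subseteq Y^2$ the closed, shift-invariant relation identifying points with equal image. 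Since $(\T^2,A)$ is expansive, Theorem~\ref{thm:ExpansivityCharacterization} (third item) gives that $\ccfrac{Y}{Z}$ is \ttfrac{subshift}{SFT}, i.e.\ $Z$ is an SFT; as $Y$ is also an SFT, $\ccfrac{Y}{Z}$ is \ttfrac{SFT}{SFT}, so $(\T^2,A)$ — and therefore $f$ — is finitely presented.

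For the second assertion I would give a semidecision procedure for the set of pairs $\ccfrac{Y}{Z}$ in \ttfrac{$\Z$-SFT}{SFT} whose underlying space is homeomorphic to $\T^2$. First, among all pairs of $\Z$-SFTs the valid ones are recognizable: reflexivity, symmetry and transitivity of $Z\cap Y^2$ on $Y$ each reduce, after rewriting the relevant conditions as relative SFTs, to checking containment of one SFT in another inside $Y$, $Y^2$, or $Y^3$, which is decidable. The core of the procedure is an effective form of the Adler--Weiss construction: there is an algorithm that, given a hyperbolic $A\in\mathrm{GL}_2(\Z)$, outputs a valid pair $\ccfrac{Y_A}{Z_A}$ in \ttfrac{$\Z$-SFT}{SFT} with $Y_A/Z_A$ conjugate to $(\T^2,A)$. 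Here $Y_A$ is the transition SFT of a Markov partition whose rectangles are bounded by segments of the stable and unstable lines through rational points, hence have algebraic coordinates uniformly computable from $A$; and $Z_A$ is an SFT — by the first assertion — whose allowed patterns, for an explicit window length, record exactly when the rectangle-addresses of two partition itineraries overlap, a decidable condition on explicit polygons.

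Given this, on input a valid pair $\ccfrac{Y}{Z}$ the procedure dovetails over all hyperbolic $A\in\mathrm{GL}_2(\Z)$: for each it computes $\ccfrac{Y_A}{Z_A}$ and runs the conjugacy semialgorithm for finitely presented $\Z$-systems on $(\ccfrac{Y}{Z},\ccfrac{Y_A}{Z_A})$, accepting if any of these halts. If the procedure accepts then $Y/Z$ is conjugate to some $(\T^2,A)$, so its underlying space is $\T^2$. Conversely, if the underlying space of $Y/Z$ is $\T^2$, then since $\ccfrac{Y}{Z}$ is \ttfrac{SFT}{SFT} the system $Y/Z$ is expansive by Theorem~\ref{thm:ExpansivityCharacterization}, hence an expansive homeomorphism of $\T^2$, hence — by the classification again — conjugate to $(\T^2,A)$ for some hyperbolic $A$; thus $\ccfrac{Y}{Z}$ is conjugate to $\ccfrac{Y_A}{Z_A}$ and the conjugacy semialgorithm halts on this pair, so the procedure accepts. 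Hence the set in question is recursively enumerable.

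I expect the main obstacle to be making the Adler--Weiss construction — and in particular the kernel $Z_A$ — effective uniformly in $A$: the existence of an SFT cover is classical and the partition rectangles are plainly computable, but one must verify carefully that ``same image under the address map'' is cut out by a computable finite set of forbidden patterns, with a computable bound on the window length, uniformly in $A$. A closely related computation, for the matrix $\left(\begin{smallmatrix}1&1\\1&0\end{smallmatrix}\right)$, is carried out in our second case study, and the general case should reduce to the same bookkeeping. The remaining ingredients — decidability of validity, and the semidecidability of conjugacy of finitely presented systems — are either routine or already established above.
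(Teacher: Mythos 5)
Your overall strategy matches the paper's: reduce by Lewowicz and Manning to hyperbolic toral automorphisms, effectively construct an \ttfrac{SFT}{SFT} presentation for each such matrix, and then dovetail the semidecidable conjugacy check (Lemma~\ref{lem:SFTSFTConjugacySemidecidable}) against the given pair. The first assertion is handled exactly as in the paper, and your converse direction of the enumeration argument (expansivity $\Rightarrow$ conjugate to some hyperbolic $A$, hence the semialgorithm halts) is the same.

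Where you genuinely diverge from the paper — and where you correctly flag your own uncertainty — is in the effective construction of the \ttfrac{SFT}{SFT} pair for a given hyperbolic $A$. You propose making the Adler--Weiss Markov partition uniformly effective in $A$, which requires cutting out the kernel by a finite set of forbidden patterns with a \emph{computable bound on the window length}. The paper sidesteps this entirely (Theorem~\ref{thm:HyperbolicToralSFTSFT}): it uses a homoclinic-point cover $\phi(x)=\sum_i T^{-i}(\pi(x_i w^\Delta))$, so the covering SFT $Y$ is just a full shift $\Sigma_n^\Z$ for a computable $n$, and the kernel is computed not geometrically but arithmetically, as the sofic shift of pairs $(x,y)$ with $\pi_\lambda(x-y)v_\lambda + \pi_\mu(x-y)v_\mu \in \Z^2$; the recognizing automaton is a uniformly computable ``carry automaton'' over $\Z[\lambda]$ (Theorem~\ref{thm:Carry}, following Propositions 2.3.30 and 2.3.33 of~\cite{BeRi10}). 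Crucially this only shows the kernel is effectively \emph{sofic}; the paper then invokes Fried's characterization (our Theorem~\ref{thm:ExpansivityCharacterization}) to conclude a priori that the kernel must be an SFT, and Lemma~\ref{lem:DecidableThings} to extract a finite forbidden-pattern set from the sofic presentation, with no need for an a priori window bound. If you want to salvage the Markov-partition route, you should aim at the same weaker intermediate goal — show the kernel is \emph{effectively sofic} — and then apply the identical Fried + automata-theory endgame, rather than trying to bound the SFT window directly. Also note that the case study (Section~\ref{sec:ToralAutomorphisms}) does not use an Adler--Weiss partition either; it uses the same homoclinic cover, so it is not a model for the bookkeeping you envision.
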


This theorem is proved in Section~\ref{sec:ToralAutomorphisms}.

We give an algorithm for computing a finite presentation from a matrix presentation of the expansive homeomorphism, and apply the algorithm to the well-known expansive $\Z$-action given by the matrix $\left(\begin{smallmatrix} 1 & 1 \\ 1 & 0 \end{smallmatrix}\right)$. We denote the topological group $\R/\Z$ by $\T$.

\begin{theorem}
\label{thm:TorusExample}
Let $M = \left(\begin{smallmatrix} 1 & 1 \\ 1 & 0 \end{smallmatrix}\right)$. Then the linear action of $M$ induces an expansive homeomorphism on $\T^2$. This dynamical system is \ttfrac{SFT}{SFT}, namely conjugate to $X/K$, where the minimal forbidden patterns for $X$ are $\{11\}$ and for $K$ those listed in Figure~\ref{fig:ptrns}.
\end{theorem}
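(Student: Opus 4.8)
The plan is to set up the golden mean shift $X \subset \{0,1\}^{\Z}$ (forbidden pattern $11$) together with the standard factor map onto $\T^2$ coming from the Markov partition of the hyperbolic automorphism $M = \left(\begin{smallmatrix} 1 & 1 \\ 1 & 0 \end{smallmatrix}\right)$, and then identify the kernel $K = \{(x,y) \in X^2 \mid \pi(x) = \pi(y)\}$ explicitly as a subshift, verifying that its minimal forbidden patterns are exactly those in Figure~\ref{fig:ptrns}. First I would recall the construction of the Markov partition: the eigendirections of $M$ (expanding/contracting, with slopes the golden ratio and its conjugate) cut $\T^2$ into two rectangles $R_0, R_1$, and the itinerary map $\pi : X \to \T^2$, $\pi(x) = $ the unique point whose orbit under $M$ visits $R_{x_i}$ at time $i$, is a well-defined continuous surjection that intertwines the shift with $M$; expansivity of the toral automorphism is standard (the eigenvalues are off the unit circle) so this gives a symbolic cover. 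That $X$ is the golden mean shift (forbidden word $11$) is the classical transition-rule computation: $R_1$ maps across $R_0$ only, so $1$ cannot follow $1$.

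Next I would analyze where $\pi$ fails to be injective. This happens exactly at points on the boundary lattice of the partition, i.e.\ points whose $M$-orbit meets the (finite union of) boundary segments of $R_0, R_1$; such a point can lie in the closure of two partition elements and hence have two itineraries. The key finite computation is to track, under the dynamics, the finitely many boundary segments and their sub-segments, determine for each how the two competing itineraries differ, and read off the "synchronization" automaton: a point with two names $x, y$ must have $x_i = y_i$ outside a window where the orbit sits on a boundary piece, and inside that window the pair $(x_i, y_i)$ follows a constrained pattern dictated by which side of which boundary segment one is on. Concretely I expect the non-diagonal identifications to come from the two classical relations $10^\infty \sim 01^\infty$-type tail identifications (here adapted to the golden mean alphabet, e.g.\ $\dots a\,1\,0\,0\,\dots \sim \dots a\,0\,1\,0\,\dots$ style carries) reflecting the two ways of writing a boundary point; encoding "agree, then at most one local discrepancy of a fixed shape, then agree again" as a subshift over the pair alphabet $\{0,1\}^2$ yields a finite list of minimal forbidden patterns, which I would then match against Figure~\ref{fig:ptrns}.

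Having a candidate subshift $K$, I would prove $K$ is the kernel by the two inclusions. For $K \subseteq \{(x,y) : \pi x = \pi y\}$: every allowed pair in $K$ describes two itineraries of a common boundary-or-interior point, so $\pi x = \pi y$ — this is a direct check that each local discrepancy pattern corresponds to an actual coincidence on $\T^2$, plus a limiting argument for infinite tails. For the reverse inclusion: if $\pi x = \pi y$ with $x \neq y$, then at each disagreement coordinate the common point's orbit is forced onto a boundary segment, and the boundary-tracking automaton shows the disagreements are confined to a window of controlled shape, so $(x,y) \in K$. Since $K \cap X^2$ is then a closed shift-invariant relation and visibly an equivalence relation (it is the kernel of a factor map), $\ccfrac{X}{K}$ is a legitimate \ttfrac{SFT}{SFT} presentation and $X/K \cong \T^2$ by the factor theorem (a continuous bijection from a compact space to a Hausdorff space is a homeomorphism). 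Finally I would note the induced action on $X/K$ is $M$ by construction.

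The main obstacle I anticipate is the bookkeeping of the boundary pieces: getting the Markov partition's boundary orbit structure exactly right (there are a few segments, and their forward/backward images subdivide finitely but one must be careful about endpoints and about points lying on two boundary segments simultaneously, e.g.\ partition corners and the fixed point / periodic boundary points), and then translating this geometric data faithfully into a minimal forbidden pattern list that agrees with Figure~\ref{fig:ptrns} with no missing or spurious patterns. A secondary subtlety is handling one-sided-infinite discrepancies (tails $\dots 0\,0\,0$ vs $\dots 1\,0\,1\,0$) correctly as limits, and checking that the resulting relation really is transitive as a relation on $X$ (it is, being a kernel, but verifying this directly from the pattern list is a useful sanity check). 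I would lean on Theorem~\ref{thm:TorusClassification} / Fried's theorem only to know a priori that $K$ must be SFT, which reassures us the finite pattern list is complete.
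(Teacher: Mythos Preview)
Your general strategy matches the paper's: construct the factor map $\phi: X \to \T^2$ from the golden mean shift via the Markov partition (the paper does this via homoclinic points, arriving at the same two-rectangle partition), then analyze the kernel by tracking boundary identifications. However, there are two substantive gaps.

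First, your description of the kernel as ``agree, then at most one local discrepancy of a fixed shape, then agree again'' is incorrect, and this is not merely a subtlety. The paper shows that the basic non-diagonal identifications are \emph{infinite tail} exchanges: on the right, $wa10^\infty \sim wa0(10)^\infty$ (call the closure of these $\orbc{K_R}$), and on the left, ${}^\infty(10)100uw \sim {}^\infty(01)001uw$ (call these $\orbc{K_L}$). Crucially, the kernel is \emph{not} the union of these relations with the diagonal but their \emph{composition}: $K = \orbc{K_L} \circ \orbc{K_R}$. A single point of $\T^2$ on a partition corner can have up to four preimages, and to get from one to another you may need to exchange both tails --- but the left and right exchanges cannot in general be performed simultaneously in a single sofic relation (the paper notes this explicitly). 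This composition step is why the forbidden-pattern list in Figure~\ref{fig:ptrns} is so long and why the patterns have length up to $6$; it is not something you would discover by looking for ``one local discrepancy.''

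Second, the paper does not attempt to produce Figure~\ref{fig:ptrns} by hand. After proving analytically that $K = \orbc{K_L} \circ \orbc{K_R}$ (via a case analysis on the four boundary-gluing relations $R_1,\dots,R_4$ of the fundamental domain), it writes automata for $\orbc{K_L}$ and $\orbc{K_R}$, composes them by computer, and extracts the minimal forbidden patterns algorithmically. Your plan to ``match against Figure~\ref{fig:ptrns}'' directly from the boundary geometry would amount to redoing this automaton composition by hand for a relation with roughly fifty minimal forbidden patterns; this is not realistic, and the paper does not claim to do it. The a priori knowledge that $K$ is SFT (from Fried/expansivity) is used exactly as you suggest, to guarantee the forbidden-pattern computation terminates.
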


\begin{figure}[h]
\input{relationaslatex}
\vspace{-10pt}
\caption{The minimal forbidden patterns for $K$ in Theorem~\ref{thm:TorusExample}.}
\label{fig:ptrns}
\end{figure}

In Section~\ref{sec:ToralAutomorphisms}, we give a human understandable description of this relation $K$ as the composition of two sofic relations, and in Appendix~\ref{sec:ToralAutoCode} we show how to derive the forbidden patterns by computer. The complete program is included in ancillary files. The topological partition used in the proof of the theorem arises from the general method, and is the same as the one used in \cite{Ad98,SiVe98}. A square root of the kernel appears in \cite{SiVe98} (it is not stated explicitly that it needs to be squared).

\section{Conventions, definitions and basic topology}
\label{sec:Definitions}

In general $Y$ is a topological space and $Z \subset Y^2$, we denote by $Y/Z$ the corresponding topological quotient, whose underlying set is the set of $Z$-equivalence classes of $Y$ (this should not cause confusion with the subshift case, where we do not necessarily require $Z \subset Y^2$, and take this intersection explicitly). It is known that $Y/Z$ is a (compact) metrizable space whenever $Y$ is compact metrizable and $Z$ is a closed equivalence relation, see Lemma~\ref{lem:QuotientMapsMetrizable}. We denote the points of $Y/Z$ by $[y]$ where $y \in Y$ is a representative, or simply directly see points $y \in Y$ as points of $Y/Z$ when this causes no confusion. The \emph{$Z$-closure} of a set $U\subset Y$ consists of those points that are $Z$-equivalent to some point of $U$ (which need not be topologically closed), and $U$ is \emph{$Z$-closed} if $U$ is equal to its $Z$-closure. We sometimes equate without explicit mention a $Z$-closed set $U\subset Y$ with the set $V\subset Y/Z$ such that $U$ is the union of equivalence classes contained in $V$.

As in the introduction, by a dynamical system we mean an action of a discrete monoid $M$ on a compact metrizable space $X$ by continuous maps. We act from the left and $1_M \cdot x = x$, $mn \cdot x = m \cdot (n \cdot x)$, where we often omit ``$\cdot$''. Also for $X$ not necessarily a topological space, we write $M\curvearrowright X$ if $M$ acts on $X$, and the action of $m\in M$ on $x\in X$ is denoted by $mx$.  We say that a subset $X'\subset X$ is $M$-invariant or closed under the action of $M$, if $mx'\in X'$ for every $m\in M$, $x'\in X'$. If $M\curvearrowright X$ and $M\curvearrowright X'$, then the monoid $M$ acts on $X\times X'$ by the \emph{diagonal action} $m(x,x')=(mx,mx')$ for $m\in M$ and $(x,x')\in X\times X'$. If $R$ is an equivalence relation on a set $X$, for every $x\in X$ we denote by $[x]_R$ the equivalence class of $x$ with respect to $R$ and by $X/R$ the set of all equivalence classes of $X$ with respect to $R$. The trivial equivalence relation or the diagonal relation of $X$ is $\Delta_X=\{(x,x)\mid x\in X\}$. The canonical projection $\pi_R:X\to X/R$ maps $\pi_R(x)=[x]_R$ for every $x\in X$. If $R$ is an $M$-invariant equivalence relation for $X$, this induces a unique $M$-action on $X$ such that $m\pi_R(x) = \pi_R(mx)$ for $m\in M$ and $x\in X$.

When $M\curvearrowright X$ and $M\curvearrowright Y$ are topological dynamical systems and $f:X\to Y$ is a surjective continuous map that commutes with the translation maps (i.e. $f(mx)=mf(x)$ for $m\in M$, $x\in X$), we say that $f:X\to Y$ is a \emph{factor map} and $Y$ is a \emph{factor} of $X$. We also say that $f:X\to Y$ is a \emph{cover} or \emph{representation} of $Y$ or simply that $X$ is a cover or representation of $Y$ (via $f$). The kernel of a cover $f:X\to Y$ is the equivalence relation $K=\{(x,y)\in X\times X\mid f(x)=f(y)\}$. If $f$ is a bijection, we say that $f$ is a conjugacy from $X$ to $Y$ and that $X$ and $Y$ are conjugate, which is denoted by $\cong$.

If $X\subset \Sigma^M$ is a subshift, the continuous maps via which the monoid $M$ acts are called \emph{shift maps} or \emph{shifts}. A \emph{block map} $f:X\to Y$ (for subshifts $X \in \Sigma^M, Y \subset (\Sigma')^M$) is a continuous map that commutes with the shifts, i.e. $f(mx)=mf(x)$ for $m\in M$, $x\in X$. Such maps are also commonly known as sliding block codes or \emph{cellular automata} when $X = Y$. A subshift $X \subset \Sigma^M$ is an \emph{SFT} if there exists a clopen set $C \subset \Sigma^M$ such that for $x \in \Sigma^M$, we have $x \in X \iff \forall m \in M: mx \notin C$. We say $Y$ is a \emph{sofic shift} if it is the image of an SFT (possibly with a distinct alphabet) in a block map.

When we consider subshifts $X\subset\Sigma^M$ with $M\in\{\N,\Z\}$, the elements $x\in X$ are typically considered as one-way infinite or bi-infinite sequences over $\Sigma$ and the symbol at position $i\in M$ in $x$ is denoted by $x_i$. Any finite sequence $w = w_1w_2\dots w_n$ (also the empty sequence, which is denoted by $\epsilon$) where $w_i\in \Sigma$ is a word over $\Sigma$. The number $n$ of symbols in $w$ is the length of $n$ and we write $\abs{w}=n$, $w\in\Sigma^n$. For any $a\in\Sigma^n$ the number of occurrences of $a$ in $w$ is denoted by $\abs{w}_a$.  The set of all words over $\Sigma$ is denoted by $\Sigma^*$, and the set of non-empty words is $A^+= A^*\setminus\{\epsilon\}$. For $w\in\Sigma^+$ we denote by $w^\infty$ the infinite sequence of repetitions of $w$ extending to the right and similarly by ${}^\infty w$ we denote the infinite sequence of repetitions of $w$ extending to the left. For $x\in X$ and $i,j\in\Z$ we denote $x_{[i,j]}=x_i x_{i+1}\dots x_{j}\in\Sigma^*$ (this is the empty word $\epsilon$ if $j<i$), $x_{[i,\infty)}=x_i x_{i+1}\dots$ and $x_{(\infty,i]}=\dots x_{i-1}x_i$. We say that a word $w\in\Sigma^+$ is a \emph{subword} of $x\in \Sigma^M$ if $w=x_{[i,j]}$ for some $i,j\in\Z$ and then we write $w\sqsubset x$. We also say that $w$ \emph{occurs} in $x$ (at position $i$). For $u,v\in \Sigma^*$ the cylinder set $[u.v]$ in $X$ is defined by
\[[u.v]=\{x\in X\mid x_{[-\abs{u}+1,0]}=u \mbox{ and } x_{[1,\abs{v}]}=v\},\]
and when $u$ is the empty word we denote $[v]=[\epsilon.v]$. For $n\in\N$ we denote $B_n(X)=\{w\in\Sigma^n\mid w\sqsubset x\mbox{ for some } x\in X\}$. The \emph{entropy} of $X$ is the limit $h(X)=\lim_{n\to\infty}\frac{\log B_n(X)}{n}$ and the language of $X$ is the set $L(X)=\bigcup_{n\in\N}B_n(X)$.

We say that sequences of the form $w^\infty$ are \emph{(totally) periodic} and sequences of the form $uw^\infty$ are \emph{eventually periodic} (where $w\in \Sigma^+$ and $u\in\Sigma^*$). Similarly, we say that sequences of the form ${}^\infty w.w^\infty$ are (totally) periodic and sequences of the form ${}^\infty wu.vw^\infty$ are eventually periodic (where $w\in \Sigma^+$ and $u,v\in\Sigma^*$). A \emph{strictly eventually periodic} sequence is one that is eventually, but not totally, periodic.

For a discrete monoid $M$, we say that a dynamical system $(X,M)$ is \emph{transitive} if for all nonempty open sets $U,V\subset X$ there is $x\in U$ and $m\in M$ such that $mx\in V$. We say that $(X,M)$ is \emph{expansive} if for some metric $d:X\times X\to \R$ and $\epsilon>0$ it holds that for any distinct $x,y\in X$ there is $m\in M$ such that $d(mx,my)>\epsilon$. Then we say that $\epsilon$ is an \emph{expansivity constant} of $(X,M)$ (with respect to the metric $d$). That $M\curvearrowright X$ is expansive is equivalent the following statement: There exists an open set $U\subset X\times X$ such that $\Delta_X\subset U$ and for any distinct $x,y\in X$ there is $m\in M$ such that $m(x,y)\notin U$. Then we say that $U$ is an expansive index of $(X,M)$.

For the purpose of computability, we need a way to encode sofic shifts and subshifts of finite type as a binary word. As all our results are about decidability, and we do not aim for efficient algorithms, any reasonably encoding can be used. One should think of a sofic shift as being given by providing a finite-state automaton (or regular expression) for its language or colanguage, and one can convert effectively between such representations, see \cite{HoMoUl06,LiMa95} and Section~\ref{sec:AutomataTheory}. SFTs can be presented also by an explicit list of its forbidden patterns, and one can transform to and from this representation as well (see Lemma~\ref{lem:DecidableThings}).

We recall some basic lemmas from topology. The following three can be found e.g. in Appendix~A of the book~\cite{Ku03}.

\begin{lemma}
\label{lem:QuotientMapsCharacterization}
Let $X$ be a compact Hausdorff space, and $f : X \to Y$ a quotient map. Then the following are equivalent:
\begin{enumerate}
\item $Y$ is Hausdorff, \label{it:Hd}
\item $f$ is closed, \label{it:Cl}
\item $\ker f$ is closed. \label{it:Rel}
\end{enumerate}
\end{lemma}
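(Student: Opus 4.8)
The plan is to establish the three equivalences among (1) Hausdorffness of $Y$, (2) closedness of the map $f$, and (3) closedness of the relation $\ker f = \{(x,x') : f(x) = f(x')\}$ as a subset of $X \times X$, using that $X$ is compact Hausdorff and $f$ is a quotient map (hence surjective and continuous with the quotient topology on $Y$). Since $X$ is compact Hausdorff, it is normal, and continuous images of compact sets are compact; these are the only topological inputs needed.

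First I would prove (1) $\Rightarrow$ (3). If $Y$ is Hausdorff, then the diagonal $\Delta_Y$ is closed in $Y \times Y$; since $f \times f : X \times X \to Y \times Y$ is continuous and $\ker f = (f \times f)^{-1}(\Delta_Y)$, the relation $\ker f$ is closed in $X \times X$. Next, (3) $\Rightarrow$ (2): suppose $\ker f$ is closed, and let $C \subset X$ be closed, hence compact. Then $f^{-1}(f(C))$ is the $\ker f$-saturation of $C$, which equals the projection to the first coordinate of $(\ker f) \cap (X \times C)$; this is a closed (so compact) subset of $X \times X$, and its image under the continuous projection $X \times X \to X$ is compact, hence closed in the Hausdorff space $X$. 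So $f^{-1}(f(C))$ is closed; since $f$ is a quotient map, this means $f(C)$ is closed in $Y$. Thus $f$ is a closed map. Finally, (2) $\Rightarrow$ (1): assume $f$ is closed, and take distinct points $y_1 \neq y_2$ in $Y$. The fibers $f^{-1}(y_1)$ and $f^{-1}(y_2)$ are disjoint closed (compact) subsets of the normal space $X$, so there are disjoint open sets $U_1 \supset f^{-1}(y_1)$, $U_2 \supset f^{-1}(y_2)$. Using that $f$ is closed, $V_i := Y \setminus f(X \setminus U_i)$ is an open neighborhood of $y_i$ with $f^{-1}(V_i) \subset U_i$; since $U_1 \cap U_2 = \emptyset$, one checks $V_1 \cap V_2 = \emptyset$ (if $y \in V_1 \cap V_2$, pick any $x \in f^{-1}(y)$, then $x \in U_1 \cap U_2$). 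Hence $Y$ is Hausdorff.

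The step I expect to require the most care is (2) $\Rightarrow$ (1): producing the separating open sets in $Y$ from those in $X$ hinges on the standard fact that a quotient map $f$ is closed if and only if for every open $U \supset f^{-1}(y)$ there is an open $V \ni y$ with $f^{-1}(V) \subset U$, and I would want to spell out this saturation argument rather than invoke it as folklore. The other two implications are essentially formal once one notes $\ker f = (f\times f)^{-1}(\Delta_Y)$ and that saturations of compact sets are obtained as projections of closed subsets of $X \times X$; the compactness of $X$ is what upgrades these images and projections from merely "continuous images" to closed sets. Since the excerpt already attributes this lemma to Appendix~A of \cite{Ku03}, I would keep the write-up brief and self-contained along the lines above.
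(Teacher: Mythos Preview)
Your proof is correct and follows essentially the same route as the paper's (commented-out) argument: the preimage-of-diagonal step for (1)$\Rightarrow$(3), the saturation-as-projection-of-$(\ker f)\cap(X\times C)$ step for (3)$\Rightarrow$(2), and the normality plus $V_i=Y\setminus f(X\setminus U_i)$ construction for (2)$\Rightarrow$(1). The one detail you skipped that the paper makes explicit is why the fibers $f^{-1}(y_i)$ are closed in (2)$\Rightarrow$(1): this needs $\{y_i\}$ closed in $Y$, which follows since singletons in $X$ are closed and $f$ is a closed surjection.
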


\begin{lemma}
\label{lem:QuotientMapsMetrizable}
Let $X$ be a compact metrizable space, and $f : X \to Y$ a quotient map, with $Y$ Hausdorff. Then $Y$ is metrizable.
\end{lemma}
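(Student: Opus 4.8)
The plan is to reduce to Urysohn's metrization theorem by proving that $Y$ is second countable. Note first that $Y$ is compact, being the continuous image of the compact space $X$, and it is Hausdorff by hypothesis, hence normal; so by Urysohn's metrization theorem it will be metrizable once we exhibit a countable base. The one external input is that, since $X$ is compact Hausdorff, $f$ is a quotient map, and $Y$ is Hausdorff, Lemma~\ref{lem:QuotientMapsCharacterization} tells us that $f$ is a \emph{closed} map. This closedness is exactly what makes the construction below go through.

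Next I would set up the usual ``saturation'' operator. For an open set $U \subseteq X$ define $U^{\sharp} = Y \setminus f(X \setminus U)$. Since $f$ is closed, $U^{\sharp}$ is open in $Y$; by construction it is the largest open subset of $Y$ whose $f$-preimage is contained in $U$, so $f^{-1}(U^{\sharp}) \subseteq U$, and therefore, using that $f$ is surjective, $U \subseteq f^{-1}(W)$ for an open $W \subseteq Y$ implies $U^{\sharp} \subseteq W$. Now fix a countable base $\mathcal{B}$ of $X$ and set
\[ \mathcal{C} = \{ (B_1 \cup \dots \cup B_k)^{\sharp} \mid k \in \N,\ B_1, \dots, B_k \in \mathcal{B} \}, \]
which is a countable family of open subsets of $Y$.

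The remaining step is to check that $\mathcal{C}$ is a base. Given $y \in Y$ and an open neighbourhood $W \ni y$, the fibre $f^{-1}(y)$ is a closed subset of $X$ (preimage of a point, which is closed since $Y$ is Hausdorff), hence compact, and it is contained in the open set $f^{-1}(W)$. Cover it by finitely many basic sets $B_1, \dots, B_k \in \mathcal{B}$, each contained in $f^{-1}(W)$, and put $U = B_1 \cup \dots \cup B_k$. Then $f^{-1}(y) \subseteq U \subseteq f^{-1}(W)$: the left inclusion gives $y \notin f(X \setminus U)$, i.e.\ $y \in U^{\sharp}$, and the right inclusion together with the previous paragraph gives $U^{\sharp} \subseteq W$. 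Thus $y \in U^{\sharp} \subseteq W$ with $U^{\sharp} \in \mathcal{C}$, so $\mathcal{C}$ is a countable base, $Y$ is second countable, and hence metrizable.

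I do not expect a real obstacle here; the only slightly delicate point is that one genuinely needs $f$ to be closed — not merely continuous — for $U^{\sharp}$ to be open, and this is precisely the content of Lemma~\ref{lem:QuotientMapsCharacterization} under the standing assumption that $Y$ is Hausdorff. If one preferred to avoid invoking Urysohn's metrization theorem, an alternative is to observe that $g \mapsto g \circ f$ isometrically embeds $C(Y)$ into $C(X)$, that $C(X)$ is separable since $X$ is compact metrizable, hence $C(Y)$ is separable, which for a compact Hausdorff space is equivalent to metrizability; but the second-countability argument above seems the most self-contained.
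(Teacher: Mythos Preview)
Your proof is correct. The paper does not actually prove this lemma: it simply lists it among three basic topological facts and cites Appendix~A of K\r{u}rka's book \cite{Ku03} for all of them. So there is no paper proof to compare against; your second-countability argument via the saturation operator $U^\sharp$ is a clean self-contained justification, and the alternative route through separability of $C(Y)$ that you mention would work equally well.
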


\begin{lemma}
\label{lem:IsAHomeomorphism}
Let $X$ and $Y$ be topological spaces and let $f : X \to Y$. If $X$ is compact, $Y$ is Hausdorff, and $f$ is a continuous bijection, then $f$ is a homeomorphism between $X$ and $Y$.
\end{lemma}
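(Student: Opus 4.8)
The plan is to reduce the statement to the continuity of the inverse map $f^{-1} : Y \to X$, since $f$ is already assumed to be a continuous bijection, and a continuous bijection with continuous inverse is by definition a homeomorphism. For a bijection, continuity of $f^{-1}$ is equivalent to $f$ being a closed map: for any $C \subset X$ we have $(f^{-1})^{-1}(C) = f(C)$, so $f^{-1}$ is continuous exactly when $f(C)$ is closed in $Y$ for every closed $C \subset X$ (equivalently, one could phrase this in terms of $f$ being an open map).

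So I would fix a closed set $C \subset X$ and argue in three short steps. First, since $X$ is compact and $C$ is a closed subset of a compact space, $C$ is itself compact. Second, since $f$ is continuous, the image $f(C)$ is compact, being the continuous image of a compact set. Third, since $Y$ is Hausdorff, compact subsets of $Y$ are closed, so $f(C)$ is closed in $Y$. Therefore $f$ is a closed map, $f^{-1}$ is continuous, and $f$ is a homeomorphism.

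I expect no genuine obstacle: the result is a standard point-set topology fact, and each of the three ingredients it rests on (a closed subset of a compact space is compact; a continuous image of a compact space is compact; a compact subset of a Hausdorff space is closed) is elementary. The only points worth flagging are that no metrizability or separation hypothesis on $X$ is needed, and that one does not require $Y$ to be compact — compactness of the domain together with the Hausdorff property of the codomain is precisely the right hypothesis, and essentially the same bookkeeping already underlies the proof of Lemma~\ref{lem:QuotientMapsCharacterization}.
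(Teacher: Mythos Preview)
Your proof is correct and is the standard argument. The paper does not actually prove this lemma; it simply records it as one of three basic topological facts ``found e.g.\ in Appendix~A of the book~\cite{Ku03}'', so there is no approach to compare against beyond noting that your argument is the canonical one.
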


The following is useful for constructing abstract quotients, usually used implicitly (see e.g. \cite{AuBaKeLe17}).

\begin{lemma}
\label{lem:SubsystemsFactors}
Let $M$ be a discrete monoid and $X$ be a compact metrizable $M$-system. Then the factors of $X$ are in one-to-one correspondence with equivalence relations in $X$ that are subsystems of $X^2$.
\end{lemma}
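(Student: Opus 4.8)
The plan is to set up an explicit bijection between the two collections. Given a factor map $f : X \to Y$, assign to it its kernel $K_f = \{(x,x') \in X^2 \mid f(x) = f(x')\}$. First I would check that $K_f$ is an equivalence relation (immediate from the definition) and that it is a subsystem of $X^2$: it is closed because $Y$ is Hausdorff and $X$ compact, so by Lemma~\ref{lem:QuotientMapsCharacterization} the kernel of the quotient map is closed (here one uses that a continuous surjection from a compact space onto a Hausdorff space is a quotient map); and it is $M$-invariant because $f(mx) = mf(x) = mf(x') = f(mx')$ for $(x,x') \in K_f$. Conversely, given an equivalence relation $R \subset X^2$ that is a subsystem of $X^2$ — i.e.\ closed and $M$-invariant — form the topological quotient $X/R$ with the induced $M$-action (well-defined by $M$-invariance of $R$, as recalled in Section~\ref{sec:Definitions}), and let $\pi_R : X \to X/R$ be the canonical projection. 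Then $\pi_R$ is continuous, surjective, and commutes with the action by construction, so it is a factor map provided $X/R$ is a compact metrizable space: compactness is automatic as the continuous image of $X$, and metrizability follows from Lemma~\ref{lem:QuotientMapsMetrizable} once we know $X/R$ is Hausdorff, which in turn follows from Lemma~\ref{lem:QuotientMapsCharacterization} since $R = \ker \pi_R$ is closed.

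Next I would verify that these two assignments are mutually inverse, modulo the natural identification of factors (two factor maps $f : X \to Y$ and $f' : X \to Y'$ being identified when there is a conjugacy $h : Y \to Y'$ with $h \circ f = f'$ — this is the sense in which the correspondence is one-to-one). Starting from $R$, the factor $\pi_R : X \to X/R$ has kernel exactly $R$, since $\pi_R(x) = \pi_R(x') \iff (x,x') \in R$. Starting from a factor map $f : X \to Y$, the kernel $K_f$ gives back the quotient $\pi_{K_f} : X \to X/K_f$, and the map $X/K_f \to Y$ sending $[x]_{K_f} \mapsto f(x)$ is well-defined, continuous (by the universal property of the quotient topology), bijective, and commutes with the action; since $X/K_f$ is compact and $Y$ is Hausdorff, Lemma~\ref{lem:IsAHomeomorphism} upgrades it to a homeomorphism, hence a conjugacy intertwining $\pi_{K_f}$ and $f$. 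This shows the two factors are identified, completing the bijection.

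The routine part is the algebra of checking invariance and the equivalence-relation axioms; the only genuinely substantive points are the appeals to the three topological lemmas (closedness of the kernel $\Leftrightarrow$ Hausdorffness of the quotient, metrizability, and continuous bijection $\Rightarrow$ homeomorphism), and these are all quoted from the excerpt. I expect the main obstacle — really the main point requiring care rather than a true obstacle — to be pinning down the correct notion of ``one-to-one correspondence'' here: factors are not literally sets but isomorphism classes of factor maps, so one must state precisely that two factor maps with the same kernel are conjugate over $X$, and that distinct kernels give non-conjugate factors. Once that bookkeeping is fixed, everything reduces to the cited lemmas plus the universal property of topological quotients.
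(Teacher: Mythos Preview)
Your proposal is correct and follows essentially the same approach as the paper: both directions (kernel of a factor map, quotient by a closed invariant equivalence relation) are set up identically, and the verification that the assignments are mutually inverse proceeds via the same map $[x]_{K_f} \mapsto f(x)$ together with Lemma~\ref{lem:IsAHomeomorphism}. If anything, you are more careful than the paper about invoking Lemmas~\ref{lem:QuotientMapsCharacterization} and~\ref{lem:QuotientMapsMetrizable} explicitly and about articulating that ``factors'' must be read as conjugacy classes of factor maps over $X$; the paper glosses over the latter point.
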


\begin{proof}
Let $E \subset X^2$ be an equivalence relation of $X$ which is a subsystem, i.e. closed and $M$-invariant. Then we define a dynamical system $X_E$ and a factor map $f_E : X \to X_E$ as follows: points of $X_E$ are equivalence classes of $E$, a set is open if it is a collection of equivalence classes whose union is open in $X$, the dynamics is $m[x]_E = [mx]_E$ where $[\cdot]_E$ denotes the $E$-equivalence class, and of course $f(x) = [x]_E$. If $f : X \to Y$ is a factor map, we define $E_f = \ker(f) = \{(x, x') \in X^2 \;|\; f(x) = f(x')\}$.

It is a straightforward computation to show that $X_E$ is well-defined and a factor of $X$.
It is also straightforward that $E_f$ is a subsystem and an equivalence relation.

We need to show ${E_{f_E}} = E$ and that $X_{E_f} \cong Y$ for any factor map $f : X \to Y$. The first claim comes directly from the definitions. For the second, let $g([x]) = f(x)$ for all $x$, and observe that this is a bijection from $X_{E_f} \cong Y$. It is continuous by the definition of the topology of $X_{E_f}$ and since $f$ is continuous. Since $X_{E_f}$ is compact and $Y$ is Hausdorff, it is a homeomorphism. Finally $ g(m[x]) = g([mx]) = f(mx) = mf(x) = mg([x])$ for all $x \in X$, so indeed $X_{E_f}$ is conjugate to $Y$ by $g$.
\end{proof}

We warn the reader that the formula $m[x]_E = [mx]_E$ in the previous lemma does not agree with the common convention that an operation on points, applied to a set, is simply applied to elements of the set, though this does happen when $M$ is a group.

In dynamical terms, division by a general subshift means the same as passing to an arbitrary factor, and the following is a direct consequence of Lemma~\ref{lem:SubsystemsFactors}.

\begin{lemma}
For any class of subshifts $S_1$, a dynamical system is \ttfrac{$S_1$}{subshift} if and only if it is a factor of a subshift in $S_1$.
\end{lemma}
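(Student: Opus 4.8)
The plan is to obtain both implications from Lemma~\ref{lem:SubsystemsFactors} together with the simple observation that, over any alphabet $\Sigma$, a relative subshift of $Y^2 \subset (\Sigma^2)^M$ is exactly a closed $M$-invariant subset of $Y^2$ for the diagonal action, which under the identification $(\Sigma^2)^M \cong \Sigma^M \times \Sigma^M$ is just the shift action on $(\Sigma^2)^M$; in particular such sets are closed in $(\Sigma^2)^M$, hence are themselves subshifts over the alphabet $\Sigma^2$.

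For the ``only if'' direction I would start from $X \cong Y/Z$ with $Y \in S_1$, $Y \subset \Sigma^M$, and a subshift $Z \subset (\Sigma^2)^M$ for which $E := Z \cap Y^2$ is an equivalence relation on $Y$. Since $Z$ and $Y^2$ are closed and $M$-invariant, so is $E$, i.e.\ $E$ is a subsystem of $Y^2$; being also an equivalence relation, Lemma~\ref{lem:SubsystemsFactors} applies and produces a factor map $f_E : Y \to X_E$. One then checks that $X_E$, as defined in that lemma, is literally the dynamical system $Y/Z = Y/(Z \cap Y^2)$ of the introduction (same set of equivalence classes, same quotient topology, same induced action $m[y] = [my]$), so $X$ is a factor of $Y \in S_1$.

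For the ``if'' direction I would take a factor map $f : Y \to X$ with $Y \in S_1$, $Y \subset \Sigma^M$, apply Lemma~\ref{lem:SubsystemsFactors} to get $E_f = \ker f \subset Y^2$, which is an equivalence relation and a subsystem of $Y^2$ with $X_{E_f} \cong X$, and then simply set $Z := E_f$, viewed as a subset of $(\Sigma^2)^M$. By the observation above, $Z$ is a subshift over $\Sigma^2$, and $Z \cap Y^2 = E_f$ is an equivalence relation on $Y$, so $Y/Z$ is defined and equals $X_{E_f} \cong X$; hence $X$ is \ttfrac{$S_1$}{subshift}. There is no genuine obstacle in this argument; the only points requiring care are the bookkeeping of the identification $(\Sigma^2)^M \cong \Sigma^M \times \Sigma^M$, so that ``diagonal action on $Y^2$'' and ``shift action on a subshift over $\Sigma^2$'' really coincide, and the verification that the abstract quotient of Lemma~\ref{lem:SubsystemsFactors} agrees on the nose with the $Y/Z$ notation fixed in the introduction.
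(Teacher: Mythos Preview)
Your proposal is correct and matches the paper's approach exactly: the paper states this lemma as ``a direct consequence of Lemma~\ref{lem:SubsystemsFactors}'' with no further argument, and you have simply spelled out that deduction carefully in both directions.
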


\begin{lemma}
\label{lem:KClosedOpens}
Let $Y$ be a compact metrizable topological space, $K \subset Y^2$ a closed equivalence relation, $A \cap B = \emptyset$ two sets which are both closed and $K$-closed. Then there exist $K$-closed open sets $U \cap V = \emptyset$ such that $A \subset U, B \subset V$.
\end{lemma}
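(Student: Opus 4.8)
The plan is to reduce the statement to the ordinary normality of the compact Hausdorff (hence normal) quotient space $Y/K$, using the fact that $A$ and $B$ are $K$-closed so that they descend to disjoint closed subsets of the quotient. First I would invoke Lemma~\ref{lem:QuotientMapsCharacterization} (in the direction \ref{it:Rel}$\Rightarrow$\ref{it:Hd}): since $K$ is a closed equivalence relation on the compact metrizable space $Y$, the quotient map $\pi = \pi_K : Y \to Y/K$ is a closed map onto a compact Hausdorff space. Because $A$ is closed and $K$-closed, $A = \pi^{-1}(\pi(A))$, and since $\pi$ is closed, $\pi(A)$ is closed in $Y/K$; likewise $\pi(B)$ is closed. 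Moreover $A \cap B = \emptyset$ together with $K$-closedness gives $\pi(A) \cap \pi(B) = \emptyset$: if $[y] \in \pi(A) \cap \pi(B)$ then $y$ is $K$-equivalent to a point of $A$ and to a point of $B$, so by $K$-closedness $y \in A \cap B$, a contradiction.

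Next, since $Y/K$ is compact Hausdorff, it is normal, so there are disjoint open sets $\tilde U \supset \pi(A)$ and $\tilde V \supset \pi(B)$ in $Y/K$. Set $U = \pi^{-1}(\tilde U)$ and $V = \pi^{-1}(\tilde V)$. These are open in $Y$ by continuity of $\pi$, disjoint because $\tilde U \cap \tilde V = \emptyset$, and they contain $A$ and $B$ respectively since $\pi(A) \subset \tilde U$ and $\pi(B) \subset \tilde V$. Finally, preimages under $\pi$ of arbitrary subsets of $Y/K$ are automatically $K$-closed (they are unions of full equivalence classes), so $U$ and $V$ are $K$-closed. This yields exactly the desired sets.

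I do not expect any serious obstacle here; the only points requiring a little care are the two elementary set-theoretic facts that $K$-closedness of $A$ gives $A = \pi^{-1}(\pi(A))$ and that disjointness is preserved under $\pi$ when both sets are $K$-closed, and the appeal to Lemma~\ref{lem:QuotientMapsCharacterization} to know that $Y/K$ is Hausdorff (hence normal) and that $\pi$ is closed. If one prefers to avoid citing the quotient-map lemma, an alternative is to work directly in $Y$: one can build $U$ and $V$ by a standard Urysohn-type argument adapted to the equivalence relation, alternately enlarging $K$-saturated open neighborhoods of $A$ and $B$ indexed by dyadic rationals, but routing through $Y/K$ is cleaner and the excerpt already provides the needed lemma.
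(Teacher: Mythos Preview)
Your proof is correct and follows the same route as the paper: both reduce the statement to normality of the quotient $Y/K$ and pull back separating opens along $\pi_K$. The paper's one-line version cites Lemma~\ref{lem:QuotientMapsMetrizable} (metrizability of $Y/K$) rather than Lemma~\ref{lem:QuotientMapsCharacterization}, but this is an inessential difference in how one verifies that $Y/K$ is normal.
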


\begin{proof}
This is equivalent to the fact that $Y/K$ is normal, which follows from Lemma~\ref{lem:QuotientMapsMetrizable} and the fact that compact Hausdorff spaces are normal.
\end{proof}

\begin{example}
It is possible that $m[x]_E \neq \{my \;|\; (y, x) \in E\}$ even if $M$-translations are surjective. For example take the $\N$-subshift $X = \bigcup_{i \in \N} \{0^i 2 0^\omega\} \cup \{0,1\}^\omega$ and
\[ E = (\{0,1\}^2)^\N \cup \{(0^i 2 0^\omega, 0^i 2 0^\omega) \;|\; i \in \N\}. \]
Then $[20^\omega]_E = \{20^\omega\}$, $1 \cdot 20^\omega = 0^\omega$, yet $1 \cdot [20^\omega]_E = [0^\omega]_E = \{0,1\}^\N$ according to the definition. \qee
\end{example}

It is well-known that multiplication by $2$ modulo an interval is expansive \cite{Ku03}. The general theory \cite{Fr87} says that it must be in the class \ttfrac{subshift}{SFT}, and indeed this is the case, by a direct binary encoding. We define two useful finitely presented systems that arise from this observation.

\begin{definition}
\label{def:BinaryReals}
Let
\begin{align*} A_{\mathrm{req}} = \{
&\left( \begin{smallmatrix} a & b & c \\ a & b & c \end{smallmatrix} \right),
\left( \begin{smallmatrix} a & b & c \\ a & b & (1-c) \end{smallmatrix} \right),
\left( \begin{smallmatrix} a & b & (1-b) \\ a & (1-b) & b \end{smallmatrix} \right), \\
&\left( \begin{smallmatrix} a & (1-a) & (1-a) \\ (1-a) & a & a \end{smallmatrix} \right),
\left( \begin{smallmatrix} (1-a) & (1-a) & (1-a) \\ a & a & a \end{smallmatrix} \right)
\;|\; a,b,c \in \{0,1\} \}.
\end{align*}
\end{definition}

For $z \in \{0,1\}^\N$, write $\bin(z) \in [0,1)$ for the real number modulo~$1$ represented by the binary expansion $0.z$. The proofs are left as exercises, but the proof of the first lemma can also be obtained from more general results in Section~\ref{sec:BetaShifts} (i.e. Theorem~\ref{Kchar}, Lemma~\ref{Kint} and the proof of Theorem~\ref{thm:Kint}).

\begin{lemma}
\label{lem:TimesTwoN}
Let $S_\N = \{0,1\}^\N$ and define $E_\N$ as the SFT of $Y^2$ with the allowed patterns $A_{\mathrm{req}}$. Then $E_\N$ is an equivalence relation and $x, y \in Y \iff \bin(x) = \bin(y)$. The \ttfrac{$\N$-SFT}{SFT} $S_\N/E_\N$ is conjugate to $x \mapsto 2x$ on $\R/\Z$.
\end{lemma}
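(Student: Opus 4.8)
The plan is to verify three things about $S_\N/E_\N$: that $E_\N$ really is an equivalence relation on $S_\N = \{0,1\}^\N$, that the relation coincides with the fibers of the map $\bin : \{0,1\}^\N \to [0,1)$, and that the induced $\N$-action is conjugate to doubling on $\R/\Z$. Since $E_\N$ is defined as the relative SFT of $(\{0,1\}^2)^\N$ carved out by the allowed window patterns $A_{\mathrm{req}}$, the first step is to understand what these patterns say. The key observation is that the only way two binary expansions $0.x$ and $0.y$ can represent the same element of $[0,1)$ (i.e.\ the same real modulo $1$) is the ``carry'' phenomenon: either $x = y$, or from some position onward one sequence looks like $u\,0\,1^\infty$ and the other like $u\,1\,0^\infty$ (reading the shared prefix $u$, then a disagreeing digit, then all ones versus all zeros), or the symmetric pair of tails $01^\infty$ versus $10^\infty$ with no initial disagreement, together with the ``wraparound'' identification $1^\infty \sim 0^\infty$ coming from working modulo $1$ (this is what the last two families of patterns with the $(1-a)$ entries encode). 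So the first thing I would do is write out exactly which pairs $(x,y) \in (\{0,1\}^2)^\N$ avoid all forbidden windows (equivalently, every length-$3$ window lies in $A_{\mathrm{req}}$), and check that this set is precisely $\{(x,y) \mid \bin(x) = \bin(y)\}$. Concretely, the windows allow: locally equal pairs (first pattern), a single ``flip'' at the right end preceded by agreement (second pattern), and then the transition windows that force, once you flip, the top row to continue with the pattern of $b(1-b)$ alternation resolving into $a\,\overline{a}\,\overline{a}$ then $\overline{a}^\infty$ on one side and $\overline{a}\,a\,a$ then $a^\infty$ on the other — i.e.\ the carry must propagate consistently to a tail of constant symbols.

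Once the combinatorial characterization $E_\N = \{(x,y) : \bin(x)=\bin(y)\}$ is established, the fact that $E_\N$ is a closed equivalence relation is automatic: it is the kernel of a continuous map $\bin : \{0,1\}^\N \to \R/\Z$ (continuity of $\bin$ is standard — it is a uniform limit of the finite partial-sum maps, or just note $|\bin(x) - \bin(y)| \le 2^{-n}$ when $x_{[1,n]} = y_{[1,n]}$), and the kernel of a continuous map into a Hausdorff space is a closed equivalence relation. Alternatively one can cite Lemma~\ref{lem:QuotientMapsCharacterization}: $\bin$ factors through the quotient $\{0,1\}^\N / E_\N$, which is compact Hausdorff, so $E_\N$ is closed. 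Either way I get for free that $E_\N$ is an equivalence relation — I don't need to verify transitivity by hand on the patterns, which would be the annoying part. I should double-check $E_\N$ is $\N$-invariant (i.e.\ closed under the shift applied diagonally), but this is immediate from the pattern description being shift-invariant, or from $\bin(\sigma x) = 2\bin(x) \bmod 1$ so that $\bin(x) = \bin(y) \Rightarrow \bin(\sigma x) = \bin(\sigma y)$.

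For the conjugacy: the map $\bin : \{0,1\}^\N \to \R/\Z$ is continuous and surjective (every real in $[0,1)$ has a binary expansion), and it satisfies $\bin(\sigma x) = 2 \bin(x)$ in $\R/\Z$ where $\sigma$ is the left shift. Hence $\bin$ descends to a continuous bijection $\overline{\bin} : S_\N/E_\N \to \R/\Z$ — bijectivity is exactly the content of $E_\N$ being the fiber relation of $\bin$ — intertwining the induced map on the quotient with $x \mapsto 2x$. Since $S_\N/E_\N$ is compact (continuous image of the compact $\{0,1\}^\N$, or compact by Lemma~\ref{lem:QuotientMapsMetrizable}) and $\R/\Z$ is Hausdorff, Lemma~\ref{lem:IsAHomeomorphism} upgrades $\overline{\bin}$ to a homeomorphism, hence a conjugacy. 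Finally $S_\N/E_\N$ is $\ttfrac{$\N$-SFT}{SFT}$ by construction: $S_\N$ is the full shift (an SFT) and $E_\N$ is given as a relative SFT of $S_\N^2 = (\{0,1\}^2)^\N$, and indeed a relative SFT of the full shift, hence an SFT.

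The main obstacle is the first step: correctly reading off the combinatorial meaning of the pattern set $A_{\mathrm{req}}$ and proving that the shift-invariant set it defines is exactly $\{(x,y) : \bin(x) = \bin(y)\}$. One inclusion (if $\bin(x) = \bin(y)$ then all windows of $(x,y)$ are in $A_{\mathrm{req}}$) requires classifying coincidences of binary expansions modulo $1$ into the cases $x=y$, $u0\,1^\infty / u1\,0^\infty$, and $1^\infty/0^\infty$, and checking each produces only allowed windows. The reverse inclusion requires an induction: assuming $(x,y)$ avoids all forbidden windows, show that either $x=y$ or there is a first disagreement after which the sequences are forced into complementary constant tails (because the only window patterns compatible with a local disagreement are the carry-propagation ones, which admit no ``escape'' back to agreement). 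Given the modular ``wraparound'', one must be a little careful that $\bin$ is genuinely well-defined on $\{0,1\}^\N$ — it takes values in $[0,1)$ but the identification $1^\infty \sim 0^\infty$ is real: $\bin(1^\infty) = 1 \equiv 0 = \bin(0^\infty)$, and this is precisely why we work modulo $1$. Once the window analysis is done the rest is bookkeeping with the cited topology lemmas.
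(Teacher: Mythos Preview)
Your proposal is correct. The paper does not actually prove this lemma: immediately after stating it, the authors write ``The proofs are left as exercises, but the proof of the first lemma can also be obtained from more general results in Section~\ref{sec:BetaShifts} (i.e.\ Theorem~\ref{Kchar}, Lemma~\ref{Kint} and the proof of Theorem~\ref{thm:Kint}).'' Your direct argument --- classify the pairs $(x,y)$ with $\bin(x)=\bin(y)$ as either equal, a $u01^\infty/u10^\infty$ pair, or the wraparound $0^\infty/1^\infty$; verify these produce only $A_{\mathrm{req}}$-windows; conversely show that once a disagreement appears the allowed windows force complementary constant tails; then conclude $E_\N = \ker(\bin)$ and apply the compact-to-Hausdorff lemma --- is precisely the intended exercise.

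The Section~\ref{sec:BetaShifts} route the paper points to is genuinely different in flavour: it sets up the general $\beta$-shift kernel $K_\beta$ via the maximal-TDS-factor machinery (Theorem~\ref{construct}, Theorem~\ref{symbconstruct}), then specializes to integer $\beta$. Theorem~\ref{Kchar} gives $K_\beta = \Delta_{S_\beta}\cup\Delta'\cup(S'\times S')$, Lemma~\ref{Kint} says that for integer $\beta$ one has $S'=\{0^\infty,(\beta-1)^\infty\}$, and the proof of Theorem~\ref{thm:Kint} then writes down exactly the allowed length-$3$ patterns and checks the resulting relative SFT equals $K_\beta$. For $\beta=2$ this reproduces $E_\N$ and $A_{\mathrm{req}}$. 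That route buys generality (it handles all integer $\beta$ and fits into the classification of Theorem~\ref{thm:BetaShiftClassificationProof}), while your approach is shorter and self-contained for the case at hand.
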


\begin{lemma}
\label{lem:TimesTwoZ}
Let $S_\Z = \{0,1\}^\Z$ and define $E_\Z$ as the SFT of $S_\Z^2$ with the allowed patterns $A_{\mathrm{req}}$. Then $E_\Z$ is an equivalence relation and $x, y \in Y \iff \bin(x_{[i, \infty)}) = \bin(y_{[i, \infty)})$ for all $i$. The \ttfrac{$\Z$-SFT}{SFT} $S_\Z/E_\Z$ is conjugate to the map $x \mapsto 2x$ on the $2$-adic solenoid.
\end{lemma}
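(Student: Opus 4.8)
The plan is to exhibit $S_\Z/E_\Z$ as the kernel-quotient of an explicit factor map onto the $2$-adic solenoid. Work in $\T=\R/\Z$, so that $\bin$ takes values in $\T$ and satisfies $\bin(x_{[i-1,\infty)})=\tfrac12\big(x_{i-1}+\bin(x_{[i,\infty)})\big)$; in particular $\bin(x_{[i+1,\infty)})=2\bin(x_{[i,\infty)})$ in $\T$. Let $\mathbb{S}=\varprojlim(\T\xleftarrow{\times 2}\T\xleftarrow{\times 2}\cdots)=\{(s_n)_{n\in\N}\in\T^\N\mid 2s_{n+1}=s_n\text{ for all }n\}$ be the $2$-adic solenoid with doubling homeomorphism $D((s_n)_n)=(2s_n)_n$ (inverse: the coordinate shift), which is the automorphism ``$x\mapsto 2x$''. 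Define
\[ \phi:S_\Z\to\mathbb{S},\qquad \phi(x)=\big(\bin(x_{[-n,\infty)})\big)_{n\in\N}. \]
Then $\phi$ lands in $\mathbb{S}$ and is equivariant ($\phi\circ\sigma=D\circ\phi$, where $\sigma$ is the shift by which $1\in\Z$ acts) by the displayed identities; it is continuous since $\bin:\{0,1\}^\N\to\T$ is; and it is surjective, since one can choose a preimage of any $(s_n)_n$ by fixing a binary expansion of $s_0$ for $x_{[0,\infty)}$ and prepending symbols inductively to the left — the two choices of each new leftmost symbol give the two elements of $\T$ that double to the current value, and $s_{n+1}$ is one of them. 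Hence $\phi$ is a factor map, so by the construction in the proof of Lemma~\ref{lem:SubsystemsFactors}, $S_\Z/\ker\phi\cong(\mathbb{S},D)$. Since equality of $\bin(x_{[i,\infty)})$ and $\bin(y_{[i,\infty)})$ propagates from $i$ to $i+1$, we get $\ker\phi=\{(x,y)\mid\bin(x_{[i,\infty)})=\bin(y_{[i,\infty)})\ \forall i\in\Z\}$, and the whole lemma reduces to the identity $E_\Z=\ker\phi$, which also makes $E_\Z$ a closed $\Z$-invariant equivalence relation.

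For $\ker\phi\subseteq E_\Z$ I would use the elementary fact that if $z,w\in\{0,1\}^\N$ and $\bin(z)=\bin(w)$, then $z=w$, or $\{z,w\}=\{0^\omega,1^\omega\}$, or $\{z,w\}=\{u01^\omega,u10^\omega\}$ for some finite word $u$. Applied to $(x_{[i,\infty)},y_{[i,\infty)})$, this lets me read off the window $\left(\begin{smallmatrix}x_i&x_{i+1}&x_{i+2}\\y_i&y_{i+1}&y_{i+2}\end{smallmatrix}\right)$: a short case split on whether $\abs{u}\geq 3$, $=2$, $=1$, or $=0$ (and on which of $x,y$ plays which role, using that $A_{\mathrm{req}}$ is invariant under exchanging rows) shows the window always lies in $A_{\mathrm{req}}$.

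For $E_\Z\subseteq\ker\phi$, inspecting $A_{\mathrm{req}}$ shows that no allowed window has an ``agreeing'' coordinate ($x_k=y_k$) strictly to the right of a ``clashing'' one ($x_k\neq y_k$); so for $(x,y)\in E_\Z$ the clashing positions form an up-set of $\Z$, leaving exactly three types: $x=y$; $x_i\neq y_i$ for all $i$; or $x_i=y_i$ for $i\leq j$ and $x_i\neq y_i$ for $i>j$ for some $j\in\Z$. The first is immediate. In the second, only the last two families of $A_{\mathrm{req}}$ are available, and propagating a contradiction leftward rules out $\left(\begin{smallmatrix}a&1-a&1-a\\1-a&a&a\end{smallmatrix}\right)$ entirely, forcing $x$ constant, so $\{x,y\}=\{0^\Z,1^\Z\}$ and all $\bin$-values vanish. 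In the third, the window at $j$ must be $\left(\begin{smallmatrix}a&b&1-b\\a&1-b&b\end{smallmatrix}\right)$, and propagating rightward then forces $x_{[j+1,\infty)}=b(1-b)^\omega$ and $y_{[j+1,\infty)}=(1-b)b^\omega$, both representing $\tfrac12$; together with agreement on coordinates $\leq j$ this gives $\bin(x_{[i,\infty)})=\bin(y_{[i,\infty)})$ for all $i$. This yields $E_\Z=\ker\phi$, hence the lemma.

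The part I expect to require the most care is the rightward window-propagation in the ``threshold'' subcase: once $\left(\begin{smallmatrix}a&b&1-b\\a&1-b&b\end{smallmatrix}\right)$ appears at position $j$, one must check that the next window is forced to be $\left(\begin{smallmatrix}b&1-b&1-b\\1-b&b&b\end{smallmatrix}\right)$ and every later one $\left(\begin{smallmatrix}1-b&1-b&1-b\\b&b&b\end{smallmatrix}\right)$, a finite but slightly delicate elimination among the five pattern families of $A_{\mathrm{req}}$, and similarly for excluding the stray window in the everywhere-clashing case. None of this is deep — it is exactly the combinatorics the paper defers to Section~\ref{sec:BetaShifts} — and the remaining ingredients (continuity and surjectivity of $\phi$, the description of equal binary expansions) are routine.
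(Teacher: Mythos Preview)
Your proposal is correct and complete. The paper leaves this lemma as an exercise (pointing to Section~\ref{sec:BetaShifts} for analogous arguments in the $\N$-case), so there is no proof to compare against; your approach---building the factor map $\phi$ onto the inverse-limit solenoid and identifying $E_\Z$ with $\ker\phi$ by the window case-analysis---is exactly the intended one, and your handling of the three clash-pattern subcases is accurate.
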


These lemmas state that finite-state automata understand the equivalence between real numbers and their binary representations.

Countable sofic shifts are a useful class of (counter)examples. One important family of countable (and in many cases sofic) subshifts is the following.

\begin{definition}
For a right-cancellative monoid $M$ (clear from context) and for $n\in\N$, $X_{\leq n}$ denotes the subshift of $\{0,1\}^M$ consisting of those points $x$ that contain at most $n$ occurrences of the digit $1$. i.e. $|\{ m\in M \;|\; x_m = 1\}| \leq n$.
\end{definition}

\section{Expansivity and subshift quotients}
\label{sec:DivisionBySFT}

In this section, we prove Theorem~\ref{thm:ExpansivityCharacterization} from the introduction.

\begin{theorem}
\label{thm:ExpansivityCharacterizationProof}
Let $M$ be any discrete countable monoid and $X$ a dynamical $M$-system. The following are equivalent:
\begin{itemize}
\item $X$ is expansive,
\item $X$ is \ttfrac{subshift}{SFT}, and
\item $X$ is conjugate to $Y/Z$ for some subshifts $Y, Z$, and whenever $X \cong Y/Z$ where $Y$ and $Z$ are subshifts, $Z$ is a relative SFT in $Y$.
\end{itemize}
If $M$ a group, then all are equivalent to the following:
\begin{itemize}
\item $X$ is conjugate to $Y/Z$ for a subshift $Y$ and a relative SFT $Z$ in $Y$ such that the set $\{ y \in Y \;|\; \exists y' \neq y: (y, y') \in Z \}$ is meager in $Y$.
\end{itemize}
\end{theorem}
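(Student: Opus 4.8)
The plan is to prove the cycle of implications among the three main conditions, and then treat the group case separately.

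First, the implication ``expansive $\Rightarrow$ \ttfrac{subshift}{SFT}'' is the core construction. Given an expansive system $(X,M)$ with expansivity constant $\epsilon$ for some metric $d$, I would build a subshift cover by a standard itinerary construction: pick a finite cover $\mathcal{U} = \{U_1,\dots,U_k\}$ of $X$ by open sets of diameter less than $\epsilon$, and possibly pass to a closed ``topological partition'' refinement so that the boundaries are controlled. Associate to each $x \in X$ its itinerary $\phi(x) \in \{1,\dots,k\}^M$ recording (a choice of) which $U_i$ contains $mx$. Expansivity guarantees $\phi$ is injective, and after taking the orbit closure $Y = \overline{\phi(X)}$ in $\{1,\dots,k\}^M$ we get a subshift; the map $\phi^{-1}$ extends to a factor map $Y \to X$ by a compactness/diameter argument (two points of $Y$ with a common long central pattern map to points that stay $\epsilon$-close under all of $M$, hence coincide). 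The key point is that the kernel $Z$ of this cover is a \emph{relative} SFT: whether $(y,y') \in Z$ is witnessed by a single constraint, namely that for every $m$, the sets $U_{y_m}$ and $U_{y'_m}$ (or their closures) must intersect — this is a local (in fact ``width-one'') condition, so $Z \cap Y^2$ is cut out of $(\Sigma^2)^M$ by forbidding finitely many patterns. This gives $X \cong Y/Z$ with $Z$ a relative SFT. I expect this to be the main obstacle: making the itinerary map well-defined and continuous (handling points on boundaries of the cover sets), and checking carefully that the resulting $Z$ really is a relative SFT and really is the kernel; this is where the cited constructions from \cite{Ad98,LiMa95,Re68} (``small/large orbit subshift'') enter, and the monoid-vs-group distinction shows up.

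Second, for ``\ttfrac{subshift}{SFT} $\Rightarrow$ expansive'': if $X \cong Y/Z$ with $Y \subset \Sigma^M$ a subshift and $Z \cap Y^2$ a relative SFT equivalence relation, I would exhibit an expansive index directly. Let $\pi : Y \to X$ be the quotient. Since $Z \cap Y^2$ is a relative SFT, it is defined by forbidding patterns in some finite window $F \subset M$; I claim the open set $U = \{([y],[y']) \in X^2 \mid (y_F, y'_F) \text{ extends to an element of } Z\}$ (suitably phrased so that it is well-defined on classes and open) is an expansive index. If $[y] \neq [y']$ then $(y,y') \notin Z$, so some translate $m(y,y')$ already contains a forbidden $F$-pattern, and then $m([y],[y']) \notin U$. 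One must check $U$ is open and contains the diagonal; openness follows because membership depends only on coordinates in the finite set $F$, and the diagonal is in $U$ since $\Delta_Y \subset Z$. Third, the equivalence with the third bullet: one direction is trivial (take any cover), and the other says that \emph{every} subshift presentation of an expansive system has relative-SFT kernel — this follows from the second implication applied in reverse together with the first: if $X$ is expansive and $X \cong Y/Z$, then $Z = \ker \pi$ where $\pi : Y \to X$, and expansivity of $X$ forces this kernel to be a relative SFT by running the expansive-index argument backwards (a kernel of a factor map onto an expansive system, with expansive index $U$, is defined by: $(y,y') \in Z$ iff no translate of $(\pi(y),\pi(y'))$ leaves $U$, and pulling $U$ back through $\pi \times \pi$ to $Y^2$ and using compactness gives a finite-window description).

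Finally, for the group case and the meagerness/almost-$1$-$1$ clause: here I would invoke the specific construction from \cite{Ad98,LiMa95} — the ``small orbit subshift'' — which, for group actions, can be arranged so that the set of points $y \in Y$ that are $Z$-equivalent to some other point is meager. The idea is that one chooses the topological partition so that the ``ambiguous'' set (preimages of the boundaries of the partition pieces) is a countable union of nowhere dense sets in $Y$: a point is non-trivially identified only if some translate of it lands on a boundary, and each ``translate lands on boundary $i$'' set is closed with empty interior in $Y$ (by minimality of the partition refinement / transitivity-type arguments), so the union over all group elements and all boundaries is meager. Conversely, the meager-kernel condition clearly implies $X$ is \ttfrac{subshift}{SFT}, hence expansive by the earlier equivalence, so no extra work is needed for that direction. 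The one subtlety to be careful about is that this meagerness argument genuinely uses that $M$ is a group (so that the orbit structure is well-behaved and translates of nowhere-dense sets are nowhere dense), which is consistent with the paper's remark that the small orbit subshift may fail to even be a subshift for surjective $\N$-actions.
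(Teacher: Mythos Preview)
Your proposal follows essentially the same architecture as the paper's proof: the paper packages the equivalence ``$Y/Z$ expansive $\iff$ $Z$ is a relative SFT in $Y^2$'' into Lemma~\ref{lem:IffRelativeSFT}, and the existence of a subshift cover (with the meagerness refinement in the quasi-open/group case) into Lemma~\ref{lem:ExistenceOfCovers}, using the large orbit subshift $X_{\mathcal{U}}^\ell$ in general and the small orbit subshift $X_{\mathcal{U}}^s$ for the almost-$1$-$1$ claim. Your itinerary construction and your ``closures intersect'' description of the kernel amount to the large orbit subshift and in fact give a more direct verification that the kernel is a relative SFT than the paper does (the paper instead deduces SFT-ness of the kernel \emph{a posteriori} from expansivity via Lemma~\ref{lem:IffRelativeSFT}).

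There is one genuine soft spot. In your direction ``relative SFT kernel $\Rightarrow$ expansive'', you write down $U = \{([y],[y']) : (y|_F, y'|_F) \text{ extends to an element of } Z\}$ and then hedge that it must be ``suitably phrased so that it is well-defined on classes and open''. This is exactly the nontrivial point, and your sketch does not resolve it: the condition depends on the representatives $y,y'$, and there is no reason the cylinder-level set is $Z$-saturated. The paper handles this by taking the forbidden clopen $C$ for $Z$, forming the $K^2$-closure $C'$ of $C$ in $Y^2$ (where $K^2$ is the product equivalence relation), and then invoking normality of the quotient $(Y/Z)^2$ (Lemma~\ref{lem:KClosedOpens}) to produce an open $K^2$-closed set $D$ with $K \subset D \subset (C')^c$; this $D$ descends to an honest open neighborhood of the diagonal in $(Y/Z)^2$ and serves as the expansive index. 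Your ``pull back the expansive index and sandwich a clopen'' argument for the converse direction of Lemma~\ref{lem:IffRelativeSFT} is fine and matches the paper.

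For the meagerness clause your outline is correct in spirit; note only that the paper's argument works at the level of $X$ first (the set $\bigcup_m m^{-1}(\bigcup_{a\neq b} \overline{U_a}\cap\overline{U_b})$ is meager in $X$ because translations are quasi-open) and is then pulled back to $Y$ via quasi-openness of $\pi_s$, rather than arguing directly in $Y$ as you suggest.
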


\begin{proof}
Suppose first that $X$ is an expansive $M$-system for some discrete monoid $M$. By Lemma~\ref{lem:ExistenceOfCovers}, there exists an $M$-subshift $Y$ and an $M$-SFT $Z$ such that $X \cong Y/Z$, so the second claim holds. By Lemma~\ref{lem:IffRelativeSFT}, the third claim then also holds. On the other hand, Lemma~\ref{lem:IffRelativeSFT}, shows that either of the second and third item implies expansivity, so all three conditions are equivalent.

When $M$ is a group, the cover given by Lemma~\ref{lem:ExistenceOfCovers} has the claimed property.
\end{proof}

The only importance of $M$ being a group is that this algebraic condition forces $M$ to act by quasi-open maps, so if we require dynamical systems to always have this property, then the last item is equivalent to the others for all monoids. We do not know whether it is always equivalent.

There are dynamical systems that are not conjugate to $Y/Z$ for any subshifts $Y, Z$, and in these cases, ``whenever $X \cong Y/Z$ where $Y$ and $Z$ are subshifts, $Z$ is an SFT in $Y$'' holds trivially, and thus this claim alone is not equivalent to expansivity.

The following is well-known, see e.g. \cite{Fr87}.

\begin{lemma}
\label{lem:IffRelativeSFT}
Let $M$ be a discrete monoid, $Y \subset \Sigma^M$ a subshift, and $K \subset Y^2$ a subshift equivalence relation, then $Y/K$ is expansive if and only if $K$ is a relative SFT in $Y^2$.
\end{lemma}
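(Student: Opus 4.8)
The plan is to prove both directions directly from the definitions of expansivity (via an expansive index, i.e.\ an open set $U \subset X^2$ containing the diagonal such that for distinct $x,y$ some $m$ moves $(x,y)$ out of $U$) and of relative SFT (the quotient system being expansive means we can detect non-equivalence by a finite window, which is exactly the finite-type condition on $K$).

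For the direction ``$K$ relative SFT $\Rightarrow$ $Y/K$ expansive'': since $K$ is a relative SFT in $Y^2$, there is a clopen $C \subset (\Sigma^2)^M$ with $(x,y) \in K \iff \forall m:\ m(x,y) \notin C$, for $(x,y) \in Y^2$. I would set $U = (Y/K)^2 \setminus \pi(C \cap Y^2)$ where $\pi : Y^2 \to (Y/K)^2$ is the quotient map; since $Y^2$ is compact and $C \cap Y^2$ is closed (being clopen in the ambient space intersected with $Y^2$), its image under the continuous closed map $\pi$ is closed, so $U$ is open in $(Y/K)^2$. It contains the diagonal of $Y/K$: if $(\bar x, \bar x) \in \pi(C \cap Y^2)$ then some representative pair $(x', y') \in C$ with $x' \equiv_K y'$, but then already $1_M \cdot (x',y') = (x',y') \in C$ contradicts $(x',y') \in K$. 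Finally, if $\bar x \neq \bar y$ in $Y/K$, pick representatives $x,y$; then $(x,y) \notin K$, so some $m$ has $m(x,y) \in C$, hence $m(\bar x,\bar y) = \pi(m(x,y)) \notin U$. So $U$ is an expansive index.

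For the converse ``$Y/K$ expansive $\Rightarrow$ $K$ relative SFT'': let $U \subset (Y/K)^2$ be an expansive index. Pulling back along $\pi$, $\pi^{-1}(U) \subset Y^2$ is an open $M$-invariant neighborhood of $K$ (it is exactly the union of $K$-classes meeting $U$-related pairs; $M$-invariance of $U$ under the quotient action gives $M$-invariance here, using that $\pi$ is equivariant). Since $Y^2$ is compact metrizable with the cylinder topology, $\pi^{-1}(U)$ is a union of cylinder sets, and $K = \bigcap_{m \in M} m^{-1}(\pi^{-1}(U)) \cap Y^2$: indeed if $(x,y) \in K$ then every $m(x,y)$ is a $K$-equivalent pair hence in $\pi^{-1}(U)$, and conversely if $(x,y) \notin K$ then $\bar x \neq \bar y$ so some $m$ sends $(\bar x,\bar y)$ out of $U$, i.e.\ $m(x,y) \notin \pi^{-1}(U)$. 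Writing $\pi^{-1}(U)$ as a union of cylinders and taking the complement $C$ to be (the closure in the ambient full shift of) one cylinder disjoint from $K$ whose $M$-orbit-complement still cuts $K$ out — more carefully, by compactness of $Y^2 \setminus \pi^{-1}(U)$ one can find a single clopen $C \subset (\Sigma^2)^M$ with $C \cap Y^2 \subset Y^2 \setminus \pi^{-1}(U)$ and $K = \{(x,y) \in Y^2 : \forall m,\ m(x,y) \notin C\}$ — exhibits $K$ as a relative SFT in $Y^2$.

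The main obstacle is the bookkeeping in the converse: turning the open $M$-invariant neighborhood $\pi^{-1}(U)$ of $K$ into a \emph{single} clopen forbidden set $C$ of the ambient full shift (rather than a relatively clopen subset of $Y^2$), using compactness of $Y^2\setminus\pi^{-1}(U)$ to pass from an infinite union of cylinders to a finite subcover and hence to a clopen set, while checking that no new equivalences or non-equivalences are introduced on $Y^2$. This is routine point-set topology, but it is where the definitions of ``SFT'' versus ``relative SFT'' must be handled with care; everything else follows mechanically from equivariance and compactness of the quotient map.
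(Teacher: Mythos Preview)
Your approach matches the paper's, and both directions are conceptually correct. Your forward direction is in fact slightly cleaner: you push $C$ forward under the closed map $\pi$ and take the complement, whereas the paper invokes normality of $(Y/K)^2$ (its Lemma~\ref{lem:KClosedOpens}) to produce a saturated open set between $K$ and $C^c$. These amount to the same construction, since the $K^2$-saturation of $C$ is exactly $\pi^{-1}(\pi(C\cap Y^2))$, which is closed because $\pi$ is closed.

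Two slips in the converse. First, $\pi^{-1}(U)$ is \emph{not} $M$-invariant: an expansive index is merely an open neighborhood of the diagonal, with no invariance hypothesis. This is harmless, since you only use $K=\bigcap_m m^{-1}\pi^{-1}(U)$, which is correct regardless. Second, the inclusion you write for $C$ points the wrong way: you need a clopen $C$ with $Y^2\setminus\pi^{-1}(U)\subset C$ and $C\cap K=\emptyset$ (equivalently a clopen $W$ with $K\subset W\subset\pi^{-1}(U)$, taking $C$ to be its complement, exactly as the paper does), not $C\cap Y^2\subset Y^2\setminus\pi^{-1}(U)$. With the inclusion as written, $C$ could be too small to witness the escape of every non-$K$ pair. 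This is a slip rather than a conceptual gap: the correct separation exists by zero-dimensionality of $Y^2$, which you clearly have in mind.
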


\begin{proof}
Suppose first that $K$ is a relative SFT in $Y^2$. Let $C \subset Y^2$ be the forbidden clopen set defining $K$. Then the set
\[K^2\doteq\{((x,y),(x',y'))\in(Y^2)^2\mid (x,x'),(y,y')\in K\}\]
is an equivalence relation on $Y^2$. The $K^2$-closure $C'$ of $C$ in $Y^2$ is closed and $K^2$-closed, and does not intersect $K$. By Lemma~\ref{lem:KClosedOpens}, there is an open $K^2$-closed set $D$ such that $K^2 \subset D \subset (C')^c \subset C^c$. It follows that $D$ is a neighborhood of the diagonal of $Y/K$ which separates points, so $Y/K$ is expansive.

Suppose then that $Y/K$ is expansive and let $D \subset (Y/K)^2$ be an open set defining an expansive index. Then $D$ is open also in $Y^2$ and the diagonal $K$ of $(Y/K)^2$ is closed also in $Y^2$ (by the definition of $K$), so we can find clopen set $C$ between $K$ and $D$. This $C$ is a set of allowed patterns for $K$.
\end{proof}

Now, let us construct symbolic presentations for expansive dynamical systems. There are several formulas one may use, and we begin with a brief study of their general properties.

\begin{definition}
Let $X$ be compact metrizable, $M$ acts on $X$. Let $\mathcal{U} = (U_1,\dots,U_n)$ be any finite family of open sets in $X$. To $\mathcal{U}$ we associate three subshifts over the alphabet $\Sigma = \{1,2,\dots,n\}$: The \emph{small orbit subshift} is defined as
\[ X_{\mathcal{U}}^s = \{ x \in \Sigma^M \;|\; \forall \mbox{ finite } F \subset M: \overline{\bigcap_{m \in F} m^{-1} U_{x_m}} \neq \emptyset \} \]
the \emph{medium orbit subshift} as
\[ X_{\mathcal{U}}^m = \{ x \in \Sigma^M \;|\; \forall \mbox{ finite } F \subset M: \bigcap_{m \in M} \overline{m^{-1} U_{x_m}} \neq \emptyset \} \]
and the \emph{large orbit subshift} as
\[ X_{\mathcal{U}}^\ell = \{ x \in \Sigma^M \;|\; \forall \mbox{ finite } F \subset M: \bigcap_{m \in M} m^{-1} \overline{U_{x_m}} \neq \emptyset \}. \]
\end{definition}

We also use the following alternate forms, which are obviously equivalent:
\[ X_{\mathcal{U}}^s = \{ x \in \Sigma^M \;|\; \forall \mbox{ finite } F \subset M: \bigcap_{m \in F} m^{-1} U_{x_m} \neq \emptyset \}, \]
\[ X_{\mathcal{U}}^m = \{ x \in \Sigma^M \;|\; \bigcap_{m \in M} \overline{m^{-1} U_{x_m}} \neq \emptyset \}, \]
\[ X_{\mathcal{U}}^\ell = \{ x \in \Sigma^M \;|\; \bigcap_{m \in M} m^{-1} \overline{U_{x_m}} \neq \emptyset \}. \]

The small orbit subshift is the standard one used in the construction of Markov partitions, see e.g. \cite{Ad98,LiMa95}. When the small orbit subshift gives a cover, it gives a very nice cover. The large orbit subshift on the other hand a gives subshift cover under no other assumption than expansivity, and is what we use in the proof. The large orbit subshift is used in \cite{Re68}.

We have no use for the medium orbit subshift, and only include it for completeness.

We prove some basic properties of these subshifts in the following lemma. 
A \emph{topological partition} is a family of disjoint open sets whose closures cover the space.

The following lemma shows some basic properties of these definitions.

\begin{lemma}
\label{lem:OrbitSubshiftProperties}
Let $X$ be any $M$-system and $\mathcal{U}$ any finite family of open sets.
\begin{itemize}

\item $X_{\mathcal{U}}^s \subset X_{\mathcal{U}}^m \subset X_{\mathcal{U}}^\ell$ are always subshifts,

\item if each $m \in M$ acts by a homeomorphism, then the medium and the large orbit subshifts coincide for all $\mathcal{U}$.
\end{itemize}

Suppose now that $M$ is expansive, the diameters of the sets $U_i$ are all smaller than half of the expansivity constant, and $\bigcup_i \overline{U_i} = X$.

\begin{itemize}
\item There is a well-defined continuous shift-commuting function $\pi_* : X_{\mathcal{U}}^* \to X$,

\item if $x \in X_{\mathcal{U}}^s$, then $\pi_s(x) = \pi_m(x)$, and if $x \in X_{\mathcal{U}}^m$, then $\pi_m(x) = \pi_\ell(x)$,

\item the map $\pi_\ell$ is surjective,

\item if all of the $m \in M$ act by quasi-open maps, then $\pi_s$ is surjective, and if also the $U_i$ form a topological partition, then $\pi_s$ is almost-$1$-$1$.
\end{itemize}
\end{lemma}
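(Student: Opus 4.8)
The statement bundles several claims of increasing difficulty; I would prove them in essentially the order stated, isolating the routine ones first.

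\textbf{The inclusions and subshift property.} The inclusions $X_{\mathcal{U}}^s \subset X_{\mathcal{U}}^m \subset X_{\mathcal{U}}^\ell$ follow pointwise: for each finite $F$, $\bigcap_{m\in F} m^{-1}U_{x_m}$ nonempty forces its closure nonempty, which (using $\overline{A\cap B}\subset\overline{A}\cap\overline{B}$ and then $\overline{m^{-1}U}\subset m^{-1}\overline{U}$ by continuity of the action of $m$) forces the larger intersections nonempty; one passes from finite $F$ to all of $M$ by compactness since each $\overline{m^{-1}U_{x_m}}$ is closed in the compact space $X$, so a decreasing net of nonempty closed sets has nonempty intersection. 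Closedness of each $X_{\mathcal{U}}^*$ in $\Sigma^M$ is because membership is a conjunction over finite $F$ of conditions each depending on only finitely many coordinates; shift-invariance is a change of variables in $F$. When each $m\in M$ acts by a homeomorphism, $m^{-1}$ is continuous and closed, so $\overline{m^{-1}U} = m^{-1}\overline{U}$, giving $X_{\mathcal{U}}^m = X_{\mathcal{U}}^\ell$.

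\textbf{The projection $\pi_*$ and its basic properties.} Now assume expansivity with constant $\epsilon$, $\diam U_i < \epsilon/2$, and $\bigcup_i\overline{U_i}=X$. For $x\in X_{\mathcal{U}}^\ell$ (the largest set), the intersection $\bigcap_{m\in M} m^{-1}\overline{U_{x_m}}$ is a decreasing net of nonempty compact sets, hence nonempty; if $p,q$ both lie in it then $d(mp,mq)\le \diam\overline{U_{x_m}} \le \epsilon/2 < \epsilon$ for all $m$, contradicting expansivity unless $p=q$. So the intersection is a single point, call it $\pi_\ell(x)$, and restrict to get $\pi_m,\pi_s$; the agreement $\pi_s(x)=\pi_m(x)$ for $x\in X_{\mathcal U}^s$ and $\pi_m(x)=\pi_\ell(x)$ for $x\in X_{\mathcal U}^m$ is immediate since the defining intersections shrink. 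Continuity: if $x^{(k)}\to x$ in $\Sigma^M$ then they eventually agree on any finite $F$, pinning $\pi(x^{(k)})$ into the compact set $\bigcap_{m\in F} m^{-1}\overline{U_{x_m}}$ whose diameters shrink to $0$ by expansivity, so $\pi(x^{(k)})\to\pi(x)$. Shift-commutation is a coordinate bookkeeping check. Surjectivity of $\pi_\ell$: given $p\in X$, for each $m$ pick $x_m$ with $mp\in\overline{U_{x_m}}$ (possible since the closures cover $X$); then $p\in\bigcap_m m^{-1}\overline{U_{x_m}}$ so $x\in X_{\mathcal U}^\ell$ and $\pi_\ell(x)=p$.

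\textbf{The quasi-open case --- the main obstacle.} The last bullet is where the real work is. If every $m\in M$ is quasi-open (image of a nonempty open set has nonempty interior), I want to show $\pi_s$ is surjective, and almost-$1$-$1$ if the $U_i$ are a topological partition. For surjectivity I would fix $p\in X$ and build a valid $x\in X_{\mathcal U}^s$ coordinatewise, maintaining the invariant that for every finite $F$ processed so far, $\mathrm{int}\big(\bigcap_{m\in F} m^{-1}U_{x_m}\big)\ni$ some point; when adding a new coordinate $m_0$, I use that $V := \mathrm{int}\big(\bigcap_{m\in F}m^{-1}U_{x_m}\big)$ is open nonempty, so $m_0 V$ has nonempty interior by quasi-openness, and this interior must meet some $U_i$ (closures cover), and I can shrink $V$ to keep the invariant --- a compactness/diagonalization argument over an enumeration of $M$ then produces the desired point of $X_{\mathcal U}^s$. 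The subtlety is handling infinitely many coordinates while keeping a single point in all the (now relatively open) intersections: I expect to need to be careful that the nested open sets still have a common point, which again comes from compactness applied to their closures together with the diameter-shrinking from expansivity. For almost-$1$-$1$: the set of $p\in X$ whose orbit never touches the boundary $\partial U_i$ of any $U_i$ under any $m\in M$ is a dense $G_\delta$ (countable intersection of the dense opens $m^{-1}(X\setminus\partial U_i)$ --- dense because $\partial U_i$ is closed nowhere dense as the $U_i$ are disjoint with closures covering, i.e. a topological partition); for such $p$, at each $m$ exactly one $\overline{U_i}$ contains $mp$ in its interior and no ambiguity arises, forcing a unique preimage. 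I would phrase this last part as: $\pi_s$ restricted over this residual set is injective, hence $\pi_s$ is almost-$1$-$1$.
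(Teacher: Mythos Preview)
Your treatment of the first five items matches the paper's and is correct. The real issue is in the last bullet, specifically the surjectivity of $\pi_s$. You say you would ``fix $p\in X$ and build a valid $x\in X_{\mathcal U}^s$'', but your invariant is only that $\bigcap_{m\in F} m^{-1}U_{x_m}$ has nonempty interior, which never references $p$; the construction produces \emph{some} $x\in X_{\mathcal U}^s$, not a preimage of the given $p$. Nor can you strengthen the invariant to ``contains $p$'': when you add a new coordinate $m_0$, you need $m_0 p\in U_i$ for some $i$, but $m_0 p$ is only guaranteed to lie in some $\overline{U_i}$. The paper's argument is different and avoids this: since each translation is quasi-open, the preimage of the dense open $\bigcup_i U_i$ under each $m$ is dense, so $\bigcap_{m\in M} m^{-1}\bigl(\bigcup_i U_i\bigr)$ is residual by Baire; every $p$ in this residual set has an obvious $\pi_s$-preimage (choose $x_m$ with $mp\in U_{x_m}$), and the image of $\pi_s$ is closed, hence all of $X$. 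Your construction could be salvaged into a proof that the image is dense (start from an arbitrary open $V_0$ and note the resulting $x$ has $\pi_s(x)\in\overline{V_0}$), but you do not make this connection.

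For almost-$1$-$1$, your argument produces a residual set in the \emph{codomain} $X$ over which $\pi_s$ is injective. The paper (in view of Theorem~\ref{thm:ExpansivityCharacterization}) wants the statement in the \emph{domain}: the set of $y\in X_{\mathcal U}^s$ with a distinct $y'$ satisfying $\pi_s(y)=\pi_s(y')$ should be meager in $X_{\mathcal U}^s$. To pass from one to the other the paper shows that $\pi_s$ is itself quasi-open (the image of any cylinder contains the nonempty open set $\bigcap_{m\in F} m^{-1}U_{x_m}$), so that $\pi_s^{-1}$ of a meager set is meager. You omit this step. A minor related point: your claim that $m^{-1}(X\setminus\partial U_i)$ is dense needs the quasi-open hypothesis on the action, which you do not invoke there.
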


\begin{proof}
First let us show all are subshifts. They are closed by the first set of formulas, as they are defined by forbidding cylinders. In each case, shift invariance follows by straightforward computation.

The set $X_{\mathcal{U}}^s$ is shift-invariant because
\begin{align*}
x \in X_{\mathcal{U}}^s &\implies \forall \mbox{ finite } F \subset M: \bigcap_{m \in F} m^{-1} U_{x_m} \neq \emptyset \\
&\implies \forall \mbox{ finite } F \subset M: \bigcap_{m \in F} (mn)^{-1} U_{x_{mn}} \neq \emptyset \\
&\implies \forall \mbox{ finite } F \subset M: \bigcap_{m \in F} n^{-1} m^{-1} U_{nx_m} \neq \emptyset \\
&\implies \forall \mbox{ finite } F \subset M: n^{-1} \bigcap_{m \in F} m^{-1} U_{nx_m} \neq \emptyset \\
&\implies \forall \mbox{ finite } F \subset M: \bigcap_{m \in F} m^{-1} U_{nx_m} \neq \emptyset \\
&\implies nx \in X_{\mathcal{U}}^s.
\end{align*}

The set $X_{\mathcal{U}}^m$ is shift-invariant because
\begin{align*}
x \in X_{\mathcal{U}}^m &\implies \bigcap_{m \in M} \overline{m^{-1} U_{x_m}} \neq \emptyset \\
&\implies \bigcap_{m \in M} \overline{n^{-1} m^{-1} U_{nx_m}} \neq \emptyset \\
&\implies \bigcap_{m \in M} n^{-1} \overline{m^{-1} U_{nx_m}} \neq \emptyset \\
&\implies n^{-1} \bigcap_{m \in M} \overline{m^{-1} U_{nx_m}} \neq \emptyset \\
&\implies \bigcap_{m \in M} \overline{m^{-1} U_{nx_m}} \neq \emptyset \\
&\implies nx \in X_{\mathcal{U}}^m.
\end{align*}
where the third implication follows from the continuity of the $n$-translation.

For shift-commutation of $X_{\mathcal{U}}^\ell$,
\begin{align*}
x \in X_{\mathcal{U}}^\ell &\implies \bigcap_{m \in M} m^{-1} \overline{U_{x_m}} \neq \emptyset \\
&\implies \bigcap_{m \in M} n^{-1} m^{-1} \overline{U_{nx_m}} \neq \emptyset \\
&\implies n^{-1} \bigcap_{m \in M} m^{-1} \overline{U_{nx_m}} \neq \emptyset \\
&\implies \bigcap_{m \in M} m^{-1} \overline{U_{nx_m}} \neq \emptyset \\
&\implies nx \in X_{\mathcal{U}}^\ell.
\end{align*}

The inclusions $X_{\mathcal{U}}^s \subset X_{\mathcal{U}}^m \subset X_{\mathcal{U}}^\ell$ are also proved by direct computation.

For the second item, suppose that each $m$ is a homeomorphism. Then $\overline{m^{-1} U_{x_m}} = m^{-1} \overline{U_{x_m}}$, so the formulas defining the medium and large subshift coincide.

The third item is standard (see e.g. \cite{LiMa95}): if the diameters of the $U_i$ are smaller than half of the expansivity constant, then any $x \in X_{\mathcal{U}}^*$ is in particular in $X_{\mathcal{U}^\ell}$, and we map it to the unique point in the intersection $\bigcap_{m \in M} m^{-1} \overline{U_{x_m}}$.

For the fifth item, $\pi_\ell$ is always surjective because $mx$ is always in the closure of one of the $U_i$.

The sixth item is also essentially standard. Namely, observe that the preimage of a dense set in a quasi-open map is dense. From this, it follows that the set of points having at least one representation is residual. But clearly this set is also closed, so $\pi_s$ is surjective.

If the sets $U_i$ form a topological partition, and $y$ is a point such that $\pi_s(y) = \pi_s(y')$ for some $y' \neq y$, then let $m \in M$ be such that $y_m \neq y_m'$. Then in particular $\pi_\ell(y) = \pi_\ell(y')$, i.e. $m^{-1} (\overline{U_{y_m}} \cap \overline{U_{y_m'}}) \neq \emptyset$, and $\pi_s(y) \in m^{-1} (\overline{U_{y_m}} \cap \overline{U_{y_m'}})$. Then $y \in \pi_s^{-1}(\bigcup_{m \in M} m^{-1}(\bigcup_{a \neq b} \overline{U_a} \cap \overline{U_b}))$. The set $\bigcup_{a \neq b} \overline{U_a} \cap \overline{U_b}$ is nowhere dense, so the sets $m^{-1}(\bigcup_{a \neq b} \overline{U_a} \cap \overline{U_b})$ are nowhere dense since translations are quasi-open. Thus $\bigcup_{m \in M} m^{-1}(\bigcup_{a \neq b} \overline{U_a} \cap \overline{U_b})$ is meager.

Now, it is enough to show that $\pi_s^{-1}(A)$ is meager whenever $A$ is meager. For this, it is enough to show that $\pi_s$ is quasi-open. So let $U$ be open in $X_{\mathcal{U}}^s$ i.e. $U$ contains some cylinder $C = \{ x \in Y \;|\; \forall m \in F: x_m = y_m \}$ for some $y \in Y$ and finite $F \subset M$. Then every point in $V = \bigcap_{m \in F} m^{-1} U_{x_m}$ has a $\pi_s$-preimage in $C$, so $\pi_s(C)$ contains the open set $V$.
\end{proof}

Even when $M = \N$, we do not know if the small orbit subshift can always be used to give a subshift cover in the expansive case. We show by examples that at least the topological partition has to be chosen carefully for this to be the case.

\begin{proposition}
There exist
\begin{itemize}
\item $\Z$-subshifts having arbitrarily fine topological partitions $\mathcal{U}$ such that $X_{\mathcal{U}}^s \subsetneq X_{\mathcal{U}}^m$,

\item surjective $\N$-subshifts having arbitrarily fine topological partitions $\mathcal{U}$ such that $X_{\mathcal{U}}^m \subsetneq X_{\mathcal{U}}^\ell$, and

\item $\N$-subshifts having arbitrarily fine open covers $\mathcal{U}$ such that $\pi_m$ is not surjective.
\end{itemize}
\end{proposition}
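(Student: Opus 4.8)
The plan is to construct three explicit (counter)examples, one for each bullet, all living on small alphabets; the common idea is to engineer a subshift where a single "defect" can occur at most once, or can propagate in a controlled way, and then pick a topological partition that is fine (so the general cover theory applies) but whose open sets have overlapping closures precisely along the defect.

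\textit{First bullet ($\Z$, $X_{\mathcal U}^s \subsetneq X_{\mathcal U}^m$).} I would take a $\Z$-subshift in which a symbol sequence is "remembered" only in the limit. A natural candidate is $X = X_{\leq 1}$ over $\{0,1\}^\Z$, i.e.\ bi-infinite sequences with at most one $1$; note $0^\infty$ is the unique limit of the sequences $\sigma^n(\dots 0 1 0 \dots)$ from both sides, and translations are homeomorphisms so $X_{\mathcal U}^m = X_{\mathcal U}^\ell$ automatically by Lemma~\ref{lem:OrbitSubshiftProperties}. I would choose a topological partition $\mathcal U = (U_0, U_1)$ with $\overline{U_0} \cap \overline{U_1} = \{0^\infty\}$ but $U_0 \cap U_1 = \emptyset$ (e.g.\ $U_1$ a small open neighbourhood of the cylinder $[1]$ together with a shrinking family of neighbourhoods of $\sigma^n[1]$, and $U_0$ the complement of $\overline{[1]}$ fattened slightly), and refine arbitrarily by intersecting with standard clopen cylinders to make it as fine as desired. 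The point $x \in \{0,1\}^\Z$ that codes "$1$ appears at position $0$ according to $U_1$, but also $1$ appears at position $1$ according to $U_1$" — more precisely a symbol pattern forcing two coordinates into $U_1$ whose preimages under the respective shifts both need to contain $0^\infty$ — will lie in $X_{\mathcal U}^m$ (the relevant infinite intersection of \emph{closures} is $\{0^\infty\} \neq \emptyset$) but not in $X_{\mathcal U}^s$ (the finite intersection of the \emph{open} sets $U_1$ and $\sigma^{-1} U_1$ is already empty). The bookkeeping of exactly which symbolic words are admitted is the fiddly part, but it is elementary.

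\textit{Second bullet ($\N$, surjective, $X_{\mathcal U}^m \subsetneq X_{\mathcal U}^\ell$).} Here I want translations to be non-injective so that $\overline{m^{-1}U} \subsetneq m^{-1}\overline{U}$ can happen, while keeping the system surjective. The natural model is a surjective $\N$-subshift built by overlaying $X_{\leq 1}$-type data on the forward shift, arranged so that forgetting the position of the $1$ (as the shift does) collapses a one-parameter family of points. I would take something like $X = \{0,1\}^\N$ acted on by $\sigma$, but with a coding layer recording a "pending event"; the crucial feature is that some point $y$ has a $\sigma$-preimage inside $m^{-1}\overline{U_{x_m}}$ for every $m$ but no preimage inside $\overline{m^{-1}U_{x_m}}$ for the correct $m$, because the closure was taken before the (collapsing) preimage. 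Concretely, if $f$ is a collapsing block map to $X_{\leq 1}$ realized on $\N$, a point landing on $0^\infty$ via two different representatives witnesses $X_{\mathcal U}^\ell \setminus X_{\mathcal U}^m$. I would again refine the partition by intersecting with cylinders to make it arbitrarily fine; surjectivity is preserved because I start from a full shift.

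\textit{Third bullet ($\N$, $\pi_m$ not surjective).} For this I take an $\N$-subshift that is \emph{not} surjective, so some point $z$ is not in the image of the shift; then no symbolic cover via closures can reach $z$ through the $m \geq 1$ coordinates, and I arrange the cover so that even the $m=0$ coordinate (whose constraint is just $\pi \in \overline{U_{x_0}}$) does not save it because $\pi_m$ must respect \emph{all} coordinates simultaneously. The cleanest instance: let $X$ contain a point $z$ with a strictly shrinking past, e.g.\ $z$ such that $\sigma z, \sigma^2 z, \dots$ escape every neighbourhood of $z$ while $z$ itself is isolated-ish; with a fine open cover $\mathcal U$, any symbolic name $x$ with $\pi_m(x)=z$ would force all $\sigma^m z \in \overline{U_{x_m}}$, and one checks the resulting infinite intersection of closures is empty, so $z \notin \operatorname{im}\pi_m$ even though $\overline{\bigcup U_i} = X$. (Recall by Lemma~\ref{lem:OrbitSubshiftProperties} $\pi_\ell$ is always surjective, so it is genuinely $\pi_m$, not $\pi_\ell$, that fails; the difference is exactly that $\overline{m^{-1}U}$ can be much smaller than $m^{-1}\overline{U}$ for non-surjective shifts.)

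The main obstacle in all three cases is the same: producing, for each desired fineness, an \emph{honest topological partition} (disjoint opens whose closures cover) whose only "degeneracy" is the single controlled overlap that drives the strict inclusion, and then verifying that the symbolic defect point really is admitted by the larger subshift. I expect the second bullet to be the most delicate, since one must simultaneously keep surjectivity, make the shift collapsing, and ensure the gap occurs at the level of $m$ versus $\ell$ rather than $s$ versus $m$; the first and third are comparatively routine applications of the $X_{\leq 1}$ construction and its one-sided analogue.
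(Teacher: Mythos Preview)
Your first bullet is on the right track and is arguably simpler than the paper's full-shift example, but the refinement step ``intersect with standard clopen cylinders'' is the wrong move: intersecting with clopens tends to produce clopen pieces, and clopen sets have no boundary overlap, which is exactly the feature you need. A clean way to execute your $X_{\leq 1}$ idea is to take, for each $N$, the partition consisting of the singletons $\{e_i\}$ for $|i|\leq N$ together with $U_+=\{e_i:i>N\}$ and $U_-=\{e_i:i<-N\}$; then $\overline{U_+}\cap\overline{U_-}=\{0^\Z\}$, the partition is $2^{-N}$-fine, and the alternating name $(\,+\,-\,)^\Z$ lies in $X_{\mathcal U}^m$ (it codes $0^\Z$) but not in $X_{\mathcal U}^s$ (already $U_+\cap\sigma^{-1}U_-=\emptyset$).

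Your second bullet has a genuine gap. On the full shift $\{0,1\}^\N$ the map $\sigma$ is open and admits continuous local sections $y\mapsto ay$, and from this one checks directly that $\overline{\sigma^{-m}U}=\sigma^{-m}\overline{U}$ for every open $U$; hence $X_{\mathcal U}^m=X_{\mathcal U}^\ell$ for \emph{any} cover of the full shift, and no amount of ``coding layer'' bookkeeping will separate them there. The paper's mechanism is quite different: it takes the subshift $\bigcup_i\{0^i20^\omega\}\cup\{0,1\}^\omega$, in which each $0^i20^\omega$ is \emph{isolated}, and a partition in which $0^\omega$ lies in no $U_j$ (only in some $\overline{U_j}$). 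Then for the isolated point $y=20^\omega$ one has $\sigma y=0^\omega\notin U_j$, hence $y\notin\sigma^{-1}U_j$, and isolation forces $y\notin\overline{\sigma^{-1}U_j}$; but $y\in\sigma^{-1}\overline{U_j}$ for a suitable $j$. The point is not that $\sigma$ collapses, but that an isolated point maps to a point covered only by closures.

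Your third bullet inherits the same gap, and the diagnosis via non-surjectivity is a red herring: the paper's example \emph{is} surjective, and the same construction (same subshift, same partition) simultaneously witnesses that $\pi_m$ misses the isolated points $0^i20^\omega$. What matters is the combination ``isolated point whose forward image is uncovered by the opens'', not whether the shift is onto.
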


\begin{proof}
Proof of the first: A natural (non-subshift) example is the shift map (i.e. multiplication by two) on the $2$-adic solenoid. We take a subshift analog of this: take the binary full shift, let $m\in\N$ and consider the following topological partition (that becomes arbitrarily fine as $m$ tends to infinity): $U_1 = \bigcup_{i \in \N} [10^{m+i}.0^{m+i+1}]$, $U_2 = \bigcup_{i \in \N} [10^{m+i}.0^{m+i}1] \cup \bigcup_{i \in \N} [0^{m+i+1}.0^{m+i}1]$, and $U_i$ for $i \in [3, n]$ enumerates cylinders $[u.w]$ where $uw \neq 0^{2m}$ and $u,w\in \{0,1\}^m$. Clearly all of $\{1, 2\}^\Z$ codes the orbit of $0^\Z$ in the medium and large orbit subshifts, but a straightforward computation shows that the word $21$ is forbidden in the small orbit subshift.

Proof of the second: Consider the subshift $\bigcup_{i \in \N} \{0^i 2 0^\omega\} \cup \{0,1\}^\omega$ and in $\mathcal{U}_n$ include the following open sets: $U_1 = [2]$, and $U_i$ for $i \geq 2$ is constructed as follows: take $U_{2,w}$ $[0w]$ for all $w \in \{0,1,2\}^n \setminus \{0^n\}$, $U_{3, w} = [1w]$ for all $w \in \{0,1\}^n$ and finally $U_4 = \bigcup_{m \geq 0} [00^{n+m}1] \cup [00^{n+m} 2]$. For any $n \in \N$ this is a finite topological partition containing all points except $0^\omega$, and the diameters of the open sets in $\mathcal{U}$ get arbitrarily small as $n \rightarrow \infty$. The shift action is clearly surjective.

Now, the corresponding medium orbit subshift does not contain any point containing the symbol $1$ (corresponding to the cylinder $U_1 = [2]$), since supposing $x_n = 1$, we have $\bigcap_{m \in M} \overline{m^{-1} U_{x_m}} \neq \emptyset$ so there is some point $y$ in this intersection, in particular $y \in \overline{n^{-1} [2]}$, i.e. $y_n = 2$. But then $y_{n+i} = 0$ for all larger $i$, so in fact $(n+1)y = 0^\omega$. But no element of $\mathcal{U}$ contains $0^\omega$, so $y \notin (n+1)^{-1} U_j$ for any $j$. Because $y$ is also an isolated point, it follows that $y \notin \overline{(n+1)^{-1} U_j}$ for any $j$. On the other hand, all shifts of the point $20^\omega$ are in closures of the $U_i$, and the representing point satisfies $x_0 = 1$, so $X_{\mathcal{U}}^\ell$ contains a point that begins with $1$.

The previous example and its proof also gives the third item.
\end{proof}

\begin{lemma}
\label{lem:ExistenceOfCovers}
Let $M$ be a discrete monoid, $X$ a compact metric space and $M \curvearrowright X$ an expansive monoid action. Then there exists a finite alphabet $\Sigma$, a subshift $Y \subset \Sigma^M$ and a relative SFT equivalence relation $Z \subset Y \times Y$ such that $X \cong Y/Z$. If each translation is quasi-open, these can be chosen so that the set of points $y$ such that $|\{z \;|\; (z,y) \in Z)\}| \geq 2$ is meager in $Y$.
\end{lemma}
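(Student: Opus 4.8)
The plan is to produce the cover as one of the orbit subshifts introduced above, and then read off all the required properties from Lemma~\ref{lem:OrbitSubshiftProperties} and Lemma~\ref{lem:IffRelativeSFT}. First I would fix a metric $d$ on $X$ witnessing expansivity, with expansivity constant $\epsilon > 0$; if $X = \emptyset$ one takes $\Sigma$ a singleton, $Y = \emptyset$ and $Z = \emptyset$, so assume $X \neq \emptyset$. In the general case, choose a finite open cover $\mathcal{U} = (U_1, \dots, U_n)$ of $X$ with $\diam U_i < \epsilon/2$ for all $i$; then $\bigcup_i \overline{U_i} = X$ automatically. In the case where every translation of $X$ is quasi-open, choose instead a finite topological partition $\mathcal{U}$ satisfying the same diameter bound; the existence of arbitrarily fine finite topological partitions of a compact metric space is standard (shrink a sufficiently fine finite open cover to a disjoint open family whose closures still cover), and this is the only input not already provided by the lemmas above. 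In both cases put $Y := X_{\mathcal{U}}^\ell$ in the general case and $Y := X_{\mathcal{U}}^s$ in the quasi-open case; by Lemma~\ref{lem:OrbitSubshiftProperties} this is a subshift over the finite alphabet $\{1,\dots,n\}$.

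Next, by Lemma~\ref{lem:OrbitSubshiftProperties} --- whose standing hypotheses have just been arranged --- there is a well-defined continuous, shift-commuting surjection $\pi : Y \to X$, taken to be $\pi_\ell$ in the general case and $\pi_s$ in the quasi-open case (for $\pi_s$, surjectivity uses that all translations are quasi-open). Set $Z := \ker \pi = (\pi \times \pi)^{-1}(\Delta_X) \subset Y^2$. Since $X$ is Hausdorff and $\pi$ continuous, $Z$ is closed; since $\pi$ commutes with the shifts, $Z$ is $M$-invariant; and it is an equivalence relation by construction. Hence $Z$ is a relative subshift equivalence relation in $Y^2$, and the induced map $Y/Z \to X$ is a continuous bijection from a compact space onto a Hausdorff space, so a homeomorphism by Lemma~\ref{lem:IsAHomeomorphism}; as it intertwines the $M$-actions, $X \cong Y/Z$.

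To upgrade ``relative subshift'' to ``relative SFT'', I would simply apply Lemma~\ref{lem:IffRelativeSFT} to the pair $(Y, Z)$: the quotient $Y/Z$ is conjugate to $X$, which is expansive, and expansivity is a conjugacy invariant, so $Z$ is a relative SFT in $Y^2$. Finally, in the quasi-open case, the last bullet of Lemma~\ref{lem:OrbitSubshiftProperties} states that $\pi_s$ is almost $1$-$1$; reading its proof, the set $\{\, y \in Y \mid \exists\, y' \neq y : \pi_s(y) = \pi_s(y') \,\}$ is contained in $\pi_s^{-1}(A)$ for a meager set $A$, and $\pi_s$ is quasi-open, so this set is meager. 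Since $Z = \ker \pi_s$ is an equivalence relation, this set is exactly $\{\, y \in Y \mid \abs{\{\, z \mid (z,y) \in Z \,\}} \geq 2 \,\}$, which is the asserted meagerness.

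The proof is essentially bookkeeping: the genuine work is already contained in Lemma~\ref{lem:OrbitSubshiftProperties} (the construction of $\pi_\ell$ and $\pi_s$ and the almost-$1$-$1$ analysis) and Lemma~\ref{lem:IffRelativeSFT} (the SFT criterion). The only point requiring a little care, which I would state as a separate standard fact, is the existence of arbitrarily fine finite topological partitions of a compact metric space, needed solely for the quasi-open refinement. No serious obstacle is anticipated beyond assembling these pieces.
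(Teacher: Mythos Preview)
Your proposal is correct and follows essentially the same approach as the paper: use the large orbit subshift $X_{\mathcal{U}}^\ell$ in the general case and the small orbit subshift $X_{\mathcal{U}}^s$ in the quasi-open case, invoke Lemma~\ref{lem:OrbitSubshiftProperties} for surjectivity and the almost-$1$-$1$ property, and then Lemma~\ref{lem:IffRelativeSFT} for the relative SFT conclusion. The paper's own proof is terser but structurally identical, including the construction of the topological partition by removing closures of later sets from earlier ones in a fine open cover.
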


\begin{proof}
Using any fine enough cover, the existence of subshifts $Y$ and $Z$ such that $Y/Z$ is conjugate to $X$ follows from the fact that $\pi_\ell$ is a factor map from the large orbit subshift onto $X$ by Lemma~\ref{lem:OrbitSubshiftProperties}. The fact that $Z$ is a relative SFT follows from Lemma~\ref{lem:IffRelativeSFT}.

For the second claim, by Lemma~\ref{lem:OrbitSubshiftProperties} the small orbit subshift also gives a cover in this case. Any family $U_1,\dots,U_n$ whose closures cover $X$ can be refined to a topological partition which is at least as fine, by removing the closure of $U_j$ from $U_i$ for every $j > i$, and then by Lemma~\ref{lem:OrbitSubshiftProperties} we obtain the second claim.
\end{proof}

By the theorem, \ttfrac{subshift}{SFT} is precisely the class of expansive systems, and moreover, \emph{every} representation of an expansive system by a subshift has a relative SFT kernel. The class of dynamical systems \ttfrac{subshift}{sofic} does not have this strong a characterization.

\begin{example}
Consider the dynamical system $X_{\leq 2}/(X_{\leq 1}^2 \cup \Delta_{X_{\leq 2}})$ on $\Z$. The base set is the unique countable compact metric space $X$ that has a unique accumulation point. It is the unique (up to conjugacy) system $(X,T)$ on $X$ with an infinite number of disjoint orbits and only one periodic point. For any set $S\subset \N_+$ let $X_S$ be the equivalence relation on $X_{\leq 2}$ which identifies all points of $X_{\leq 1}$ with those points where the distance of two 1s belongs to the set $S$. Now let $S$ be co-infinite. From this we get an uncountable number of equivalence relations $X_S$, so some of them cannot be (relatively) sofic. On the other hand, all such $X_{\leq 2}/X_S$ are conjugate to $(X,T)$. Therefore $(X,T)$ also has a sofic subshift cover whose kernel is not sofic. \qee
\end{example}

One could nevertheless hope for a dynamical characterization of \ttfrac{subshift}{sofic} systems up to conjugacy. We do not have such a characterization.

\section{Quotients between SFTs and sofics}

In this section we prove Theorem~\ref{thm:RelationsBetweenClasses}.

\subsection{Coinciding classes}

Consider the operation of taking all subshift factors of a class of subshifts. The classes of sofic subshifts, transitive subshifts, mixing subshifts, minimal subshifts and coded subshifts \cite[Corollaire~2.11]{BlHa86} are closed under this operation, and SFTs map precisely onto the sofic shifts under subshift factor maps by definition while synchronized shifts map onto coded shifts \cite[Proposition~4.1]{BlHa86}.

Write $F$ for the operator taking a class $S$ of subshifts to the class $F S$ of subshifts that are factors of subshifts in $S$. Write $E$ for the operator taking a class of subshifts to the class $E  S$ of subshifts that are preimages of subshifts in $S$ under block maps between full shifts, i.e. $X \in S, X \subset A^M, f : B^M \to A^M \implies f^{-1}(X) \in E S$.

\begin{lemma}
\label{lem:soficSFTSFTSFT}
If $S_2 = ES_2$, then $\ttfrac{$S_1$}{$S_2$} = \ttfrac{$FS_1$}{$S_2$}$.
\end{lemma}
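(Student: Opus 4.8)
The plan is to prove the two inclusions separately. The inclusion $\ttfrac{$S_1$}{$S_2$} \subseteq \ttfrac{$FS_1$}{$S_2$}$ is immediate: the identity map on a subshift is a factor map, so $S_1 \subseteq FS_1$, and any witness $Y/Z$ with $Y \in S_1$, $Z \in S_2$ is \emph{a fortiori} a witness with $Y \in FS_1$.

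For the reverse inclusion, take $X \cong Y/Z$ with $Y \subseteq \Sigma^M$ in $FS_1$ and $Z \subseteq (\Sigma^2)^M$ in $S_2$, where $Z \cap Y^2$ is a closed equivalence relation. Fix $Y' \subseteq (\Sigma')^M$ in $S_1$ together with a factor map $\phi : Y' \to Y$. First I would recall that $\phi$ is a sliding block code and extends to a block map $\bar\phi : (\Sigma')^M \to \Sigma^M$ between \emph{full} shifts with $\bar\phi|_{Y'} = \phi$: the composition $y' \mapsto \phi(y')_{1_M}$ is continuous into the discrete set $\Sigma$, hence by compactness of $Y'$ it depends only on the coordinates of $y'$ in some finite set $F \subseteq M$; shift-commutation ($\phi(y')_m = \phi(my')_{1_M}$) then forces $\phi(y')_m$ to be obtained by applying the same rule to the coordinates of $y'$ in $Fm = \{fm \mid f \in F\}$, for every $m \in M$, and this rule may be extended arbitrarily to a rule on all patterns over $\Sigma'$ of shape $F$, defining $\bar\phi$. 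Now set $\tilde Z = \Psi^{-1}(Z)$, where $\Psi = \bar\phi \times \bar\phi$ is viewed, via the identifications $((\Sigma')^2)^M \cong ((\Sigma')^M)^2$ and $(\Sigma^2)^M \cong (\Sigma^M)^2$, as a block map between full shifts. By the hypothesis $S_2 = E S_2$ we get $\tilde Z \in S_2$.

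It remains to verify that $\ccfrac{Y'}{\tilde Z}$ is a legitimate quotient realizing $X$. Unwinding the identifications, $\tilde Z \cap (Y')^2 = \{(y'_1, y'_2) \in (Y')^2 \mid (\phi(y'_1), \phi(y'_2)) \in Z\} = (\phi \times \phi)^{-1}(Z \cap Y^2)$, which is the pullback along the surjection $\phi$ of the closed $M$-invariant equivalence relation $Z \cap Y^2$, hence itself a closed $M$-invariant equivalence relation on $Y'$; so $Y'/\tilde Z$ is defined. The surjection $\phi$ induces a continuous, shift-commuting bijection $Y'/\tilde Z \to Y/(Z \cap Y^2)$, $[y'] \mapsto [\phi(y')]$ (surjective because $\phi$ is, injective by the definition of $\tilde Z$), which is a conjugacy by Lemma~\ref{lem:IsAHomeomorphism} since both spaces are compact metrizable; composing with the given conjugacy $Y/Z \cong X$ gives $Y'/\tilde Z \cong X$ with $Y' \in S_1$ and $\tilde Z \in S_2$, as needed.

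The only point requiring care, and the reason for passing through $\bar\phi$ rather than working with $\phi$ directly, is that $\Psi$ must be a block map between full shifts for the hypothesis $S_2 = E S_2$ to apply; this is also why the plain product $\bar\phi \times \bar\phi$ is the map to use. Everything else is unwinding definitions together with the elementary fact that pullbacks of equivalence relations along surjections are again equivalence relations, so I do not anticipate a genuine obstacle — the single idea is to pull the $S_2$-relation back along a full-shift extension of the covering factor map.
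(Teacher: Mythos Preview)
Your proof is correct and follows essentially the same approach as the paper's: pull back the $S_2$-relation along the product of the covering block map and invoke $S_2 = ES_2$. You are more explicit than the paper in justifying that the factor map $\phi : Y' \to Y$ extends to a block map $\bar\phi$ between full shifts (the paper simply starts from such an $f : B^M \to A^M$), which is a helpful clarification since the definition of $E$ requires exactly this.
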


\begin{proof}
Let $M$ be the monoid on which the subshifts are defined.

Of course $\ttfrac{$S_1$}{$S_2$} \subset \ttfrac{$F S_1$}{$S_2$}$ since the identity map is a factor map. Suppose then $X \in \ttfrac{$F S_1$}{$S_2$}$, so $X = Y/Z$ where $Y \subset A^M$, $f(Y') = Y$ for some block map $f : B^M \to A^M$, $Y' \in S_1$, $Z \subset (A^2)^M$, $Z \in S_2$.

Let $Z' \subset (B^2)^M$ be the preimage of $Z$ under $f \times f$. Clearly $f \times f : (B^2)^M \to (A^2)^M$ is a block map, so $Z'$ is in $S_2$. We now have $Y'/Z' \cong Y/Z$: The map from $Y'/Z'$ to $Y/Z$ mapping $[y']$ to $[f(y')]$ is well-defined by the definition of $Z'$, is continuous since $f$ is continuous, is a bijection by the definition of $Z'$, and therefore is a homeomorphism by Lemma~\ref{lem:QuotientMapsCharacterization} and Lemma~\ref{lem:IsAHomeomorphism}. That it commutes with the shift is a direct computation.
\end{proof}

\begin{lemma}
\label{lem:EqsBetweenClasses}
For any monoid $M$,
\[ \text{$M$-SFT} \subset \text{$M$-sofic} \subset \ttfrac{$M$-SFT}{SFT} = \ttfrac{$M$-sofic}{SFT} \subset \ttfrac{$M$-SFT}{sofic} = \ttfrac{$M$-sofic}{sofic} \]
\end{lemma}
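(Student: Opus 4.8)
The plan is to establish the two equalities and the three inclusions in Lemma~\ref{lem:EqsBetweenClasses} by citing and applying Lemma~\ref{lem:soficSFTSFTSFT}, together with the easy observation that the class \emph{sofic} is closed under the operator $E$ and that \emph{SFT} is contained in \emph{sofic}. First I would note that the first two inclusions, $M\text{-SFT} \subset M\text{-sofic}$ and $M\text{-sofic} \subset \ttfrac{$M$-sofic}{SFT}$, are essentially trivial: an SFT is a sofic shift, and any subshift $Y$ is conjugate to $Y/\Delta_Y$, where the diagonal $\Delta_Y$ is a relative SFT in $Y^2$ (indeed $\Delta_{A^M}$ is an SFT of $(A^2)^M$ on any monoid, defined by forbidding the finitely many pairs $\left(\begin{smallmatrix} a \\ b \end{smallmatrix}\right)$ with $a \neq b$, and a relative SFT of an SFT is an SFT). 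Hence $M\text{-SFT} \subset M\text{-sofic} \subset \ttfrac{$M$-SFT}{SFT} \subset \ttfrac{$M$-sofic}{SFT}$, and the last inclusion also follows from $M\text{-SFT}\subset M\text{-sofic}$. Similarly $\ttfrac{$M$-sofic}{SFT} \subset \ttfrac{$M$-sofic}{sofic}$ since every SFT is sofic, so a relative SFT is in particular a relative sofic subshift.

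Next I would handle the two equalities. For $\ttfrac{$M$-SFT}{SFT} = \ttfrac{$M$-sofic}{SFT}$: the inclusion $\subset$ is immediate from SFT $\subset$ sofic. For $\supset$, I apply Lemma~\ref{lem:soficSFTSFTSFT} with $S_1 = \text{SFT}$ and $S_2 = \text{SFT}$, after checking the hypothesis $S_2 = E S_2$, i.e. that the preimage of an SFT under a block map between full shifts is again an SFT. This is standard: if $X \subset A^M$ is an SFT defined by forbidding a clopen set $C \subset A^M$, and $f : B^M \to A^M$ is a block map, then $f^{-1}(C)$ is clopen in $B^M$ by continuity of $f$, and $x \in f^{-1}(X) \iff \forall m: mf(x) \notin C \iff \forall m: f(mx) \notin C \iff \forall m: mx \notin f^{-1}(C)$, so $f^{-1}(X)$ is the SFT defined by forbidding $f^{-1}(C)$. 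Since sofic shifts are exactly the factors of SFTs, i.e. $\text{sofic} = F(\text{SFT})$, Lemma~\ref{lem:soficSFTSFTSFT} gives $\ttfrac{$\text{SFT}$}{$\text{SFT}$} = \ttfrac{$F(\text{SFT})$}{$\text{SFT}$} = \ttfrac{$\text{sofic}$}{$\text{SFT}$}$. The second equality $\ttfrac{$M$-SFT}{sofic} = \ttfrac{$M$-sofic}{sofic}$ is proved the same way: one needs $\text{sofic} = E(\text{sofic})$, i.e. that the preimage of a sofic shift under a block map between full shifts is sofic; this holds because sofic shifts are closed under both preimages and images under block maps (a preimage of an SFT is an SFT as just shown, and a composition of block maps is a block map, so if $Y$ is the image of an SFT $X$ under $g$ and $f$ is a block map into the appropriate full shift, then $f^{-1}(Y) = f^{-1}(g(X))$ is a block-map image of the SFT $(f')^{-1}(X)$ for a suitable pullback $f'$ — alternatively cite the standard closure properties of sofic shifts, noting that $M$ is arbitrary here so one should argue directly). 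Then Lemma~\ref{lem:soficSFTSFTSFT} with $S_1 = \text{SFT}$, $S_2 = \text{sofic}$ yields $\ttfrac{$\text{SFT}$}{$\text{sofic}$} = \ttfrac{$F(\text{SFT})$}{$\text{sofic}$} = \ttfrac{$\text{sofic}$}{$\text{sofic}$}$.

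I expect the only real point requiring care — the ``main obstacle'', though it is minor — is verifying the operator identities $S_2 = E S_2$ in the generality of arbitrary monoids $M$, since one cannot blindly invoke the classical one-dimensional closure properties of sofic shifts; instead one argues directly from the clopen-forbidden-set definition of SFTs and from the definition of sofic as a block-map image of an SFT, using that block maps between full shifts are continuous and compose, and that preimages of clopen sets are clopen. Everything else is bookkeeping: chaining the inclusions in the displayed line and invoking Lemma~\ref{lem:soficSFTSFTSFT} twice. I would write the proof as: ``The inclusions $M\text{-SFT}\subset M\text{-sofic}\subset \ttfrac{$M$-SFT}{SFT}\subset \ttfrac{$M$-sofic}{SFT}\subset\ttfrac{$M$-sofic}{sofic}$ and $\ttfrac{$M$-SFT}{sofic}\subset\ttfrac{$M$-sofic}{sofic}$ follow directly from the definitions and the facts that every SFT is sofic and the diagonal is a relative SFT. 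For the equalities, note that $E(\text{SFT}) = \text{SFT}$ and $E(\text{sofic}) = \text{sofic}$, and apply Lemma~\ref{lem:soficSFTSFTSFT} with $S_1 = \text{SFT}$ and $S_2 \in \{\text{SFT}, \text{sofic}\}$, using $F(\text{SFT}) = \text{sofic}$.''
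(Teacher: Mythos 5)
Your proof is correct and follows essentially the same route as the paper: both equalities come from Lemma~\ref{lem:soficSFTSFTSFT} combined with the operator identities $E(\text{SFT}) = \text{SFT}$, $E(\text{sofic}) = \text{sofic}$, and $F(\text{SFT}) = \text{sofic}$, and the inclusions are then routine. The only difference is one of detail: you explicitly verify that preimages of SFTs and of sofic shifts under block maps between full shifts remain SFT (resp.\ sofic) over a general monoid $M$, whereas the paper's proof simply asserts $\text{SFT} = E\,\text{SFT}$ and $\text{sofic} = E\,\text{sofic}$ without comment.
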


\begin{proof}
Since SFT = $E$SFT, $\ttfrac{$S$}{SFT} = \ttfrac{$FS$}{SFT}$ for all $S$, in particular $\ttfrac{SFT}{SFT} = \ttfrac{sofic}{SFT}$ by the previous lemma. Similarly, since sofic = $E$sofic, $\ttfrac{$S$}{sofic} = \ttfrac{$FS$}{sofic}$ for all $S$, in particular $\ttfrac{SFT}{sofic} = \ttfrac{sofic}{sofic}$. Then the inclusions are also clear.
\end{proof}

\subsection{Separating the classes}

\begin{lemma}
Let $M$ be any finitely generated infinite monoid, and $S \subset M$ a finite generating set. Then there exists an infinite sequence $s_1, s_2, s_3, \ldots$, $s_i \in S$, such that all the elements
\[ 1_M, s_1, s_2s_1, s_3s_2s_1, \dots \]
are distinct.
\end{lemma}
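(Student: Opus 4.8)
The plan is to build the sequence $s_1, s_2, \dots$ greedily, maintaining the invariant that the partial products $p_0 = 1_M, p_1 = s_1, p_2 = s_2 s_1, \dots, p_k = s_k s_{k-1} \cdots s_1$ are all distinct, and showing the construction never gets stuck. The key observation is that right multiplication by a generator corresponds to moving along an edge in the right Cayley graph of $M$ with respect to $S$: from the vertex $p_{k-1}$ we can reach $s p_{k-1}$ for any $s \in S$. Since $S$ generates $M$ and $M$ is infinite, this (directed) graph has infinitely many vertices, and every vertex has out-degree at most $|S|$, so it is locally finite.

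First I would set up the Cayley graph $\Gamma$ whose vertices are elements of $M$ and with a directed edge from $g$ to $sg$ for each $s \in S$; note $\Gamma$ is connected in the sense that every vertex is reachable from $1_M$ by a directed path (this is exactly the statement that $S$ generates $M$). The claim to prove becomes: there is an infinite directed path $1_M = p_0 \to p_1 \to p_2 \to \cdots$ visiting pairwise distinct vertices, i.e.\ an infinite simple directed ray from $1_M$. Suppose for contradiction that no such ray exists. Then I would consider the tree $\mathcal{T}$ of all finite simple directed paths starting at $1_M$, ordered by extension. This tree is finitely branching (out-degree $\le |S|$ at each vertex of $\Gamma$, hence each node of $\mathcal{T}$ has at most $|S|$ children), and by assumption it has no infinite branch, so by König's lemma $\mathcal{T}$ is finite. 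But then only finitely many vertices of $\Gamma$ are reachable from $1_M$ by simple directed paths; since every directed path can be shortened to a simple one with the same endpoints, only finitely many vertices are reachable from $1_M$ at all — contradicting that $M$ is infinite and $S$-generated.

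The one point needing a little care — and the main (mild) obstacle — is the asymmetry coming from $M$ being merely a monoid rather than a group: right multiplication by $s$ need not be injective or surjective, so $\Gamma$ is genuinely directed and one must be careful that ``reachable'' means reachable by a \emph{directed} path and that this still exhausts $M$. This is fine: $S$ generating $M$ means precisely that every $g \in M$ can be written as $s_k \cdots s_1$ with $s_i \in S$ (including the empty product $1_M$), which is a directed path from $1_M$ to $g$ in $\Gamma$. The rest is the König's lemma argument above. I would write the final proof in the direct greedy form: having chosen $s_1, \dots, s_{k-1}$ with $p_0, \dots, p_{k-1}$ distinct, if for every $s \in S$ the element $s p_{k-1}$ already lies in $\{p_0, \dots, p_{k-1}\}$, then the set $R = \{p_0, \dots, p_{k-1}\}$ would be closed under left multiplication by all of $S$ and contain $1_M$, hence would be all of $M$ (as $S$ generates), contradicting $|M| = \infty$; so some choice of $s_k \in S$ keeps the $p_i$ distinct, and we continue forever.
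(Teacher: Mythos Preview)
Your K\H{o}nig's lemma argument is correct and is exactly what the paper does (the paper's proof is literally the one sentence ``Apply K\H{o}nig's lemma to the left Cayley graph''); your writeup just spells out the details.

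However, the ``direct greedy form'' you propose to use for the final proof has a genuine gap. From the hypothesis that $s\,p_{k-1}\in\{p_0,\dots,p_{k-1}\}$ for every $s\in S$ you conclude that $R=\{p_0,\dots,p_{k-1}\}$ is closed under left multiplication by all of $S$. That does not follow: you only know $S\cdot p_{k-1}\subset R$, not $S\cdot R\subset R$. Nothing prevents, say, $s\,p_0=s$ from lying outside $R$. In a directed graph this is the familiar phenomenon that a greedy simple-path construction can walk into a dead end even though an infinite simple ray from the root exists; the whole point of invoking K\H{o}nig's lemma on the tree of \emph{all} finite simple paths is precisely to avoid committing to a single greedy choice. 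So keep your K\H{o}nig argument as the actual proof and drop the greedy rephrasing.
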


\begin{proof}
Apply K\H{o}nig's lemma to the left Cayley graph, i.e. the simple undirected graph with nodes $M$ and edges $(m, sm)$ with $s \in S$.
\end{proof}

\begin{theorem}
\label{thm:ProperSofic}
For any finitely generated infinite monoid $M$,
\[ \text{$M$-SFT} \subsetneq \text{$M$-sofic} \subsetneq \ttfrac{$M$-SFT}{SFT} = \ttfrac{$M$-sofic}{SFT} \subsetneq \ttfrac{$M$-SFT}{sofic} = \ttfrac{$M$-sofic}{sofic}. \]
\end{theorem}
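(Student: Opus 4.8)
## Proof proposal for Theorem~\ref{thm:ProperSofic}

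The plan is to establish the three strict inclusions separately; the equalities are already in hand from Lemma~\ref{lem:EqsBetweenClasses}. All three separations should be witnessed by building a single mechanism: use an $M$-SFT to ``draw a ray'' on the monoid (guaranteed by the previous lemma, which produces an infinite sequence of prefixes $1_M, s_1, s_2s_1, \dots$ all distinct), and then overlay a one-dimensional symbolic system along that ray. The key is that a path drawn by local rules gives us a copy of $\N$ sitting inside $M$, and on that copy we can place any subshift we like (or, for the quotient statements, any one-dimensional \ttfrac{sofic}{sofic} datum); off the ray we put blank symbols. I would first set up this ``ray SFT'' $R_M \subset \Gamma^M$ carefully: a point of $R_M$ either is entirely blank, or contains exactly one bi-infinite or one-sided directed path whose successive edge-labels form a sequence of generators, and everything else blank. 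One must check this is genuinely an SFT (the path condition and the no-branching/no-merging conditions are local) and that it is nonempty and has the orbit structure one expects; the infinite sequence from the preceding lemma guarantees infinite rays exist so these systems are nontrivial.

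For $M\text{-SFT} \subsetneq M\text{-sofic}$: take a classical proper sofic one-dimensional shift $Z_0$ (e.g. the even shift), and overlay it on the ray. The resulting $M$-system is sofic because the overlay of a sofic datum on an SFT skeleton is the image of an SFT under a block map (project the cover of $Z_0$). It is not an SFT because the shadowing property fails: one can splice together locally-admissible configurations along the ray that globally violate the even-shift constraint, exactly as in the classical $M=\N$ case (\cite{LiMa95}). This is the cleanest separation and I would present it first.

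For $M\text{-sofic} \subsetneq \ttfrac{M\text{-SFT}}{SFT}$: here I would overlay the finitely-presented system $S_\N/E_\N$ (multiplication by $2$ on $\R/\Z$, from Lemma~\ref{lem:TimesTwoN}) along the ray. The point is that the quotient system contains a copy of a circle — a connected, positive-dimensional piece — whereas any sofic shift is zero-dimensional (totally disconnected). So the overlaid system cannot be sofic. That it lies in \ttfrac{$M$-SFT}{SFT}: the cover is the ray-SFT carrying $\{0,1\}$ along the path (times blank elsewhere), an $M$-SFT, and the kernel relation, which applies $E_\N$ along the path and is trivial off it, is again an $M$-SFT since $E_\N$ is defined by local patterns and the ``on the ray'' condition is local. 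One should double-check that the overlay does not identify too much (blank points only among themselves) so the relation really is an equivalence relation that is a relative SFT.

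For $\ttfrac{M\text{-SFT}}{sofic} \subsetneq \ttfrac{M\text{-SFT}}{sofic}$ being a \emph{strict} superset of \ttfrac{$M$-SFT}{SFT}: by Theorem~\ref{thm:ExpansivityCharacterization}, the \ttfrac{$M$-SFT}{SFT} systems are exactly the expansive ones, so it suffices to exhibit a \ttfrac{$M$-SFT}{sofic} system that is \emph{not} expansive. Overlay along the ray a one-dimensional \ttfrac{sofic}{sofic} system that is not expansive — for instance a \ttfrac{$\N$-sofic}{sofic} system whose kernel is a proper sofic (non-SFT) relation, so that by Lemma~\ref{lem:IffRelativeSFT} the one-dimensional quotient is already non-expansive; a concrete choice is $X_{\leq 1}$-type or even-shift-based identifications producing, e.g., a non-expansive countable system. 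Pulling this back along the ray (blank off the ray) gives an $M$-system which is \ttfrac{$M$-SFT}{sofic} by the same ``local overlay'' bookkeeping, but inherits non-expansivity from the one-dimensional fiber (two points differing only along the ray in the non-expansive pair stay close under all of $M$, since $M$ moves the ray around but the relevant obstruction is about relative positions on the path). Hence it is not \ttfrac{$M$-SFT}{SFT}.

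The main obstacle I anticipate is the bookkeeping around the ray-SFT: verifying that ``there is exactly one ray and it is a legitimate directed non-self-intersecting path labelled by generators'' is enforceable by \emph{finitely many local rules} on a general finitely generated monoid (self-avoidance is a priori a global condition, so one must instead enforce a weaker local ``no immediate backtracking / no local branching'' condition and argue, using the distinctness of the prefixes $s_k \cdots s_1$ from the preceding lemma, that the relevant configurations one actually uses are genuine rays), and checking that overlaying a one-dimensional system along such a possibly-non-embedded path still yields the intended $M$-system with the claimed SFT/sofic kernel and the claimed dynamical defect (non-SFT, non-zero-dimensional, or non-expansive). The non-expansivity verification in the last separation is the most delicate, since one must produce, for every candidate expansivity index, a pair of distinct points whose entire $M$-orbit stays inside it, and this requires controlling how $M$-translations reposition the ray.
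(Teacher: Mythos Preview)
Your overall strategy --- draw paths on $M$ via a local rule, then overlay one-dimensional systems along those paths to import the known $\N$-separations --- is exactly the paper's approach, and your identification of the three separating properties (shadowing/SFT, zero-dimensionality, expansivity) matches as well.

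The one place where you diverge, and where you correctly sense trouble, is in the design of the path SFT. You propose enforcing ``exactly one bi-infinite or one-sided directed path, everything else blank,'' and then worry that self-avoidance and uniqueness are not local. That worry is justified: such a condition is not an SFT condition on a general monoid. But the paper sidesteps this entirely. Its path SFT $X$ simply requires that if $x_m = s \in S$ then $x_{sm} \neq 0$; no uniqueness, no non-branching, no self-avoidance is imposed. Configurations can contain arbitrary tangles of arrows. The point is only that $X$ \emph{contains} a configuration $z$ with a single infinite simple path (guaranteed by the lemma on distinct prefixes $1_M, s_1, s_2s_1, \ldots$), and that one configuration is all you need to embed a circle for the second separation and to run the non-SFT and non-expansivity arguments. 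You should drop the uniqueness requirement; it is neither achievable nor needed.

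A smaller difference: for the first separation the paper does not overlay the even shift. Instead it modifies the path SFT so that paths may \emph{end abruptly}, overlays nonincreasing bit sequences, and takes the block-map image (subtract the follower) to get ``at most one $1$ per path.'' The non-SFT argument is then very clean: a configuration with two $1$s on one path is forbidden, but setting one intermediate path symbol to $0$ breaks the path into two pieces each carrying one $1$, yielding a legal configuration with the same local views. Your even-shift overlay should also work, but the path-breaking trick avoids having to reason carefully about which local windows of the overlaid data can be extended.
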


\begin{proof}
Lemma~\ref{lem:EqsBetweenClasses} shows everything except the separations between the classes.

For the properness of the inclusions on $\N$, $X_{\leq 1}$ on $\N$ is proper sofic, the finitely presented system $x \mapsto 2x$ on the unit circle $S^1$ from Lemma~\ref{lem:TimesTwoN} does not have a zero-dimensional underlying space so it is not sofic, and the sofically presented system $X_{\leq 2}/(X_{\leq 1}^2 \cup \Delta_{X_{\leq 2}})$ is not expansive so it is not finitely presented.

On a general f.g.\ infinite monoid $M$, we will simulate these separating systems. First, we construct an SFT that draws paths on $M$. Let $S \not\ni 1_M$ be a generating set for $M$, let $A = S \cup \{0\}$ where $0 \notin S$ and define an SFT $X \subset A^M$ by
\[ x \in X \iff (\forall m \in M: x_{m} = s \in S \implies x_{sm} \in S). \]

Every configuration $x \in X$ determines, for any $m \in M$ such that $x_m \neq 0$, a unique (possibly eventually cyclic) path in the left Cayley graph of $M$, obtained in a natural way by following the generators. By the previous lemma, there is an infinite path starting from the origin in the left Cayley graph, and thus $X$ contains a configuration $z \neq 0^M$ where $z_{1_M} \neq 0$, the path from $1_M$ that it determines is infinite, and all non-$0$ symbols in $z$ appear on this path.

Now, we can easily separate the levels, by adapting the constructions for $\N$ to take place on each of the infinite paths in configurations of $X$. We begin with the nonemptiness of the class \ttfrac{SFT}{SFT} $ \setminus $ sofic, i.e.\ finitely presented non-sofic systems. For this, we overlay configurations of $S_\N$ from Lemma~\ref{lem:TimesTwoN} (the binary full shift) on each path, and identify two configurations if on every path, the configurations overlaid on the path are in the $E_\N$-relation.

More precisely, we use the alphabet $\Sigma = \{0\} \cup (S \times \{0,1\})$, and define $Y \subset \Sigma^M$ with the SFT rules of $X$ described above, ignoring the bit in each symbol in $S \times \{0,1\}$. We define $Z$ to be the following SFT: The alphabet is $\Sigma^2$, and the SFT rules state that if $(x, y) \in Z$ then for all $m \in M$ either $(x, y)_m = (0, 0)$ or $(x, y)_m \in (S \times \{0,1\})^2$, and in this case we require
$x_m = (s, a)$, $x_{sm} = (s', a')$, $x_{s'sm} = (s'', a'')$ and $y_m = (s, b)$, $y_{sm} = (s', b')$, $y_{s'sm} = (s'', b'')$, for some $\left( \begin{smallmatrix}
a & a' & a'' \\
b & b' & b''
\end{smallmatrix} \right) \in A_{\mathrm{req}}$ where $A_{\mathrm{req}}$ is the set from Definition~\ref{def:BinaryReals}. It is easy to see that $Z \cap Y^2$ is an equivalence relation, so by definition, the system $Y/Z$ is \ttfrac{$M$-SFT}{SFT}.

The system is not sofic since the the underlying space is not zero-dimensional, as there are non-trivial path-connected components obtained by preserving the position of the infinite path to be as in $z$, and moving the real number written on this configuration on the interval $[0,1]$. More precisely, enumerate the path giving the non-$0$ symbols in $z$ in order as $m_0 = 1_G, m_1 = s_1, m_2 = s_2s_1, \cdots$, and for real number $r \in [0,1]$ write its usual binary representation as $\textrm{bin}(r) \in \{0,1\}^\N$ (if there are two, pick either one), and associate to $r$ configuration $x_r$ defined by
\[ (x_r)_m = \left\{\begin{array}{ll}
0 & \mbox{ if } z_m = 0, \\
(s_{i+1}, \textrm{bin}(r)_i) & \mbox{ if } m = m_i.
\end{array}\right. \]
Then $r \mapsto x_r$ is a homeomorphism between the sphere $S^1 \cong [0,1]/(0 \equiv 1)$ and a closed subset of $Y/Z$. This concludes the proof that the second inclusion in the statement is proper.

For the properness of the first inclusion, that is, existence of proper sofic systems, we construct a sofic shift that overlays configuration of $X_{\leq 1}$ on top of every path similarly as above, but we modify $X$ to allow paths to end abruptly. More precisely, let $A = S \cup \{0\}$ where $0 \notin S$ and define $X' \subset A^M$ by
\[ x \in X' \iff (\forall m \in M: x_{m} = s \in S \implies x_{sm} \in S \cup \{0\}). \]
Now let $\Sigma = \{0\} \cup (S \times \{0,1\})$ and define $Y \subset \Sigma^M$ by requiring that the projection where the bits in the second component are ignored gives $X'$, and along every path marked in the first component, the sequence of bits in the second component is nonincreasing. Let $Z$ be the sofic shift obtained from $Y$ by subtracting the follower from every bit along each path (or zero, if the path ends), so that every path ends up having at most one $1$.

To see that $Z$ is not SFT, suppose for a contradiction that it is, and that there is an SFT rule with neighborhood $F$. Let $k \geq |F|+2$ and consider a configuration $z \in \Sigma^M$ with a marked (infinite simple) path $m_0 = 1_M, m_1, m_2, \dots$ in the first component, where on the bit layer there are $1$s at $m_0$ and at $m_k$. Since $z \notin Z$, by the assumption there exists $n \in M$ such that $nz|_F$ is a forbidden pattern. We have $nz_m = z_{mn}$ for all $m \in F$, and by the pigeonhole principle there exists $0 < i < k$ such that $\forall m \in F: m_i \neq mn$. Now define $z' \in \Sigma^M$, where $z'_{m'} = z_{m'}$ for all $m' \in M \setminus \{m_i\}$ and $z'_{m_i} = 0$. Clearly $z' \in Z$, but $nz'|_F = nz|_F$ is forbidden. This contradiction proves $Z$ is not SFT.

To obtain proper sofically presented systems, overlay a configuration of $X_{\leq 2}$ on top of every path of $X'$, and identify two points by quotienting by the relation $X_{\leq 1}^2 \cup \Delta_{X_{\leq 2}}$ on each path. As above, it is easy to see that the resulting sofically presented system is not expansive, thus not a finitely presented system.
\end{proof}

\subsection{Which groups satisfy the conclusion of Ma\~n\'e's theorem?}
\label{sec:ManeGroups}

In this section, we give a full characterization of finitely generated groups which satisfy the conclusion of Ma\~n\'e's theorem, i.e.\ we characterize groups that admit expansive actions on infinite-dimensional spaces. The construction is the same as in the previous section.

\begin{theorem}
\label{thm:ManeGroupsProof}
Let $G$ be a finitely generated group. The following are equivalent:
\begin{itemize}
\item $G$ is not virtually cyclic,
\item $G$ acts expansively on an infinite-dimensional compact metrizable space,
\item there is an infinite-dimensional \ttfrac{$G$-subshift}{SFT},
\item there is a \ttfrac{$G$-SFT}{SFT} which contains every compact metrizable space.
\end{itemize}
\end{theorem}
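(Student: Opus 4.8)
The plan is to prove the cycle of implications in the order (fourth) $\Rightarrow$ (third) $\Rightarrow$ (second) $\Rightarrow$ (first), and then close the loop with the genuinely new implication (first) $\Rightarrow$ (fourth). The first three implications are essentially formal: a \ttfrac{$G$-SFT}{SFT} system containing a homeomorphic copy of every compact metrizable space is in particular infinite-dimensional (the Hilbert cube embeds), giving (fourth) $\Rightarrow$ (third); any \ttfrac{$G$-subshift}{SFT} is expansive by Lemma~\ref{lem:IffRelativeSFT} (and the base space is infinite-dimensional by hypothesis), giving (third) $\Rightarrow$ (second); and by Theorem~\ref{thm:ExpansivityCharacterizationProof} an expansive $G$-system is \ttfrac{subshift}{SFT}, so (second) $\Rightarrow$ (third) in fact holds too, but more to the point the implication (second) $\Rightarrow$ (first) is exactly the content of Ma\~n\'e's theorem \cite{Ma79} applied to $G$: a virtually cyclic group has a finite-index copy of $\Z$ (or is finite), and the restriction of an expansive action to a finite-index subgroup is still expansive, so Ma\~n\'e forces finite topological dimension. (The finite and two-ended cases are easy directly.) This reduces the whole theorem to constructing, for a non-virtually-cyclic finitely generated $G$, a \ttfrac{$G$-SFT}{SFT} system whose underlying space contains every compact metrizable space.

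For (first) $\Rightarrow$ (fourth), I would reuse the path-drawing construction from the proof of Theorem~\ref{thm:ProperSofic} verbatim, with the single upgrade that the paths can now be taken \emph{doubly infinite} (bi-infinite rays) rather than merely forward-infinite. This is where the non-virtual-cyclicity enters: by Lemma~\ref{lem:Rays} (cited from Ballier--Stein \cite{BaSt18}, itself from Woess \cite{Wo89}), any finitely generated group that is not virtually cyclic contains an infinite ray, i.e.\ an injective bi-infinite path in the Cayley graph, and in fact one can draw such a ray by a local (SFT) rule. Concretely: fix a generating set $S$, let the SFT $X \subset A^M$ with $A = S \cup S^{-1} \cup \{0\}$ encode on its non-$0$ cells a disjoint union of bi-infinite injective paths (each cell either is $0$ or carries an ``incoming'' and ``outgoing'' generator, consistently matched with neighbors, and the injectivity along any single path is guaranteed by choosing, via Lemma~\ref{lem:Rays}, a configuration $z$ in which the distinguished path through $1_G$ is genuinely injective and carries all the non-$0$ symbols). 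Then, exactly as in Theorem~\ref{thm:ProperSofic}, overlay on each path a binary sequence and quotient by the \ttfrac{$\Z$-SFT}{SFT} relation $E_\Z$ from Lemma~\ref{lem:TimesTwoZ}; since $E_\Z$ is a local (SFT) relation on the path, the resulting $Z$ is a $G$-SFT and $Z \cap Y^2$ is an equivalence relation, so $Y/Z$ is \ttfrac{$G$-SFT}{SFT}.

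It remains to see that $Y/Z$ contains a homeomorphic copy of every compact metrizable space. For this, freeze the path skeleton to be the injective bi-infinite path of $z$ and observe that the subspace of $Y/Z$ obtained by varying only the binary labels along this single path, modulo $E_\Z$, is conjugate (as a topological space, forgetting dynamics) to the underlying space of $S_\Z/E_\Z$, which by Lemma~\ref{lem:TimesTwoZ} is the $2$-adic solenoid. Since the solenoid is an infinite-dimensional compact connected metrizable space, it already witnesses (third); to get \emph{every} compact metrizable space one instead overlays, on the path, a subshift cover of a universal expansive $\Z$-system. Any compact metrizable $K$ embeds in the Hilbert cube $Q$, which carries an expansive $\Z$-action (multiplication by $2$ coordinatewise is not expansive, but one can take a product/inverse-limit presentation; more simply, take the universal expansive system guaranteed abstractly, or use that $Q$ embeds in an infinite-dimensional \ttfrac{$\Z$-subshift}{SFT} obtained from an infinite product of circle-doubling systems), and an expansive $\Z$-system has a \ttfrac{$\Z$-SFT}{SFT} presentation by Theorem~\ref{thm:ExpansivityCharacterizationProof} whose defining local rules can be overlaid on each path just as $E_\Z$ was. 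The main obstacle, and the only point requiring care, is precisely this last step: pinning down a single \ttfrac{$\Z$-SFT}{SFT} system whose space is ``universal'' (contains every compact metrizable space) and whose SFT rules are genuinely local so that they can be transplanted onto the abstract path skeleton; once such a system is identified, the overlay-and-quotient mechanism of Theorem~\ref{thm:ProperSofic} ports over with only bookkeeping changes, and the fact that freezing the skeleton yields a \emph{closed} subset of $Y/Z$ (hence a genuine topological embedding) is the same residuality/closedness argument as in the proof of Theorem~\ref{thm:ProperSofic}.
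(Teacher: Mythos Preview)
Your reduction of the easy implications is fine, but the construction for (first) $\Rightarrow$ (fourth) has a genuine gap that cannot be repaired along the lines you suggest.

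First, a factual error: the $2$-adic solenoid is one-dimensional, not infinite-dimensional, so a single path carrying $S_\Z/E_\Z$ does not even witness item (three). More seriously, your proposed fix --- overlaying on the path a \ttfrac{$\Z$-SFT}{SFT} system whose space is ``universal'' --- is impossible in principle: by Ma\~n\'e's theorem itself, every expansive $\Z$-system (equivalently, every \ttfrac{$\Z$-SFT}{SFT}) has finite topological dimension, so no such system can contain the Hilbert cube. Whatever local $\Z$-rules you transplant onto a single bi-infinite path, the resulting subspace will be finite-dimensional.

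The missing idea is that Lemma~\ref{lem:Rays} does not merely give one ray: for $G$ not virtually cyclic it gives an \emph{infinite family of pairwise disjoint} one-way rays in the Cayley graph. The paper's construction exploits exactly this. One uses the same path-drawing SFT $Y$ as in Theorem~\ref{thm:ProperSofic} (forward paths, overlaying a bit and quotienting by $A_{\mathrm{req}}$ along each path), but now fixes a skeleton configuration carrying infinitely many disjoint rays simultaneously. Writing a binary expansion of a point of $S^1$ on the $i$th ray for each $i$ yields a continuous injection of the Hilbert torus $(S^1)^\omega$ into $Y/Z$; since $(S^1)^\omega$ contains the Hilbert cube, every compact metrizable space embeds. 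The infinite dimensionality thus comes from the \emph{number} of paths, not from what is written on any single one.
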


The following easy lemma reduces the virtually $\Z$ case to the $\Z$ case.

\begin{lemma}
Let $G \curvearrowright X$ be an expansive action and $H \leq G$ of finite index. Then the subaction $H \curvearrowright X$ is expansive.
\end{lemma}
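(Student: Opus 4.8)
The plan is to unwind the definition of expansivity on both sides. Suppose $G \curvearrowright X$ is expansive with expansivity constant $\epsilon$ with respect to a metric $d$; we want to show the restricted action of the finite-index subgroup $H \leq G$ is also expansive. The obvious obstruction is that when we restrict to $H$, we only get to ``observe'' the orbit points $hx$ for $h \in H$, which is a sparser set than the full $G$-orbit, so the same $\epsilon$ need not work; the fix is to replace $d$ by a metric that already incorporates the action of a fixed set of coset representatives.

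Concretely, let $g_1, \dots, g_k$ be coset representatives for $H$ in $G$, so $G = \bigcup_i H g_i$ (or $g_i H$, whichever is convenient). Define a new metric
\[ d'(x, y) = \max_{1 \leq i \leq k} d(g_i x, g_i y). \]
This is a metric on $X$ (it is a max of pullbacks of a metric under homeomorphisms, hence symmetric, satisfies the triangle inequality, and vanishes only on the diagonal since each $g_i$ is injective — in fact $g_1 = 1_G$ can be assumed, so $d' \geq d$), and it induces the same topology as $d$ since each $g_i$ acts homeomorphically and there are finitely many of them.

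Now I claim $\epsilon$ is an expansivity constant for $H \curvearrowright X$ with respect to $d'$. Take distinct $x, y \in X$. By expansivity of the $G$-action there is $g \in G$ with $d(gx, gy) > \epsilon$. Write $g = h g_i$ for some $h \in H$ and some representative $g_i$ (this is where the choice of left/right coset convention must be matched to the definition $d'(x,y) = \max_i d(g_i x, g_i y)$, so I would set it up as $g = g_i h$ with right cosets, or adjust $d'$ accordingly — a routine bookkeeping point). Then $d(gx, gy) = d(g_i(hx), g_i(hy)) \leq d'(hx, hy)$, so $d'(hx, hy) > \epsilon$ with $h \in H$. This proves the restricted action is expansive. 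The only genuinely delicate point is aligning the coset-representative convention in the factorization $g = \cdots$ with the shape of $d'$ so that the element witnessing expansivity really lands in $H$; everything else is immediate from finiteness of the index and the fact that group elements act by homeomorphisms.
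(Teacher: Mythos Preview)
Your proof is correct. The paper does not actually prove this lemma at all --- it simply labels it an ``easy lemma'' and states it without argument. Your approach (pulling back the metric along a finite set of coset representatives) is the standard one, and your remark about aligning the coset convention with the shape of $d'$ is exactly the only thing one needs to be careful about; with the paper's left-action convention $gx = g_i(hx)$ you want $G = \bigcup_i g_i H$ and $d'(x,y) = \max_i d(g_i x, g_i y)$, which is what you wrote.
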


In the non-virtually cyclic case, the construction is the same as in the proof of Theorem~\ref{thm:ProperSofic}.

As will be clear from the proof, the case that $G$ is not a torsion group is somewhat trivial. The main technical difficulty comes from the fact that in the case of torsion groups, we cannot find copies of $\N$ on the group by a local rule, and need to force such copies by SFT rules. For this we use the following theorem which may be of independent interest.

A \emph{ray} in a graph is a one-way infinite simple path, i.e. a sequence $x_1, x_2, x_3, \dots$ of vertices of the graph with no repetitions. Two paths are disjoint if the sets of vertices they visit are disjoint.

\begin{lemma}
\label{lem:Rays}
Let $G$ be a finitely generated group, and $\mathcal{G}$ a right Cayley graph of $G$ with respect to a finite symmetric generating set $A \subset G$. The following are equivalent.
\begin{itemize}
\item $G$ is virtually cyclic.
\item there does not exist an infinite set of disjoint rays in $\mathcal{G}$.
\end{itemize}
\end{lemma}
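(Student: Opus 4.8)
The plan is to prove Lemma~\ref{lem:Rays} by citing the graph-theoretic literature rather than reproving everything from scratch, since the statement is essentially a translation of known structure theorems for graphs of linear growth into the language of finitely generated groups. First I would recall the notion of \emph{ends} of a graph: the number of ends of $\mathcal{G}$ (for $G$ finitely generated, this is a quasi-isometry invariant, so it does not depend on the generating set $A$) is one of $0,1,2,\infty$, and by Stallings' theorem about ends of groups together with the structure theory of infinite-ended groups, having $\infty$ ends forces $G$ to split over a finite subgroup in a way that immediately produces infinitely many disjoint rays (one can push a ray into each of infinitely many distinct cosets). So the interesting dichotomy is really between the $2$-ended case (where $G$ is virtually $\Z$, hence virtually cyclic, and visibly has only finitely many disjoint rays up to the obvious bound coming from the two ends) and the $1$-ended case.

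The key step is then: if $G$ is one-ended, then $\mathcal{G}$ contains infinitely many pairwise disjoint rays. The clean way to get this is to invoke the theorem of Halin~\cite{Ha65}: in any infinite connected graph, if there is no infinite family of pairwise disjoint rays, then there is a finite set of vertices $F$ whose removal leaves only finite components — equivalently the graph has ``finitely many ends'' in a strong sense and in fact is ``$\leq k$-ended'' for the relevant $k$. Contrapositively, a graph with infinitely many ends, or more precisely a graph that is not ``$k$-ended for any $k$'', has infinitely many disjoint rays; and for a one-ended graph one argues directly that no finite $F$ can disconnect it into only finite pieces (the one infinite complementary component, being itself infinite and connected, still contains a ray, and iterating/combining with a diagonal argument extracts infinitely many disjoint ones). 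Concretely I would: (i) handle $0$ ends — impossible since $G$ is infinite; (ii) handle $2$ ends — then $G$ is virtually cyclic and one checks directly (via the structure $G$ has a finite-index copy of $\Z$, whose Cayley graph is a ``thickened line'') that any collection of pairwise disjoint rays is finite, giving the forward direction; (iii) handle $1$ end — use Halin/an appeal to Woess~\cite{Wo89} as in Ballier--Stein~\cite{BaSt18} to get infinitely many disjoint rays; (iv) handle $\infty$ ends — use Stallings plus the splitting to exhibit infinitely many disjoint rays explicitly (or note infinitely many ends already yields infinitely many disjoint rays by a greedy construction, peeling off one ray per end).

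For the two directions of the stated equivalence: the direction ``$G$ virtually cyclic $\implies$ no infinite disjoint ray family'' is the concrete bounded case (ii) above, using that a virtually $\Z$ group has a Cayley graph quasi-isometric to $\R$, so a finite vertex set disconnects it into at most two infinite pieces and a bounded-size bookkeeping argument caps the number of disjoint rays. The direction ``$G$ not virtually cyclic $\implies$ infinitely many disjoint rays'' is where the real content sits, and it reduces — via the end-count trichotomy and the fact that the only finitely generated groups with $\leq 2$ ends are the finite groups ($0$ ends) and the virtually cyclic groups ($2$ ends, by Hopf/Stallings) — to the cases of one end and infinitely many ends, both dispatched as above.

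The main obstacle I expect is the one-ended case: it is \emph{not} true in general that a one-ended graph has infinitely many pairwise disjoint rays (e.g. a ``half-grid'' has one end but one can still find infinitely many disjoint rays — actually that is fine; the subtle point is rather that a finite-index subgroup change, changing generating set, and the exact form of Halin's theorem all have to be lined up correctly). So I would lean on the already-cited black boxes — Woess~\cite{Wo89}, Halin~\cite{Ha65}, Ballier--Stein~\cite{BaSt18}, and either Stallings~\cite{St68,St71} or the Tointon--Yadin~\cite{ToYa16} argument for the infinitely-ended reduction — and present the proof as a short assembly of these, with the self-contained alternative argument deferred to Appendix~\ref{sec:RaysAlternative} as the paper already indicates.
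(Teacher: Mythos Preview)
Your proposal is correct and essentially matches the paper's approach: both treat the forward direction via linear growth/quasi-isometry to $\Z$, and for the converse both cite Ballier--Stein \cite{BaSt18} (following Woess \cite{Wo89}) for the hard case, with Halin \cite{Ha65} and Stallings \cite{St68,St71} as the alternative route already flagged for Appendix~\ref{sec:RaysAlternative}. The only organizational difference is that the paper's main proof splits on ``virtually free vs.\ not virtually free'' (handling the former via $F_2$ directly and the latter via thick ends), whereas you split on end count $0/1/2/\infty$; your decomposition is exactly the one the paper uses in its appendix, so you have effectively reproduced that version. Two minor wording issues to clean up: your paraphrase of Halin's theorem (``removal of $F$ leaves only finite components'') is not quite the statement you need, and ``the only groups with $\leq 2$ ends are finite or virtually cyclic'' should read ``with $0$ or $2$ ends'' --- but neither affects the substance.
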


\begin{proof}
If $G$ is virtually cyclic, then it has at most linear growth rate, and it is easy to see that it cannot contain an infinite set of disjoint rays with respect to any set of generators.

If $G$ is virtually free but not virtually cyclic, then it does contain an infinite set of disjoint rays, since the free group $F_2$ obviously has this property for any generating set.

If $G$ not virtually free, then every Cayley graph of $G$ has a \emph{thick end}, meaning an infinite set of disjoint one-sided paths which define the same end of the graph. See Lemmas~2.1 and 2.3 in \cite{BaSt18} for a proof following Woess \cite{Wo89}.
\end{proof}

In Appendix~\ref{thm:RaysApp}, we give an alternative deduction in the one-ended case using a result of Halin, and reduce to the one-ended case using Stallings theorem.

\begin{proof}[Proof of Theorem~\ref{thm:ManeGroups} (Theorem \ref{thm:ManeGroupsProof})]
Suppose first that $G$ is virtually cyclic. Let $H \leq G$ be a copy of $\Z$ of finite index. Then $H$ still acts expansively by the previous lemma, and \cite{Ma79} shows $X$ is finite-dimensional. By Theorem~\ref{thm:ExpansivityCharacterizationProof} the second and third conditions are equivalent, and clearly if the third condition is false, then the fourth is as well.

Suppose then that $G$ is not virtually cyclic, and let $A$ be any symmetric generating set not containing $1_G$. It is enough to show that the fourth item holds.

Let $Y$ be the following SFT: The alphabet is $\Sigma = \{\#\} \cup (\{0, 1\} \times A)$. The local rules state that if $x_g = (b, a)$ where $b \in \{0,1\}, a \in A$, then $x_{ag} \neq \#$.

Let $Z$ be the following SFT: The alphabet is of course $\Sigma^2$, and the SFT rules state that if $(x, y) \in Z$ then for all $g$ either $(x, y)_g = (\#, \#)$ or $(x, y)_g = ((b, a), (c, a))$ for some $b, c \in \{0,1\}, a \in A$, and that when $(x, y)_g = ((b, a), (c, a))$, we have that the three bits readable are among the allowed patterns of the ``same number'' relation, i.e.
$
\left( \begin{smallmatrix}
b & b' & b'' \\
c & c' & c''
\end{smallmatrix} \right) \in A_{\mathrm{req}}$
where $x_g = (b, a)$, $x_{ag} = (b', a')$, $x_{a'ag} = (b'', a'')$, and $y_g = (c, a)$, $y_{ag} = (c', a')$, and $y_{a'ag} = (c'', a'')$.

It is straightforward to show that $Z$ is an SFT equivalence relation in $Y$ and $(x, y) \in Z$ if and only if the graph drawn by the second components of non-$\#$ symbols is the same in both, and on each pair of corresponding paths in $x$ and $y$ the same point of the binary solenoid is represented.

By Lemma~\ref{lem:Rays}, $G$ admits some disjoint infinite set of one-way infinite rays. Then we have a map from the Hilbert torus $(S^1)^\omega$ to $Y$ which writes some binary expansion of the point on the $i$th circle of $(S^1)^\omega$ on the $i$th ray of $Y$. This function is continuous and bijective into $Y/K$, so $Y/K$ contains an embedded copy of $(S^1)^\omega$. Since $(S^1)^\omega$ contains a Hilbert cube, we have that $Y/Z$ is a \ttfrac{$G$-SFT}{SFT} which contains every compact metrizable space.
\end{proof}

In the case of f.g. monoids, the same construction gives infinite-dimensional systems on monoids, as soon as one can find infinitely many disjoint rays in the left Cayley graph. We do not know the characterization of such monoids.

\section{Metrization of \ttfrac{subshift}{subshift} systems}
\label{sec:Metrization}

\subsection{Hyperbolic graph systems}
\label{sec:Hyperbolic}

We define a family of topological spaces obtained as limits of systems of (finite) graphs. We prove that limits of systems of graphs produce precisely the compact metrizable spaces. We prove that for every graph system, there is a ``hyperbolic'' graph system that defines the same topological space. When the graph system is hyperbolic, we construct an explicit metric.

In this section, it is useful to allow (pseudo)metrics to take on infinite distances, so a (pseudo)metric is a function $d : X \times X \to \R_{\geq 0} \cup \{\infty\}$ satisfying the usual formulas for a (pseudo)metric. Infinite distances only appear in auxiliary (pseudo)metrics discussed in the proofs, and the final metrics we obtain for our $\ttfrac{subshift}{subshift}$ applications never take on infinite values.

The idea of approximating topological spaces by using systems of simpler spaces is certainly not new (see for example the section ``The theory of inverse limit spaces'' in the book~\cite{Naga85}) but we have not seen previous work where the metric has been constructed explicitly, which is needed later for generalizing Ma\~n\'e's theorem. The ideas of this section are also clearly similar to those of Nekrashevych~\cite{Nek05}, though we do not know the exact connection.

\begin{definition}
Let $G_i = (V_i, E_i)$ be an undirected graph with self-loops at each vertex for all $i \in \N$, and $\phi_i : G_{i+1} \to G_i$ a graph homomorphism surjective on vertices (but not necessarily edges). Then $(G_i, \phi_i)_{i \in \N}$ is called a \emph{system of graphs}. Write $d_n$ for the distance function in $G_n$ and define $G = \underset{\leftarrow} \lim^* \; G_i = \{x \in (V_i)_{i \in \N} \;|\; \phi_i(x_{i+1}) = x_i\}$. If $f : \N \to \N$ is an increasing
function satisfying $f(n) \geq n$,
we say a system of graphs is $f$-\emph{hyperbolic} if
\[ \forall n: \forall x, y \in G: d_{f(n)}(x_{f(n)}, y_{f(n)}) \geq 2d_n(x_n, y_n) - 1. \]
If a system is $f$-hyperbolic for some $f$, we say it is \emph{prehyperbolic}.
If a system is $f$-hyperbolic for $f(n) = n + c$ for some constant $c$, we say it is \emph{virtually hyperbolic}. If it is $f$-hyperbolic for $f(n) = n + 1$, we say it is \emph{hyperbolic}.
\end{definition}

\begin{lemma}
A system of graphs $(G_i, \phi_i)_{i \in \N}$ is prehyperbolic if and only if
\[ \forall n: \exists m: \forall x, y \in G: d_m(x_m, y_m) \leq 2 \implies d_n(x_n, y_n) \leq 1 \]
if and only if
\[ \forall n: \exists m: \forall m' \geq m: \forall x, y \in G: d_{m'}(x_{m'}, y_{m'}) \leq 2 \implies d_n(x_n, y_n) \leq 1. \]
Any increasing function $f(n) \geq n$ giving such $m$ for each $n$ can be picked as the hyperbolicity function.
\end{lemma}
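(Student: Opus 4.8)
### Proof proposal

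The plan is to prove the chain of equivalences by showing $(1) \Rightarrow (2) \Rightarrow (3) \Rightarrow (1)$, where $(1)$ is the definition of prehyperbolicity, $(2)$ is the first displayed condition, and $(3)$ is the second displayed condition; along the way I will keep track of how the witnessing function $m = m(n)$ transforms, so that the final sentence about picking the hyperbolicity function follows.

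\textbf{$(1) \Rightarrow (2)$.} Suppose the system is $f$-hyperbolic for some increasing $f$ with $f(n) \geq n$. Fix $n$ and set $m = f(n)$. If $x, y \in G$ satisfy $d_m(x_m, y_m) \leq 2$, then by $f$-hyperbolicity $2 d_n(x_n, y_n) - 1 \leq d_{f(n)}(x_{f(n)}, y_{f(n)}) = d_m(x_m, y_m) \leq 2$, so $d_n(x_n, y_n) \leq 3/2$, hence $d_n(x_n, y_n) \leq 1$ since distances are integers. This gives $(2)$ with the same $m = f(n)$.

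\textbf{$(2) \Rightarrow (3)$.} The point here is monotonicity along the system: the maps $\phi_i$ are graph homomorphisms sending edges to edges (or to self-loops), so they are $1$-Lipschitz, and hence for $m' \geq m$ we have $d_m(x_m, y_m) \leq d_{m'}(x_{m'}, y_{m'})$ for all $x, y \in G$. Therefore if $(2)$ holds with witness $m$ for a given $n$, and $m' \geq m$, then $d_{m'}(x_{m'}, y_{m'}) \leq 2$ forces $d_m(x_m, y_m) \leq 2$, hence $d_n(x_n, y_n) \leq 1$; so the same $m$ works as a witness for $(3)$. (Trivially $(3) \Rightarrow (2)$ as well, taking $m' = m$.)

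\textbf{$(3) \Rightarrow (1)$ and the final sentence.} Given $(3)$, define $f$ recursively: choose $f(n)$ to be any value $\geq \max(n, f(n-1)+1)$ that also exceeds the witness $m(n)$ from $(3)$ for the index $n$; this makes $f$ strictly increasing with $f(n) \geq n$. I claim the system is $f$-hyperbolic. Fix $n$ and $x, y \in G$; let $k = d_{f(n)}(x_{f(n)}, y_{f(n)})$. If $k \geq 3$ then $2 d_n(x_n, y_n) - 1 \leq 2 d_{f(n)}(x_{f(n)}, y_{f(n)}) - 1 \leq 2k - 1 \leq $ hmm — wait, I need $d_n \leq d_{f(n)}$, which again is the $1$-Lipschitz property of the $\phi_i$'s; then $2d_n - 1 \leq 2k - 1$, and this is $\geq k$ exactly when $k \geq 1$, which is automatic. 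If instead $k \leq 2$, then since $f(n)$ exceeds the witness $m(n)$ and $(3)$ is stated for all $m' \geq m(n)$, we get $d_n(x_n, y_n) \leq 1$, hence $2 d_n(x_n, y_n) - 1 \leq 1 \leq k$ unless $k = 0$; and if $k = 0$ then $x_{f(n)} = y_{f(n)}$, which by the Lipschitz property forces $x_n = y_n$, i.e. $d_n(x_n,y_n) = 0$ and $2 d_n - 1 = -1 \leq 0 = k$. In all cases $d_{f(n)}(x_{f(n)}, y_{f(n)}) \geq 2 d_n(x_n, y_n) - 1$, so the system is $f$-hyperbolic. This argument shows in particular that \emph{any} increasing $f(n) \geq n$ dominating a witness function from $(3)$ is a valid hyperbolicity function, which is the last assertion.

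\textbf{Main obstacle.} None of the steps is deep; the one thing to get right is the bookkeeping in the case analysis of $(3) \Rightarrow (1)$, making sure the inequality $d_{f(n)} \geq 2 d_n - 1$ holds in the regime $d_{f(n)} \geq 3$ purely from the $1$-Lipschitz property ($d_n \leq d_{f(n)}$ gives $2d_n - 1 \leq 2 d_{f(n)} - 1$, and one checks $d_{f(n)} \leq 2 d_{f(n)} - 1$ iff $d_{f(n)} \geq 1$), and separately handling the small cases $d_{f(n)} \in \{0,1,2\}$ via condition $(3)$. I should also state explicitly at the outset that graph homomorphisms between the $G_i$ (which carry self-loops at every vertex) are distance-nonincreasing, since this $1$-Lipschitz fact is used repeatedly.
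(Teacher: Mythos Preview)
Your argument for $(3)\Rightarrow(1)$ in the case $k=d_{f(n)}(x_{f(n)},y_{f(n)})\geq 3$ is wrong. From the $1$-Lipschitz property you get $d_n\leq k$, hence $2d_n-1\leq 2k-1$; you then note $k\leq 2k-1$ for $k\geq 1$. But those two inequalities point the wrong way to combine: they do \emph{not} yield $2d_n-1\leq k$. (Concretely, nothing in your argument rules out $d_n=k=5$, which would violate $k\geq 2d_n-1$.) The Lipschitz bound alone is simply too weak here; condition $(3)$ must be used for large $k$ as well, not just for $k\leq 2$.

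The paper closes exactly this gap with a path-halving argument: take a geodesic $(z_0,z_1,\dots,z_k)$ in $G_m$ from $x_m$ to $y_m$, project each $z_i$ to $z_i'\in V_n$, and apply the condition $d_m\leq 2\Rightarrow d_n\leq 1$ to each pair $(z_{2i},z_{2i+2})$ (lifting these vertices to $G$ via surjectivity of the $\phi_j$). This shows $z'_{2i}$ and $z'_{2i+2}$ are adjacent in $G_n$, producing a path of length $\lceil k/2\rceil$ and hence $d_n(x_n,y_n)\leq\lceil d_m(x_m,y_m)/2\rceil$, which is equivalent to $d_m\geq 2d_n-1$. Your steps $(1)\Rightarrow(2)$ and $(2)\Leftrightarrow(3)$ are fine, as is your treatment of $k\leq 2$; you just need to replace the $k\geq 3$ case with this halving argument.
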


\begin{proof}
We first consider the first equivalence. The contrapositive of the above, where the implication is replaced by $d_n(x_n, y_n) > 1 \implies d_m(x_m, y_m) > 2$, is a special case of prehyperbolicity.

For the other direction, suppose the condition of the statement holds. Suppose $d_m(x_m, y_m) = k$. Take any path between $x_m, y_m$ of length $k$, say $(z_0, z_1, \dots, z_k)$ with $x_m = z_0$, $y_m = z_k$. Let $z'_i = \phi_n \circ \cdots \circ \phi_{m-1}(z_i)$. If $k = 2\ell + 1$, $(z'_0, z'_2, \dots, z'_{2 \ell}, z'_k)$ is a path of length $\ell + 1$ from $x_n$ to $y_n$ by the assumption $d_m(x_m, y_m) \leq 2 \implies d_n(x_n, y_n) \leq 1$, and if $k = 2 \ell$ then $(z'_0, z'_2, \dots, z'_{2 \ell})$ is a path of length $\ell$ from $x_n$ to $y_n$. In either case $d_n(x_n, y_n) \leq \lceil d_m(x_m, y_m)/2 \rceil$, which implies $d_m(x_m, y_m) \geq 2d_n(x_n, y_n) - 1$.

For the second equivalence, observe that since the $\phi_i$ are graph homomorphisms, distances cannot decrease when we move further in the graph sequence.
\end{proof}

In particular by the above lemma, hyperbolicity is equal to
\[ \forall n: d_{n+1}(x_{n+1}, y_{n+1}) \leq 2 \implies d_n(x_n, y_n) \leq 1. \]

Clearly for any prehyperbolic system of graphs there exists an increasing sequence $j_0, j_1, \dots$ such that the graphs $G_{j_i}$ form a hyperbolic system of graphs when we compose the morphisms between them, and this is called a \emph{telescoping sequence}. For a hyperbolic system, $\N$ is such a sequence, and for a virtually hyperbolic system $c\N$ is such a sequence for some constant $c \in \N$.

Write $G^*$ for the space $\underset{\leftarrow} \lim^* \; G_i$ with the product topology. This space is always zero-dimensional, and does not take the graph structure into account -- it restricts the possible choices of maps $\phi_i$, but if the graphs are all complete or discrete, this is nothing but the general construction of a compact zero-dimensional space as an inverse limit of finite sets. We now quotient out the graph neighbor relations of the graphs to define a natural quotient space $\underset{\leftarrow} \lim \; G_i$:

\begin{definition}
Let $(G_i, \phi_i)_{i \in \N}$ be a system of finite graphs. Let $K \subset (G^*)^2$ be the relation $(x, y) \in K \iff \forall n: (x_n, y_n) \in E_n$. We define
\[ \underset{\leftarrow} \lim \; G_i \cong G^* / K. \]
\end{definition}

It is obvious that $K$ is closed in $(G^*)^2$ when $G^*$ has the product topology, and thus $G^* / K$ is a compact metrizable space, by \ref{lem:QuotientMapsMetrizable}. The converse is true (and essentially classical), and we explain it in the following.

\begin{lemma}
\label{lem:ClosedPerClosedPrehyperbolic}
Suppose $\Sigma_i$ are finite alphabets, $C \subset \prod_{i \in \N} \Sigma_i$ is closed and $R \subset C^2$ a closed equivalence relation. For each $n$ define the graph $G_n = (V_n, E_n)$ where $V_n = \{u \in \prod_{i = 0}^{n - 1} \Sigma_i \;|\; [u] \cap C \neq \emptyset\}$, and
\[ (u, v) \in E_n \iff \exists x, y \in C: x \in [u] \wedge y \in [v] \wedge (x, y) \in R. \]
Define $\phi_i (ua) = u$ for all $ua \in V_{n+1}, a \in \Sigma_n$.
Then $(G_n, \phi_n)_{n \in \N}$ form a prehyperbolic system of graphs, and $C/R \cong \underset{\leftarrow} \lim \; G_i$.
\end{lemma}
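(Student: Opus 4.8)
The plan is to verify the three assertions of Lemma~\ref{lem:ClosedPerClosedPrehyperbolic} in turn: that the $\phi_n$ are well-defined graph homomorphisms surjective on vertices, that the resulting system is prehyperbolic, and that $C/R \cong \underset{\leftarrow}\lim\; G_i$. The first is essentially bookkeeping: $\phi_n(ua) = u$ is well-defined because if $[ua] \cap C \neq \emptyset$ then certainly $[u] \cap C \neq \emptyset$, it is surjective on vertices because any $u$ with $[u]\cap C\neq\emptyset$ contains a point $x\in C$ whose length-$(n+1)$ prefix $ua$ lies in $V_{n+1}$, and it is a graph homomorphism because an $R$-pair witnessing $(ua, vb)\in E_{n+1}$ also witnesses $(u,v)\in E_n$ (and self-loops are preserved since $R$ is reflexive, using that $R$ is an equivalence relation). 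I would dispatch this quickly.

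For prehyperbolicity, I would use the characterization from the lemma preceding this one: it suffices to show $\forall n\,\exists m\,\forall x,y\in G^*$ (where here $G^* = \underset{\leftarrow}\lim^*\; G_i$, which one identifies with $C$ since $C$ is closed, hence equals the set of sequences with compatible prefixes): $d_m(x_m,y_m)\le 2 \implies d_n(x_n,y_n)\le 1$. Here is where the work is. A length-$\le 2$ path in $G_m$ from $x_m$ to $y_m$ passing through some $w\in V_m$ means $(x_m,w)\in E_m$ and $(w,y_m)\in E_m$, so there are points $x',w_1\in C$ with the appropriate length-$m$ prefixes and $(x',w_1)\in R$, and $w_2,y'\in C$ with $(w_2,y')\in R$, where $w_1,w_2$ both have prefix $w$. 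The naive hope is that $x'\mathrel{R} y'$, giving $(x_n,y_n)\in E_n$; but $w_1\neq w_2$ in general, so transitivity of $R$ does not directly apply. The key obstacle — and I expect this is the only real content of the proof — is to repair this by a compactness argument: as $m\to\infty$ the prefix $w$ forces $w_1,w_2$ to agree on longer and longer prefixes, so by compactness one can extract, for a suitable $m$ depending on $n$, points that are genuinely $R$-related. Concretely, I would argue by contradiction: if no such $m$ works for a given $n$, then for each $m$ pick offending $x^{(m)},y^{(m)}\in C$ with a length-$\le 2$ path in $G_m$ through $w^{(m)}$ but $d_n(x^{(m)}_n,y^{(m)}_n)>1$; pass to a subsequence so that $x^{(m)}\to x$, $y^{(m)}\to y$, $w_1^{(m)}\to w_1$, $w_2^{(m)}\to w_2$ in $C$, and the prefixes of length $m$ stabilize. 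Since $w_1^{(m)}$ and $w_2^{(m)}$ agree on their first $m$ coordinates, the limits satisfy $w_1 = w_2 =: w$, and since $R$ is closed we get $(x,w)\in R$ and $(w,y)\in R$, hence $(x,y)\in R$ by transitivity, hence $(x_n,y_n)\in E_n$; but $d_n$ is determined by the length-$n$ prefix and these stabilized, so $d_n(x_n^{(m)},y_n^{(m)})\le 1$ for large $m$, a contradiction. (One should double-check the edge case where the path has length $\le 1$, i.e. $(x_m,y_m)\in E_m$ directly or $x_m = y_m$; these are easier and handled by the same or a degenerate argument.)

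For the homeomorphism $C/R \cong \underset{\leftarrow}\lim\; G_i = G^*/K$, I would first identify $C$ with $G^*$ via the prefix map $x \mapsto (x_{[0,n-1]})_{n\in\N}$, which is a homeomorphism onto $G^*$ since $C$ is closed (so prefixes of points of $C$ are exactly the vertices of the $G_n$, and the product topologies match). Under this identification I claim $R$ corresponds exactly to $K$, i.e. $(x,y)\in R \iff \forall n: (x_{[0,n-1]}, y_{[0,n-1]})\in E_n$. The forward direction is immediate from the definition of $E_n$. For the converse, if $(x_{[0,n-1]}, y_{[0,n-1]})\in E_n$ for all $n$, then for each $n$ there are $x^{(n)}, y^{(n)}\in C$ sharing the length-$n$ prefixes of $x,y$ respectively with $(x^{(n)},y^{(n)})\in R$; then $x^{(n)}\to x$, $y^{(n)}\to y$, and $R$ closed gives $(x,y)\in R$. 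Hence $R$ and $K$ agree as relations on $C\cong G^*$, so the topological quotients $C/R$ and $G^*/K$ are literally the same space, and by Lemma~\ref{lem:QuotientMapsMetrizable} (and Lemma~\ref{lem:QuotientMapsCharacterization}, since $R$ closed makes both Hausdorff) this is a genuine homeomorphism. I would present this last part as a short sequence of observations since it is all routine once the identification is set up; the prehyperbolicity compactness argument is the heart of the matter.
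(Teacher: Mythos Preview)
Your proposal is correct and follows essentially the same route as the paper: prehyperbolicity via a contradiction-plus-compactness argument exploiting closedness and transitivity of $R$, and the homeomorphism via the identification $C\cong G^*$ together with $R=K$ (the forward inclusion by definition, the reverse by closedness of $R$). The paper's proof is terser --- it skips the verification that the $\phi_n$ form a system of graphs and compresses the compactness argument --- but the substance is identical; your only slip is the phrase ``the prefixes of length $m$ stabilize'', which should read ``length $n$'' (or be dropped, since convergence already gives it).
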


\begin{proof}
We check prehyperbolicity. Suppose the system is not prehyperbolic, so that for some $|u| = |v| = n$ we have $d_n(u, v) = 2$ but there exists arbitrarily long words $|w| = |w'| = m$ such that $d_{n+m}(uw, vw') = 2$, i.e. there are points $x^m \in [uw], y^m \in [vw']$ and $z^m, \hat z^m \in C$ such that $z^m_{[0,n+m-1]} = \hat z^m_{[0,n+m-1]}$, $(x^n, z^n), (\hat z^m, y^m) \in R$. Extract a limit point of $(x^m,z^m,\hat z^m,y^m)$ to get a $4$-tuple $(x, z, \hat z, y)$ with $(x, z), (\hat z, y) \in R$. Necessarily $z = \hat z$, so by transitivity of $K$ we have $(x, y) \in R$, a contradiction.

Let $K\subset (G^*)^2$ be the relation of the previous definition. By definition $C$ and $G^*$ are equal as sets of vertices. By definition also $R\subset K$ and a compactness argument shows that $K\subset R$, so $R=K$, so $C/R=G^*/K \cong \underset{\leftarrow} \lim \; G_i$.
\end{proof}

\begin{lemma}
Let $X$ be a topological space. The following are equivalent:
\begin{itemize}
\item $X$ is compact and metrizable, and
\item $X$ is homeomorphic to $\underset{\leftarrow} \lim \; G_i$ for some hyperbolic system of finite graphs.
\end{itemize}
\end{lemma}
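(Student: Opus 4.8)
I would prove the two implications separately, leaning on Lemma~\ref{lem:ClosedPerClosedPrehyperbolic}, the topological Lemmas~\ref{lem:QuotientMapsCharacterization}, \ref{lem:QuotientMapsMetrizable} and~\ref{lem:IsAHomeomorphism}, and the classical Hausdorff--Alexandroff theorem that every compact metrizable space is a continuous image of the Cantor set $\{0,1\}^{\N}$.

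\textbf{The hyperbolic limit is compact metrizable.} Suppose $X\cong\underset{\leftarrow}\lim\;G_i=G^*/K$ for a hyperbolic system $(G_i,\phi_i)_{i\in\N}$ of finite graphs. The space $G^*=\underset{\leftarrow}\lim^*\;G_i$ is a closed subset of the product $\prod_i V_i$ of finite discrete spaces, hence compact and metrizable, and $K$ is closed in $(G^*)^2$ as an intersection of clopen sets. The point that needs hyperbolicity is that $K$ is an equivalence relation: reflexivity and symmetry come from the self-loops and from the graphs being undirected, while transitivity holds because if $d_n(x_n,y_n)\le 1$ and $d_n(y_n,z_n)\le 1$ for all $n$, then $d_{n+1}(x_{n+1},z_{n+1})\le 2$ for all $n$ by the triangle inequality, whence $d_n(x_n,z_n)\le 1$ for all $n$ by the hyperbolicity inequality. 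Thus $G^*/K$ is a quotient of a compact metrizable space by a closed equivalence relation, so it is Hausdorff by Lemma~\ref{lem:QuotientMapsCharacterization} and metrizable by Lemma~\ref{lem:QuotientMapsMetrizable}; hence $X$ is compact metrizable.

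\textbf{Every compact metrizable space arises this way.} Conversely, let $X$ be compact metrizable and pick a continuous surjection $f:\{0,1\}^{\N}\to X$. Then $R=(f\times f)^{-1}(\Delta_X)$ is a closed equivalence relation on $C:=\{0,1\}^{\N}$, and the induced continuous bijection $C/R\to X$ is a homeomorphism by Lemma~\ref{lem:IsAHomeomorphism}. Lemma~\ref{lem:ClosedPerClosedPrehyperbolic}, applied to this $C$ and $R$ (with $\Sigma_i=\{0,1\}$), produces a \emph{prehyperbolic} system of finite graphs $(G_n,\phi_n)_{n\in\N}$ with $X\cong C/R\cong\underset{\leftarrow}\lim\;G_n$. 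It remains to replace this by a hyperbolic system without changing the limit. I would telescope: choose an increasing sequence $j_0<j_1<\cdots$ such that the graphs $G_{j_i}$ with the composed homomorphisms form a hyperbolic system (possible as recorded after the characterization of prehyperbolic systems), and verify that $x\mapsto(x_{j_i})_i$ is a homeomorphism from $G^*$ onto the $\underset{\leftarrow}\lim^*$ of the telescoped system which carries the neighbor relation $K$ onto the neighbor relation of the telescoped system. One inclusion is immediate; the other uses that each $\phi_n$ is a graph homomorphism, so a composite $\phi_n\circ\cdots\circ\phi_{j_i-1}$ maps an edge of $G_{j_i}$ to an edge or loop of $G_n$, recovering $(x_n,y_n)\in E_n$ from $(x_{j_i},y_{j_i})\in E_{j_i}$ whenever $j_i\ge n$. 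This homeomorphism then descends to the quotients, giving $X\cong\underset{\leftarrow}\lim\;G_{j_i}$ for a hyperbolic system.

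\textbf{Main obstacle.} The individual steps are short. The step I expect to require the most care is the telescoping compatibility --- checking that the neighbor relation, and hence the quotient space, is unchanged when one passes to a subsequence of the graph system --- since this is the only place where the graph-homomorphism hypothesis on the $\phi_i$ is genuinely used. Invoking Hausdorff--Alexandroff (or sketching its standard proof via iterated finite closed covers) is the only external input.
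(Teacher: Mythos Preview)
Your proof is correct and follows essentially the same route as the paper's: represent $X$ as a quotient of Cantor space, invoke Lemma~\ref{lem:ClosedPerClosedPrehyperbolic} to get a prehyperbolic system, then telescope. You in fact supply more detail than the paper does---the paper does not explicitly check that hyperbolicity yields transitivity of $K$, nor that telescoping preserves the limit space, both of which you handle.
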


\begin{proof}
If $X$ is homeomorphic to $\underset{\leftarrow} \lim \; G_i$ for some hyperbolic system of finite graphs, then it is compact and metrizable as a quotient of a compact metrizable space by a closed equivalence relation.

Every compact metrizable space is a quotient of Cantor space by a closed equivalence relation, say $Y/Z$. Pick words in $Y$ as the nodes of the graphs, and pick the edges to be those pairs whose corresponding clopens have Cartesian product intersecting $Z$. By the previous lemma, this is necessarily prehyperbolic, and we can telescope it to be hyperbolic. By the same lemma, the graph system is homeomorphic to $Y/Z$.
\end{proof}

The reason for introducing hyperbolic graph systems is that we can construct a metric explicitly using the doubling condition. If $d$ is a metric on a set $X$, then the \emph{$\epsilon$-itinerary restriction} $d'$ of a metric $d$ is obtained by defining
\[ d'(x, y) = \inf \{ \sum_i d(x_i, x_{i+1}) \;|\; (x_0,x_1,\dots,x_k) \in X^{k+1}, x = x_0, y = x_k, \forall i: d(x_i, x_{i+1}) \leq \epsilon \}. \]

In what follows, let $(G_i, \phi_i)_{i \in \N}$ be a hyperbolic system of graphs, and let $G^* = \underset{\leftarrow} \lim^* \; G_i$ and $G = G^*/K$ be as above. For $x, y \in G$ write $x \sim_n y \iff (x_n, y_n) \in E_n$, and in this case say the points are \emph{$n$-neighbors}. Note that by the homomorphism property $x \sim_{n+1} y \implies x \sim_n y$. Define $x \sim y \iff \forall n: x \sim_n y$, and in this case say the points are \emph{$\omega$-neighbors}.

\begin{definition}\label{def:Itinerary}
If $x,x'\in G^*$, then any pair
\[I=((n_0, \dots, n_{k-1}), (x^0, x^1, \dots, x^k))\]
with
\[k\in\N, (n_0, \dots, n_{k-1}) \in \N^k, (x^0, x^1, \dots, x^k) \in G^{k+1}\mbox{ such that } x^0 = x \wedge x^k = x' \wedge \forall i: x^i \sim_{n_i} x^{i+1}\]
is called an \emph{itinerary} and $c(I) = \sum_{i = 0}^{k-1} 2^{-n_i + 1}$ is called its \emph{cost}. The $n_i$ are referred to as \emph{pivot depths}, and we refer to the steps of an itinerary as \emph{pivot moves}.
\end{definition}

\begin{theorem}
\label{thm:MetrizingHyperbolics} 
Let $(G_i, \phi_i)_{i \in \N}$ be a hyperbolic system of graphs. Then
\[d(x, y) = \inf \{ c(I) \;|\; I\mbox{ is an itinerary from }x\mbox{ to }y\}\]
is the unique maximal pseudometric $d$ on $G^*$ satisfying $d(x, y) \leq d_n(x_n, y_n) / 2^{n - 1}$
for all $n \in \N$. Its $2^{-n+1}$-itinerary restriction $d^n$ satisfies
\[ (d_n(x_n, y_n) - 1) / 2^n \leq d^n(x, y) \leq d_n(x_n, y_n) / 2^{n - 1} \]
for all $n$, and its metric identification metrizes the topological space $G$.
\end{theorem}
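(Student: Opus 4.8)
The plan is to establish the four assertions in turn: that the given $d$ is a pseudometric, that it obeys $d(x,y)\le d_n(x_n,y_n)/2^{n-1}$ for all $n$ and is maximal (hence unique) with this property, that its $2^{-n+1}$-itinerary restriction $d^n$ obeys the stated two-sided estimate, and that the metric identification of $d$ carries the quotient topology of $G=G^*/K$.

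First I would check the pseudometric axioms for $d$. Reflexivity $d(x,x)=0$ comes from the trivial itinerary (or, insisting on a nonempty itinerary, from a single self-loop pivot, whose cost tends to $0$ as its depth grows); symmetry holds because the $G_i$ are undirected, so any pivot move reverses at the same cost; the triangle inequality holds because an itinerary from $x$ to $y$ and one from $y$ to $z$ concatenate to an itinerary from $x$ to $z$ whose cost is the sum. For the inequality $d(x,y)\le d_n(x_n,y_n)/2^{n-1}$, I would lift a geodesic $x_n=v_0,v_1,\dots,v_\ell=y_n$ in $G_n$ (so $\ell=d_n(x_n,y_n)$) to an itinerary: as the $\phi_i$ are vertex-surjective, $G^*\to V_n$ is onto, so one can pick $x^i\in G^*$ with $(x^i)_n=v_i$, $x^0=x$, $x^\ell=y$, all pivot depths $n$, and cost $\ell\cdot 2^{-n+1}$. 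For maximality, if $d'$ is any pseudometric with $d'(u,w)\le d_n(u_n,w_n)/2^{n-1}$ for all $n$, then for every itinerary $I$ from $x$ to $y$ the triangle inequality together with $d_{n_i}((x^i)_{n_i},(x^{i+1})_{n_i})\le 1$ (since $x^i\sim_{n_i}x^{i+1}$) gives $d'(x,y)\le\sum_i d'(x^i,x^{i+1})\le\sum_i 2^{-n_i+1}=c(I)$; taking the infimum over $I$ yields $d'\le d$, and since $d$ itself satisfies the inequalities it is the unique maximal such pseudometric.

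For the sandwich on $d^n$, the key observation is that $d^n(x,y)$ equals the infimum of $c(I)$ over itineraries $I$ from $x$ to $y$ all of whose pivot depths are $\ge n$: a single pivot move of depth $<n$ already costs $\ge 2^{-n+2}>2^{-n+1}$ and so cannot occur inside a $2^{-n+1}$-small $d$-step, hence any chain witnessing a value close to $d^n(x,y)$ refines into an itinerary with all depths $\ge n$ of essentially the same cost, while conversely the pivot moves of such an itinerary are themselves $2^{-n+1}$-small $d$-steps. Granting this, the upper bound $d^n(x,y)\le d_n(x_n,y_n)/2^{n-1}$ is again the geodesic-lifting above, whose pivots all sit at depth $n$. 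The lower bound $d^n(x,y)\ge (d_n(x_n,y_n)-1)/2^n$ is where hyperbolicity enters: iterating the doubling inequality gives $d_{n+j}(x_{n+j},y_{n+j})\ge 2^j(d_n(x_n,y_n)-1)+1$ for all $j$, and one then argues that an itinerary $I$ with all pivot depths $\ge n$ must have $c(I)\ge (d_n(x_n,y_n)-1)/2^n$ — the heuristic being that a pivot move of depth $m$ displaces the $G_m$-coordinate by at most one edge, so matching the exponentially growing graph distances $d_m(x_m,y_m)$ forces enough pivot moves at each depth, and the cost they incur, balanced against the growth factor $2$ per level, cannot drop below this value. I expect this accounting to be the main obstacle: a shallow pivot move can in principle displace a deep coordinate by a large amount, and the precise content of hyperbolicity is exactly that such detours are too expensive to lower the cost; the cleanest route is probably induction on the largest pivot depth occurring in $I$, collapsing the deepest pivots via the doubling inequality.

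Finally I would deduce that the metric identification of $d$ metrizes $G$. The point is that $\{d=0\}=K$: if $(x,y)\in K$ then $x\sim_n y$ for all $n$, so a single depth-$n$ pivot gives $d(x,y)\le 2^{-n+1}$ and hence $d(x,y)=0$; conversely if $d(x,y)=0$ then for each $n$ there is an itinerary of cost $<2^{-n+1}$, whose pivot depths are all $\ge n$, so $d^n(x,y)=0$, and the lower bound just proved forces $d_n(x_n,y_n)\le 1$, i.e. $x\sim_n y$, whence $(x,y)\in K$. Therefore $G^*/\{d=0\}=G^*/K=G$ carries a genuine metric induced by $d$. Since $d(u,w)\le d_n(u_n,w_n)/2^{n-1}$, the map $d$ is continuous on $G^*\times G^*$ for the product topology (points agreeing with $x,y$ on the first $n$ coordinates lie within $d_n(\cdot,\cdot)/2^{n-1}$ of them, which $\to 0$), so it descends to a continuous, hence quotient-topology-continuous, metric on $G^*/K$. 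The identity map from $G^*/K$ with its quotient topology (compact, being a quotient of the compact space $G^*$) to $G^*/K$ with the $d$-metric topology (Hausdorff, being metric) is then a continuous bijection, and so a homeomorphism by Lemma~\ref{lem:IsAHomeomorphism}; thus $d$ metrizes $G$.
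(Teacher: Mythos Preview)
Your overall architecture matches the paper's: pseudometric axioms and maximality are routine, the identification $\{d=0\}=K$ together with the compact-Hausdorff argument handles metrization, and you correctly isolate the lower bound $(d_n(x_n,y_n)-1)/2^n\le d^n(x,y)$ as the only genuinely hard step. The paper organizes things in the same order (and your metrization argument via Lemma~\ref{lem:IsAHomeomorphism} is essentially the same as theirs, which checks sequential continuity of the identity map).

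The gap is precisely where you say it is: you do not actually prove the lower bound, and the approach you sketch---induct on the largest pivot depth, collapsing the deepest pivots via the doubling inequality---is not what the paper does and would be awkward to make work. The paper instead proves a \emph{strengthened} claim: for an itinerary $I$ from $x$ to $x'$ with $d_0(x_0,x'_0)\ge\ell\ge1$, one has $c(I)\ge c_\alpha(I)/2+\ell-1+c_\beta(I)/2$, where $c_\alpha,c_\beta$ are the costs of the first and last pivot moves. This boundary correction is the point: without it the induction leaks a constant each time you split. The induction is lexicographic on (length of itinerary, maximal pivot depth), simultaneously over all hyperbolic graph systems. If the itinerary contains a depth-$0$ pivot, you split there and apply induction on shorter itineraries; the extra $c_\alpha/2+c_\beta/2$ exactly absorbs what would otherwise be lost at the cut. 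If there is no depth-$0$ pivot, you drop $G_0$ from the system (still hyperbolic), which halves all costs and decreases every pivot depth by one; hyperbolicity gives $d_1(x_1,x'_1)\ge 2\ell-1$, and induction on the shifted system (same length, smaller maximal depth) closes the loop. So the key idea you are missing is not to collapse from the top down but to peel the shallowest level off, and to carry the first/last-edge contributions through the induction so that splitting at a shallow pivot is lossless.
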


\begin{proof}
We first prove that the formulas imply that the metric identification metrizes $G$. First let us show that the distance between two points $x, y \in G^*$ is nonzero if and only if the points are not in the $K$-relation. For this, consider $x,y \in G^*$ with $d(x, y) = 0$. In particular there must exist itineraries where all individual pivot moves are arbitrarily cheap (i.e.\ pivot depths are arbitrarily large), meaning $d(x,y) = d^n(x,y)$ for all $n$. Then $(d_n(x_n, y_n) - 1) / 2^n \leq d^n(x, y) = d(x,y) = 0$, so we must have $d_n(x_n, y_n) \leq 1$ for all $n$. Conversely, if $x_n \sim_n y_n$ for all $n$, then there are itineraries from $x$ to $y$ with a single pivot move of arbitrarily low cost.

To see that the topologies are also the same, note that the metric identification by $d$ is Hausdorff and $G$ is compact, so it is enough to show that the map taking $K$-equivalence classes to $d$-equivalence classes is continuous. The space $G$ is sequential (we already know it's metrizable by Lemma~\ref{lem:QuotientMapsMetrizable}) so it is enough to show sequential continuity. If $x^i \rightarrow x$ in $G$ then consider the $K$-closure $C$ of the set $\bigcup_{a \in V_n \\ (a, x_n) \notin E_n} [a]$. This corresponds to a closed set in $G$, and by the normality of $G$, we find an open set $U \subset G$ such that $x \in U$ and $U \cap C = \emptyset$. It follows that $x^i \in U$ for large enough $i$, meaning $x^i_n \sim_n x_n$ for large enough $i$. Clearly it follows that $d(x^i, x) \rightarrow 0$.

To see that $d$ is the maximal pseudometric $d'$ satisfying
\[ d'(x, y) \leq d_n(x_n, y_n) / 2^{n - 1}, \]
observe that $d$ satisfies this by using itineraries with only depth-$n$ pivot moves, and $d' \leq d$ for any $d'$ satisfying it by the triangle inequality.

Even if $d_n(x, y) = 0$ and $x \neq y$, we will have to pay $2^{-n+1}$ to move from $x$ to $y$ with a depth-$n$ pivot move (recall that we have self-loops in all nodes of each $G_n$, so we can always perform such a pivot move).

Note that by some simple limit arguments, $d^n(x, y)$ is precisely the infimum of costs of itineraries $x$ to $y$ satisfying $n_i \geq n$ for all~$i$. The upper bound $d^n(x, y) \leq d_n(x_n, y_n) / 2^{n - 1}$ then comes directly from the itinerary corresponding to a path of length $d_n(x_n, y_n)$. It now suffices to prove the lower bound.

Instead of
\[ d_n(x_n, y_n) / 2^n - 2^{-n} \leq d^n(x, y) \]
we show the following stronger technical claim (w.l.o.g. for $n = 0$):
suppose $d_0(x, x') \geq \ell \geq 1$ and $I$ is an itinerary from $x$ to $x'$. Then $c(I) \geq c_\alpha(I)/2 + \ell - 1 + c_\beta(I)/2$ where $c_\alpha, c_\beta$ are the costs of the first and last edges.

We prove this by induction, simultaneously for all pairs of points in all hyperbolic graph systems, first on length (as a tuple, i.e. the number $k+1$ in Definition~\ref{def:Itinerary}) then on depth (maximum pivot level $\max n_i$). Suppose $I$ has length $k = 2$ (minimal possible). Since $\ell - 1 = 0$, we just need to show its cost is at least half the sum of the costs of its first and last edges. Since $k = 2$, $I$ is a single edge and we have $c(I) = c_\alpha(I)/2 + c_\beta(I)/2$ as desired.

Now suppose $I$ has length $k > 2$. First suppose it contains a move with $n_i = 0$ (i.e. a pivot move of cost $2$). Suppose first $i \in [1, k-2]$ so it is not the first or last move. Split $I$ into $I_1 = (x^0, \dots, x^i)$ and $I_2 = (x^{i+1},\dots,x^k)$ and suppose $d_0(x^0, x^i) = m$ so $d_0(x^{i+1}, x^k) \geq \ell - m - 1$. Then by induction on length
\[ c(I_1) \geq c_{\alpha}(I)/2 + m - 1 + c_{\beta}(I_1)/2, \;\; c(I_2) \geq c_{\alpha}(I_2)/2 + \ell - m - 1 - 1 + c_\beta(I)/2 \]
so
\begin{align*}
c(I) &= c(I_1) + 2 + c(I_2) \\
& \geq c_{\alpha}(I)/2 + m - 1 + c_{\beta}(I_1)/2 + 2 + c_{\alpha}(I_2)/2 + \ell - m - 1 - 1 + c_\beta(I)/2 \\
& \geq c_{\alpha}(I)/2 + \ell - 1 + c_\beta(I)/2
\end{align*}

If $i = 0$, then let $I' = (x^1,\dots,x^k)$. We have $d_0(x^1, x^k) \geq \ell - 1$ so
\[ c(I') \geq c_\alpha(I')/2 + \ell - 2 + c_\beta(I')/2. \]
Then since $c_\alpha(I) = 2$ we have
\[ c(I) \geq 2 + c_\alpha(I')/2 + \ell - 2 + c_\beta(I')/2 \geq \ell + c_\beta(I)/2 = c_\alpha(I)/2 + \ell - 1 + c_\beta(I)/2. \]
The case where $i = k - 1$ is symmetric.

Now, suppose no moves with cost $2$ are used. Then $x^i \sim_1 x^{i+1}$ for all $i$ meaning $d_1(x^0, x^k) \geq 2\ell - 1$ by hyperbolicity. Observe that if we drop the graph $G_0$ we obtain another system of graphs which is obviously still hyperbolic, and the effect on costs of itineraries is that some itineraries are lost, and the remaining ones have their costs doubled. Let $y^i = \sigma(x^i)$ for each $i$ (that is, drop the $G_0$-component of each $x^i$) and $y = \sigma(x), y' = \sigma(x')$. Then $y^i$ form an itinerary $I'$ from $y$ to $y'$ with pivot sequence $(n_0-1,n_1-1,\dots,n_{k-1}-1)$, in this shifted hyperbolic graph system.

The maximal pivot depth $\max (n_i - 1)$ is one smaller, and the length of the itinerary is the same, so we can apply induction. We compute
\[ c(I') \geq c_{\alpha}(I')/2 + 2\ell - 1 - 1 + c_{\beta}(I')/2. \]
Since $c_{\alpha}(I') = 2c_{\alpha}(I)$ and $c_{\beta}(I') = 2c_{\beta}(I)$ and in general all costs are twice cheaper in $I$ than in $I'$. It follows that
\begin{align*}
c(I) &\geq (c_{\alpha}(I')/2 + 2\ell - 2 + c_{\beta}(I')/2)/2 \\
& = (c_{\alpha}(I')/4 + \ell - 1 + c_{\beta}(I')/4 \\
&= c_{\alpha}(I)/2 + \ell - 1 + c_{\beta}(I)/2
\end{align*}
as required.
\end{proof}

\begin{example}
The interval $[0,1]$ can be represented by repeated subdivision of paths. Take $G_i = P_{2^i}$, the path graph with $2^i$ nodes (with implicit self-loops), labeled in order along the path with the words $(0^i, 0^{i-1}1, \dots, 1^i)$. Let $\phi_i(w0) = \phi(w1) = w$. The following picture shows the first four levels $i = 0,1,2,3$. The vertical edges are edges of the graphs, and the horizontal edges (with arrow tips) depict the homomorphisms.
\begin{center}
\begin{tikzpicture}[>=stealth]
\node[draw,circle] (w) at (0,0.0) {};
\node[draw,circle] (w0) at (3,-0.875) {};
\node[draw,circle] (w1) at (3,0.875) {};
\node[draw,circle] (w00) at (6,-1.3125) {};
\node[draw,circle] (w01) at (6,-0.4375) {};
\node[draw,circle] (w10) at (6,0.4375) {};
\node[draw,circle] (w11) at (6,1.3125) {};
\node[draw,circle] (w000) at (9,-1.53125) {};
\node[draw,circle] (w001) at (9,-1.09375) {};
\node[draw,circle] (w010) at (9,-0.65625) {};
\node[draw,circle] (w011) at (9,-0.21875) {};
\node[draw,circle] (w100) at (9,0.21875) {};
\node[draw,circle] (w101) at (9,0.65625) {};
\node[draw,circle] (w110) at (9,1.09375) {};
\node[draw,circle] (w111) at (9,1.53125) {};
\draw (w0) -- (w1);
\draw (w00) -- (w01);
\draw (w01) -- (w10);
\draw (w10) -- (w11);
\draw (w000) -- (w001);
\draw (w001) -- (w010);
\draw (w010) -- (w011);
\draw (w011) -- (w100);
\draw (w100) -- (w101);
\draw (w101) -- (w110);
\draw (w110) -- (w111);
\draw[->] (w0) -- (w);
\draw[->] (w1) -- (w);
\draw[->] (w00) -- (w0);
\draw[->] (w01) -- (w0);
\draw[->] (w10) -- (w1);
\draw[->] (w11) -- (w1);
\draw[->] (w000) -- (w00);
\draw[->] (w001) -- (w00);
\draw[->] (w010) -- (w01);
\draw[->] (w011) -- (w01);
\draw[->] (w100) -- (w10);
\draw[->] (w101) -- (w10);
\draw[->] (w110) -- (w11);
\draw[->] (w111) -- (w11);
\end{tikzpicture}

\end{center}
Points of $\{0,1\}^\N$ are in natural correspondence with rays in this graph. The distance from the path $x$ corresponding to $0^\infty$ to the path $y$ corresponding to $1^\infty$, in the level-$i$ graph is $2^i-1$, and thus the cost on level $i = 0$ is $2$, and on all other levels the cost is $(2^i-1)/2^{i-1}=2-1/2^{i-1}$. This gives the defining upper bounds on the distance in the metric constructed in the theorem, and in particular we obtain that $d(x,y) \leq 1$.

Looking at this diagram only, the distance indeed seems to be $1$, as the optimal itinerary that uses no steps on levels $>3$ indeed only takes a single step on level $1$. Indeed, it is not hard to show that the metric in the graph system is precisely the Euclidean distance, interpreting $x$ and $y$ as binary representations of real numbers. The theorem does not allow us to deduce that $1$ is optimal from this prefix of the graph system: in fact even if we decide to only take steps of costs $1/4$ or less, i.e.\ consider the restriction $d^3$ (whose distances are at least as large as those of $d$), the lower bound obtained for the distance from $x$ to $y$ is $3/4$, since the formula gives
\[ d^3(x, y) \geq d_3(x_3, y_3) - 1) / 2^3 = 3/4. \]

We could not hope to have a better lower bound, without knowing exactly how the graph process continues: Consider the infinite periodic continuation beginning as follows:
\begin{center}
\begin{tikzpicture}[>=stealth]
\node[draw,circle] (w0w000) at (0,-1.53125) {};
\node[draw,circle] (w0w001) at (0,-1.09375) {};
\node[draw,circle] (w0w010) at (0,-0.65625) {};
\node[draw,circle] (w0w011) at (0,-0.21875) {};
\node[draw,circle] (w0w100) at (0,0.21875) {};
\node[draw,circle] (w0w101) at (0,0.65625) {};
\node[draw,circle] (w0w110) at (0,1.09375) {};
\node[draw,circle] (w0w111) at (0,1.53125) {};
\node[draw,circle] (w1w000) at (3,-1.53125) {};
\node[draw,circle] (w1w001) at (3,-1.09375) {};
\node[draw,circle] (w1w010) at (3,-0.65625) {};
\node[draw,circle] (w1w011) at (3,-0.21875) {};
\node[draw,circle] (w1w100) at (3,0.21875) {};
\node[draw,circle] (w1w101) at (3,0.65625) {};
\node[draw,circle] (w1w110) at (3,1.09375) {};
\node[draw,circle] (w1w111) at (3,1.53125) {};
\node[draw,circle] (w2w000) at (6,-1.53125) {};
\node[draw,circle] (w2w001) at (6,-1.09375) {};
\node[draw,circle] (w2w010) at (6,-0.65625) {};
\node[draw,circle] (w2w011) at (6,-0.21875) {};
\node[draw,circle] (w2w100) at (6,0.21875) {};
\node[draw,circle] (w2w101) at (6,0.65625) {};
\node[draw,circle] (w2w110) at (6,1.09375) {};
\node[draw,circle] (w2w111) at (6,1.53125) {};
\node[draw,circle] (w3w000) at (9,-1.53125) {};
\node[draw,circle] (w3w001) at (9,-1.09375) {};
\node[draw,circle] (w3w010) at (9,-0.65625) {};
\node[draw,circle] (w3w011) at (9,-0.21875) {};
\node[draw,circle] (w3w100) at (9,0.21875) {};
\node[draw,circle] (w3w101) at (9,0.65625) {};
\node[draw,circle] (w3w110) at (9,1.09375) {};
\node[draw,circle] (w3w111) at (9,1.53125) {};
\draw (w0w000) -- (w0w001);
\draw (w0w010) -- (w0w011);
\draw (w0w100) -- (w0w101);
\draw (w0w110) -- (w0w111);
\draw (w1w000) -- (w1w001);
\draw (w1w010) -- (w1w011);
\draw (w1w100) -- (w1w101);
\draw (w1w110) -- (w1w111);
\draw (w2w000) -- (w2w001);
\draw (w2w010) -- (w2w011);
\draw (w2w100) -- (w2w101);
\draw (w2w110) -- (w2w111);
\draw (w3w000) -- (w3w001);
\draw (w3w010) -- (w3w011);
\draw (w3w100) -- (w3w101);
\draw (w3w110) -- (w3w111);
\draw (w1w000) -- (w0w000);
\draw (w1w001) -- (w0w001);
\draw (w1w010) -- (w0w010);
\draw (w1w011) -- (w0w011);
\draw (w1w100) -- (w0w100);
\draw (w1w101) -- (w0w101);
\draw (w1w110) -- (w0w110);
\draw (w1w111) -- (w0w111);
\draw (w2w000) -- (w1w000);
\draw (w2w001) -- (w1w001);
\draw (w2w010) -- (w1w010);
\draw (w2w011) -- (w1w011);
\draw (w2w100) -- (w1w100);
\draw (w2w101) -- (w1w101);
\draw (w2w110) -- (w1w110);
\draw (w2w111) -- (w1w111);
\draw (w3w000) -- (w2w000);
\draw (w3w001) -- (w2w001);
\draw (w3w010) -- (w2w010);
\draw (w3w011) -- (w2w011);
\draw (w3w100) -- (w2w100);
\draw (w3w101) -- (w2w101);
\draw (w3w110) -- (w2w110);
\draw (w3w111) -- (w2w111);
\draw (w0w000) -- (w0w001);
\draw (w0w001) -- (w0w010);
\draw (w0w010) -- (w0w011);
\draw (w0w011) -- (w0w100);
\draw (w0w100) -- (w0w101);
\draw (w0w101) -- (w0w110);
\draw (w0w110) -- (w0w111);
\end{tikzpicture}

\end{center}
In other words, suppose $G'_0 = G_0, G_1' = G_1, G_2' = G_2, G_3' = G_3$ are as before, but $G_n' = 4P_2$ for larger $n$, and the homomorphisms go as described in the diagram. This system is trivially hyperbolic, but the space is no longer the interval, but rather the discrete space on four points. Let us label the new nodes by repeating the last bit in the node at level $3$, and associate infinite words to infinite rays as before. In this new graph system, the distance from any path extending $x_{[0,3]}$ to any path extending $y_{[0,3]}$ is precisely $3/4$, realized by itineraries of the form
\[ ((m, 3, m, 3, m, 3, m), (x, 001^\infty, 010^\infty, 01^\infty, 10^\infty, 101^\infty, 110^\infty, y) \]
where $m \geq 3$ is arbitrary, corresponding to cost $4\cdot 2^{-m+1}+3\cdot 2^{-3+1}=3/4+8/2^m$. \qee
\end{example}

In the previous example, the lower and upper bounds given by Theorem~\ref{thm:MetrizingHyperbolics} converge (to the Euclidean distance) for any two points. A hyperbolic graph system and points in it can be given to an algorithm in the form of an oracle for the graphs $G_i$, homomorphisms $\phi_i$ and the graph elements $x_i \in G_i$ identifying the point. By definition, the metric is upper semicomputable in this data, i.e.\ one can obtain a converging sequence of upper approximations by inspecting larger and larger prefixes of the graph system, and in the previous example, one also obtains.

However, unlike in the previous example, it is difficult to tell in general how fast this convergence is, and indeed, we do not at present know whether the distance between given points is computable (uniformly in the points and the system), nor whether the information obtained from the formulas in Theorem~\ref{thm:MetrizingHyperbolics} always converges to the correct distance. We certainly believe this is the case for the sofic-per-sofic systems (and more generally automatic spaces) studied in the next section. However, an easy calculation shows that it at least gives a $5/2$-approximation, in the following sense:

\begin{theorem}
\label{thm:Computability}
Given a hyperbolic system of graphs $G^* = (G_i, \phi_i)_{i \in \N}$ and two points $x, y \in G^*$, one can effectively enumerate
\begin{itemize}
\item an increasing sequence $(r_m)_m$ of rationals such that $d(x, y) \in [r_m, \frac52 r_m + 2^{-m}]$ for all $j$, and
\item a decreasing sequence $(s_m)_m$ of rationals converging to $d(x, y)$.
\end{itemize}
\end{theorem}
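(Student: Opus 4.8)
The proof naturally splits into producing the two sequences separately; the decreasing one is essentially just upper semicomputability of the metric, while the increasing one is where the constant $\tfrac52$ and the linearity of the hyperbolicity function are used.

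\textbf{The decreasing sequence.} I would work directly with itineraries (Definition~\ref{def:Itinerary}). For each $N$ form the finite weighted graph $H_N$ with vertex set $V_N$, joining $u,v\in V_N$ by an edge of weight $\min\{2^{-\ell+1}\mid 0\le\ell\le N,\ (\pi^N_\ell u,\pi^N_\ell v)\in E_\ell\}$, where $\pi^N_\ell=\phi_\ell\circ\cdots\circ\phi_{N-1}$ (no edge if that set is empty), and let $s_N$ be the length of a shortest $x_N$-to-$y_N$ path in $H_N$. Each $s_N$ is a rational computable from the oracle. Since every vertex of $V_N$ extends to a point of $G^*$ (by König's lemma, the graphs being finite), a path in $H_N$ lifts to an itinerary of equal cost, so $s_N\ge d(x,y)$; a path in $H_N$ also lifts to a path of equal weight in $H_{N+1}$ (the projections used are unaffected), so $s_{N+1}\le s_N$; and every itinerary uses pivot depths bounded by some $N$, hence projects to an $H_N$-path of weight at most its cost, so $\inf_N s_N=d(x,y)$. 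Thus $(s_N)_N$ is the desired decreasing sequence. (If $x_0,y_0$ lie in different components of $G_0$ there is no itinerary and $d(x,y)=+\infty$; this is consistent with allowing infinite distances, and one simply takes $s_N=+\infty$ until a path appears.)

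\textbf{The increasing sequence.} Write $\delta_n=d_n(x_n,y_n)$, effectively computable from the oracle, and recall from Theorem~\ref{thm:MetrizingHyperbolics} that $(\delta_n-1)/2^n\le d^n(x,y)\le \delta_n/2^{n-1}$ for the $2^{-n+1}$-itinerary restriction $d^n$. The key facts are that $d=d^0\le d^1\le d^2\le\cdots$ and that $d^n=d$ whenever $n$ does not exceed the minimal pivot depth of some near-optimal itinerary (such an itinerary is still a witness for $d^n$). Consequently, taking $j$ to be the minimal pivot depth of an itinerary of cost $\le d(x,y)+\varepsilon$ and letting $\varepsilon\to 0$ suitably, one gets a level $n$ with
\[ \frac{\delta_n-1}{2^n}\ \le\ d(x,y)\ \le\ \frac{\delta_n}{2^{n-1}}\ =\ \frac{2\delta_n}{2^n}, \]
whose two sides have ratio $2\delta_n/(\delta_n-1)\le\tfrac52$ as soon as $\delta_n\ge 5$. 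Combined with the always-valid lower bounds $\rho_0:=\max(0,\delta_0-1)$ and, for $n\ge 1$, $\rho_n:=\max\bigl(0,\min(\delta_n-1,4)\bigr)/2^n$ (the clamping by $4$ records that an itinerary with a pivot of depth $<n$ already costs $\ge 2^{-(n-1)+1}=4/2^n$), I would have the algorithm, for each $m$, search over $n$ and $N$ in parallel for a pair with $s_N\le\tfrac52\rho_n+2^{-m}$, and then output $r_m:=\max(r_{m-1},\rho_n)$. Once the search halts the required inclusion is immediate: $r_m\le d(x,y)$ since each $\rho_n$ is a genuine lower bound, and $d(x,y)\le s_N\le\tfrac52\rho_n+2^{-m}\le\tfrac52 r_m+2^{-m}$; and $(r_m)_m$ is increasing by construction.

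\textbf{Main obstacle.} The work is in showing this search always terminates — equivalently, that $\sup_n\rho_n\ge\tfrac25 d(x,y)$, for then $\tfrac52\rho_n+2^{-m}>d(x,y)=\lim_N s_N$ for a suitable $n$, and $\lim_N s_N$ eventually drops below it. This is the ``easy calculation'' mentioned before the theorem, but it requires a short case analysis that genuinely uses the linear hyperbolicity together with the monotonicity $\delta_{n+1}\ge 2\delta_n-1$: the case $\delta_n\ge 5$ at the level found above (where the sandwich alone yields $\tfrac52$), the case $\delta_n\le 4$ with $n$ large enough that $d(x,y)\le 8/2^n\le 2^{-m}$ (take $r_m=0$), and the small-$n$, small-$\delta_n$ cases, where one instead reads off a good lower bound from the upper estimates $b_k=\max(1,\delta_k)/2^{k-1}$ at small levels $k$. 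I expect the bookkeeping in this last case — and checking that the clamping constant $4$ makes everything line up to exactly $\tfrac52$ — to be the only delicate point; the rest is routine.
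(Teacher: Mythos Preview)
Your upper-semicomputability argument for the decreasing sequence is fine and matches the paper. The increasing sequence, however, has a genuine gap: the ``main obstacle'' you flag — that $\sup_n \rho_n \ge \tfrac25\,d(x,y)$ — is not just delicate bookkeeping but is \emph{false} in general, so your search need not terminate.

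Here is a counterexample. Take $G_0$ to be the path $a\!-\!b\!-\!c\!-\!d\!-\!e$ (with self-loops), and for every $n\ge 1$ let $G_n$ consist of five isolated vertices with self-loops, with $\phi_0$ the obvious bijection onto $V(G_0)$ and $\phi_n=\id$ for $n\ge 1$. This is a hyperbolic system (the only nontrivial check is at level $0$, where distinct $G_0$-vertices lie in distinct $G_1$-components, so $d_1=\infty\ge 2d_0-1$). For $x$ over $a$ and $y$ over $e$ one has $\delta_0=4$ and $\delta_n=\infty$ for $n\ge 1$. Any pivot at depth $\ge 1$ requires adjacency in $G_1$, hence fixes the $G_0$-coordinate; so every itinerary needs at least four depth-$0$ pivots, giving $d(x,y)=8$ exactly. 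But your lower bounds are $\rho_0=\delta_0-1=3$ and $\rho_n=4/2^n\le 2$ for $n\ge 1$, so $\sup_n\rho_n=3<\tfrac25\cdot 8=3.2$. Consequently $\tfrac52\rho_n+2^{-m}\le 7.5+2^{-m}<8\le s_N$ for all $n,N$ once $m\ge 2$, and your algorithm never halts. (If you prefer all $\delta_n$ finite, the same phenomenon persists: the ratio $8/3$ between the best upper bound $2\delta_0$ and the best $\rho_n$ exceeds $\tfrac52$, and the information in the numbers $\delta_n$ alone cannot close this gap.)

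The underlying issue is structural: your $\rho_n$ depend only on the single numbers $\delta_n=d_n(x_n,y_n)$, and these do not determine $d(x,y)$ to within a factor $\tfrac52$. The paper's $r_m$ is defined quite differently, as the infimum over \emph{$m$-truncated itineraries}: sequences of points using genuine pivots at depths $<m$ together with isolated ``teleport'' steps at depth $m$, the latter priced at $(d_m(\cdot,\cdot)-1)/2^m$ using the full $G_m$-metric between the intermediate points (not just between $x_m$ and $y_m$). This extra graph-theoretic information is exactly what is needed; in the example above one gets $r_m\to 8$, and the $\tfrac52$ comes from comparing each teleport cost $(d_m-1)/2^m$ with the cost $2d_m/2^m$ of replacing it by actual depth-$m$ pivots, combined with the fact that teleports must be separated by a step of cost $\ge 2^{-m+2}$.
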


\begin{proof}
The existence of the sequence of rational numbers converging to $d(x,y)$ from above is simply upper semicomputability. In what follows, we prove the first item.

Define an \emph{$m$-truncated itinerary} as a tuple
\[I=((n_0, \dots, n_{k-1}), (x^0, x^1, \dots, x^k))\]
with $n_i \leq n$ for all $i$, such that $x^0 = x$, $x^k = x'$ and
\[ \forall i \in [0,k-1]: (n_i < m \implies x^i \sim_{n_i} x^{i+1}) \wedge \forall i \in [0,k-2]: \min(n_i, n_{i+1}) < m \]
The \emph{lower cost} of an $m$-truncated itinerary is $c(I) = \sum_{i = 0}^{k-1} c(x^i, x^{i+1}, n_i)$, where
\[ c(x_i, x_{i+1}, n_i) = \left\{\begin{array}{ll}
2^{-n_i + 1}, & \mbox{if } n_i < m \\
d_m(x^i_m, y^{i+1}_m) / 2^m - 2^{-m}, & \mbox{otherwise.}
\end{array}\right.\]

The differences between an truncated itinerary and an actual itinerary are that an itinerary can take steps of arbitrarily large depths, while truncated itineraries cannot take steps beyond depth $m$ and cannot take two consecutive steps at that depth. On the other hand, in a truncated itinerary, there is no requirement for the moves on the level $m$ -- they may teleport between any two points.

Recall the inequalities
\[ d_n(x_n, y_n) / 2^n - 2^{-n} \leq d^n(x, y) \leq d_n(x_n, y_n)/2^{-n+1} \]
proved in \ref{thm:MetrizingHyperbolics}. Let $I = ((n_0, \dots, n_{k-1}), (x^0, x^1, \dots, x^k))$ be an arbitrary itinerary from $x$ to $y$. Replace all maximal subitineraries $((n_j, \dots, n_\ell), (x^j, \dots, x^{\ell+1}))$, where $n_i \geq m$ for all $i \in [j, \ell]$, by simply $((m), (x^j, x^{\ell+1}))$. What results is an $m$-truncated itinerary $I'$, and the first inequality implies that $c(I') \leq c(I)$

On the other hand, consider an arbitrary $m$-truncated itinerary $I'$ from $x$ to $y$. We can replace \emph{missing steps} $((m), (x_j, x_{j+1}))$ by pivot moves to obtain an itinerary from $x$ to $y$. The cost $d_m(x^i_m, y^{i+1}_m) / 2^m - 2^{-m}$ in the truncated itinerary is replaced by $d_m(x^i_m, y^{i+1}_m)/2^{m-1}$ in the actual itinerary. Thus, for each replaced missing step we add a cost of $2^{-m}$ and then (at most) multiply the entire cost by $2$ (the costs of steps on levels $p < m$ are not modified).

We get that for all $m$-truncated itineraries $I'$ there exists an itinerary $I$ such that $c(I) \leq 2c(I') + t2^{-m+1}$, where $t$ is the number of missing steps. Missing steps are separated by at least $t-1$ steps on levels $p < m$ each of whose cost is at least $2^{-m+2}$, so $c(I') \geq (t-1) 2^{-m+2}$, from which we get
\[ c(I')/2 + 2^{-m+1} \geq t 2^{-m+1}. \]
Substituting in $c(I) \leq 2c(I') + t2^{-m+1}$, we have $c(I) \leq \frac52c(I') + 2^{-m+1}$.

Now define
\[ r_m = \inf \{c(I') \;|\; I' \mbox{ is an $m$-truncated itinerary from $x$ to $y$}\}. \]
We have $r_j \leq c(I') \leq c(I)$ for any itinerary $I$ from $x$ to $y$ and $I'$ its $m$-truncation, so $r_j \leq \inf c(I) = d(x, y)$. On the other hand, for any truncated itinerary $I'$ there exists an itinerary $I$ such that $c(I) \leq \frac52 c(I') + 2^{-m+1}$, which implies $d(x, y) \leq \frac52 r_m + 2^{-m+1}$.

The $r_m$ now satisfy $d(x, y) \in [r_m, \frac52 r_m + 2^{-m+1}]$. Once we have such a sequence, we can make it nondecreasing by replacing $r_m$ with $\max_{i < m} r_i$. We can then replace $2^{-m+1}$ by $2^{-m}$ and can make the sequence strictly increasing, by offsetting the sequence and lowering the values artificially.
\end{proof}

\subsection{Application to subshifts}

It is basic topology that a \ttfrac{subshift}{subshift} dynamical system is metrizable as a topological space. As we want to obtain computable metrics in \ttfrac{subshift}{subshift} systems we understand well, we give an explicit construction of such a metric. In the \ttfrac{sofic}{sofic} case, we obtain an upper semicomputable metric that is given by a similar formula as the usual Cantor metric. For the terminology used in this section, see Section \ref{sec:Hyperbolic}.

\begin{definition}
\label{def:ShiftGraph}
Let $Y$ be a subshift and $Z \subset Y^2$ a subshift equivalence relation. If $u, u' \in \B_{2n+1}(Y)$, write $u \sim_n u'$ if $([u] \times [u']) \cap Z \neq \emptyset$, and define ${\sim} = \bigcup_n {\sim_n}$. Define $\mathcal{G}_{(Y,Z)}$ as the system of graphs $G_i$ where $V(G_n) = \B_{2n+1}(Y)$, $E(G_n) = \{(u, v) \;|\; u \sim v\}$, write $d_n$ for the graph ``metric'' of $G_n$ ($d_n(v,v)=1$ where the interpretation is that the shortest route from a vertex to itself is via a self-loop) and define the projections $\pi_n : G_{n+1} \to G_n$ by $\pi_n(u) = u_{[1,2n-1]}$.
\end{definition}

\begin{lemma}
\label{lem:SubshiftPerSubshiftPrehyperbolic}
The graph system $\mathcal{G}_{(Y,Z)}$ is $f$-hyperbolic for some $f$ and $\mathcal{G}_{(Y,Z)}$ is pseudometrized by the unique pseudometric $d$ such that
\[ \forall n: d(x, y) \leq d_{f(n)}(x_{[-f(n),f(n)]}, y_{[-f(n),f(n)]}) / 2^{n - 1} .\]
The induced metric topologizes $Y/Z$.
\end{lemma}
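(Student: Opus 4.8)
The plan is to recognize $\mathcal{G}_{(Y,Z)}$ as an instance of the graph system produced in Lemma~\ref{lem:ClosedPerClosedPrehyperbolic}, so that prehyperbolicity and the identification $\underset{\leftarrow}\lim G_i\cong Y/Z$ come for free, and then to apply Theorem~\ref{thm:MetrizingHyperbolics} after telescoping. For the $\Z$-case, enumerate $\Z$ in the zig-zag order $0,1,-1,2,-2,\dots$; this identifies $\Sigma^\Z$ with $\prod_{i\in\N}\Sigma$ in such a way that the length-$(2n{+}1)$ prefix of a point is exactly its centered window $x_{[-n,n]}$. Under this identification $Y$ is a closed set $C$, $Z$ is a closed equivalence relation $R$ on it, and (after the cosmetic reindexing sending level $n$ of Definition~\ref{def:ShiftGraph} to level $2n{+}1$ of Lemma~\ref{lem:ClosedPerClosedPrehyperbolic}, and noting that $B_{2n+1}(Y)=\{x_{[-n,n]}\mid x\in Y\}$ by shift-invariance) the system $\mathcal{G}_{(Y,Z)}$ is, up to composing along the subsequence of odd levels, precisely $(G_n,\phi_n)_n$ from that lemma. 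Hence $\mathcal{G}_{(Y,Z)}$ is prehyperbolic and $\underset{\leftarrow}\lim G_i\cong C/R=Y/Z$; the $\N$-case (with $B_{n+1}(Y)$ and one-sided windows) is even more directly such an instance, needing no reindexing.

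Since $\mathcal{G}_{(Y,Z)}$ is prehyperbolic it is $f$-hyperbolic for some increasing $f$ with $f(n)\geq n$. Put $j_0=0$ and $j_{k+1}=f(j_k)$; then $d_{j_{k+1}}\geq 2d_{j_k}-1$ by $f$-hyperbolicity, so the subsystem $(G_{j_k})_k$ with composed projections is a hyperbolic system of graphs with the same inverse limit. Theorem~\ref{thm:MetrizingHyperbolics} applied to it produces the unique maximal pseudometric $d$ satisfying $d(x,y)\leq d_{j_k}(x_{[-j_k,j_k]},y_{[-j_k,j_k]})/2^{k-1}$ for all $k$, whose metric identification topologizes $\underset{\leftarrow}\lim G_{j_k}\cong Y/Z$. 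Relabelling the hyperbolicity function of $\mathcal{G}_{(Y,Z)}$ to be $k\mapsto j_k$ (legitimate: the inequality between levels $j_k$ and $j_{k+1}$ is exactly the one just used) makes this $d$ and this $f$ the ones asserted, and the final sentence of the lemma is then the corresponding clause of Theorem~\ref{thm:MetrizingHyperbolics}.

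The one point requiring care is the index bookkeeping in the second paragraph: arranging for the single symbol $f$ to serve simultaneously as a valid hyperbolicity function for $\mathcal{G}_{(Y,Z)}$ and as the reindexing along which the displayed bound is stated, and reading ``unique'' in the statement in the sense of Theorem~\ref{thm:MetrizingHyperbolics}, namely the (automatically unique) maximal pseudometric dominated by the prescribed bounds. Everything else is a transcription of Lemma~\ref{lem:ClosedPerClosedPrehyperbolic} and Theorem~\ref{thm:MetrizingHyperbolics}, with only the trivial passage from one-sided prefixes to centered windows; one could alternatively avoid the appeal to Lemma~\ref{lem:ClosedPerClosedPrehyperbolic} and re-run its short compactness argument directly for centered windows — extract a limit of ``bridging'' $4$-tuples $(x,z,\hat z,y)$, observe $z=\hat z$, and invoke transitivity of $Z$.
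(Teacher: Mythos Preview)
Your proposal is correct and follows exactly the paper's approach: the paper's own proof is the single sentence ``This is a direct application of Lemma~\ref{lem:ClosedPerClosedPrehyperbolic} and Theorem~\ref{thm:MetrizingHyperbolics} (after telescoping),'' and you have spelled out precisely those three steps, including the zig-zag identification of $\Sigma^\Z$ with a one-sided product (which the paper leaves implicit) and the telescoping bookkeeping. Your third paragraph's caution about conflating the hyperbolicity function with the telescoping sequence is appropriate---the paper's statement and its one-line proof are equally loose on this point, and your reading of ``unique'' as ``unique maximal'' matches Theorem~\ref{thm:MetrizingHyperbolics}.
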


\begin{proof}
This is a direct application of Lemma~\ref{lem:ClosedPerClosedPrehyperbolic} and Theorem~\ref{thm:MetrizingHyperbolics} (after telescoping).
\end{proof}

Let $\ccfrac{Y}{Z}$ be \ttfrac{subshift}{subshift} and let $\mathcal{G}_{(Y,Z)}$ and the $G_n$ be as above.

\begin{theorem} 
\label{thm:SoficSoficHyperbolic}
If $\ccfrac{Y}{Z}$ is \ttfrac{sofic}{sofic}, then $\mathcal{G}_{(Y,Z)}$ is virtually hyperbolic, and for some $c \in \N$, 
\[ \forall n: d(x, y) \leq d_{cn}(x_{[-cn,cn]}, y_{[-cn,cn]}) / 2^{n - 1} \]
defines a unique maximal metric $d$ on $Y/Z$, and given two points in $Y$, one can effectively compute $\frac52$-approximations to $d(x, y)$ in the sense of Theorem~\ref{thm:Computability}.
\end{theorem}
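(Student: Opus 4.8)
The plan is to strengthen the prehyperbolicity of $\mathcal{G}_{(Y,Z)}$ furnished by Lemma~\ref{lem:SubshiftPerSubshiftPrehyperbolic} into \emph{virtual} hyperbolicity, using a finite presentation of the sofic relation $Z$; the metric and its computability then drop out of Theorem~\ref{thm:MetrizingHyperbolics} and Theorem~\ref{thm:Computability} after telescoping. By the lemma preceding Theorem~\ref{thm:MetrizingHyperbolics} (and its last sentence), it suffices to exhibit a constant $c \in \N$, depending only on the presentations of $Y$ and $Z$, such that for all $n$ and all $x, y \in Y$,
\[ d_{n+c}\bigl(x_{[-(n+c),n+c]}, y_{[-(n+c),n+c]}\bigr) \leq 2 \implies d_n\bigl(x_{[-n,n]}, y_{[-n,n]}\bigr) \leq 1, \]
for then $f(n) = n+c$ is an admissible hyperbolicity function and $\mathcal{G}_{(Y,Z)}$ is virtually hyperbolic. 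We treat $M = \Z$; the case $M = \N$ is identical with one-sided windows.

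To prove the implication, fix a finite graph $\mathcal{A}$ with state set $Q$, $N := |Q|$, and edges labelled over $\Sigma^2$, whose bi-infinite walk labels are exactly $Z$. Form the \emph{composition automaton} $\mathcal{B}$ on the state set $Q \times Q$, with an edge $(p,q) \to (p',q')$ labelled $(\alpha,\gamma)$ whenever some $\beta \in \Sigma$ gives $\mathcal{A}$-edges $p \to p'$ labelled $(\alpha,\beta)$ and $q \to q'$ labelled $(\beta,\gamma)$. The bi-infinite walk labels of $\mathcal{B}$ are exactly the composition of relations $Z \circ Z$, which equals $Z$ because $Z$ is an equivalence relation. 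Take $c := N^2 + 1$. Now suppose $d_{n+c}(x_{[-W,W]}, y_{[-W,W]}) \leq 2$ with $W := n+c$. Unwinding the definition (using the self-loops) gives a word $w \in B_{2W+1}(Y)$ and pairs $(a,b), (a',e) \in Z$ with $a|_{[-W,W]} = x|_{[-W,W]}$, $b|_{[-W,W]} = w = a'|_{[-W,W]}$ and $e|_{[-W,W]} = y|_{[-W,W]}$. Pick bi-infinite walks $P_1$ realizing $(a,b)$ and $P_2$ realizing $(a',e)$ in $\mathcal{A}$, with state sequences $(p_i)_{i\in\Z}$ and $(q_i)_{i\in\Z}$. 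On $[-W,W]$ the second coordinate of $P_1$ agrees with the first coordinate of $P_2$ (both equal $w$), so the pairs $(p_i,q_i)$ trace a walk in $\mathcal{B}$ through positions $[-W-1,W]$ whose edge at each position $j \in [-W,W]$ is labelled $(x_j, y_j)$. Among the $c+1 > N^2$ states $(p_i,q_i)$ with $i \in [-W-1,-n-1]$ two coincide, and the intervening portion of the walk---whose edges all lie within the window---is a cycle of $\mathcal{B}$, so some state $s_0 = (p_{i_0}, q_{i_0})$ with $i_0 \leq -n-1$ lies on a cycle of $\mathcal{B}$; the same pigeonhole applied to $i \in [n,W]$ yields $s_1 = (p_{i_1}, q_{i_1})$ with $i_1 \geq n$ on a cycle of $\mathcal{B}$. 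Concatenating an infinite repetition of the first cycle, the window portion of the walk from $s_0$ to $s_1$, and an infinite repetition of the second cycle produces a bi-infinite walk in $\mathcal{B}$, whose label $(\hat a, \hat e)$ thus lies in $Z$; since the segment from $i_0$ to $i_1$ covers $[-n,n]$ and carries the labels $(x_j,y_j)$ there, $\hat a|_{[-n,n]} = x|_{[-n,n]}$ and $\hat e|_{[-n,n]} = y|_{[-n,n]}$. Hence $(\hat a,\hat e) \in ([x_{[-n,n]}] \times [y_{[-n,n]}]) \cap Z$, i.e.\ $d_n(x_{[-n,n]}, y_{[-n,n]}) \leq 1$, as required.

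It remains to extract the metric and check computability. Telescoping $\mathcal{G}_{(Y,Z)}$ along $c\N$ gives a hyperbolic system of finite graphs $H_n := G_{cn}$ (with composed morphisms) whose limit space is $Y$ and whose graph quotient is homeomorphic to $Y/Z$, exactly as in Lemma~\ref{lem:SubshiftPerSubshiftPrehyperbolic}; Theorem~\ref{thm:MetrizingHyperbolics} applied to $(H_n)$ is precisely the assertion that $d(x,y) \leq d_{cn}(x_{[-cn,cn]}, y_{[-cn,cn]})/2^{n-1}$ for all $n$ determines a unique maximal metric on $Y/Z$ metrizing it. For computability: since $Y$ and $Z$ are sofic, the vertex sets $B_{2cn+1}(Y)$, the adjacency relation $u \sim_n v \iff ([u]\times[v]) \cap Z \neq \emptyset$, and the central-window morphisms are all computable from the given automata, and $c = N^2+1$ is computable from $\mathcal{A}$; so $(H_n)$ together with the given points $x,y$ (as oracles for their symbols) forms an effective instance of the input of Theorem~\ref{thm:Computability}, which then delivers the $\tfrac52$-approximations.

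The crux---and the only genuinely new step---is the middle argument. The product states $(p_i,q_i)$ need not individually extend to bi-infinite walks of $\mathcal{B}$ (their components are extendible in $\mathcal{A}$, but the pair need not be), so one cannot simply prolong the product walk; the role of the linear bound $c$ is that a pigeonhole repetition among the product states forces a genuine cycle of $\mathcal{B}$, which \emph{does} extend, and the bookkeeping of positions has to be arranged so that the spliced bi-infinite walk still reads $(x,y)$ on $[-n,n]$. I expect everything else (the telescoping, the identification of the limit space with $Y/Z$, and the decidability of the graph data) to be routine given the cited results.
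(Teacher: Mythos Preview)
Your proof is correct and follows essentially the same route as the paper: both take $c = |Q|^2 + 1$ for a state set $Q$ of an automaton for $Z$, use pigeonhole on pairs of states in the left and right buffers to produce periodic extensions, and invoke transitivity of $Z$ (your $Z\circ Z = Z$ in $\mathcal{B}$) to collapse the length-$2$ path to a single $Z$-edge on the inner window. The only cosmetic difference is that the paper works directly with two parallel runs and pumps each separately before composing, while you package this as a single walk in the composition automaton $\mathcal{B}$; the metric and computability conclusions are then drawn identically from Theorems~\ref{thm:MetrizingHyperbolics} and~\ref{thm:Computability}.
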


\begin{proof}
Since $Y$ is sofic, we may assume $Z \subset Y^2$, and $Y$ is fully determined by $Z$. Let $q_Z$ be the number of states for a finite-state automaton accepting the language $Z$ (i.e. the number of vertices in an edge-labeled edge-shift cover). Let $m = q_Z^2+1$.

We use the notation of Definition \ref{def:ShiftGraph} and argue similarly as in Lemma~\ref{lem:ClosedPerClosedPrehyperbolic}. Suppose that $u, v \in \B_{2n+1}(Y)$ satisfy $d_n(u, v) = 2$ and suppose there exist words $s,t,s',t'$ with $|s| = |t| = |s'| = |t|' = m$ such that
$d_{n+m}(sut, s'vt') = 2$.
Then there exists a word $w \in \B_{2n+2m+1}(Y)$ such that $(sut, w), (w, s'vt') \in \B_{2(n+m) + 1}(Z)$. Since $m > q_Z^2$, by the pigeonhole principle there exist $\ell_1 < \ell_2, r_1 < r_2$ such that
\[
({^\infty}(s_{[\ell_1, \ell_2)}) s_{[\ell_2, |s|)} u t_{[0,r_1]} (t_{(r_1, r_2]})^\infty,
 {^\infty}(w_{[\ell_1, \ell_2)}) w_{[\ell_2, 2n+m+1+r_1)} (w_{(2n+m+1+r_1, 2n+m+1+r_2]})^\infty) \in Z, \]
\[
({^\infty}(w_{[\ell_1, \ell_2)}) w_{[\ell_2, 2n+m+1+r_1)} (w_{(2n+m+1+r_1, 2n+m+1+r_2]})^\infty,
{^\infty}(s'_{[\ell_1, \ell_2)}) s'_{[\ell_2, |s|)} v t'_{[0,r_1]} (t'_{(r_1, r_2]})^\infty) \in Z
\]
which by transitivity of $Z$ implies $[u] \cap [v] \neq \emptyset$, a contradiction with $d_n(u, v) = 2$. Since $m$ does not depend on $n$, this shows virtual hyperbolicity. The metric is then given by the previous lemma. The computability statement follows from Theorem~\ref{thm:Computability}, because we can clearly compute the graphs effectively from a finite description of the sofic shift.
\end{proof}

We give an example of a family of \ttfrac{subshift}{sofic} examples where the function describing prehyperbolicity grows arbitrarily slowly, and
\[ \forall n: d(x, y) \leq d_{cn}(x_{[-cn,cn]}, y_{[-cn,cn]}) / 2^{n - 1} \]
fails to give a metric for any $c$ (and even for any fixed function in place of $n \mapsto cn$).

\begin{example}
For every $f : \N \to \N$ there exists a \ttfrac{subshift}{sofic} $\ccfrac{Y}{Z}$ such that
\[ \forall n: d(x, y) \leq d_{f(n)}(x_{[-f(n),f(n)]}, y_{[-f(n),f(n)]}) / 2^{n - 1} \]
does not define a metric on $\mathcal{G}_{(Y,Z)}$.

For this, fix a sequence $(n_i)_{i \in \N}$ and let $Y$ be the subshift generated by the points
\[ \dots\#\#\# x_0 2^{n_0} x_1 2^{n_1} x_2 2^{n_2} x_3 2^{n_3} x_4 2^{n_4} \dots  \]
where $x_i \in \{0,1\}$ for all $i$. The sofic relation $Z$ verifies that the two points have their $\#$s and $2$s in the same positions, and that if $\#0$ or $\#1$ occurs in either of them (thus both), then the bit sequences represent the same number, i.e.\ the automaton verifies $A_{\mathrm{req}}$ (and that the first bits are either the same or one sequence starts with $100$ and the other with $011$), along the subsequence of bits.

Now, if $n_i$ grows much faster than $f(i)$, then it can be shown that the $d$-distance between
\[ \dots\#\#\# 0 2^{n_0} 0 2^{n_1} 0 2^{n_2} 0 2^{n_3} 0 2^{n_4} \dots  \]
and
\[ \dots\#\#\# 1 2^{n_0} 0 2^{n_1} 0 2^{n_2} 0 2^{n_3} 0 2^{n_4} \dots  \]
is zero, but they are not in the relation $Z$. \qee
\end{example}

While the above gives systems which are difficult to metrize, and thus the proof of Theorem~\ref{thm:TopDimFinite} does not give finite dimension for them, they in fact all have Lebesque covering dimension one. We do not have examples where dimension is infinite, see Question~\ref{q:SubshiftSoficFinDim}.

\section{Topological dimension of sofically presented systems}
\label{sec:SoficSoficMane}

The theorem of Ma\~n\'e says that every \ttfrac{subshift}{SFT} system has finite topological dimension. We show that the topological dimension of \ttfrac{sofic}{sofic} systems is also finite. The finite dimension comes from proving finite Hausdorff dimension for a suitably constructed metric, as in \cite{Fa89}.

\begin{theorem}
\label{thm:TopDimFiniteProof}
Suppose $M \in \{\N, \Z\}$ and $X$ is \ttfrac{$M$-sofic}{sofic}. Then the topological dimension of $X$ is finite.
\end{theorem}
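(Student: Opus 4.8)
The plan is to reduce Theorem~\ref{thm:TopDimFiniteProof} to the explicit metric produced by Theorem~\ref{thm:SoficSoficHyperbolic}: I will show that this metric has finite Hausdorff dimension, and then appeal to the classical fact (see \cite{Fa89}) that for a separable metrizable space the topological (Lebesgue covering) dimension is bounded above by the Hausdorff dimension of any compatible metric. Concretely, since $X$ is \ttfrac{$M$-sofic}{sofic} we write $X \cong Y/Z$ with $Y \subset \Sigma^M$ and $Z \subset (\Sigma^2)^M$ sofic and $Z \cap Y^2$ a closed equivalence relation; as $Y$ is sofic we may assume $Z \subset Y^2$ as in the proof of Theorem~\ref{thm:SoficSoficHyperbolic}. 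That theorem then yields a constant $c \in \N$ and a metric $d$ metrizing $Y/Z$ with
\[ \forall n:\ d(x,y) \leq d_{cn}\big(x_{[-cn,cn]},\, y_{[-cn,cn]}\big) / 2^{n-1}, \]
where $d_m$ is the graph ``metric'' on $B_{2m+1}(Y)$ with the self-loop convention $d_m(v,v) = 1$. For $M = \N$ one uses the one-sided analog (with windows $[0,cn]$), exactly as in the remark following Theorem~\ref{thm:SoficSoficMetric}.

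The one substantive observation is a diameter estimate. If $x, y \in Y$ lie in a common cylinder $[w]$ with $|w| = 2cn+1$ centered at the origin, then $x_{[-cn,cn]} = y_{[-cn,cn]} = w$, so $d_{cn}(x_{[-cn,cn]}, y_{[-cn,cn]}) = d_{cn}(w,w) = 1$ and hence $d(x,y) \leq 2^{-n+1}$. Thus for each $n$ the images in $X = Y/Z$ of the cylinders $[w]$, $w \in B_{2cn+1}(Y)$, form a cover of $X$ by at most $|\Sigma|^{2cn+1}$ sets each of $d$-diameter at most $2^{-n+1}$. Consequently, for any real $s$,
\[ \sum_{w \in B_{2cn+1}(Y)} \big(\mathrm{diam}_d([w])\big)^s \ \leq\ |\Sigma|^{2cn+1}\, 2^{-(n-1)s} \ =\ 2^s|\Sigma|\,\big(|\Sigma|^{2c} 2^{-s}\big)^n, \]
which tends to $0$ as $n \to \infty$ whenever $s > 2c\log_2|\Sigma|$. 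Therefore the $s$-dimensional Hausdorff measure of $(X,d)$ vanishes for every such $s$, so $\dim_H(X,d) \leq 2c\log_2|\Sigma| < \infty$.

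Since $X$ is compact, hence separable, its topological dimension is at most $\dim_H(X,d)$, and therefore finite. There is essentially no obstacle in this argument beyond the work already done in Theorem~\ref{thm:SoficSoficHyperbolic}: the only points needing care are that the metric $d$ genuinely topologizes $Y/Z$ (so that the Hausdorff dimension bound transfers to the space $X$ itself) and the invocation of the Szpilrajn--Marczewski inequality $\dim \leq \dim_H$. The self-loop convention $d_m(v,v) = 1$ is precisely what makes the cylinder-diameter estimate — and hence the box/Hausdorff dimension count — go through without any further hyperbolicity input.
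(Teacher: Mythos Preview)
Your proof is correct and follows essentially the same approach as the paper: build the metric from Theorem~\ref{thm:SoficSoficHyperbolic}, use the self-loop convention to bound the $d$-diameter of cylinders, count cylinders to get a finite Hausdorff dimension bound, and invoke $\dim \leq \dim_H$. The only cosmetic differences are that the paper routes the count through the upper box dimension and uses the entropy $h(Y)$ rather than the cruder $\log|\Sigma|$ to get the sharper bound $2c\,h(Y)/\log 2$, whereas you bound the Hausdorff measure directly.
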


\begin{proof}
We show the result for $\Z$-systems, but the proof for $\N$-systems is the same (though we use prefixes instead of centered words).

It is enough to show that there exists a metric such that $X$ has finite Hausdorff dimension with respect to it, as the topological dimension is always smaller than the Hausdorff dimension with respect to any metric inducing the topology, see p. 107 in \cite{HuWa48}. It is well-known that the Hausdorff dimension with respect to a metric $d$ is bounded from above by the \emph{(upper) box dimension}
\[ \bar C_d(X) = \limsup_{\epsilon \rightarrow 0} \frac{\log N(X, \epsilon)}{ - \log \epsilon } \]
where $N(X,\epsilon)$ is the number of balls of radius $\epsilon$ needed to cover $X$. Thus it is enough to show that with respect to some metric, the upper box dimension is finite.

Let $d$ be the metric given by Theorem~\ref{thm:SoficSoficHyperbolic}, i.e. for some $c \in \N$ we have
\[ d(x, y) \leq d_{cn}(x_{[-cn,cn]}, y_{[-cn,cn]}) / 2^{n - 1} \]
for all $n, x, y$ with $d_{cn}$ as in Definition \ref{def:ShiftGraph}. Then at most $|B_{2cn + 1}(Y)|$ balls of radius $1/2^{n - 1}$, i.e.
$|B_{2c(n + 1) + 1}(Y)|$ balls of radius $1/2^n$, are needed to cover the space, so
\begin{align*}
\bar C_d(X) &= \limsup_{\epsilon \rightarrow 0} \frac{\log N(X, \epsilon)}{ - \log \epsilon } \\
&= \limsup_{i \rightarrow \infty, a \in [0, 2^{-i})} \frac{\log N(X, 2^{-i} + a)}{ - \log (2^{-i} + a) } \\
&\leq \limsup_{i \rightarrow \infty} \frac{\log N(X, 2^{-i})}{ - \log (2^{-i+1}) } \\
&\leq \limsup_{i \rightarrow \infty} \frac{\log |B_{2c(i + 1) + 1}(Y)|}{ (i-1) \log 2 } \\
&= \limsup_{i \rightarrow \infty} \frac{\log (\exp(h(Y)(2c(i + 1) + 1) + \alpha(i) i))}{ (i-1) \log 2 } \mbox{ where $\alpha(i) \rightarrow 0$} \\
&= \limsup_{i \rightarrow \infty} \frac{(h(Y)(2c(i + 1) + 1) + \alpha(i) i)}{ (i-1) \log 2 } \mbox{ where $\alpha(i) \rightarrow 0$} \\
&\leq \frac{h(Y)2c}{\log 2}.
\end{align*}
\end{proof}

\section{Automata theory and morphisms}
\label{sec:AutomataTheory}

We outline some decidability results on $\N$ and $\Z$. See \cite{PePi04} for the basics of automata and logic on infinite words, and \cite{HoMoUl06} for automata theory on finite words. The decidability results here are meant as a counterpart to the undecidability results in Section~\ref{sec:ConjugacyUndecidability}, and thus we focus on quick and conceptually simple proofs of decidability; in many cases we can state on abstract grounds that a problem is decidable, but the present discussion does not provide any reasonable (e.g.\ polynomial time) algorithm.

\subsection{Deciding things about automata}

Since often our subshift relations arise from not a priori shift-invariant or closed sets, for stating decidability results it is useful to step into a more general framework than sofic shifts which has similar decidability properties, but does not have such restrictions.

We call $\omega$-automatic sets \emph{$\N$-automatic sets}.

By $MF_2(\Z, 0, \prec)$, we denote the monadic second-order logic of $\Z$, with the predicate $\prec$ on $\Z$ by $m \prec n$ iff $n = m+1$, and $0$ the constant $0 \in \Z$.

\begin{lemma}
\label{lem:MF2Decidable}
The logics $MF_2(\N, \prec)$ and $MF_2(\Z, 0, \prec)$ are decidable.
\end{lemma}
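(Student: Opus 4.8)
The plan is to reduce everything to Büchi's theorem. Recall that $MF_2(\N,\prec)$ is, up to notation, the logic S1S: the monadic second-order theory of the natural numbers with a single successor relation. Büchi's theorem provides an effective translation between $MF_2(\N,\prec)$-formulas and Büchi automata on $\omega$-words, under which a formula is satisfiable precisely when the associated automaton accepts some word; since emptiness of Büchi automata is decidable, so is $MF_2(\N,\prec)$ (see \cite{PePi04}). Note that the constant $0$ and the linear order $<$ are not needed as primitives here, since they are $MF_2$-definable from $\prec$: we have $x = 0 \iff \neg\exists y\,(y \prec x)$, and $x \le y$ holds iff every set $X$ with $x\in X$ closed under $\prec$ (i.e.\ $u\in X \wedge u\prec v \Rightarrow v\in X$) contains $y$. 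So the presence or absence of these symbols does not affect decidability.

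For $MF_2(\Z,0,\prec)$, I would give an $MF_2$-interpretation of $(\Z,0,\prec)$ inside $(\N,\prec)$. Encode a subset $X\subseteq\Z$ by the pair $(X_+,X_-)$ of subsets of $\N$, where $X_+ = \{\,n\in\N \mid n\in X\,\}$ and $X_- = \{\,n\in\N \mid -(n+1)\in X\,\}$; encode an element of $\Z$ by a Boolean flag (``is it $\ge 0$?'') together with an element of $\N$. Each set quantifier of $MF_2(\Z,0,\prec)$ is then simulated by a pair of set quantifiers over $\N$, and each element quantifier by a quantifier over a flag together with an element quantifier over $\N$ (a flag being encodable, say, as a subset of $\N$ forced to be $\emptyset$ or $\N$). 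Membership translates to membership in the appropriate component; the atom $x = 0$ becomes ``$x$ has the nonnegative flag and its $\N$-part has no $\prec$-predecessor''; and $x \prec y$ translates into a disjunction over the relative positions of the flags: successor in the positive copy when both are nonnegative, reverse successor in the negative copy when both are negative, and the single boundary transition $-1 \prec 0$ when $x$ is negative with $\N$-part $0$ and $y$ is nonnegative with $\N$-part $0$. This yields a computable, truth-preserving map from $MF_2(\Z,0,\prec)$-sentences to $MF_2(\N,\prec)$-sentences, so decidability transfers from the first part.

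The only genuine ingredient is Büchi's theorem, which we cite; everything else is bookkeeping. The one place that needs a moment's care is the boundary between the two copies of $\N$ in the interpretation — one must check the translation of $\prec$ there and make sure degenerate (empty or singleton) components cause no trouble — but this is entirely routine. Alternatively, one can sidestep the interpretation altogether by invoking the extension of Büchi's theorem to bi-infinite words, which directly gives decidability of the monadic second-order theory of $(\Z,<)$, hence of $(\Z,0,\prec)$ since $<$ is $MF_2$-definable from $\prec$ and $0$; see \cite{PePi04}.
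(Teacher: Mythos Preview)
Your main argument is correct, but the route for $MF_2(\Z,0,\prec)$ differs from the paper's. The paper also cites \cite{PePi04} for $MF_2(\N,\prec)$ and for $MF_2(\Z,\prec)$, but then eliminates the constant $0$ by a shift-invariance trick rather than by interpretation in $\N$: given $\phi \in MF_2(\Z,0,\prec)$, replace every occurrence of $0$ by a fresh first-order variable $o$ and universally quantify over $o$, obtaining $\psi = \forall o\,\phi[0/o] \in MF_2(\Z,\prec)$. Since the structure $(\Z,\prec)$ is homogeneous under translations, $\phi$ holds iff $\phi[0/o]$ holds for every $o$, so $\phi$ and $\psi$ are equivalent. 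This is shorter than your folding interpretation and avoids the boundary bookkeeping entirely; your interpretation, on the other hand, is a more portable technique that does not rely on any symmetry of the target structure.

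One small wrinkle in your final ``alternative'' paragraph: citing decidability of the MSO theory of $(\Z,<)$ does not by itself give decidability of $(\Z,0,\prec)$, since $0$ is not definable in $(\Z,<)$ (nor in $(\Z,\prec)$). Your remark that ``$<$ is definable from $\prec$ and $0$'' points the reduction in the wrong direction. What is actually needed there is exactly the paper's shift-invariance step to dispose of the constant; with that added, the alternative works too.
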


\begin{proof}
The case $MF_2(\N, \prec)$ is proved in \cite{PePi04}. By \cite{PePi04}, the logic $MF_2(\Z, \prec)$ is decidable. Given $\phi \in MF_2(\Z, 0, \prec)$, assume $\phi$ does not contain the variable $o$ and construct the following statement $\psi \in MF_2(\Z, \prec)$
\begin{align*}
\psi = \forall o: \phi[0 / o]
\end{align*}
where $\phi[0/o]$ is equal to $\phi$, except every occurrence of $0$ is replaced by $o$. If $\psi$ is satisfiable, it is satisfiable for the choice $o = 0$, thus $\phi$ is satisfiable by definition. If $\phi$ is satisfiable, then for any choice of $o$ we can conjugate all choices of (first and second order) variables by $x \mapsto x+o$ to obtain satisfiability of $\psi$.
\end{proof}

The above lemma can also be stated in the language of automata theory: the logic $MF_2(\N, \prec)$ can define exactly the $\omega$-regular languages, and the decidability of the logic corresponds to the decidability of emptiness of such a language (given a defining finite-state automaton). The logic $MF_2(\Z, \prec)$ can be characterized by automata as well, by $\zeta$-automaticity. We can give an automaton characterization of $MF_2(\Z, 0, \prec)$. These automata are nothing more than the usual $\zeta$-automatic languages and transducers, except that we fix an origin.

A \emph{$\Z$-automaton} is a tuple $T = (G, \Sigma, \mu, F^-, F^+)$ where $G$ is a finite graph, $\mu : V(G) \to \Sigma \times \{0,1\}$ a labeling function, $F^-, F^+ \subset 2^{V(G)}$. Its \emph{language} $L(T)$ is the set
\[ \{x \in \Sigma^\Z \;|\; T \mbox{ accepts } (x, \dots000.1000\dots) \} \]
where $T$ accepts a point $y$ if $y$ is the $\mu$-label of a path in $G$ whose set of states visited infinitely many times on the right is in $F^+$ and the set visited infinitely many times on the left is in $F^-$. A \emph{$\Z$-transducer} is an automaton whose alphabet is a product $\Sigma \times \Delta$, and its language is interpreted as a relation, or as the graph of a partial multivalued function. We call its language its \emph{graph}. A \emph{$\Z$-transducer} is \emph{deterministic} if it gives at most one value to every input.

The following is essentially by definition. 

\begin{lemma}
A language $L \subset \Sigma^\Z$ is $\Z$-automatic if and only if for some $\zeta$-automatic language $L'$ over alphabet $\Sigma \times \{0,1\}$, satisfying $\forall x \in L': |\pi_2(x)|_1 = 1$, we have $L = \{\pi_1(x) \;|\; x \in L', \pi_2(x)_0 = 1 \}$.
\end{lemma}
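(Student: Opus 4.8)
The plan is to prove both implications by unwinding the definitions, the only external input being the classical correspondence between $\zeta$-automaticity (equivalently $MF_2(\Z,\prec)$-definability) and recognizability by finite two-sided Muller automata over bi-infinite words, together with the facts that such languages are shift-invariant and closed under Boolean operations, and that ``the second track contains exactly one $1$'' is $\zeta$-automatic (it is the first-order property $\exists n\,(\pi_2(x)_n = 1 \wedge \forall m\,(\pi_2(x)_m = 1 \to m = n))$ on $(\Sigma\times\{0,1\})^\Z$). For a $\Z$-automaton $T = (G,\Sigma,\mu,F^-,F^+)$ I will write $\widehat L(T) \subset (\Sigma\times\{0,1\})^\Z$ for the ``full'' language of labels of bi-infinite paths whose sets of states visited infinitely often on the left and right lie in $F^-$ and $F^+$; note that $\widehat L(T)$ is precisely a language recognized by a vertex-labelled two-sided Muller automaton, hence $\zeta$-automatic, and that by definition $L(T) = \{\pi_1(y) \;|\; y \in \widehat L(T),\ \pi_2(y) = \dots000.1000\dots\}$.

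For the forward direction, I would start from a $\Z$-automaton $T$ with $L(T) = L$ and set $L' = \widehat L(T) \cap \{x \;|\; |\pi_2(x)|_1 = 1\}$. This is $\zeta$-automatic as an intersection of two $\zeta$-automatic languages, and it satisfies the single-$1$ requirement by construction. For $x \in L'$ with $\pi_2(x)_0 = 1$, the unique $1$ of $\pi_2(x)$ lies at the origin, so $\pi_2(x) = \dots000.1000\dots$ and $x \in \widehat L(T)$ is equivalent to $\pi_1(x) \in L(T) = L$; conversely every $y \in L$ yields such a point $(y,\dots000.1000\dots) \in L'$. Hence $L = \{\pi_1(x) \;|\; x\in L',\ \pi_2(x)_0 = 1\}$, as required.

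For the converse, given a $\zeta$-automatic $L'$ with the single-$1$ property and $L = \{\pi_1(x) \;|\; x\in L',\ \pi_2(x)_0 = 1\}$, I would represent $L'$ as the language of a vertex-labelled two-sided Muller automaton over the alphabet $\Sigma\times\{0,1\}$ (a routine normal form; apply the standard edge-to-vertex-labelling transformation if the chosen representation is edge-labelled). That automaton is literally the data of a $\Z$-automaton $T = (G,\Sigma,\mu,F^-,F^+)$ with $\widehat L(T) = L'$. Then $x \in L(T)$ iff $(x,\dots000.1000\dots) \in \widehat L(T) = L'$, and since any member of $L'$ whose second track is $\dots000.1000\dots$ is exactly a member of $L'$ with $\pi_2(x)_0 = 1$, this holds iff $x \in L$. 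Thus $L(T) = L$ and $L$ is $\Z$-automatic.

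The only point needing a sentence of care -- and the closest thing to an obstacle -- is the bookkeeping identifying $\zeta$-automaticity (however it is taken as defined in the surrounding text, via $MF_2(\Z,\prec)$ or via automata) with recognizability by finite vertex-labelled two-sided Muller automata, plus the attendant shift-invariance and Boolean closure; all of this is classical (see \cite{PePi04}) and is precisely what ``essentially by definition'' is meant to cover, so no new argument is required.
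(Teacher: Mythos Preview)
Your proof is correct and is precisely the unwinding that the paper intends when it says ``The following is essentially by definition'': the paper gives no proof at all for this lemma, and your argument spells out exactly the identification between a $\Z$-automaton (as defined just above the lemma) and a two-sided Muller automaton over $\Sigma\times\{0,1\}$, together with the intersection with the shift-invariant single-$1$ condition.
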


The following lemma links sofics and automatic sets.

\begin{lemma}
\label{lem:AutomaticSoficConnection}
Let $M = \Z$ or $M = \N$. Then an $M$-automatic set is a sofic subshift if and only if it is topologically closed and closed under the shift. A $\Z$-automatic set is $\zeta$-automatic if and only if it is shift-invariant.
\end{lemma}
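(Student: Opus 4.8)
The statement has two parts, and both directions of each biconditional are essentially bookkeeping once the right characterizations are in place. I will handle $M = \Z$ in detail; the $M = \N$ case is analogous (and slightly easier, since there is no ``left'' behaviour to track).

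\emph{First claim: an $M$-automatic set $L$ is a sofic subshift iff it is topologically closed and shift-invariant.} The forward direction is trivial: every sofic shift is by definition topologically closed and closed under the shift. For the converse, suppose $L \subset \Sigma^\Z$ is $\Z$-automatic, topologically closed, and shift-invariant. The plan is to produce an edge-shift cover. By the definition of a $\Z$-automaton $T = (G, \Sigma, \mu, F^-, F^+)$, membership $x \in L$ is witnessed by a bi-infinite $\mu$-labelled path in $G$ with appropriate Büchi conditions $F^\pm$ on the two tails. Since $L$ is shift-invariant, the choice of origin is irrelevant, so we may first discard the origin marker (the $\{0,1\}$ coordinate) and work with the induced $\zeta$-automatic description; the second claim below makes this precise, but all we need here is that shift-invariance lets us forget where $0$ is. Now the subset of $V(G)^{\Z}$ consisting of bi-infinite paths that are ``recurrent at both ends into accepting sets'' is itself describable, but to get an honest SFT cover we take the standard trick: pass to the \emph{transition graph} (vertices $=$ states, edges labelled by $\Sigma$ via $\mu$), restrict to the strongly connected components that can carry an accepting bi-infinite run, and let $\hat X$ be the edge SFT on the surviving edges. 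The labelling $\mu$ induces a block map $\hat X \to \Sigma^\Z$; its image is exactly the set of labels of bi-infinite accepting runs. Because $L$ is topologically closed, this image is $L$ itself rather than merely a dense subset (any limit of accepting labels is a label of an accepting run, by König/compactness on the finite graph, and already lies in $L$ by closedness). Thus $L$ is the image of an SFT under a block map, i.e.\ sofic. The main obstacle here is the careful handling of the two Büchi acceptance conditions $F^-, F^+$ when restricting to recurrent components; this is routine $\omega$-automata theory (cite \cite{PePi04}) but must be stated cleanly.

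\emph{Second claim: a $\Z$-automatic set is $\zeta$-automatic iff it is shift-invariant.} For the forward direction: a $\zeta$-automatic language is by definition one accepted by a two-sided Büchi automaton with \emph{no distinguished origin}, so it is automatically invariant under the shift (shifting a bi-infinite word does not change the set of labelled runs up to reindexing). For the converse, suppose $L$ is $\Z$-automatic and shift-invariant. By the preceding lemma (the one immediately before this statement), $L = \{\pi_1(x) \mid x \in L',\ \pi_2(x)_0 = 1\}$ for some $\zeta$-automatic $L'$ over $\Sigma \times \{0,1\}$ with $|\pi_2(x)|_1 = 1$ for all $x \in L'$. The idea is: project away the origin coordinate. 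Let $L'' = \{\pi_1(x) \mid x \in L'\}$. This is $\zeta$-automatic, being the image of a $\zeta$-automatic language under a letter-to-letter projection (closure of $\zeta$-automatic languages under projection is standard). I claim $L'' = L$. The inclusion $L \subseteq L''$ is immediate. For $L'' \subseteq L$: if $w = \pi_1(x)$ with $x \in L'$, then $\pi_2(x)$ has its unique $1$ at some position $k$; then $\sigma^k x$ lies in the shift of $L'$, but since $L'$ is $\zeta$-automatic it is shift-invariant, so $\sigma^k x \in L'$ and its origin coordinate is $1$, whence $\sigma^k w \in L$; now shift-invariance of $L$ gives $w \in L$. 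Hence $L = L''$ is $\zeta$-automatic.

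I expect the main subtlety to be purely expository: being precise about the move between ``$\Z$-automaton with origin marker'', ``$\zeta$-automaton'', and ``edge SFT'', and making sure the compactness argument for closedness is spelled out rather than waved at. No genuinely hard mathematics is involved; the proof is a sequence of standard automata-theoretic constructions (projection, restriction to recurrent components, edge-shift presentation) glued together by the hypotheses of closedness and shift-invariance.
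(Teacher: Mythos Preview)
Your overall strategy matches the paper's, but the key construction for the first claim has a genuine gap. You propose to ``restrict to the strongly connected components that can carry an accepting bi-infinite run'' and take the edge SFT on those. This discards too much: transient states lying \emph{between} accepting recurrent components are thrown away, and with them words of $L$ that pass through them. Concretely, take states $1,2,3$ with self-loops $1\to 1$ labelled $a$ and $3\to 3$ labelled $b$, and transient edges $1\to 2$ labelled $c$, $2\to 3$ labelled $d$; with Muller conditions $F^- = F^+ = \{\{1\},\{3\}\}$ the accepted language is the closed shift-invariant set $L = \{a^\Z, b^\Z\} \cup \orb{{}^\infty a\,cd\,b^\infty}$, but your SCC restriction keeps only states $1$ and $3$ and outputs the sofic shift $\{a^\Z, b^\Z\}$, missing every point containing $cd$.

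The correct move, which the paper makes, is to trim by removing exactly those states that lie on \emph{no} accepting bi-infinite run --- equivalently, states from which no $F^+$-good cycle is reachable going forward, or no $F^-$-good cycle (and origin marker) is reachable going backward. After this trimming, every finite path label extends to an accepting run and hence is a subword of some point of $L$; conversely every word in the language of $L$ labels some finite path. Thus the trimmed labelled graph, read as an ordinary edge-shift presentation with the acceptance conditions discarded, has exactly the finite-word language of $L$, and since $L$ is a subshift this forces equality. Your closedness/K\H{o}nig remark is pointing in this direction, but the argument should be phrased as matching finite-word languages rather than as ``limits of accepting labels are accepting''. Your treatment of the second claim, projecting away the origin coordinate and using shift-invariance of $L$ and of the $\zeta$-automatic $L'$, is clean and correct (the paper does not spell this part out).
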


\begin{proof}
A sofic shift is shift- and topologically closed, and is easily seen to be automatic.

For the other direction, consider first the case $M = \Z$. Suppose we have an $M$-automaton for $X$, and $X$ is topologically closed and shift-invariant. Remove from the automaton those states from which we cannot reach a cycle whose states form a set in $F^+$, those states from which, going backwards, we cannot reach a state marking the origin and then a cycle whose states form a set in $F^-$. Then all labels of finite paths in the automaton correspond to words that can appear after the origin, thus all words by the assumptions. The resulting automaton proves soficity.

When $M = \N$, remove those states that cannot be reached from an initial state and then again those states from which we cannot reach a cycle with all states accepting, and again we obtain an automaton proving soficity.
\end{proof}

\begin{lemma}
\label{lem:DecidableThings}
Let $M = \Z$ or $M = \N$, and let $X \subset \Sigma^M$ and $Y, Z \subset (\Sigma^2)^M$ be $M$-automatic sets. Then the following are algoritmically decidable:
\begin{itemize}
\item Is $X$ topologically closed? Is it shift-invariant? Is $X$ an SFT?
\item Is $Y$ reflexive/symmetric/antisymmetric/transitive on $X$?
\end{itemize}
The following can be effectively computed and are $M$-automatic:
\begin{itemize}
\item the minimal forbidden patterns of $X$, if it is an SFT,
\item the restriction of $Y$ to $X$, i.e. $Y \cap X^2$,
\item the topological closure and the shift-closure of $X$,
\item the composition $Y \circ Z$.
\end{itemize}
\end{lemma}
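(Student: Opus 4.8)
The plan is to reduce every item to writing down a sentence or a formula of $MF_2(\N, \prec)$ or $MF_2(\Z, 0, \prec)$ and then applying Lemma~\ref{lem:MF2Decidable}, using throughout the effective two-way translation between $M$-automata and monadic second order logic. Concretely, one codes a point $x \in \Sigma^M$ by the tuple of indicator sets $P^x_a = \{m \in M \mid x_m = a\}$, $a \in \Sigma$; then an $M$-automatic set $W \subseteq \Sigma^M$ corresponds to an $MF_2$ formula $\phi_W\bigl((P_a)_{a\in\Sigma}\bigr)$ with these as free monadic variables, and conversely every such formula defines an $M$-automatic set — for $M = \N$ this is Büchi's theorem (\cite{PePi04}), and for $M = \Z$ it follows from the characterization of $\Z$-automatic sets by $\zeta$-automatic languages with a marked origin (the lemma preceding Lemma~\ref{lem:AutomaticSoficConnection}) together with the handling of the constant $0$ in the proof of Lemma~\ref{lem:MF2Decidable}; both directions are effective, so it is enough to exhibit the formulas. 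In this coding, a point-valued quantifier ``$\exists y$'' becomes a block of monadic second order quantifiers, the diagonal $x \mapsto (x,x)$ and the coordinate swap $(x,y)\mapsto(y,x)$ are $MF_2$-expressible, equality ``$x = y$'' is $\bigwedge_a (P^x_a \leftrightarrow P^y_a)$, and the single-step shift ``$(\sigma x)_m = a$'' is expressible via the predicate $\prec$, which relates $m$ to $m+1$.

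For the decidability items we write the sentences. ``$X$ is shift-invariant'' is $\forall x\,(\phi_X(x) \to \phi_X(\sigma x))$ (with ``$\leftrightarrow$'' in place of ``$\to$'' when $M = \Z$, so as to also forbid $\sigma^{-1}X \not\subseteq X$). ``$X$ is topologically closed'' is $\forall x\,\bigl( \forall I\, [ (I \text{ is a finite interval} \wedge 0 \in I) \to \exists y\,(\phi_X(y) \wedge \forall m\,(m \in I \to x_m = y_m)) ] \to \phi_X(x) \bigr)$, where ``$I$ is a finite interval'' says $I$ is convex and has a least and a greatest element (for $M = \N$ these are exactly the initial segments $[0,n]$); this is correct because every basic open neighbourhood of $x$ is refined by one cut out by such an interval. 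For ``$X$ is an SFT'' we first decide, by the two preceding sentences, whether $X$ is closed and shift-invariant; if so Lemma~\ref{lem:AutomaticSoficConnection} lets us compute a sofic presentation, and SFT-ness of a sofic shift is classically decidable from a presentation (\cite{LiMa95}). The relational properties of $Y$ on $X$ are immediate: reflexivity is $\forall x\,(\phi_X(x) \to \phi_Y(x,x))$; symmetry is $\forall x, y\,((\phi_X(x) \wedge \phi_X(y) \wedge \phi_Y(x,y)) \to \phi_Y(y,x))$; antisymmetry is $\forall x, y\,((\phi_X(x) \wedge \phi_X(y) \wedge \phi_Y(x,y) \wedge \phi_Y(y,x)) \to x = y)$; transitivity is $\forall x, y, z\,((\phi_X(x) \wedge \phi_X(y) \wedge \phi_X(z) \wedge \phi_Y(x,y) \wedge \phi_Y(y,z)) \to \phi_Y(x,z))$. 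All of these are decided by Lemma~\ref{lem:MF2Decidable}.

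For the sets to be computed, we exhibit $MF_2$ formulas defining them (equivalently, do the evident automaton constructions), and then appeal to the effective formula-to-automaton direction of the translation. The restriction $Y \cap X^2$ is defined by $\phi_Y(x,y) \wedge \phi_X(x) \wedge \phi_X(y)$; the topological closure of $X$ is defined by the inner subformula displayed above (the universal closure of which asserted closedness); the composition $Y \circ Z$ is defined by $\exists y\,(\phi_Z(x,y) \wedge \phi_Y(y,z))$, a block-existential over monadic variables, which on the automaton side is the standard ``glue along the middle coordinate and project it out'' construction. The shift-closure $\bigcup_{k \in M} \sigma^k X$ is cleanest directly on automata: for $M = \N$ one makes every state reachable from an initial state initial, and for $M = \Z$ one projects out (equivalently, re-places) the origin marker, yielding an $M$-automatic set — which for $M = \Z$ is the underlying $\zeta$-automatic language, shift-invariant by construction. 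Finally, once $X$ is known to be an SFT, a computable bound on its step (obtainable from a sofic presentation, \cite{LiMa95}) bounds the length of its minimal forbidden patterns; we enumerate all words up to that length and keep those not in $L(X)$ all of whose proper subwords lie in $L(X)$, obtaining the finite (hence trivially $M$-automatic) set of minimal forbidden patterns.

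The only point needing more than routine $MF_2$-bookkeeping is the input from the classical theory of sofic shifts: that SFT-ness is decidable given a presentation and that there is then a computable bound on the step of the SFT; everything else is a matter of carefully writing the formulas and of keeping straight the one-sided versus two-sided conventions (bounded intervals versus initial segments, the one-sided shift in the definition of a subshift, and the position of the origin marker in the $\Z$ case).
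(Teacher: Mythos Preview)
Your proposal is correct and follows essentially the same strategy as the paper: both reduce every item to either an $MF_2$ sentence (invoking Lemma~\ref{lem:MF2Decidable}) or a direct automaton construction. The only notable difference is in the SFT check and minimal forbidden patterns: the paper computes these directly via the ``first offenders'' of the colanguage (the regular language $L \setminus (L\Sigma \cup \Sigma L)$, whose finiteness characterizes SFT-ness and whose elements are the minimal forbidden patterns), whereas you defer to classical sofic-shift theory from \cite{LiMa95} for decidability of SFT-ness and a step bound; both routes are standard and equally valid.
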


\begin{proof}
The proofs are basic automata theory. We outline some ideas.

Similarly as in the previous lemma (removing useless states, then making all states final), we can turn an $M$-automaton into one accepting its closure. Equality of $M$-automatic languages is decidable by Lemma~\ref{lem:MF2Decidable}. Similarly for shift-invariance. This shows the first and the penultimate item.

Given $X$, it is an SFT if and only if it is sofic and the set of first offenders of its colanguage is finite. Soficity is checked by checking topological and shift-closure, and the colanguage is computed by taking the complement. The first offenders of a regular language $L \subset \Sigma^*$ are the effectively regular language $L \setminus (L \Sigma \cup \Sigma L)$, and finiteness of a regular language is easy to verify.

Given a relation $Y \subset X^2$, in each case one can directly program each of the statements ``$Y$ is $P$'', where $P$ = reflexive/symmetric/antisymmetric/transitive, in $MF_2(\N, \prec)$ or $MF_2(\Z, 0, \prec)$. There are also easy automata-theoretic constructions: In the case of reflexivity, one can alternatively form an automaton for the diagonal relation and check inclusion, in the case of symmetry one can flip the relation and check equality, in the case of antisymmetry, one can flip the relation and check trivial intersection. In the case of transitivity one can compute an automaton for the composition: to compose $R$ and $R'$, given $(x, z)$, we simply guess $y$ such that $(x, y) \in R, (y, z) \in R'$.
\end{proof}

\begin{lemma}
Given a sofically presented system, it is decidable whether it is expansive, and whether it is finitely presented.
\end{lemma}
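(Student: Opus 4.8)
The plan is to reduce both decidability questions to the decidability facts already established in Lemma~\ref{lem:DecidableThings} and the structural characterizations of Theorem~\ref{thm:ExpansivityCharacterizationProof} and Lemma~\ref{lem:IffRelativeSFT}. A sofically presented system is given to us by a pair $\ccfrac{Y}{Z}$, where $Y$ is a sofic shift and $Z$ is a sofic relation with $Z\cap Y^2$ an equivalence relation on $Y$ (for the input to be a valid presentation we may assume this has already been verified, or verify it using the second item of Lemma~\ref{lem:DecidableThings}). By Lemma~\ref{lem:IffRelativeSFT}, the quotient $Y/Z$ is expansive if and only if $Z\cap Y^2$ is a relative SFT in $Y^2$; and by Theorem~\ref{thm:ExpansivityCharacterizationProof}, a sofically presented system is finitely presented exactly when it is expansive (being expansive is equivalent to being \ttfrac{subshift}{SFT}, and a sofically presented system is in particular \ttfrac{subshift}{subshift}, so the conditions coincide). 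Hence \emph{both} questions reduce to the single question: given the sofic relation $Z$ and the sofic shift $Y$, is $Z\cap Y^2$ a relative SFT in $Y^2$?

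First I would compute, using Lemma~\ref{lem:DecidableThings}, an $M$-automaton for $R := Z\cap Y^2$ as a subset of $(\Sigma^2)^M$ (restriction of an automatic relation to an automatic set is effective and automatic). Being a relative SFT in $Y^2$ means: there is a clopen set $C\subset Y^2$ such that for $w\in Y^2$, $w\in R \iff \forall m\in M:\, mw\notin C$. Equivalently, there is a finite set of forbidden patterns (over the alphabet $\Sigma^2$, interpreted relative to $Y^2$) defining $R$ inside $Y^2$. To check this, I would first form the colanguage of $R$ relative to $Y^2$ — i.e. the set of finite words over $\Sigma^2$ that appear in $Y^2$ but not in $R$ — which is effectively regular since $R$ and $Y^2$ are sofic; then extract its minimal forbidden patterns (first offenders), exactly as in the SFT-recognition argument inside the proof of Lemma~\ref{lem:DecidableThings}: the first offenders of a regular language $L$ form the effectively regular language $L\setminus(L\Sigma\cup\Sigma L)$, whose finiteness is decidable. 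If this set of first offenders is finite, it gives a candidate finite forbidden pattern set; one then checks (again decidably, by equality of $M$-automatic sets via Lemma~\ref{lem:MF2Decidable}) that the relative SFT these patterns define inside $Y^2$ equals $R$. If the first-offender set is infinite, $R$ is not a relative SFT. This decides whether $Y/Z$ is expansive, hence whether it is finitely presented.

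For concreteness on the subtlety of ``relative'': one must be slightly careful that $R$ is closed and shift-invariant to begin with (so that talking about its forbidden patterns makes sense) — but this is part of the hypothesis that $\ccfrac{Y}{Z}$ is a legitimate presentation, and in any case is checkable by Lemma~\ref{lem:DecidableThings}. Then the only genuine content is recognizing relative SFT-ness, which is the relative analogue of the SFT-recognition step already carried out in Lemma~\ref{lem:DecidableThings}; the argument there goes through verbatim once languages and colanguages are taken relative to $Y^2$ instead of the full shift.

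The main obstacle I anticipate is purely bookkeeping: making sure the colanguage and first-offender computations are phrased correctly in the \emph{relative} setting (patterns forbidden in $R$ but appearing in $Y^2$), rather than absolutely, and confirming that the equivalence in Lemma~\ref{lem:IffRelativeSFT} is exactly what links expansivity of the quotient to the relative-SFT property of the given relation $Z\cap Y^2$ — as opposed to some other relation equal to it as a subsystem. Since $Z\cap Y^2$ \emph{is} the kernel of the canonical cover $Y\to Y/Z$ we are handed, Lemma~\ref{lem:IffRelativeSFT} applies directly, so there is no real difficulty here, only the need to state the reduction cleanly.
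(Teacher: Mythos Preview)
Your proposal is correct and follows essentially the same route as the paper: reduce both questions to checking whether the kernel $Z\cap Y^2$ is a relative SFT in $Y^2$ (via Lemma~\ref{lem:IffRelativeSFT} and Theorem~\ref{thm:ExpansivityCharacterizationProof}), and then decide this using the automata-theoretic machinery of Lemma~\ref{lem:DecidableThings}. The paper's own proof is just two sentences and invokes ``the previous lemma'' for the relative-SFT check without further comment; you have in fact been more careful than the paper by spelling out how the first-offender argument of Lemma~\ref{lem:DecidableThings} must be run \emph{relative to} $Y^2$ rather than the full shift, which is exactly the bookkeeping needed to make the appeal rigorous. One small simplification: once the set of relative first offenders is finite, the relative SFT they define automatically coincides with $R$ (by the same argument you use to show $R$ is contained in it), so the separate equality check you propose is redundant, though harmless.
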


\begin{proof}
Expansivity is equivalent to being finitely presented by results of Fried. For finitely presentedness, by the same results, we just need to check whether the quotiented sofic is a relative SFT, which can be done using the previous lemma.
\end{proof}

Finding the transitive closure is only semidecidable, in the following sense.

\begin{lemma}
There exists a semialgorithm that, given $\Z$-automatic $R \subset (\Sigma^2)^M$, returns a $\Z$-automaton for the transitive closure $R^*$ in finite time if $\exists m: R^{\leq m} = R^*$. Given a symmetric reflexive sofic relation $R \subset (\Sigma^2)^\Z$, it is undecidable whether there exists $m$ such that $R^{\leq m} = R^*$.
\end{lemma}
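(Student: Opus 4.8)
The plan is to handle the two halves separately.

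\smallskip\noindent\emph{The semialgorithm.} By Lemma~\ref{lem:DecidableThings}, from a $\Z$-automaton for $R$ one can effectively build $\Z$-automata for all of the relations $R^{\leq m} = R \cup R^2 \cup \cdots \cup R^m$, since composition and finite union of $\Z$-automatic relations are effective and preserve $\Z$-automaticity; and by Lemma~\ref{lem:MF2Decidable} equality of $\Z$-automatic relations is decidable. The semialgorithm enumerates $m = 1, 2, 3, \dots$ and at stage $m$ tests whether $R^{\leq m} = R^{\leq m+1}$, outputting the automaton for $R^{\leq m}$ as soon as this holds. This is correct: one always has $R^{\leq m} \subseteq R^{\leq m+1}$, and if $R^{\leq m} = R^{\leq m+1}$ then $R^{m+2} = R \circ R^{m+1} \subseteq R \circ R^{\leq m} \subseteq R^{\leq m+1} = R^{\leq m}$, so inductively $R^{\leq k} = R^{\leq m}$ for all $k \geq m$ and hence $R^{\leq m} = R^*$; conversely $R^{\leq m} = R^*$ trivially gives $R^{\leq m} = R^{\leq m+1}$. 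So the procedure halts, with a correct $\Z$-automaton for $R^*$, exactly when some $m$ with $R^{\leq m} = R^*$ exists.

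\smallskip\noindent\emph{Undecidability.} I would reduce from an undecidable halting-type problem for Turing machines. Given a machine $T$, the target is a $\Z$-subshift $X_T$ together with a symmetric reflexive sofic relation $R_T \subseteq X_T^2$ whose $R_T^{*}$-classes are, up to translation, the connected components of the undirected computation graph of $T$; then ``$\exists m : R_T^{\leq m} = R_T^{*}$'' means ``these components have uniformly bounded diameter'', which is arranged to be equivalent to the chosen property of $T$. Concretely, a point of $X_T$ is, apart from a background padding symbol, a single finite block encoding a pair $(c,c')$ of configurations of $T$ with $c \vdash c'$, with $c$ and $c'$ written interleaved cell by cell; being such a block is a local (hence SFT) condition, so $X_T$ is sofic, covered by an obvious edge-shift scanning a block between its two delimiters. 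The relation $R_T$ is the symmetric–reflexive closure of: $\langle (c,c') \rangle \mathrel{R_T} \langle (c',c'') \rangle$ when the two blocks occupy the same positions of $\Z$. Because of the interleaving, the required agreement (the ``second'' configuration of one block equalling the ``first'' configuration of the other) compares cells at bounded distance, so $R_T$ is again sofic; growth of the tape is accommodated by auxiliary ``grow'' moves confined to a bounded window at the tape boundary. An $R_T$-path through $\langle(c_0,c_1)\rangle, \langle(c_1,c_2)\rangle, \dots$ is then exactly a computation of $T$, so for a suitably normalized $T$ (e.g.\ a reversible machine whose initial state is never re-entered, so that components are plain paths whose diameter is the length of the corresponding computation) a bound $m$ with $R_T^{\leq m} = R_T^{*}$ exists iff all the relevant computations are uniformly bounded, which is the undecidable property we started from. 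As a symmetric reflexive sofic relation is in particular $\Z$-automatic, this gives undecidability of ``$\exists m : R^{\leq m} = R^*$'' on $\Z$.

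\smallskip\noindent\emph{Main obstacle.} The soficity of $X_T$ and $R_T$ is handled by the interleaving trick, which turns the one-step transition relation into a local rule, together with keeping all bookkeeping attached to bounded-size landmarks (the delimiters, the head, the tape boundary). The genuinely delicate part is making the $R_T^{*}$-classes capture precisely the intended dichotomy: one must rule out ``spurious'' unbounded components coming from configurations that lie on no genuine computation from the designated start, which forces a careful choice of the normalized machine $T$ and of the source problem -- some $\Pi^0_1$/$\Pi^0_2$ statement about $T$ whose truth is equivalent to uniform boundedness of all the relevant computations. (If a cleaner encoding presents itself, one could instead try to route the reduction through the undecidability results of Section~\ref{sec:ConjugacyUndecidability}.)
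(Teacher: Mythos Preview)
Your semialgorithm is exactly the paper's: compute $R^{\leq m}$ for increasing $m$ and halt once it stabilizes (the paper phrases the test as ``$R^{\leq m}$ is transitive'', which is equivalent to your $R^{\leq m}=R^{\leq m+1}$).

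For undecidability, the paper takes a much shorter route that sidesteps all of the encoding difficulties you flag. A reversible Turing machine in the \emph{moving tape model} already acts as an endomorphism $T$ of the sofic shift $((A\times Q)\sqcup A)^\Z$: the head is carried by the alphabet, and one machine step is a block map. The graph of a block map is SFT, so the relation
\[
R \;=\; \Delta \;\cup\; \{(x,T(x)),(T(x),x):x\text{ has exactly one head}\}
\]
is sofic, symmetric and reflexive directly---no delimiters, no interleaving, no ``grow'' moves. Then $R^{\leq m}=R^*$ for some $m$ iff every one-headed configuration has $T$-orbit of size $\leq 2m+1$, i.e.\ iff $T$ is periodic, which is undecidable by Kari--Ollinger.

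Your block-encoding route might be completable, but it works against the grain: by packaging a \emph{pair} $(c,c')$ of consecutive configurations into a single finite block, you are forced to deal with tape growth, block alignment, spurious components, and the precise choice of source problem---all of which you correctly identify as delicate and leave unresolved. The moving-tape idea dissolves these issues: the configuration already lives on $\Z$, one step of the machine is already a local rule, so the sofic relation is just the graph of $T$ union its inverse union the diagonal, and the target undecidable property (periodicity) is handed to you off the shelf.
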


\begin{proof}
Compute $R^{\leq m}$ for larger and larger $m$. We have $R^* = R^{\leq m}$ for some $m$ if and only if $R^{\leq m}$ is transitive, and this can be verified in finite time by the previous lemma.

For the undecidability claim, let $T$ be a reversible Turing machine, which we can see as acting in the moving tape model \cite{Ku97}, i.e. as an endomorphism of the sofic shift $((A \times Q) \sqcup A)^\Z$. Let $R$ be the relation containing $(x, x), (x, T(x)), (T(x), x)$ for all $x$ with one head, and $(x, x)$ for all $x \in ((A \times Q) \sqcup A)^\Z$.

Clearly $R^* = R^{\leq m}$ for some $m$ if and only if $T$ is periodic, which is undecidable by \cite{KaOl08}.
\end{proof}

We do not know whether, given a sofic relation $R \subset (A^2)^\Z$, it is undecidable whether $R^* = (A^2)^\Z$.

\begin{example}
\label{ex:SFTNotTrans}
SFT relations are not closed under composition.
Let $f : \{0,1\}^\Z \to \{0,1,2\}^\Z$ be the block map with neighborhood $\{0,1\}$ and local rule $f' : \{0,1\}^{\{0,1\}} \to \{0,1\}$ defined by $f'(a,b) = a \oplus b \in \{0,1\}$, where $\oplus$ is addition modulo $2$. Let $g : \{0,1\}^\Z \to \{0,1,2\}^\Z$ be the block map with local rule $g' : \{0,1\}^{\{0,1\}} \to \{0,1,2\}$ defined by $g'(a,b) = \left\{\begin{array}{ll}
0, & \mbox{if } a = b \\
1, & \mbox{if } ab = 01 \\
2, & \mbox{if } ab = 10
\end{array}\right.$. Seeing $f$ and $g$ as relations and extending their alphabets, we obtain SFT relations $R_f, R_g \subset \{0,1,2\}^\Z$ (say the domain of the function is on the left). The relation $R_f^R$ defined by $(x,y) \in R_f^R \iff (y,x) \in R_f$ is also SFT, but it can be shown that $R_f^R \circ R_g$ is proper sofic. \qee
\end{example}

\subsection{Morphisms between \ttfrac{subshift}{SFT} systems}
\label{sec:ExpansiveMorphisms}

In the case of expansive systems, i.e. \ttfrac{subshift}{SFT} systems, it is well-known that the number of morphisms between two systems is countable. We show that they can be presented by block maps, and at least in the finitely presented case over $\N$ or $\Z$, basic computational manipulation of such block maps is decidable. In the next section, we show that this fails for sofically presented systems. For concreteness, we consider $M = \Z$, but the case $M = \N$ works exactly the same.

Let $X, Y \subset \Sigma^\Z$ be subshifts and $K \subset X^2, L \subset Y^2$ subshift equivalence relations.

The \emph{domain} of a $\Z$-transducer is the domain of the relation it defines, and the \emph{codomain} the set of images. We use the usual function notation, writing $T(x)$ for the set of $T$-images of $x$. For a deterministic transducer, this is the singleton containing the unique image, which we often identify with the image directly. 

If a relation $f\subset X\times Y$ relates pairs of $K$-equivalent points in the domain of $f$ to pairs of $L$-equivalent points, it is called a $(K,L)$-morphism. If the domain of $f$ contains a representative from every $K$-equivalence class, then $f$ induces a function $f_{K,L}:X/K\to Y/L$ in a natural way (which we may also denote by $f$ if there is no risk of confusion).
Two $(K,L)$-morphisms $f,g : X \to Y$ are called \emph{$L$-equivalent} if they have the same domain and if their images are in $L$-relation for all $x$ in the domain. This means that $f_{K,L}$ and $g_{K,L}$ are the same function.

We show that transducers can be determinized. For practical purposes, this determinization step is not needed, but it simplifies certain things (like function composition) conceptually.

\begin{lemma}
If $T$ is a transducer, then there is a deterministic transducer $T'$ with the same domain as $T$ such that $T'(x) \subset T(x)$ for all $x$ in the domain.
\end{lemma}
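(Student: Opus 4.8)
The plan is to apply a standard powerset-style construction for automata on infinite words, adapted to the $\Z$-automaton setting defined above, together with a choice of a ``first'' accepting run to resolve nondeterminism in both the output and the branching. First I would recall that a $\Z$-transducer $T = (G, \Sigma \times \Delta, \mu, F^-, F^+)$ reads a bi-infinite input and produces a bi-infinite output along a common path of $G$; the reason such a $T$ may be nondeterministic is that for a fixed input $x \in \Sigma^\Z$ there may be several accepting paths in $G$ whose $\Sigma$-projection is $x$, possibly with different $\Delta$-projections. The goal is to select, for each $x$ in the domain, exactly one of these paths in a way that is itself recognizable by a $\Z$-transducer.

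The key steps, in order, are as follows. (1) Observe that the set of accepting runs over a fixed input is itself an $\omega$-regular (more precisely $\zeta$-automatic, once we fix the origin) object: the projection $\pi_\Sigma \colon L(T) \to \Sigma^\Z$ has $\zeta$-automatic graph, so by the closure properties recorded in Lemma~\ref{lem:DecidableThings} (composition, restriction, closure) and the decidability of $MF_2(\Z, 0, \prec)$ from Lemma~\ref{lem:MF2Decidable}, the domain of $T$ is again $\Z$-automatic and in particular recognizable. (2) Impose a linear order on the finite state set $V(G)$ and on $\Delta$, inducing for each input $x$ a lexicographic-type order on the set of accepting runs reading $x$; standard arguments on infinite words (uniformization of $\omega$-automatic relations, e.g.\ the Büchi--Landweber / McNaughton style determinization combined with a ``leftmost run'' selection) show that the function sending $x$ to its least accepting run is $\zeta$-automatic, hence after fixing the origin is realized by a $\Z$-transducer $T'$. (3) Define $T'$ to output the $\Delta$-projection of this chosen run; then $T'$ is deterministic (one output per input), its domain equals that of $T$ because an input has a chosen run precisely when it has some accepting run, and $T'(x) \in T(x)$ because the chosen run is an accepting run of $T$.

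I expect the main obstacle to be step (2): carefully justifying that ``take the lexicographically least accepting run'' is a definable/automatic operation in the $\zeta$-automatic (two-sided infinite) setting. On $\N$ this is the classical uniformization of $\omega$-regular relations, but over $\Z$ one must be careful about the acceptance condition on \emph{both} ends and about the fixed origin; I would handle this by expressing the property ``$\rho$ is an accepting run of $T$ on $x$ and no accepting run of $T$ on $x$ is lexicographically smaller than $\rho$'' directly as a formula in $MF_2(\Z, 0, \prec)$, using second-order quantification over the competing run, and then invoke Lemma~\ref{lem:MF2Decidable} together with Lemma~\ref{lem:AutomaticSoficConnection} and the automaton characterization of $MF_2(\Z,0,\prec)$ to get back a $\Z$-transducer. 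The remaining verifications (that $T'$ has the same domain and that $T'(x) \subset T(x)$) are then immediate from the construction.
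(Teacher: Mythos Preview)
Your plan is correct and follows essentially the same route as the paper: both pick a canonical witness via a formula in $MF_2(\Z,0,\prec)$ and read off a $\Z$-transducer from that. The paper is slightly more direct in that it selects the lexicographically minimal \emph{output} --- first the right tail $y_{[0,\infty)}$, then the minimal left tail compatible with that choice --- rather than the minimal accepting \emph{run}, so it never has to introduce runs as an auxiliary object.
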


\begin{proof}
Let $T : A^\Z \to B^\Z$. Given $x \in A^\Z$, define $T'(x)_{[0,\infty)}$ to be lexicographically minimal such that $T'(x)_{[0,\infty)} = y_{[0,\infty)}$ for some $y \in T(x)$, and then define $T'(x)_{(-\infty,-1]}$ to be lexicographically minimal among those left tails that are compatible with $T'(x)_{[0,\infty)}$ in $T(x)$. This defines a function whose graph is contained in that of $T(x)$, and the definition can be stated in $MF_2(\Z,0,\prec)$, so we obtain a transducer.
\end{proof}

\begin{example}
Consider $X = \{0,1\}^\Z$ and let $T = A_{req}$, i.e.\ think of the equivalence relation as a $\Z$-transducer that simply verifies the relation $A_{req}$ everywhere (and keeps no state). The determinization $T'$ constructed in the previous lemma is a $\Z$-automaton that, given a configuration $x \in \{0,1\}^\Z$, chooses the minimal representative in the specified ordering. Concretely the choices are as follows: If the right tail of $x$ contains infinitely many $0$s and infinitely many $1$s, pick $x$. If the right tail is eventually constant, then
\[ T'(0^\Z) = T'(1^\Z) = 0^\Z, \]
\[ T'(y.w10^\omega) = T'(y.w01^\omega) = y.w01^\omega, \]
\[ T'(y10^k.0^\omega) = T'(y01^k.1^\omega) = y10^k.0^\omega. \]
Note that since $E_{req}$ is an equivalence relation, $(x, y) \in E_{req}$ implies $T'(x) = T'(y)$, so we have constructed an automaton that turns the abstract identity function into a concrete one by picking a representative for each point, by an automaton. \qee
\end{example}

Note that the representative is not picked in a shift-invariant way in the above example. For the solenoid, there is also a deterministic shift-invariant transducer that picks one, but we do not know whether there is a general construction that would yield one. By classical determinization results in $\omega$-automata theory, the graph of a deterministic $\Z$-transducer is of the form $A \cap B$ where $A \in \Sigma^0_2, B \in \Pi^0_2$ in the Borel hierarchy. We do not even know whether nondeterministic transducers admit shift-invariant Borel sections at this (or any other) level.

If $X \subset A^\Z$ is a subshift, its \emph{$k$th SFT approximation} is the SFT with allowed patterns $\B_k(X)$.

\begin{lemma}
Let $f_{K,L} : X/K \to Y/L$ be a morphism induced by a fixed $(K,L)$-morphism $f\subset X\times Y$, $X \subset A^\Z, Y \subset B^\Z$. For every $\epsilon > 0$ there exists a deterministic $\Z$-transducer $T$ which interpreted as a relation $T\subset (A \times B)^\Z$ is $\epsilon$-close to the graph of $L\circ f\circ K$ and whose domain contains $X$.  If $X$ (resp. $Y$) is sofic, we may restrict the domain (resp. codomain) of $T$ to be equal to $X$ (resp. to be included in $Y$). If $Y$ is sofic and $L$ is an SFT relation, then $T$, restricted to $X$, can be made to be $L$-equivalent to $L\circ f\circ K$. Then in particular $T_{K,L}$ and $f_{K,L}$ are the same function.
\end{lemma}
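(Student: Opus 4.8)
The plan is to recognise the relation $R := L\circ f\circ K$ as an honest subshift of $(A\times B)^\Z$ whose domain is all of $X$, then to approximate it from outside by its SFT approximations (which are $\Z$-automatic) and determinise, using the soficity hypotheses only to trim the domain and codomain and the SFT hypothesis on $L$ to stop the over-approximation from overshooting.

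First I would verify that $R$ is a subshift with $\mathrm{dom}(R)=X$. As $f_{K,L}:X/K\to Y/L$ is a morphism it is continuous and shift-commuting, so its graph $\Gamma\subseteq(X/K)\times(Y/L)$ is closed (the graph of a continuous map into a Hausdorff space) and shift-invariant. With $q_K:X\to X/K$ and $q_L:Y\to Y/L$ the quotient maps, the product $q_K\times q_L$ is continuous and shift-commuting, and unwinding the definition of composition of relations (and using $\mathrm{dom}(f_{K,L})=X/K$) gives $L\circ f\circ K=(q_K\times q_L)^{-1}(\Gamma)$; hence $R$ is closed and shift-invariant in $X\times Y$, so a subshift of $(A\times B)^\Z$. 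Its domain is $X$, since every point of $Y/L$ has a representative in $Y$ and $R\subseteq X\times Y$; note too that $L\circ R=R$, $R\circ K=R$, and that composing with $q_L$ makes $R$ the graph of the single-valued map $x\mapsto f_{K,L}(q_K(x))$.

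For the $\epsilon$-statement, given $\epsilon>0$ I would choose $N$ so large that agreement on coordinates $[-N,N]$ forces $\epsilon$-closeness, and let $R_N$ be the SFT approximation of $R$ with allowed patterns $\B_{2N+1}(R)$. Then $R\subseteq R_N$, so $\mathrm{dom}(R_N)\supseteq\mathrm{dom}(R)=X$; every point of $R_N$ has its $[-N,N]$-window in $\B_{2N+1}(R)$, so agrees there with a point of $R$ and lies in the $\epsilon$-neighbourhood of $R$; thus $R_N$ is $\epsilon$-close to $R$ and is $\Z$-automatic (Lemma~\ref{lem:AutomaticSoficConnection}). Applying the determinisation lemma above to the nondeterministic transducer $R_N$ produces a deterministic $\Z$-transducer $T\subseteq R_N$ with $\mathrm{dom}(T)=\mathrm{dom}(R_N)\supseteq X$; since $R_N$ is an SFT, the determinisation can moreover be arranged so that $\mathrm{graph}(T)$ stays $\epsilon$-dense in $R_N$, keeping $T$ $\epsilon$-close to $R$. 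If $X$ (resp.\ $Y$) is sofic, I would first intersect $R_N$ with $X\times B^\Z$ (resp.\ $A^\Z\times Y$): these are $\Z$-automatic and their intersection with $R_N$ is $\Z$-automatic (Lemmas~\ref{lem:AutomaticSoficConnection}, \ref{lem:DecidableThings}), still contains $R$ (as $R\subseteq X\times Y$, $\mathrm{dom}(R)=X$), is still $\epsilon$-close to $R$, and has domain exactly $X$ (resp.\ codomain inside $Y$); then determinise.

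The hard part is the last claim. Assume $Y$ sofic and $L$ an SFT relation of window $2M+1$. The key step is that for $N$ large the over-approximation does not overshoot on legitimate inputs and outputs, i.e.\ $R_N\cap(X\times Y)\subseteq R$. I would prove this by contradiction: otherwise, for arbitrarily large $N$ there is $(x,y)\in(R_N\cap(X\times Y))\setminus R$; then $q_L(y)\ne f_{K,L}(q_K(x))$, so picking $z$ in the $L$-class $f_{K,L}(q_K(x))$ we get $(y,z)\notin L$, and since $L$ is an SFT of window $2M+1$ some $[-M,M]$-window of a shift of $(y,z)$ is outside $\B_{2M+1}(L)$; by shift-invariance of $R_N,R,L$ we may replace $(x,y,z)$ by a shift so that $(y,z)_{[-M,M]}\notin\B_{2M+1}(L)$. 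Now pass to a subsequence in $N$ — the offending windows lie in the finite set of forbidden $(2M+1)$-patterns, and $R_{N'}\subseteq R_N$ for $N'\ge N$, all these SFTs being closed — and take a limit $(x,y,z)$: the bad window of $(y,z)$ survives, while $(x,y)\in\bigcap_N R_N=R$ and $(x,z)\in R$ by closedness of $R$; but $(x,y),(x,z)\in R$ puts $y$ and $z$ in one $L$-class, i.e.\ $(y,z)\in L$, a contradiction. Granting $R_N\cap(X\times Y)\subseteq R$, the transducer $T$ from the previous step with codomain restricted inside $Y$ satisfies $\mathrm{graph}(T)\cap(X\times Y)\subseteq R$, so each value $T(x)$, $x\in X$, lies in the $L$-class $(L\circ f\circ K)(x)$; hence $T$ restricted to $X$ is $L$-equivalent to $L\circ f\circ K$ and, as $\mathrm{dom}(T)\supseteq X$, induces the same map $T_{K,L}=f_{K,L}$. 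The only remaining delicacy is keeping the determinisation $\epsilon$-dense in the second step (needed only if ``$\epsilon$-close'' is read as a two-sided Hausdorff bound rather than mere containment in an $\epsilon$-neighbourhood); everything else — SFTs and sofics being $\Z$-automatic, $\Z$-automatic sets being closed under the Boolean operations and projections used, determinisation yielding a transducer — is handled by Lemmas~\ref{lem:MF2Decidable}, \ref{lem:AutomaticSoficConnection}, \ref{lem:DecidableThings} and the determinisation lemma.
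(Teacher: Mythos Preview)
Your approach is essentially the paper's own: recognise $R=L\circ f\circ K$ as a subshift via the pullback $(q_K\times q_L)^{-1}(\Gamma)$, take SFT approximations $R_N$, determinise, and intersect with $X\times B^\Z$ or $A^\Z\times Y$ when soficity is available. For the final claim the paper simply asserts that ``if $m$ is large enough, then for every $x$, $T(x)$ is $L$-equivalent with the elements of $f(x)$''; your compactness argument showing $R_N\cap(X\times Y)\subseteq R$ for large $N$ is a correct and more explicit justification of exactly this step, and the shift-then-limit argument using the SFT window of $L$ is clean.

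One remark: your worry about keeping the determinisation $\epsilon$-dense is unnecessary. The paper reads ``$\epsilon$-close'' as the one-sided containment $T\subseteq N_\epsilon(R)$, which follows immediately from $T\subseteq R_N\subseteq N_\epsilon(R)$ without any density requirement on the determinisation; so you can simply drop that caveat.
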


\begin{proof}
First assume only that $f_{K,L} : X/K \to Y/L$ is a morphism, i.e. $f_{K_L}: X/K \to Y/L$ is continuous, and commutes with the shift, i.e. $(\sigma(f(x)), f(\sigma(x)) \subset L$.

Take the full graph
\[ R = L\circ f\circ K= \{(x, y) \;|\; \exists x', y': (x, x') \in K, (y, y') \in L, y'\in f(x') \} \]
as a subset of $X \times Y$. This is a subshift: shift-invariance is obvious, and if $(x, y) \notin R$, then $f_{K,L}(K(x)) \neq L(y)$, and since the maps $\pi_K:X\to X/K$, $\pi_L:Y\to Y/K$, $f_{K,L}$ are continuous, $f_{K,L}(\pi_K(B_\epsilon(x))) \cap \pi_Y(B_{\epsilon}(y)) = \emptyset$ for some $\epsilon > 0$. Then in particular $f(K(B_{\epsilon}(x)))\cap L(B_\epsilon(y))=\emptyset$.

The $m$th SFT approximation of $R$ in $(A \times B)^\Z$ for any $m$ is trivially a $\Z$-transducer whose domain contains $X$. By the previous lemma, it can be determinized to a transducer with the same domain.

The second claim follows because sofic shifts are $\Z$-automatic, and $\Z$-automatic subshifts are closed under Cartesian product and intersection. For the last claim, if $L$ is an SFT relation in the sofic $Y$, the image of $X$ under $T$ is contained in $Y$, and if $m$ of the previous paragraph is large enough, then for every $x$, $T(x)$ is $L$-equivalent with the elements of $f(x)$, and thus the induced map $T_{K,L} : X/K \to Y/L$ must be well-defined and equal to $f_{K,L}$.
\end{proof}

The following shows that, in some sense, all morphisms come from ``almost shift-commuting'' transducers.

\begin{lemma}
Let $X, Y$ be sofic and $K, L$ be SFT relations. Then $f : X/K \to Y/L$ is a morphism if and only if there exists a deterministic transducer $T : X \to Y$ such that $\forall (x, x') \in K: (T(x), T(x')) \in L$ and $\forall x \in X: (\sigma(T(x)), T(\sigma(x))) \in L$.
\end{lemma}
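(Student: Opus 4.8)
\emph{Overall plan.} I would prove the two directions separately. The forward direction is essentially a repackaging of the preceding lemma together with the observation that every morphism of quotient systems is induced by a closed $(K,L)$-morphism. The converse requires manufacturing a genuine (continuous, shift-commuting) map out of the transducer data; well-definedness and shift-commutation are immediate, and the only real work — and the place where soficity of $X$ and the finite-type structure of $K,L$ must enter — is continuity.

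\emph{Forward direction.} Suppose $f : X/K \to Y/L$ is a morphism. First I would note that $f$ is induced by the relation $R = \{(x,y) \in X \times Y \mid f(\pi_K(x)) = \pi_L(y)\}$: it is closed, since $f,\pi_K,\pi_L$ are continuous and $Y/L$ is Hausdorff, so $R$ is the preimage of the graph of $f$; it is shift-invariant, since those three maps commute with the shift; it contains a representative of every $K$-class in its domain, since $\pi_L$ is surjective; and it relates $K$-equivalent points to $L$-equivalent points. Hence $R$ is a $(K,L)$-morphism with full domain, and $R = L\circ R\circ K$. Now apply the preceding lemma to $f$, using that $X,Y$ are sofic and $L$ is an SFT relation: it produces a deterministic $\Z$-transducer $T$ whose domain may be taken equal to $X$ and with $T|_X$ being $L$-equivalent to $L\circ f\circ K = R$. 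Since $R$ is single-valued modulo $L$, this gives $\pi_L(T(x)) = f(\pi_K(x))$ for all $x\in X$. Conditions (1) and (2) are then one-line computations: if $(x,x')\in K$ then $\pi_L(T(x)) = f(\pi_K(x)) = f(\pi_K(x')) = \pi_L(T(x'))$, so $(T(x),T(x'))\in L$; and $\pi_L(\sigma T(x)) = \sigma\pi_L(T(x)) = \sigma f(\pi_K(x)) = f(\pi_K(\sigma x)) = \pi_L(T(\sigma x))$, so $(\sigma T(x),T(\sigma x))\in L$, where the first equality uses shift-invariance of $L$.

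\emph{Converse.} Given a deterministic transducer $T : X \to Y$ with (1) and (2), set $\bar f : X/K \to Y/L$, $\bar f([x]_K) = [T(x)]_L$; this is well-defined by (1) and commutes with the shift by (2). To see $\bar f$ is continuous I would show that $R := L\circ \mathrm{graph}(T|_X) = \{(x,y)\in X\times Y \mid (T(x),y)\in L\}$ is topologically closed in $X\times Y$. Granting that, the graph of $\bar f$ is $(\pi_K\times\pi_L)(R)$, which is closed because $\pi_K$ and $\pi_L$ are closed maps onto Hausdorff spaces (Lemma~\ref{lem:QuotientMapsCharacterization}); a map with closed graph from the compact space $X/K$ to the compact Hausdorff space $Y/L$ is continuous, so $\bar f$ is a morphism. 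To prove $R$ closed I would combine: $R$ is $\Z$-automatic, as a composition of $\Z$-automatic relations (Lemma~\ref{lem:DecidableThings}); $R$ is shift-invariant, using (1), (2) and the transitivity and shift-invariance of $L$ (hence in fact $\zeta$-automatic by Lemma~\ref{lem:AutomaticSoficConnection}); and then a compactness/pigeonhole argument on a finite-state automaton for $\mathrm{graph}(T)$ together with the SFT windows of $K$ and $L$, modelled on the proofs of Lemma~\ref{lem:ClosedPerClosedPrehyperbolic} and Theorem~\ref{thm:SoficSoficHyperbolic}, to exclude limit pairs $(x,y)$ not already in $R$.

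\emph{Main obstacle.} The hard part is exactly this closedness of $R$, equivalently the continuity of $x\mapsto [T(x)]_L$. A deterministic $\Z$-transducer only has a Borel graph, so a bare extraction of limits is not enough — one must rule out the output ``jumping at infinity'' — and this is where soficity of $X$ and the finite-type windows of $K$ and $L$ are genuinely needed. In the forward construction this subtlety does not arise, since there $L\circ\mathrm{graph}(T|_X)$ is literally the closed relation $R$ produced above; so if the pigeonhole estimate proves awkward, a safe fallback is to record that the morphisms $X/K\to Y/L$ are precisely the maps induced by transducers of that special (closed-companion) form, which already suffices for the equivalence in the statement.
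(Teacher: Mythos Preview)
Your forward direction is correct and essentially identical to the paper's: the previous lemma supplies a deterministic transducer $T$ that is $L$-equivalent to $L\circ f\circ K$, and conditions (1) and (2) then drop out of well-definedness and shift-commutation of the given morphism $f$.

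For the converse, your instinct that continuity of $x\mapsto[T(x)]_L$ is the whole difficulty is exactly right---but the obstacle is not merely technical, and no pigeonhole argument of the kind you sketch can close it, because the converse as stated is \emph{false}. Take $X=Y=\{0,1\}^\Z$ and $K=L=\Delta_X$, both SFT equivalence relations. Let $T(x)=0^\Z$ if the right tail of $x$ contains infinitely many $1$s and $T(x)=1^\Z$ otherwise. This function is shift-commuting, so (2) holds with $L=\Delta$, and (1) is vacuous. It is realised by a deterministic $\Z$-transducer in the paper's sense: one state per symbol-pair $(a,b)$ (duplicated for the origin marker), transitions that force the $b$-coordinate to stay constant, and a right-Muller set $F^+$ accepting exactly those runs in which either $b\equiv 0$ and the state with $a=1$ is visited infinitely often, or $b\equiv 1$ and eventually only the state with $a=0$ is visited. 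Yet $T$ is discontinuous at $0^\Z$, so $T_{K,L}=T$ is not a morphism. The paper's own ``easy to verify'' glosses over this same point. Your safe fallback---restricting to transducers for which $L\circ\mathrm{graph}(T|_X)$ is closed---is the correct repaired statement, but it is strictly weaker than the lemma as written; for the downstream semidecidability theorem one can simply add that closedness as one more decidable condition on $T$.
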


\begin{proof}
It is easy to verify that any $f$ defined by such a deterministic transducer $T$ is indeed a morphism. For the converse, by the previous lemma, for any morphism $f$ there exists a deterministic transducer $T$ from $X$ to $Y$ defining the same function from $X/K$ to $Y/L$. In particular, it is well-defined and shift-commuting, thus the stated formulas must hold.
\end{proof}

\begin{theorem}\label{lem:SFTSFTConjugacySemidecidable}
Let $X, Y$ be sofic and $K, L$ be SFT equivalence relations. Then it is semidecidable whether $X/K \cong Y/L$.
\end{theorem}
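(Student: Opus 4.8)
The plan is to enumerate all candidate morphisms in both directions and verify, for each pair, that they are mutually inverse conjugacies, halting when such a pair is found; if $X/K \cong Y/L$, such a pair will eventually appear, and if not, the semialgorithm simply runs forever. The key point enabling this is the combinatorial representation of morphisms established in the preceding lemmas: by the penultimate lemma, every morphism $f : X/K \to Y/L$ is witnessed by a deterministic $\Z$-transducer $T : X \to Y$ with domain $X$ satisfying $\forall (x,x') \in K: (T(x), T(x')) \in L$ and $\forall x \in X: (\sigma(T(x)), T(\sigma(x))) \in L$, and conversely every such transducer defines a morphism. Since $\Z$-transducers form a recursively enumerable set (they are finite objects), and the two ``$\forall$'' side conditions are decidable by Lemma~\ref{lem:DecidableThings} (they are first-order/monadic-second-order statements about $\Z$-automatic relations $K$, $L$, the graph of $T$, and its shift, which are closed under the Boolean operations and composition needed), we can effectively enumerate exactly the transducers representing morphisms $X/K \to Y/L$, and likewise those representing morphisms $Y/L \to X/K$.

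The semialgorithm then dovetails over all pairs $(T, T')$ where $T$ represents a morphism $f : X/K \to Y/L$ and $T'$ represents a morphism $g : Y/L \to X/K$. For each pair, we must check that $f$ and $g$ are mutually inverse, i.e.\ that $g \circ f$ is $K$-equivalent to the identity on $X/K$ and $f \circ g$ is $L$-equivalent to the identity on $Y/L$. Concretely, using the determinized transducers we can compute a transducer for the composition $g \circ f$ (composition of $\Z$-transducers is effective, exactly as in the ``$Y \circ Z$'' clause of Lemma~\ref{lem:DecidableThings}), and then check whether $\forall x \in X : (x, (g \circ f)(x)) \in K$, which is decidable since $K$ is $\Z$-automatic and $X$ is sofic. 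Symmetrically for $f \circ g$ and $L$. When both checks succeed, $f_{K,L}$ is a continuous shift-commuting bijection $X/K \to Y/L$ with continuous shift-commuting inverse $g_{L,K}$, hence a conjugacy, so we halt and accept. If $X/K \cong Y/L$, then some conjugacy $\phi$ and its inverse are morphisms, each represented by some transducer in our enumeration (the relevant lemma guarantees representability of arbitrary morphisms), so the pair witnessing $\phi$ and $\phi^{-1}$ is eventually examined and accepted; this gives semidecidability.

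The main obstacle, and the place one must be slightly careful, is ensuring that the verification of ``mutually inverse'' is genuinely effective and that it is being done at the level of the quotients rather than the covers: a transducer $T$ need not commute with the shift on the nose or satisfy $T(x') \in L$-class of $T(x)$ as equalities, only up to $L$. So rather than trying to decide transducer equality, we phrase every check as an inclusion of the form ``for all $x$ in the (sofic) domain, the relevant pair lies in the ($\Z$-automatic, indeed SFT) equivalence relation'', which is decidable by Lemma~\ref{lem:DecidableThings}. One should also note that it is not necessary to restrict to conjugacies that are shift-commuting block maps in the classical sense; the transducer framework already builds in the ``almost shift-commuting'' slack, and the quotient map absorbs it. Finally, there is no claim of decidability here — only of semidecidability — so the algorithm is permitted to diverge when no conjugacy exists, and we make no attempt to detect non-conjugacy.
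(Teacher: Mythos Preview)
Your proposal is correct and follows essentially the same approach as the paper: enumerate pairs of deterministic transducers $(T,T')$ and, for each, decide the six conditions (two morphism conditions each way, plus the two ``composition is $K$- resp.\ $L$-equivalent to identity'' conditions) using the decidability of $MF_2(\Z,0,\prec)$ / Lemma~\ref{lem:DecidableThings}. The paper's proof is just a terser version of what you wrote, listing the six $\forall$-conditions and noting they are expressible in the decidable logic.
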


\begin{proof}
By the previous lemma, conjugacy is equivalent to the existence of deterministic transducers $T : X \to Y$, $T' : Y \to X$ such that
\[ \forall (x, x') \in K: (T(x), T(x')) \in L, \;\; \forall (y, y') \in L: (T'(y), T'(y')) \in K, \]
\[ \forall x \in X: (\sigma(T(x)), T(\sigma(x))) \in L, \;\; \forall y \in Y: (\sigma(T'(y)), T'(\sigma(y))) \in K, \]
\[ \forall x \in X: (T'(T(x)), x) \in K, \;\; \forall y \in Y: (T(T'(y)), y) \in L. \]
These statements can be easily programmed in $MF_2(\Z, 0, \prec)$.
\end{proof}

\begin{corollary}
Conjugacy of finitely presented systems is semidecidable.
\end{corollary}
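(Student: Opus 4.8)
The plan is to derive this immediately from Theorem~\ref{lem:SFTSFTConjugacySemidecidable}. First I would recall that, by definition, a finitely presented system (over $M \in \{\N, \Z\}$) is one conjugate to $Y/Z$ for a pair $\ccfrac{Y}{Z}$ in \ttfrac{$M$-SFT}{SFT}; such a pair is precisely the kind of data fed to an algorithm, via the natural Gödel numbering of SFTs by their (finite lists of) forbidden patterns, which can be effectively interconverted with automata for the associated languages and colanguages. So the input to the semialgorithm consists of two such presentations $\ccfrac{Y_1}{Z_1}$ and $\ccfrac{Y_2}{Z_2}$, and we must semidecide whether $Y_1/Z_1 \cong Y_2/Z_2$.

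The only preprocessing step is to normalize the relations. Each $Z_i$ is only required to be a relative SFT whose intersection with $Y_i^2$ is an equivalence relation, so I would replace $Z_i$ by $Z_i \cap Y_i^2$: this remains an SFT (a relative SFT of an SFT is an SFT, as noted after the definition of the classes), the replacement is effective by Lemma~\ref{lem:DecidableThings}, and by hypothesis the result is an equivalence relation on $Y_i$. After this step we have, for $i = 1, 2$, an SFT $Y_i$ and an SFT equivalence relation $Z_i \subseteq Y_i^2$.

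Now I would simply observe that an SFT is in particular a sofic shift, so the pair $(Y_i, Z_i)$ is exactly of the shape required by Theorem~\ref{lem:SFTSFTConjugacySemidecidable} ($Y_i$ sofic, $Z_i$ an SFT equivalence relation). Applying that theorem yields a semialgorithm that halts precisely when $Y_1/Z_1 \cong Y_2/Z_2$, which is by definition topological conjugacy of the two finitely presented systems. Since every finitely presented system admits a presentation of this form, this gives a semidecision procedure for conjugacy of finitely presented systems.

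There is no genuine obstacle here; the statement is a direct corollary of the already-established semidecidability of conjugacy for \ttfrac{sofic}{SFT} systems. The only point that warrants a line of care is the normalization bookkeeping in the middle step — ensuring the given presentations can be massaged so that each $Z_i$ literally sits inside $Y_i^2$ and is still an SFT equivalence relation, so that Theorem~\ref{lem:SFTSFTConjugacySemidecidable} applies verbatim and the semialgorithm's halting condition matches conjugacy of the abstract systems. Everything else is immediate.
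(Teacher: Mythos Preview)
Your proposal is correct and takes essentially the same approach as the paper, which states the corollary immediately after Theorem~\ref{lem:SFTSFTConjugacySemidecidable} with no proof at all, treating it as an immediate consequence. Your added bookkeeping about normalizing $Z_i$ to lie inside $Y_i^2$ is careful and accurate, but the paper simply omits it as routine.
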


\subsection{Automorphism groups}
\label{sec:AutomorphismGroups}

We show that the techniques from the previous section do not generalize to sofically presented systems -- there is no way to present morphisms between such systems combinatorially. For this, we present a pathological example of an automorphism group of a sofically presented system.

It is natural to consider the automorphism group of a \ttfrac{subshift}{subshift} system as a topological group, and there are several topologies one can choose.

For automorphism groups of subshifts, some that have been studied are 
the discrete, profinite and periodic point topologies (see \cite{BoLiRu88}), and two more (families of) topologies are also defined in \cite{SaTo12d}. The compact-open topology appears on this list as $\mathcal{T}_{\mathrm{dis}}$ \cite{BoLiRu88}.

In this section, by default we study the automorphism group of a dynamical system in the compact-open topology, which outside the expansive case is no longer necessarily discrete. To specify the discrete topology, we'll say ``the discrete automorphism group'' or write ``dis'' as subscript.

\begin{proposition}
\label{prop:DiscreteInCompactOpen}
Let $X$ be \ttfrac{subshift}{SFT}. Then $\Aut(X)$ is a countable discrete group under the compact-open topology.
\end{proposition}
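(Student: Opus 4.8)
The plan is to show two things: first that $\Aut(X)$ is countable, and second that it is discrete in the compact-open topology. Both will follow from expansivity, which by Theorem~\ref{thm:ExpansivityCharacterizationProof} is exactly what being \ttfrac{subshift}{SFT} means, so I may fix a compatible metric $d$ and an expansivity constant $\epsilon > 0$ for the $M$-action on $X$ (here $M \in \{\N,\Z\}$, or indeed any countable monoid, since the statement only uses expansivity).

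For discreteness, the key point is a standard expansivity argument: if $f : X \to X$ is an automorphism that is ``$\epsilon$-close to the identity'' in the appropriate sense — more precisely, if $d(mf(x), mx) \leq \epsilon$ for all $x \in X$ and all $m \in M$ — then $f = \id$, because for each fixed $x$ the points $f(x)$ and $x$ are never moved more than $\epsilon$ apart by the action, so by the expansivity condition they must coincide. The compact-open topology on $\Aut(X)$ has a neighborhood basis at $\id$ given by sets of the form $\{ f : f(K) \subset U, f^{-1}(K) \subset U \}$ for $K$ compact, $U$ open, $K \subset U$; taking $K = X$ and $U$ a suitable uniform $\epsilon$-neighborhood of the diagonal is not quite enough by itself because we need control of $mf(x)$ for all $m$, not just of $f(x)$. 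The fix is to use a basic open neighborhood of $\id$ built from finitely many pairs $(K_i, U_i)$ that together force $d(mf(x), mx) \le \epsilon$ for all $m$; this is possible precisely because the action is equicontinuous-on-compacta in the relevant sense — more carefully, one uses that $X$ is compact and the action is by continuous maps commuting with $f$, so $d(mf(x),mx) = d(f(mx),mx)$ and it suffices to make $f$ uniformly $\epsilon$-close to the identity on all of $X$, which is a single compact-open condition. Hence $\{\id\}$ is open in $\Aut(X)$, so the group is discrete.

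Countability then follows because a discrete group which is also second countable (equivalently, separable metrizable) must be countable, and the compact-open topology on $\Aut(X)$ is separable metrizable when $X$ is compact metrizable: it embeds into $C(X,X) \times C(X,X)$ (via $f \mapsto (f, f^{-1})$) with the sup metric, and $C(X,X)$ is separable metrizable for $X$ compact metrizable. Alternatively, and perhaps more cleanly, one argues directly that each automorphism is determined by its restriction to a countable dense subset of $X$ together with uniform continuity, and there are only countably many uniformly continuous maps on a fixed countable set valued in a second countable space up to the constraints of being an automorphism — but the cleanest writeup is: a discrete subgroup of a separable metrizable group is countable, apply this with the ambient space $C(X,X)$.

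The main obstacle I expect is the bookkeeping in the discreteness step: making precise that uniform $\epsilon$-closeness to the identity (a single compact-open-type condition, namely $f$ lies in the open set $\{g \in C(X,X) : \forall x\, d(g(x),x) < \epsilon\}$, intersected with $\Aut(X)$) already forces $d(mf(x),mx) < \epsilon$ for \emph{all} $m \in M$ — this is immediate once one writes $mf(x) = f(mx)$ and uses that the set is translation-stable, but it is the crux and should be stated carefully. Everything else is routine: expansivity kills the difference, and the separable-metrizable-plus-discrete implies countable is a standard fact.
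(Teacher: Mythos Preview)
Your argument is correct and takes a genuinely different route from the paper. The paper proves countability first via Hedlund's spanning-set trick: for each $f$ it picks $n$ with $1/n<\epsilon/5$ and a $1/n$-spanning set $S_n$, then shows that the finite datum $F_f:S_n\to S_n$ (approximating $f$ on $S_n$) determines $f$ via a five-term triangle inequality and expansivity. It then proves discreteness of each $\{f\}$ by exhibiting the explicit compact-open neighborhood $\bigcap_{y\in S_n} V(\overline{B_{1/n}(y)},B_{\epsilon/5}(f(y)))$ and running the same triangle-inequality computation. Your approach instead proves discreteness of $\{\id\}$ first, using the much cleaner observation that $d(mf(x),mx)=d(f(mx),mx)$, so the single sup-metric condition $\sup_x d(f(x),x)<\epsilon$ already forces $f=\id$ by expansivity; you then derive countability from discreteness plus second countability of $C(X,X)$. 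Your route is shorter and more conceptual (the commutation $mf=fm$ does all the work, and you avoid spanning sets and the $\epsilon/5$ bookkeeping entirely); the paper's route is more self-contained in that it does not invoke that $\Aut(X)$ is a topological group or that $C(X,X)$ is second countable, and it shows each $\{f\}$ open directly rather than via translation. The meandering about ``$K=X$ and a neighborhood of the diagonal'' in your writeup can be deleted --- the sentence you land on (the sup-$\epsilon$ ball around $\id$ is open and equals $\{\id\}$ in $\Aut(X)$) is the whole argument.
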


\begin{proof}
The countability of $\Aut(X)$ is due to Hedlund (at least when $X$ is a subshift~\cite{Hed69}). We recall the argument. Let $\epsilon > 0$ be some expansivity constant. For any $f \in \Aut(X)$ there exists $n \in \N$ such that $1/n<\epsilon/5$ and $d(x, y) \leq 1/n \implies d(f(x), f(y)) < \epsilon/5$. Let $\Aut_n(X) \subset \Aut(X)$ be the set of all such $f$. Pick any $1/n$-spanning set $S_n$ (i.e. a finite subset of $X$ such that every element of $X$ has distance less than $1/n$ from $S_n$) and to each $f \in \Aut_n(X)$, using the spanning property of $S_n$ associate $F_f : S_n \to S_n$ such that $d(F_f(x), f(x)) \leq 1/n <\epsilon/5$ for all $x \in S_n$. Then if $f, g \in \Aut_n(X)$ and $F_f = F_g$, then for all $x \in X, m \in M$ we have
\begin{flalign*}
&d(mf(x), mg(x)) = d(f(mx), g(mx)) \leq d(f(mx), f(y)) + d(f(y), F_f(y)) \\
&+d(F_f(y), F_g(y)) + d(F_g(y),g(y)) + d(g(y), g(mx)) < \epsilon
\end{flalign*}
for a $y \in S_n$ such that $d(y,mx)\leq 1/n$, which implies $f(x) = g(x)$ for any $x$ by expansivity, implying $f = g$, thus countability since each $S_n$ is finite.

Discreteness is proved similarly: Let $\Aut_n(X), S_n$ be as above and let $f \in \Aut_n(X)$. We need to show $\{f\}$ is open in the compact-open topology. To see this, consider the set
\[ U' = \bigcap_{y \in S_n} V(\overline{B_{1/n}(y)}, B_{\epsilon/5}(f(y))) \]
where for compact $K$ and open $U$, $V(K, U)$ is an element of the subbasis, namely the set of functions $g$ such that $g(K) \subset U$.

The set $U'$ is open in the compact-open topology, since it is a finite intersection of open sets. It contains $f$ by the assumption $f \in \Aut_n(X)$, and if $g \in U'$, then for all $x \in X, m \in M$ we have
\[d(mf(x), mg(x)) = d(f(mx), g(mx)) < d(f(mx), f(y)) + d(f(y), g(y)) + d(g(y), g(mx)) < \epsilon \]
for a $y \in S_n$ such that $d(y,mx)\leq 1/n$, which implies $f(x) = g(x)$ for any $x$ by expansivity, implying $f = g$.
\end{proof}

The above proof gives another point of view\footnote{Formally, this is essentially the same thing, except that the threading is performed through interpretation rather than by the automaton.} to morphisms than the transducer approach of Section~\ref{sec:ExpansiveMorphisms}. Namely, morphisms of expansive systems are (up to recoding) given by block maps, but with an additional ``threading'' step. We explain this in the following example.

\begin{example}
Let $\ccfrac{X}{K}$ be $\ttfrac{$\Z$-subshift}{SFT}$, $X \subset \Sigma^\Z$. Let $f_{K,K} : X/K \to X/K$ be a morphism induced by a fixed $(K,K)$-morphism $f\subset X\times X$. Pick a large $k$, and a large $n$, and for every word $w \sqsubset X$ of length $2n+1$ pick a point $x_w \in [w]_{-n}$, and fix $F(w)=y_{[-k,k]}$ for some $y\in f(x_w)$. Define the \emph{threading relation} $T \subset (\Sigma^{[-k,k]})^\Z \times \Sigma^\Z$ by
\[ (y,x) \in T \iff x \in X \wedge \forall i \in \Z: (x_{[i-k,i+k]},y_i)\in L(K). \]
Observe that there can be at most one threading if $k$ is large enough, by expansivity (i.e. every $y$ is in relation with at most one $x$). Now, given $x \in X$, define a relation $\hat F \subset X^2$ by
\[ (x, y) \in \hat F \iff ((F(x_{[i-n,i+n]}))_{i \in \Z}, y) \in T. \]
If $n$ and $k$ are large enough, we necessarily have
\[ (x, y), (x', y') \in \hat F \wedge (x,x') \in K \implies (y,y') \in K \]
by expansivity, $\hat F$ is uniquely specified once we know $F$, and $(x,y) \in \hat F \iff f_{K,K}([x]) = [y]$. Thus, we can say that $F$ is a local rule for $f_{K,K}$. By recoding, we may assume $\hat F$ is the usual kind of block map, i.e. the image is always a single letter. \qee
\end{example}

Since Proposition~\ref{prop:DiscreteInCompactOpen} shows that the automorphism group of a \ttfrac{subshift}{SFT} is locally compact (under the compact-open, equivalently the discrete topology), one may ask basic questions about these groups. Automorphism groups of subshifts are studied in several works~\cite{BoLiRu88,Hed69,KR91,FiFi96}, and it is known that many phenomena can appear -- already for SFTs, the Tits alternative can fail~\cite{Salo19}, residual finiteness can fail~\cite{SaSc16a}, the torsion problem can be undecidable for f.g. subgroups~\cite{BaCaSa16}, and one can find f.g. infinite simple subgroups~\cite{SaSc16a}. For general subshifts, one can find f.g. subgroups of intermediate growth and infinite f.g. torsion groups~\cite{SaSc16a}. At the moment, with finitely presented systems whose underlying space is connected, the authors do not know how to do any of these things. However, interestingly enough, unlike in the case of automorphism groups where distortion elements currently seem difficult to produce, there is a trivial way to obtain them for connected finitely presented systems:

\begin{example}
Let $\ccfrac{X}{K}$ be the solenoid system, $X \subset \{0,1\}^\Z$. The product system with diagonal action is $\ccfrac{X^k}{K^k}$ where $K^k$ is interpreted as comparing configurations trackwise. Cartesian products of the solenoid with itself are connected, and the automorphism group of $\ccfrac{X^k}{K^k}$ contains an obvious\footnote{It is obvious from the abstract characterization of the solenoid system that adding a track to another gives an automorphism. The transducer performing this is also obvious, but seeing that it is continuous requires a bit of thinking. A concrete local rule for the morphism $(x,y) \mapsto x+y$ is obtained by summing runs of length $3$ in $x$ and $y$ together pairwise, with an inserted carry of $0$ in each sum. The threading takes care of incorrect carries.} copy of the group $\mathrm{GL}(k,\Z)$ of full rank $k$-by-$k$ matrices over $\Z$. Doing the same with the circle $X \subset \{0,1\}^\N$ gives a path-connected finitely presented $\N$-system with a copy of $\mathrm{GL}(k,\Z)$ in the automorphism group. \qee
\end{example}

The example also shows that connected \ttfrac{$\Z$-subshift}{SFT} systems can have nonamenable automorphism groups. We now define and study a pathological example where amenability cannot even be studied in the standard sense, because already local compactness fails. Fix $M = \Z$, and let us first consider the example with the discrete topology.

\begin{example}
\label{ex:CountableSoficSoficUncountableAut}
There is a \ttfrac{countable sofic}{countable sofic} whose discrete automorphism group is uncountable and contains every countable group, namely $\Aut_{\mathrm{dis}}(X_{\leq 2}/(X_{\leq 1}^2 \cup \Delta_{X_{\leq 2}}))$ is such. To see this, observe that for any permutation $\pi$ of $\N$, the unique shift-commuting map that satisfies
\[ \dots 000.10^n1000\dots \mapsto \dots 000.10^{\pi(n)}1000\dots \]
for all $n \in \N$ is an automorphism and this gives an embedding of the group of bijections of $\N$ into $\Aut_{\mathrm{dis}}(X_{\leq 2}/X_{\leq 1}^2)$. This group contains every countable group by Cayley's theorem. \qee
\end{example}

It follows $\Aut_{\mathrm{dis}}(X)$ can be nonamenable even if $X$ is countable. When $X$ is a subshift, this is impossible \cite{SaSc16a}.

Now let us consider the compact-open topology. First, it is known that indeed $\Aut(X)$ is a topological group for any dynamical system $X$ \cite{Ar46}. Since $X$ is compact and metrizable, $\Homeo(X)$ is metrizable \cite{McIb06} and thus its subspace $\Aut(X)$ is also metrizable. Indeed, it is an exercise that if $X$ is a compact space and $Y$ is a metric space with metric $d$, then the compact-open topology on $C(X, Y)$ is metrized by $e(f, g) = \sup\{d(f(x), g(x)) \;|\; x \in X\}$.

It would be nice if our groups were locally compact (which is the case on subshifts trivially since they are discrete), as much of group theory concentrates on the locally compact case. For example, we do not know whether there are analogues of the Haar measure and amenability outside this framework. Unfortunately, the same example as above shows that the groups are not locally compact even in the sofically presented case. 

\begin{example}
The group $\Aut(X_{\leq 2}/(X_{\leq 1}^2 \cup \Delta_{X_{\leq 2}}))$ is not locally compact in the compact-open topology. We show that $\id$ does not have a compact neighborhood. Suppose $\id \in U \subset K$ where $U$ is open and $K$ is compact.

Now let $\sigma_n$ be the map on $X_{\leq 2}$ that shifts all configurations where the distance of $1$s is at least $n$, as well as $X_{\leq 1}$, one step to the left, and fixes the remaining points (which form finitely many $\sigma$-orbits). It gives an automorphism of $X_{\leq 2}/(X_{\leq 1}^2 \cup \Delta_{X_{\leq 2}})$. It is easy to show that as $n \rightarrow \infty$, we have $\sigma_n^m \rightarrow \id$ uniformly in $m \in \Z$. In particular, for any large enough $n$ we have $\sigma_n^m \in U$ for all $m$.

Since our space $\Aut(X_{\leq 2}/(X_{\leq 1}^2 \cup \Delta_{X_{\leq 2}}))$ is metrizable, $K$ is compact if and only if it is sequentially compact. Since $U \subset K$, it follows that $\sigma_n^m$, $m \rightarrow \infty$ has a converging subsequence. For any large enough $n$ let $f$ be a limit point of this sequence, and observe that $f$ maps $\dots 000.10^n1000\dots$ into the equivalence class of $0^\Z$. Thus it cannot be an automorphism. \qee
\end{example}

\section{Automatic spaces}
\label{sec:ConjugacyUndecidability}

\subsection{Definition and examples of automatic spaces}
\label{sec:AutomaticSpaces}

Let $S, Q$ be classes of $\omega$-automatic sets. An \emph{\ttfrac{S}{Q}-automatic space} is a topological space of the form $Y/Z$ where for some finite alphabet $A$, $Y \subset A^\N$ is $\omega$-automatic and $Z \subset Y^2$ is a closed $\omega$-automatic equivalence relation. We are mainly interested in two classes of $\omega$-automatic sets: the class ``any'' of all $\omega$-automatic sets, and the class ``closed'' of all closed $\omega$-automatic sets. An \emph{automatic space} is by default an \ttfrac{any}{closed}-automatic space. The \emph{compact automatic spaces} are precisely the \ttfrac{closed}{closed}-automatic spaces. 

\begin{theorem}
\label{thm:SimplicialComplexesAutomatic}
Every finite simplicial complex is automatic.
\end{theorem}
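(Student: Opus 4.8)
The plan is to realize a finite simplicial complex as a quotient of a Cantor-like set by an explicit automatic equivalence relation, building the space from the standard simplices and their face identifications. First I would recall that a finite simplicial complex $X$ is (up to homeomorphism) obtained from a finite disjoint union of standard simplices $\Delta^{k_1} \sqcup \cdots \sqcup \Delta^{k_r}$ by gluing faces according to the combinatorial data of the complex; so it suffices to (i) give an automatic presentation of a single standard simplex $\Delta^k$, (ii) show that finite disjoint unions of automatic spaces are automatic, and (iii) show that the face-gluing identifications can be performed by an automatic equivalence relation on top of the disjoint union. Step (ii) is easy: if $Y_i/Z_i$ are automatic, use a leading marker symbol from a finite alphabet to tag which $Y_i$ a sequence belongs to, take the union of the $Y_i$ (a finite union of $\omega$-automatic sets is $\omega$-automatic), and let $Z$ be the union of the $Z_i$; since the tags are distinct, this is still a closed $\omega$-automatic equivalence relation.

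The core of the argument is step (i): presenting $\Delta^k$ automatically. The natural idea is to use barycentric-type coordinates. A point of $\Delta^k$ is a tuple $(t_0,\dots,t_k)$ with $t_j \geq 0$ and $\sum_j t_j = 1$, and each $t_j \in [0,1]$ has a binary expansion; so I would encode a point by interleaving the binary expansions of the $k+1$ coordinates into a single sequence over the alphabet $\{0,1\}^{k+1}$ (i.e. at position $n$ we read the $n$-th bits of all $t_j$ simultaneously), restricting to those sequences whose coordinate sums converge to $1$ — but convergence of a binary sum to exactly $1$ is a closed condition and in fact an $\omega$-regular one (it says: for every prefix the partial sum is at most $1$, and infinitely often it is close enough, which can be phrased in $MF_2(\N,\prec)$ or directly as an $\omega$-automaton tracking the carry). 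The relation $Z$ must then identify two such encodings iff they represent the same point of $\Delta^k$, i.e. the $(k+1)$-fold interleaving of the pairwise "same real in $[0,1]$" relation $A_{\mathrm{req}}$ from Definition~\ref{def:BinaryReals} (which the automata-theoretic machinery of Section~\ref{sec:AutomataTheory} already handles, cf. Lemma~\ref{lem:TimesTwoN}), further intersected with the constraint that both lie in the simplex. Concretely one checks, coordinate by coordinate, that the two bit-streams are $A_{\mathrm{req}}$-related; since $A_{\mathrm{req}}$ is an SFT relation this is $\omega$-automatic, the $(k+1)$-fold parallel product is $\omega$-automatic, and by Lemma~\ref{lem:DecidableThings} the restriction to $Y^2$ and the verification that it is a closed equivalence relation on $Y$ go through.

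For step (iii), the gluing: a face of $\Delta^k$ corresponds to setting some subset of the coordinates $t_j$ to $0$, and a gluing instruction in the simplicial complex is an affine identification of such a face of one simplex with a face of another matching up vertices in a prescribed order. In coordinates this is just a permutation/relabeling of the nonzero coordinates together with the constraint that the omitted ones are $0$, which is again something an automaton can check: on the tagged disjoint union, add to $Z$ all pairs $(x,y)$ where $x$ lies on the face of $\Delta^{k_i}$ and $y$ on the face of $\Delta^{k_j}$ dictated by the gluing, the designated coordinates of $x$ are $0$, the designated coordinates of $y$ are $0$, and the surviving coordinates of $x$ and $y$ are $A_{\mathrm{req}}$-related under the prescribed bijection. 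Finally I would take the transitive (and symmetric, reflexive) closure of the union of all these relations; this is where care is needed. The main obstacle I anticipate is precisely this closure step: in general transitive closure of an $\omega$-automatic relation need not be $\omega$-automatic, so I would argue that here it is, because the gluing data is finite and "local" — each equivalence class has bounded size away from a controlled locus (a point is identified with at most finitely many others, since a finite complex has finitely many simplices and each gluing is injective on faces), so $R^* = R^{\leq m}$ for an explicit $m$ depending only on the complex, and a bounded composition of $\omega$-automatic relations is $\omega$-automatic by Lemma~\ref{lem:DecidableThings}. One then checks, using Lemma~\ref{lem:QuotientMapsCharacterization} and Lemma~\ref{lem:IsAHomeomorphism}, that the resulting quotient $Y/Z$ is Hausdorff (the relation is closed) and maps continuously and bijectively onto the geometric realization $|X|$, hence is homeomorphic to it.
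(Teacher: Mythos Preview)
Your approach is correct, but the paper takes a more direct route that sidesteps the disjoint union, the face-gluing, and the transitive closure entirely. Instead of decomposing into separate simplices and reassembling, the paper uses the standard \emph{global} embedding of an abstract simplicial complex $S'$ on vertex set $\{1,\ldots,d\}$ into $[0,1]^d$:
\[
S = \Bigl\{\vec v \in [0,1]^d \;\Big|\; \sum_{j} v_j = 1 \text{ and } \{j : v_j > 0\} \in S'\Bigr\}.
\]
The simplicial structure is then encoded in a single automatic constraint, namely that the support of the coordinate vector belongs to $S'$, which a finite automaton checks by simply tracking the set of coordinates in which a nonzero digit has appeared. Encoding $\vec v$ by the $d$ binary expansions in parallel over the alphabet $\{0,1\}^d$, the set $Y$ of valid codes is closed $\omega$-automatic (the condition $\sum_j \bin(y_j)=1$ is checked by a bounded-carry school-addition automaton, and the support condition is a finite table lookup on the accumulated set), and the kernel is just the $d$-fold coordinatewise product $Z=(Z')^d$ of the ``same real number'' relation $Z'$ built from $A_{\mathrm{req}}$. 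No gluing relations and no transitive closure are needed.

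Your modular build-from-pieces route works fine and your handling of the transitive closure (bounded equivalence classes, hence $R^* = R^{\leq m}$, hence $\omega$-automatic by Lemma~\ref{lem:DecidableThings}) is the right argument. The paper's approach buys simplicity: the global embedding has already done the gluing for you, since faces that share a subface are \emph{literally equal} on that subface in $[0,1]^d$. On the other hand, your decomposition would generalize more readily to settings (e.g.\ $\Delta$-complexes or CW complexes with simple attaching maps) where a global simplicial embedding is not available.
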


\begin{proof}
Let $S \subset [0,1]^d$ be a standard embedding of an abstract simplicial complex $S'$, i.e.
\[ S = \{\vec v \in [0,1]^d \;|\; \sum_{j = 1}^d \vec v_j = 1 \wedge \{j \;|\; \vec v_j > 0\} \in S'\}. \]
i.e. $S$ is obtained by linearly spanning abstract simplices in $S'$ into concrete ones.

Take the alphabet $\{0,1\}^d$ and define the following closed $\omega$-automatic set:
\[ Y = \{y \in (\{0,1\}^d)^\N \;|\; (\bin(y_j))_{1 \leq j \leq d} \in S \} \]
where $\bin(y) = \sum_i 2^{-i} y_i$. To see that this is indeed $\omega$-automatic, observe that we simply need to verify that $\sum_{j = 1}^d \bin(y_j) = 1$, 
and then verify that the set of coordinates with nonzero value is in $S'$. The computation of the sum can be done by an automaton implementation of the standard school algorithm, by adding digits together and keeping the (bounded) carry in the state of the automaton. Checking that the set of coordinates with nonzero value is in $S'$ can be done by keeping track of nonzero coordinates seen so far in the state of the automaton.

Now let $Z'$ be the relation that identifies two configurations $(x,y)$ in $\{0,1\}^\N$ if they represent the same number. This is closed $\omega$-automatic since $(x,y)$ is in this relation iff $(x, y)_{[i,i+2]} \in A_{\mathrm{req}}$ for all $i$, and $(x, y)_{[0,2]} \neq \left( \begin{smallmatrix} (1-a) & (1-a) & (1-a) \\ a & a & a \end{smallmatrix} \right)$. 

Let $Z = (Z')^d$. We claim that $Y/Z$ is homeomorphic to $S$. To see this, observe that the map from $Y$ to $[0,1]^d$ defined by $y \mapsto (\bin(y_1), \bin(y_2), \dots, \bin(y_d))$ by definition maps onto $S$, and is continuous by the formula defining $\bin$. This gives a map from $Y/Z$ to $S$, which is still continuous by the definition of the quotient topology. Since $Y/Z$ is compact and $S$ is Hausdorff, this gives a homeomorphism.
\end{proof}

\subsection{Underlying spaces of sofically presented systems}

We prove that the underlying spaces of sofically presented systems are automatic. 

\begin{theorem}
\label{thm:soficsoficautomatic}
The underlying topological space of every sofically presented system with $M \in \{\N, \Z\}$ is (effectively) automatic.
\end{theorem}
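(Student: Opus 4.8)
The statement to prove is Theorem~\ref{thm:soficsoficautomatic}: the underlying space of a sofically presented system over $M \in \{\N, \Z\}$ is effectively automatic. Since the definition of an automatic space uses the one-sided alphabet $A^\N$, the first task is to reduce the two-sided case $M = \Z$ to a one-sided encoding, and the second is to massage a \ttfrac{sofic}{sofic} presentation into the exact shape $Y/Z$ with $Y \subset A^\N$ $\omega$-automatic and $Z \subset Y^2$ a \emph{closed} $\omega$-automatic equivalence relation.

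First I would handle the presentation. Given a sofically presented system $X \cong \ccfrac{Y_0}{Z_0}$ with $Y_0 \subset \Sigma^M$ sofic and $Z_0 \subset (\Sigma^2)^M$ sofic such that $Z_0 \cap Y_0^2$ is an equivalence relation on $Y_0$, I first pass to $Z := Z_0 \cap Y_0^2$; by Lemma~\ref{lem:DecidableThings} (restriction of an automatic relation, intersection of automatic sets) this is again $M$-automatic and closed, and it is exactly an equivalence relation on $Y_0$ --- this lets us forget the ambient square. So we may assume $Z \subset Y_0^2$. Now I want a one-sided alphabet. For $M = \N$ there is essentially nothing to do: a sofic $\N$-subshift and a sofic equivalence relation are $\N$-automatic subsets of $\Sigma^\N$ and $(\Sigma^2)^\N$ respectively, which are the right ambient spaces, and closedness and $\omega$-automaticity are immediate. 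For $M = \Z$ the standard trick is to fold a bi-infinite sequence $x \in \Sigma^\Z$ into the one-sided sequence $\tilde x \in (\Sigma \times \Sigma)^\N$ with $\tilde x_i = (x_i, x_{-i-1})$ (the ``zipper'' encoding). This is a homeomorphism $\Sigma^\Z \to (\Sigma^2)^\N$, it sends sofic (equivalently $\Z$-automatic shift-invariant, by Lemma~\ref{lem:AutomaticSoficConnection}) sets to $\omega$-automatic sets --- one builds the $\omega$-automaton by running the two tails of a $\zeta$-automaton synchronously, or more cleanly one argues in $MF_2$ that the folding map is $MF$-definable --- and it sends closed sets to closed sets since it is a homeomorphism. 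Applying the same encoding to pairs (so $Z$ becomes a relation on the folded space, using $(\Sigma^2 \times \Sigma^2)^\N$), we obtain $Y \subset A^\N$ with $A = \Sigma^2$ and $Z' \subset Y^2$, both $\omega$-automatic, $Z'$ closed, $Z'$ an equivalence relation on $Y$, and $Y/Z' \cong Y_0/Z \cong X$ as topological spaces (the dynamics is of course lost, but that is fine --- ``automatic space'' is a property of the space only). All of these conversions are effective by the decidability and computability clauses of Lemmas~\ref{lem:MF2Decidable}, \ref{lem:AutomaticSoficConnection} and \ref{lem:DecidableThings}.

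The remaining point is bookkeeping about which subclass of automatic space we land in: an ``automatic space'' by default means \ttfrac{any}{closed}, i.e.\ $Y$ may be an arbitrary $\omega$-automatic set and $Z$ must be closed. We have verified $Y$ is $\omega$-automatic (in fact closed, so we even land in the compact \ttfrac{closed}{closed} class, consistent with $X$ being compact) and $Z'$ is a \emph{closed} $\omega$-automatic equivalence relation, which is exactly what the definition demands; and the homeomorphism $Y/Z' \cong X$ was produced above via Lemma~\ref{lem:IsAHomeomorphism} ($Y/Z'$ is compact, $X$ is Hausdorff, the induced map is a continuous bijection). For effectiveness, I would note that every step above --- intersecting to restrict $Z$ to $Y_0^2$, applying the folding automaton, checking the output is closed --- is a concrete automata-theoretic construction producing a finite description of the new $\omega$-automata, so the whole procedure is algorithmic in a finite presentation of $X$.

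\textbf{Main obstacle.} None of the steps is deep; the only thing requiring genuine care is the folding encoding for $M = \Z$: one must check that folding really does carry sofic (or $\Z$-automatic) sets to $\omega$-automatic sets and preserves closedness, and that it does so uniformly for the relation $Z$ living on the \emph{square}, where the alphabet bookkeeping ($\Sigma^\Z \times \Sigma^\Z \to (\Sigma^2)^\N \times (\Sigma^2)^\N \hookrightarrow ((\Sigma^2)^2)^\N$) is easy to get wrong. I expect the cleanest write-up routes through $MF_2(\Z,0,\prec)$ rather than manipulating $\zeta$-automata by hand: the folding map and its inverse are clearly $MF$-definable, so $\omega$-automaticity of the folded set is inherited from $\Z$-automaticity of the original via Lemma~\ref{lem:MF2Decidable} and the automaton--logic correspondence, and closedness transfers for free since folding is a homeomorphism of the ambient Cantor spaces.
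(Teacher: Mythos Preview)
Your proposal is correct and follows essentially the same approach as the paper: treat the $\N$-case directly (a sofic $\N$-subshift and a sofic relation are already closed $\omega$-automatic sets), and for the $\Z$-case apply the left/right ``zipper'' homeomorphism $\Sigma^\Z \to (\Sigma^2)^\N$ to both the shift and the kernel relation. The paper's version is terser --- it constructs the $\omega$-automaton directly from the regular language of the sofic shift rather than routing through $MF_2(\Z,0,\prec)$ --- but the content is the same, and your extra bookkeeping (restricting $Z_0$ to $Y_0^2$, noting the result is in fact \ttfrac{closed}{closed}) is all correct and harmless.
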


\begin{proof}
A sofic shift $Y \subset \Sigma^M$ is precisely a subshift whose language $L$ is regular. If $M = \N$, then we can construct an automaton that, given $x \in \Sigma^M$, checks that every prefix of $x$ is in $L$, and similarly for the kernel subshift.

If $M = \Z$, we observe that zipping the left and right end into a one-sided sequence is a homeomorphism. Thus, we construct an automaton over the alphabet $\Sigma^2$ that, given $(x, y) \in (\Sigma^2)^\N$, checks that for all $n$, $u^R v$ is in $L$ for all prefixes $u, v$ of $x, y$ respectively, with $|u| = |v| = n$, and similarly for the kernel subshift.
\end{proof}

\begin{definition}
Let $X$ be any topological space. Let $\overset{\rightarrow}X = (X\times\N) \cup \{\infty\}$ where $\infty$ is the one-point compactification of $X\times\N$. Define $\overset{\rightarrow}s(\infty) = \overset{\rightarrow}s(x, 0) = \infty, \overset{\rightarrow}s(x, n+1) = (x, n)$. Let $\overset{\leftrightarrow}X = (X\times\Z) \cup \{\infty\}$ where $\infty$ is the one-point compactification of $X\times\Z$. Define again $\overset{\leftrightarrow}s(\infty) = \infty, \overset{\leftrightarrow}s(x, n) = (x, n-1)$.
\end{definition}

See Figure~\ref{fig:SphereCarousel} for an illustration.

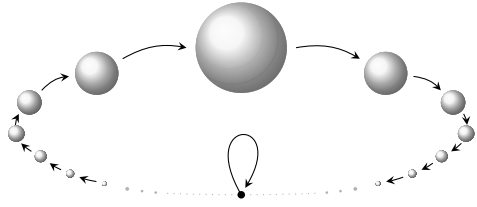
\begin{figure}[h]
\begin{center}
\begin{tikzpicture}[scale = 1.5];
\def\ys{0.65}
\shade[ball color = gray!10, opacity = 0.6] (1.2,-0.85*\ys) circle (0.02);
\shade[ball color = gray!10, opacity = 0.6] (1.5,-0.7125168290796722*\ys) circle (0.035);
\shade[ball color = gray!10, opacity = 0.6] (1.7581665796442194,-0.4766681965010218*\ys) circle (0.052);
\shade[ball color = gray!10, opacity = 0.6] (1.9714841972416093,-0.16826323129742857*\ys) circle (0.072);
\shade[ball color = gray!10, opacity = 0.6] (1.8574469507609722,0.25518371947877134*\ys) circle (0.107);
\shade[ball color = gray!10, opacity = 0.6] (1.2666538192549668,0.6516099682706644*\ys) circle (0.19);

\shade[ball color = gray!10, opacity = 0.6] (0, 1.0*\ys) circle (0.4);

\shade[ball color = gray!10, opacity = 0.6] (-1.2666538192549666,0.6516099682706646*\ys) circle (0.19);
\shade[ball color = gray!10, opacity = 0.6] (-1.857446950760972,0.25518371947877168*\ys) circle (0.107);
\shade[ball color = gray!10, opacity = 0.6] (-1.9714841972416093,-0.16826323129742846*\ys) circle (0.072);
\shade[ball color = gray!10, opacity = 0.6] (-1.7581665796442194,-0.4766681965010217*\ys) circle (0.052);
\shade[ball color = gray!10, opacity = 0.6] (-1.5,-0.7125168290796721*\ys) circle (0.035);
\shade[ball color = gray!10, opacity = 0.6] (-1.2,-0.85*\ys) circle (0.02);

\node[circle,fill=black!30!white,inner sep=0.5pt] at (-1,-0.92*\ys) {};
\node[circle,fill=black!30!white,inner sep=0.35pt] at (-0.87,-0.95*\ys) {};
\node[circle,fill=black!30!white,inner sep=0.2pt] at (-0.75,-0.97*\ys) {};

\node[circle,fill=black!30!white,inner sep=0.5pt] at (1,-0.92*\ys) {};
\node[circle,fill=black!30!white,inner sep=0.35pt] at (0.87,-0.95*\ys) {};
\node[circle,fill=black!30!white,inner sep=0.2pt] at (0.75,-0.97*\ys) {};

\draw[dotted,color=black!30!white] (-0.65,-0.98*\ys) edge[bend right = 1] (0,-1*\ys);
\draw[dotted,color=black!30!white] (0.65,-0.98*\ys) edge[bend left = 1] (0,-1*\ys);

\node[circle,fill=black,inner sep=1pt] at (0,-1*\ys) {};

\draw (0.48,1*\ys) edge[bend left=20, -stealth] (1.04,0.85*\ys);
\draw (1.51,0.61*\ys) edge[bend left=20, -stealth] (1.75,0.42*\ys);
\draw (1.95,0.1*\ys) edge[bend left=10, -stealth] (1.98,-0.05*\ys);
\draw (1.93,-0.3*\ys) edge[bend left=5, -stealth] (1.84,-0.41*\ys);
\draw (1.68,-0.57*\ys) edge[bend left=5, -stealth] (1.58,-0.66*\ys);
\draw (1.42,-0.76*\ys) edge[bend left=5, -stealth] (1.27,-0.82*\ys);

\draw (-0.48,1*\ys) edge[bend right=20, stealth-] (-1.04,0.85*\ys);
\draw (-1.51,0.61*\ys) edge[bend right=20, stealth-] (-1.75,0.42*\ys);
\draw (-1.95,0.1*\ys) edge[bend right=10, stealth-] (-1.98,-0.05*\ys);
\draw (-1.93,-0.3*\ys) edge[bend right=5, stealth-] (-1.84,-0.41*\ys);
\draw (-1.68,-0.57*\ys) edge[bend right=5, stealth-] (-1.58,-0.66*\ys);
\draw (-1.42,-0.76*\ys) edge[bend right=5, stealth-] (-1.27,-0.82*\ys);

\tikzset{every loop/.style={shorten >=3,min distance=8mm,in=60,out=120,looseness=10}}
\path[->] (0,-1*\ys) edge[loop above, -stealth] node {} ();

\end{tikzpicture}
\end{center}
\vspace{-10pt}
\caption{Depiction of the $\Z$-system $\protect\overset{\leftrightarrow}{S^2}$
. The dynamics follows the arrows. The thick black dot denotes the one-point compactification fixed by the dynamics. By Theorem~\ref{thm:leftrightsystem}, this system is \ttfrac{SFT}{sofic}.} 
\label{fig:SphereCarousel}
\end{figure}

\begin{theorem}
\label{thm:leftrightsystem}
Let $X$ be a compact automatic space. Then $(\overset{\leftrightarrow}X, \overset{\leftrightarrow}s)$ and $(\overset{\rightarrow}X, \overset{\rightarrow}s)$ are sofically presented. 
\end{theorem}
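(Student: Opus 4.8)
\textbf{Proof plan for Theorem~\ref{thm:leftrightsystem}.}

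The plan is to realize $(\overset{\leftrightarrow}X, \overset{\leftrightarrow}s)$ and $(\overset{\rightarrow}X, \overset{\rightarrow}s)$ as explicit quotients of sofic shifts by sofic equivalence relations, using the compact automatic presentation of $X$ as the ``fiber data'' and an SFT skeleton that encodes a single marked position on the line $\Z$ (or ray $\N$), with the point at infinity corresponding to configurations where no mark appears. So let $X \cong C/R$ where $C \subset A^\N$ is a closed $\omega$-automatic set and $R \subset C^2$ a closed $\omega$-automatic equivalence relation. I will treat the $\Z$ case; the $\N$ case is identical mutatis mutandis (one-sided sequences, prefixes instead of centered words), and I'll remark on it at the end.

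First I would build the skeleton subshift $S \subset \Gamma^\Z$ over the alphabet $\Gamma = \{\#\} \cup (A \times \{\triangleright\})$, say, where the local rule forbids two occurrences of the marker symbol ``$\triangleright$'' (i.e. at most one position $i$ carries a letter of $A$; all others are $\#$). This is an SFT. A configuration with the marker at position $i$ carrying the start of an $A$-word should encode the pair $((\text{point of }X), i) \in X \times \Z$ via the following convention: the marker at $i$ holds a letter $a_0 \in A$, and then the $A$-letters at positions $i, i+1, i+2, \dots$ spell out a sequence in $C$; everything to the left of $i$, and the ``second coordinate'' of positions $\geq i$, is $\#$. However, $C$ is only $\omega$-automatic, not an SFT; to keep the skeleton sofic rather than merely automatic, I instead let $Y$ be the sofic shift consisting of those $\Gamma^\Z$-configurations with at most one marker such that, \emph{when} a marker occurs at some position $i$, the right tail of $A$-letters from $i$ onwards lies in $C$ (checkable by a finite automaton reading left to right from the marker, since $C$ is $\omega$-automatic and closed, hence its one-sided language is regular). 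Since having no marker, or a marker followed by a tail in $C$, is an $\omega$-automatic and shift-invariant and topologically closed condition, $Y$ is sofic by Lemma~\ref{lem:AutomaticSoficConnection}. The shift map on $Y$ moves the marker one step; configurations with no marker form the single fixed point $\#^\Z$, which will be the point at infinity. The key topological observation, to be verified by a direct compactness/convergence argument, is that the natural surjection $Y \to \overset{\leftrightarrow}X$ sending a marked configuration (marker at $i$, tail $c \in C$) to $(\,[c]_R,\, -i\,)$ and $\#^\Z \mapsto \infty$ is continuous: a sequence in $Y$ whose markers march off to $\pm\infty$ converges to $\#^\Z$, matching the one-point compactification topology, and inside a fixed marker position the topology on the $C$-tails is just the product topology on $A^\N$ restricted to $C$, which is what $C/R$ (and hence $X$) carries.

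Next I would define the relation $Z \subset Y^2$: two configurations of $Y$ are $Z$-related iff either both have no marker, or both have the marker in the \emph{same} position $i$ and their $A$-tails $c, c'$ from position $i$ satisfy $(c,c') \in R$. (I should double-check whether identifications across different marker positions are needed — they are not, since $\overset{\leftrightarrow}X$ puts distinct $\Z$-coordinates into distinct points, so $Z$ only needs to collapse the $R$-fibers at each fixed position and to collapse all marker-free configurations, of which there is only one anyway.) This $Z$ is an equivalence relation on $Y$ because $R$ is one on $C$; it is topologically closed because $R$ is closed and ``same marker position'' is a clopen-type condition compatible with limits (the only subtlety, limits where markers escape to infinity, lands both sides on $\#^\Z$, which is $Z$-related to itself); and it is sofic/$\omega$-automatic because $R$ is $\omega$-automatic and checking equality of marker positions plus feeding the two synchronized tails into the automaton for $R$ is an automaton task — formally one invokes Lemma~\ref{lem:AutomaticSoficConnection} again after noting the resulting set is closed and shift-invariant (shift-invariance: shifting both sides moves both markers equally and preserves the tails). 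Then $Y/Z$ is sofically presented by definition, and the map above descends to a continuous bijection $Y/Z \to \overset{\leftrightarrow}X$; since $Y/Z$ is compact (Lemma~\ref{lem:QuotientMapsMetrizable}) and $\overset{\leftrightarrow}X$ is Hausdorff, Lemma~\ref{lem:IsAHomeomorphism} upgrades it to a homeomorphism, and it intertwines the shift with $\overset{\leftrightarrow}s$ by construction. For $\overset{\rightarrow}X$, replace $\Z$ by $\N$: use one-sided configurations in $\Gamma^\N$, marker at some position $i \in \N$ followed by a $C$-tail, $\#$ elsewhere; the shift still moves the marker toward $0$ and then to the fixed point $\#^\N = \infty$, matching $\overset{\rightarrow}s(x,0) = \infty$.

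The main obstacle I anticipate is \emph{not} the algebra of the relation but the topological bookkeeping at infinity: one must confirm that the one-point compactification topology on $X \times \Z$ (resp.\ $X \times \N$) is exactly reproduced by the subshift topology on $Y/Z$ — in particular that the point $\#^\Z$ really is the \emph{only} limit of marker-escaping sequences and that no spurious identifications or separations occur (e.g.\ a sequence with fixed marker position but wandering $C$-tail must converge within that fiber, which it does since $C$ is closed). A secondary, more bureaucratic obstacle is making the soficity of $Y$ and $Z$ rigorous: one should be slightly careful that ``$C$ is $\omega$-automatic and closed'' genuinely yields a finite-state, one-directional check of the tail starting at the marker (this is where closedness matters — an $\omega$-automaton for a closed set can be taken with a trivial/everything acceptance condition on the right tail, cf.\ the proof of Lemma~\ref{lem:AutomaticSoficConnection}), and that the ``at most one marker'' condition combined with this is still sofic; I would phrase this as an explicit appeal to Lemma~\ref{lem:AutomaticSoficConnection} after exhibiting the defining $\omega$-automaton and checking topological and shift closure by hand. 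Everything else — that $Z$ is an equivalence relation, commutation with the shift, the bijectivity of the induced map — is a routine unwinding of definitions.
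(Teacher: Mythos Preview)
Your approach is the paper's: configurations $\dots\#\#\# c_0 c_1 c_2 \dots$ with a $C$-tail starting at some position, and a kernel that collapses tails via $R$ within each fixed start-position. But there is a genuine gap exactly where you anticipate trouble. You assert that ``configurations with no marker form the single fixed point $\#^\Z$'' and that one must check that ``$\#^\Z$ really is the only limit of marker-escaping sequences''. This is false. When the start-position escapes to $-\infty$ (equivalently, keep left-shifting $\dots\#\#\# c_0 c_1 \dots$), the limits are configurations in $A^\Z$ with no $\#$ at all. For $Y$ to be a subshift these must belong to $Y$; for $Y/Z$ to be $\overset{\leftrightarrow}X$ they must all be collapsed to $\infty$. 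In the $\N$-case the same thing happens even more directly: applying $\sigma$ to a configuration whose marker sits at position $0$ already produces an all-$A$ configuration, matching $\overset{\rightarrow}s(x,0)=\infty$. Your stated alphabet $\{\#\}\cup(A\times\{\triangleright\})$ cannot express unmarked $A$-letters, and combined with your ``at most one marker'' rule it leaves room for only a single $A$-symbol rather than a full tail, so the encoding is internally inconsistent as written.

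The paper's cure is to drop the explicit marker: take alphabet $A\cup\{\#\}$, define $Y$ by forbidding only the patterns $\# w$ with $w\in A^+$ not a prefix of any point of $C$ (this automatically admits all of $A^\Z$ into $Y$, since such configurations contain no $\#$), and then have the sofic kernel $Z$ identify the entire degenerate set $(A^\Z\cap Y)\cup\{\#^\Z\}$ as a single equivalence class. Your $Z$-clause ``both have no marker'' would in fact do this once the all-$A$ configurations are acknowledged as marker-free; with that correction the rest of your argument (continuity, bijectivity, Lemma~\ref{lem:IsAHomeomorphism}) goes through unchanged.
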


\begin{proof}
Consider first the case $M = \Z$. Let $X \subset \Sigma^\N$ be an $\omega$-regular closed language and $K \subset (\Sigma^2)^\N$ the corresponding $\omega$-automatic closed relation.

The sofic shift $Y$ we construct checks the following:
\begin{enumerate}
\item Every point is of the form
${^\infty}\# x$ for some $x \in \Sigma^\N$, or a degenerate version (i.e. all the points $x \in \Sigma^\Z$ and $\#^\Z$ are included).
\item The word $\# w$ is forbidden if $w_0 \in \Sigma$ and there is no point in $X$ with prefix~$w$.
\end{enumerate}
Note that in particular all of $\Sigma^\Z$ is included in $Y$.

The sofic shift $Z \subset Y^2$ we construct checks the following:
\begin{enumerate}
\item given a pair $(y, y')$, if either $y$ or $y'$ contains both $\#$ and some symbol from $\Sigma$, then every symbol in $(y, y')$ is $(\#, \#)$ or in $\Sigma^2$,
\item if $y_{i-1} = \#, y_i \neq \#$, then $(y, y')_{[i,\infty)} \in K$.
\end{enumerate}

Thus $Z$ has all of $(\#^\Z, \#^\Z)$ and $(\Sigma^2)^\Z \cap Y^2$ in the same equivalence class, and otherwise identifies points $y, y'$ if and only if both are of the non-degenerate form
$(y, y') = ({^\infty}\# x, {^\infty}\# x'),$
both $x, x' \in \Sigma^\N$ start from the same coordinate, and $(x, x') \in K$.

Now, let $Y_i$ be the set of points $\dots\#\#\# x$ where $x \in \Sigma^\N$ begins at the $i$th coordinate. Clearly $Y_i/K$ is homeomorphic to $X$. Since the degenerate points form only one point $Y_\infty$ in $Y/K$, and clearly all sequences of points from $Y_i/K$ with $|i| \rightarrow \infty$ converge to $Y_\infty$, it follows that $\overset{\leftrightarrow}X$ is homeomorphic to $Y$. The natural homeomorphism conjugates $s$ and $\sigma$.

Consider then $M = \N$. We use exactly the same construction: $Y$ contains all of $\Sigma^\N$, and after $\#$-symbols, we verify that the continuation is in $X$. The kernel relation $Z$ again identifies all points $\#^\N$ and all points in $\Sigma^\N$, and verifies that when both $\#$ and a letter of $\Sigma$ occur in one of the points $(y, y')$, the last $\#$ is in the same coordinate in both, and the continuation is a pair of points in $K$.
\end{proof}

Supposing that $A, B$ are subsets of some set $X$, and $A \cap B = \emptyset$, we say $A$ and $B$ are \emph{recursively inseparable} if there is no recursive set $C \subset X$ such that $A \subset C \subset X \setminus B$.

\begin{theorem}
\label{thm:ConjugacyUndecidableProof}
Let $M = \N$ or $M = \Z$. Given two sofically presented systems $X, Y$, the following properties are recursively inseparable:
\begin{itemize}
\item $X$ and $Y$ are conjugate,
\item $X$ and $Y$ are not homeomorphic.
\end{itemize}
\end{theorem}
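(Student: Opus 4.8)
The plan is to reduce the classical (undecidable) homeomorphism problem for finite simplicial complexes to the problem in the statement, using Theorem~\ref{thm:leftrightsystem} to turn automatic spaces into sofically presented systems and Theorem~\ref{thm:SimplicialComplexesAutomatic} to feed in simplicial complexes. Concretely, from a finite simplicial complex $X$ I would form the sofically presented system $\overset{\rightarrow}{X}$ (for $M=\N$) or $\overset{\leftrightarrow}{X}$ (for $M=\Z$), and the crux is that this construction remembers the homeomorphism type of $X$, so conjugacy (and even mere homeomorphism) of the resulting systems is equivalent to homeomorphism of the inputs.

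The first step is a rigidity lemma: if $X,Y$ are compact metrizable spaces that are connected and have at least two points, then ``$X\cong Y$'', ``$\overset{\rightarrow}{X}$ and $\overset{\rightarrow}{Y}$ are conjugate'', and ``$\overset{\rightarrow}{X}$ and $\overset{\rightarrow}{Y}$ are homeomorphic'' are all equivalent (and likewise for $\overset{\leftrightarrow}{}$). The implication from $X\cong Y$ to conjugacy is immediate (apply a homeomorphism on each level and fix $\infty$), and conjugacy trivially implies homeomorphism. For the converse, note that in $\overset{\rightarrow}{X}$ each slice $X\times\{n\}$ is clopen -- open since $X\times\N$ is open in $\overset{\rightarrow}{X}$ and $X\times\{n\}$ is open in $X\times\N$, and closed since its complement is a union of the other clopen slices together with a neighbourhood of $\infty$ -- hence a connected component, since $X$ is connected; and $\{\infty\}$ is a connected component as well, since any connected set containing $\infty$ and meeting a slice would be split by that clopen slice. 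So the connected components of $\overset{\rightarrow}{X}$ are countably many homeomorphic copies of $X$ together with the single point $\infty$, and because $X$ has more than one point, $\{\infty\}$ is its unique singleton component. A homeomorphism $\overset{\rightarrow}{X}\to\overset{\rightarrow}{Y}$ must permute components and fix the distinguished singleton, hence restrict to a homeomorphism between a copy of $X$ and a copy of $Y$.

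With the lemma in hand, I would invoke Markov's construction of closed manifolds from group presentations, together with the Adian--Rabin theorem: there is a computable sequence $(P_n)_{n\in\N}$ of triangulated closed connected PL $5$-manifolds such that $\{n \mid P_n\cong S^5\}$ is not recursive, where $S^5$ is a fixed triangulated $5$-sphere. By Theorem~\ref{thm:SimplicialComplexesAutomatic}, whose proof produces an explicit $\omega$-automaton, each $P_n$ and $S^5$ is a compact automatic space effectively in $n$; then by Theorem~\ref{thm:leftrightsystem}, whose construction is also explicit, $\overset{\rightarrow}{P_n},\overset{\rightarrow}{S^5}$ (resp.\ $\overset{\leftrightarrow}{P_n},\overset{\leftrightarrow}{S^5}$) are sofically presented systems whose finite presentations can be computed from $n$. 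Let $f$ be this computable map $n\mapsto(\overset{\rightarrow}{P_n},\overset{\rightarrow}{S^5})$. By the rigidity lemma, $P_n\cong S^5$ implies $f(n)$ is a pair of conjugate systems, and $P_n\not\cong S^5$ implies $f(n)$ is a pair of non-homeomorphic systems. Hence a recursive set $\mathcal D$ of pairs containing all conjugate pairs and disjoint from all non-homeomorphic pairs would pull back along $f$ to a recursive set equal to $\{n\mid P_n\cong S^5\}$, contradicting Markov. Therefore no such $\mathcal D$ exists, which is exactly the asserted recursive inseparability.

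The substantive ingredients here are all external -- Markov's construction, the Adian--Rabin theorem, and Theorems~\ref{thm:SimplicialComplexesAutomatic} and~\ref{thm:leftrightsystem} -- so the only genuine internal work is the rigidity lemma, and I expect the one point needing care is ensuring the spaces fed in are connected and non-degenerate so the lemma applies; using closed connected manifolds of positive dimension takes care of this automatically, and all constructions involved are given by explicit algorithms, so effectivity of $f$ is routine.
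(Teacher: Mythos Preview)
Your proposal is correct and follows essentially the same route as the paper: feed connected simplicial complexes through Theorem~\ref{thm:SimplicialComplexesAutomatic} and Theorem~\ref{thm:leftrightsystem}, then observe via a connected-components argument that homeomorphism of the resulting systems recovers homeomorphism of the inputs, and invoke Markov. Your rigidity lemma is just a more explicit formulation of what the paper does in one line (``the infinite connected components are pairwise homeomorphic''), and you handle both $M=\N$ and $M=\Z$ explicitly whereas the paper treats only $M=\Z$ and leaves the other case implicit.
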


\begin{proof}
We prove this for $M = \Z$. Given two connected simplicial complexes $S_1, S_2$, construct using Theorem \ref{thm:SimplicialComplexesAutomatic} automatic spaces $A_1, A_2$ such that $A_1 \cong S_1, A_2 \cong S_2$, and construct the \ttfrac{SFT}{sofic} shifts $\overset{\leftrightarrow}{A_1}$ and $\overset{\leftrightarrow}{A_2}$ using Theorem~\ref{thm:leftrightsystem}. If $A_1$ and $A_2$ are homeomorphic, the systems are obviously conjugate. If $\overset{\leftrightarrow}{A_1}$ and $\overset{\leftrightarrow}{A_2}$ are homeomorphic, then the infinite connected components are pairwise homeomorphic (with some correspondence). The infinite connected components are obviously all homeomorphic to $A_1$ in $\overset{\leftrightarrow}{A_1}$ and to $A_2$ in $\overset{\leftrightarrow}{A_2}$, so we must have $A_1 \cong A_2$. Since homeomorphism of connected simplicial complexes is undecidable~\cite{Mar58}, we have proved the result.
\end{proof}

\begin{example}
It is known to even be undecidable whether a given simplicial complex is homeomorphic to the $5$-sphere $S^5$~\cite{VKF74}, and the previous theorem gives a many-one reduction of simplicial complex homeomorphism simultaneously to the homeomorphism and conjugacy problems of sofically presented systems. 
Thus, by constructing a \ttfrac{$\Z$-sofic}{sofic} representation of $\overset{\leftrightarrow}{S^5}$ we obtain a single \ttfrac{sofic}{sofic} such that conjugacy (and homeomorphism) to it is undecidable.

Concretely, a \ttfrac{$\Z$-sofic}{sofic} system $\ccfrac{X}{K}$ is obtained as follows: $X$ is generated by configurations $\dots\#\#\# x$
where $x=(x_0,x_1,\dots,x_6) \in (\{0,1\}^7)^\N$, and we check that
\begin{enumerate}
\item $\sum_{i = 0}^6 r(x_i) = 1$,
\item $\exists i: x_i = 0^\N$.
\end{enumerate}
Namely, then the $x_i$ correspond to binary representations of points in the standard representation of the boundary of the $6$-simplex, i.e. $\{A \subset \{0,1,2,3,4,5,6\} \;|\; A \neq \{0,1,2,3,4,5,6\}\}$ (which is homeomorphic to $S^5$).

For two points of this form, the relation $K$ verifies that the rightmost $\#$ is in the same position, after which the corresponding binary tracks have the same real value (i.e. are in the relation $A_{\mathrm{req}}$ and begin with the same symbol or with $(100, 011)$). All configurations not of this form, i.e. the ones in $(\{0,1\}^7)^\Z \cup \{\#^\Z\}$, are simply identified by $K$. In other words,
\[ K = \cdots\#\#\# S \cup ((\{0,1\}^7)^\Z \cup \{\#^\Z\})^2 \cup \Delta_X, \]
where $S$ is the relation described above. \qee
\end{example}

\begin{example}
Connected components of \ttfrac{SFT}{SFT} systems can be arbitrary simplicial complexes. For this, perform the same construction as in Theorem~\ref{thm:ConjugacyUndecidableProof} for any connected simplicial complex $S$, but do not identify all of $\{\#^\Z\} \cup Y^2$ as one point. Then the homeomorphism type is the same, except that the one-point compactification is replaced by some quotient of $\{\#^\Z\} \cup Y$. Clearly, the points with a rightmost $\#$-symbol at $i \in \Z$ form a clopen set homeomorphic to $S$. \qee
\end{example}

To prove the undecidability of homeomorphism or conjugacy for \ttfrac{SFT}{SFT} systems, one would need more control on the $\{\#^\Z\} \cup Y$ component.

\section{Systems with trivial dynamics}
\label{sec:TrivDyn}

It is known that expansive systems that have only periodic points are finite (see \cite{MeSa19} for a general result). The following example shows this is no longer true for subshifts over sofics.

\begin{example}
There exists a \ttfrac{subshift}{countable sofic} dynamical system $X$ such that every point in $X$ has finite orbit, and for every $n$, there exists one periodic orbit of size $n$. Namely, let $Y$ be the orbit closure of the points $\{(0^n1)^\Z \;|\; n \geq 1\}$ and let $Z=X_{\leq 1}^2$. Then $X = Y/Z$ has the desired property. \qee
\end{example}

There are even subshifts over sofics that consist of infinitely many fixed points.

\begin{example}
\label{ex:Boyle}
There exists an infinite \ttfrac{subshift}{sofic} dynamical system $X$ such that every point in $X$ is a fixed point. Namely, let $Y$ be the orbit closure of the points $\{(0^n1)^\Z \;|\; n \geq 1\}$ and let $Z\subseteq(\{0,1\}^2)^\Z$ be the subshift whose language consists of all the subwords of $((1,0)(0,0)^*(0,1)(0,0)^*)^*\cup\{(0,0),(1,1)\}^*$, so $Z$ is sofic. The restriction of $Z$ to $Y$ identifies pairs of configurations $(y_1,y_2)$ where consecutive occurrences of $1$ alternate between $y_1$ and $y_2$. In other words, for every $n\geq1$ all the points in the orbit of $(0^n1)^\Z$ are identified with each other and all the points of $X_{\leq 1}$ are identified with each other, and no other identifications are made. The resulting system $Y/Z$ is conjugate to $(X,\id_X)$ where $X=\{1/n\mid n\in\Z, n\geq 1\}\cup\{0\}$. \qee
\end{example}

Note that $X$ is isomorphic to the ordinal $\omega+1$ with the order topology.

In the class of \ttfrac{subshift}{subshift} not only is it possible to have systems consisting of infinitely many fixed points, but the set of fixed points can even form any compact metric space. We present a straightforward proof, but the following theorem would also follow from a more general result of~\cite{BoFiFi02} saying that every dynamical system with zero entropy is a factor of a subshift.

\begin{theorem}
\label{thm:TrivialDynamics}
Let $X$ be any compact metric space. Then the trivial $\Z$-system $(X, \id_X)$ is \ttfrac{subshift}{subshift}.
\end{theorem}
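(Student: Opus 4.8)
The plan is to encode an arbitrary compact metric space $X$ as a quotient of a subshift, using the fact (Lemma~\ref{lem:QuotientMapsMetrizable} and the standard fact that every compact metrizable space is a continuous image of the Cantor set) that $X \cong C/R$ for a closed equivalence relation $R$ on the Cantor set $C = \{0,1\}^{\N}$. The idea is to realize both $C$ and $R$ as subshifts over $\Z$ carrying trivial dynamics. The naive attempt -- just taking $Y$ to be a subshift conjugate to $C$ with the identity action -- fails, since a subshift with $\sigma = \id$ is necessarily finite. So instead I would spread a point $x \in C = \{0,1\}^{\N}$ across a $\Z$-indexed configuration in a shift-invariant way, for instance by placing the $i$th coordinate $x_i$ at position $i$ and mirroring it to position $-i$ (using a separate marker for the origin), or more simply by repeating a single ``delimiter'' symbol and recording the bits of $x$ between consecutive delimiters. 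The key point is that the shift action on the resulting $\Z$-subshift $Y$ should have every point periodic with period depending only on the pattern, and after quotienting by the right sofic/subshift relation $Z$ the induced action collapses to the identity.

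Concretely, I would proceed as follows. First, fix a continuous surjection $q : C \to X$ and let $R = \{(x,y) \in C^2 : q(x) = q(y)\}$, a closed equivalence relation. Second, define a subshift $Y \subset \Gamma^{\Z}$ over a suitable finite alphabet $\Gamma$ so that each $x \in C$ is encoded by a (set of shifts of a) configuration $\iota(x) \in Y$, and so that $Y$ decomposes as a union of shift-orbits, one for each encoded point plus possibly some ``degenerate'' limit configurations; the encoding must be chosen so that the map $x \mapsto \overline{\orb{\iota(x)}}$ is a well-defined injection compatible with limits in $C$. A clean way to arrange this: let each coordinate carry either a blank symbol $\#$ or a pair $(b, j) \in \{0,1\} \times \N$ with $j$ bounded (so the alphabet is finite only if we cap $j$, which forces a more careful encoding -- alternatively encode $x_i$ in unary-delimited blocks, so the alphabet is just $\{0,1,\#\}$ and $\iota(x)$ looks like $\cdots \# x_0 \# 0^{?} \# x_1 \# \cdots$ with block lengths growing, giving a configuration whose orbit closure is a single periodic orbit together with $\cdots\#\#\#\cdots$ in the limit). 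Third, define $Z \subset (\Gamma^2)^{\Z}$ to be the subshift consisting of all pairs $(y, y')$ that, when decoded, yield points $x, x' \in C$ with $(x, x') \in R$, together with $\Delta_Y$ and all pairs among the degenerate configurations. Fourth, verify: $Z \cap Y^2$ is a closed equivalence relation (closedness of $R$ plus a compactness argument, as in Lemma~\ref{lem:ClosedPerClosedPrehyperbolic}); $Y/Z$ is homeomorphic to $X$ (the decoding map descends to a continuous bijection $Y/Z \to C/R \cong X$, which is a homeomorphism by compactness and Lemma~\ref{lem:IsAHomeomorphism}); and the $\Z$-action on $Y/Z$ is trivial, because shifting $y \in Y$ produces another point of the same orbit, hence $Z$-equivalent to $y$, so $\sigma$ acts as the identity on equivalence classes.

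The main obstacle is the encoding step: I need a single subshift $Y$ over a \emph{finite} alphabet whose configurations, up to orbit, are in natural bijective correspondence with $\{0,1\}^{\N}$ together with a controlled set of limit points, and such that the shift permutes each orbit (so that $\sigma$ becomes trivial after quotienting) while the topology on orbit-closures matches the product topology on $C$. Using variable-length delimited blocks to record the bits of $x$ handles the finite-alphabet constraint, but then one must check carefully that orbit closures behave correctly: two different points $x \neq x'$ of $C$ give configurations with distinct orbit closures (so $Z$ does not accidentally over-identify via block-structure ambiguity), and a convergent sequence $x^{(k)} \to x$ in $C$ gives configurations converging in $Y$ to (a shift of) $\iota(x)$ or to a degenerate limit, matching convergence in $C/R$. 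Once the encoding is pinned down, the rest -- that $Z$ is a subshift, that $Z \cap Y^2$ is a closed equivalence relation, and that the induced map is a homeomorphism conjugating the trivial actions -- is routine and parallels arguments already given for Example~\ref{ex:Boyle} and Lemma~\ref{lem:ClosedPerClosedPrehyperbolic}. I expect the cleanest writeup to invoke the ``spread a one-sided sequence across $\Z$'' trick already used in the proof of Theorem~\ref{thm:soficsoficautomatic} and Theorem~\ref{thm:leftrightsystem}, adapting it so that here \emph{all} encoded points live in a common subshift rather than being separated by component.
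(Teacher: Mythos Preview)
Your high-level strategy is right: write $X \cong C/R$ with $C = \{0,1\}^{\N}$ and $R$ closed, then encode points of $C$ into a $\Z$-subshift $Y$ and $R$ into a subshift relation $Z$, so that decoding is shift-invariant and the shift collapses on $Y/Z$. But none of the encodings you describe actually achieve this, and the defect is structural rather than cosmetic. With the $\overset{\leftrightarrow}{X}$-style encoding $\dots\#\#\# x_0 x_1 x_2\dots$ (or its variable-block-length variant), the orbit closure of $\iota(x)$ contains not only $\#^{\Z}$ but also configurations in $\{0,1\}^{\Z}$ obtained as limits of $\sigma^n\iota(x)$ for $n\to+\infty$; these depend only on the \emph{tail} of $x$, so two distinct $x,x'$ with the same tail share degenerate limit points. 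If such a point $y$ lies in the orbit closures of both $\iota(x)$ and $\iota(x')$, then $(\sigma^{n_k}\iota(x),\iota(x))\in Z$ and $(\sigma^{m_k}\iota(x'),\iota(x'))\in Z$ give in the closure $(y,\iota(x)),(y,\iota(x'))\in Z$, hence by transitivity $[\iota(x)]=[\iota(x')]$ even when $(x,x')\notin R$. So the quotient collapses too much. (Your aside that the orbit closure is ``a single periodic orbit together with $\#^{\Z}$'' cannot hold either: a periodic orbit is already closed, and an uncountable $C$ cannot be injected into the countable set of periodic points.)

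The paper's missing ingredient is a \emph{Toeplitz} encoding. One writes $\#$ and $x_0$ along arithmetic progressions of period $3$, then recursively fills the remaining progression with the same structure carrying $x_1$, and so on; the resulting configuration has each bit $x_i$ occurring with period $3^{i+1}$. The point is that now the decoding $y\mapsto (x_0,x_1,\dots)$ is a genuine block map: any long enough subword of any point in the orbit closure of $\iota(x)$ determines $x_0,\dots,x_i$, so \emph{every} point of that orbit closure decodes to $x$ and there are no degenerate limits at all. Then $Z=\{(y,y'):(\mathrm{decode}(y),\mathrm{decode}(y'))\in R\}$ is automatically a closed shift-invariant equivalence relation, $(y,\sigma y)\in Z$ for all $y$, and the decoding descends to a homeomorphism $Y/Z\to C/R\cong X$. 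The constructions you cite from Theorems~\ref{thm:soficsoficautomatic} and~\ref{thm:leftrightsystem} are designed to produce nontrivial dynamics (the carousel $\overset{\leftrightarrow}{X}$), which is exactly what you need to avoid here.
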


\begin{proof}
Let $X \cong X'/K$ where $X' \subset \{0,1\}^\N, K \subset (X')^2$ are closed. We define $Y$ and $K'$ such that $\ttfrac{Y}{K'} \cong (X, \id)$ as a union of Toeplitz subshifts with a shared maximal equicontinuous factor.

Let $x \in \{0,1\}^\N$ be arbitrary and define the partial configuration
\[ y_0 = \dots \# x_0 \bla \# x_0 \bla . \# x_0 \bla \# x_0 \bla \# x_0 \bla \dots \]
where $\bla$ denotes an unknown value. For such a point, the sequence of $\#$s is referred to as the \emph{$\#$-skeleton}. Observe that if the holes $\bla$ are filled by a configuration $z$ such that $\#^m \not\sqsubset z$ for some $m$, then in every point of the orbit closure of the resulting configuration, we can locally determine a unique $\#$-skeleton (by a block map), and thus can determine the bit $x_0$.

Now, define $y_1$ by filling the sequence of $\bla$s in $y_0$ by
\[ \dots \# x_1 \bla \# x_1 \bla . \# x_1 \bla \# x_1 \bla \# x_1 \bla \dots \]
so that the occurrence of $\bla$ in $y_0$ closest to the origin (breaking ties arbitrarily) is replaced by a non-$\bla$ symbol. Note that no matter what the sequence of $\bla$s in $y_1$ is replaced with, $\#^3$ will not occur in it, and thus after filling the $\bla$s of $y_0$, we have a local rule detecting the $\#$-skeleton. If further the sequence of $\bla$s in $y_1$ is filled by any sequence not containing $\#^m$ for some $m$, its $\#$-skeleton, and thus the bit $x_1$, will be locally detectable. This process can be continued inductively to construct $y_n$ from $y_{n-1}$ and $x_n$, and $y_n$ converge to some configuration $y$ without occurrences of $\bla$.
 
To define the subshift $Y$, we require that $y\in Y$ is constructed by the process described above using some sequence $x_0 x_1 \dots$ belonging to $X'$. This is a shift-invariant condition since the bits $x_0 x_1 \dots x_i$ can be deduced from any long enough subword of any point obtained by the process, and it is closed since $X'$ is closed.

To define $K'$, we identify two points $y^1, y^2$ if their associated sequences are
$x^1_0 x^1_1 x^1_2 \dots$ and $x^2_0 x^2_1 x^2_2 \dots$, respectively, and $(x^1_0 x^1_1 x^1_2 \dots, x^2_0 x^2_1 x^2_2 \dots) \in K$. Again since $K$ is closed and the bits $x_0 x_1 \dots x_i$ can be locally deduced from any long enough subword, this is a closed shift-invariant condition, and it is an equivalence relation since $K$ is.

The map that computes $x_0 x_1 x_2 \dots$ from a point of $Y$ is continuous from $Y$ to $X$, and is clearly bijective between $K'$-classes and points of $X$, thus it is a homeomorphism between $Y/K'$ and $X$. Clearly the shift dynamics on $Y/K'$ is trivial, concluding the proof.
\end{proof}

In such examples, the numerator cannot be sofic:

\begin{lemma}
If $(X,S)$ is a transitive dynamical system, $(Y,T)$ is an infinite dynamical system and $Y$ is a factor of $X$, then some point in $Y$ has an infinite orbit.
\end{lemma}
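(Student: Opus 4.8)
The plan is to argue by contraposition: assuming every point of $Y$ has finite orbit, show $Y$ is finite. Let $\pi : X \to Y$ be the factor map. First I would recall that transitivity of $(X,S)$ means there is a point $x_0 \in X$ with dense orbit $\{S^n x_0 \mid n \geq 0\}$ (on $\N$ this is the usual formulation; on $\Z$ a forward-dense orbit still follows from transitivity of a compact system by a standard Baire category argument, and in any case $Y$ has trivial-in-finite-orbits dynamics so we may pass to an iterate). Then $\pi(x_0)$ has dense orbit in $Y$, since $\pi$ is continuous and surjective: $\overline{\orb{\pi(x_0)}} \supseteq \pi(\overline{\orb{x_0}}) = \pi(X) = Y$. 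So $(Y,T)$ is itself transitive.

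The key step is then: a transitive dynamical system in which some point has finite orbit is itself a single finite orbit. Indeed, if $y \in Y$ has finite orbit, then $\orb{y}$ is a closed $T$-invariant set (it is finite, hence closed in the metrizable space $Y$, and invariant by definition). If $(Y,T)$ is transitive and $\pi(x_0)$ has dense orbit, pick the $y$ above with finite orbit; by density of $\orb{\pi(x_0)}$ and openness considerations, or more directly: the orbit closure of the transitive point is all of $Y$, but it must also be contained in... hmm, this needs the transitive point itself to have finite orbit. So instead I would argue: since $\pi(x_0)$ has dense orbit and (by hypothesis, negating the conclusion) finite orbit, its orbit is both finite and dense, hence $Y = \orb{\pi(x_0)}$ is finite, contradicting that $(Y,T)$ is infinite. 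This is the crux and it is short once transitivity is transported to $Y$.

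The main obstacle I anticipate is the subtlety in the $\Z$ (or general monoid $M$) case regarding what ``transitive'' and ``orbit'' precisely mean and whether a genuinely dense orbit (forward orbit) exists, versus merely the topological-transitivity condition stated in the excerpt (for all nonempty open $U,V$ there is $x \in U$, $m \in M$ with $mx \in V$). With the definition as stated, I would argue directly: suppose every point of $Y$ has finite orbit. Then $Y = \bigcup_{k \geq 1} Y_k$ where $Y_k = \{y \mid \orb{y} \text{ has size} \leq k\}$; each $Y_k$ is closed (a limit of points whose orbits have size $\leq k$ has orbit of size $\leq k$, using that $Y$ is Hausdorff so the finitely many ``coincidence'' conditions $m y = m' y$ are closed). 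Hence $Y = \bigcup_k \pi(\text{preimage})$... cleaner: the factor map pushes the transitivity condition to $Y$ (given open $U,V \subseteq Y$, pull back to open sets in $X$, apply transitivity of $X$, push forward), so $(Y,T)$ is transitive. A transitive system in which all orbits are finite is finite: if $Y$ is infinite and transitive, then for each $k$ the closed invariant set $Y_k$ has empty interior (else transitivity would be violated, as one could not move a point out of a small open subset of $Y_k$ into the complement of the finite set $Y_k$ — here one uses that a transitive system has no nontrivial clopen-ish invariant obstruction, spelled out via the open-cover/Baire argument), so $Y = \bigcup_k Y_k$ is meager in itself, contradicting the Baire category theorem since $Y$ is compact metrizable. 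This Baire-category packaging is the part I would write most carefully.
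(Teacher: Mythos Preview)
Your first argument is correct and is exactly the paper's proof, which is three sentences: pick a transitive point $x\in X$, observe that $f(x)$ is transitive in $Y$, and note that a point with dense orbit in an infinite space must have infinite orbit. The paper simply takes for granted that topological transitivity (the open-set formulation) yields a point with dense orbit; this is the standard Baire-category fact for compact metrizable systems that you yourself mention, so your worry there is not really an obstacle.

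Your alternative route via the sets $Y_k=\{y:|\orb{y}|\le k\}$ has a gap. In the parenthetical you call $Y_k$ ``the finite set $Y_k$'', but $Y_k$ need not be finite (for instance if $T=\id$ then $Y_1=Y$). The argument that a closed forward-invariant set with nonempty interior contradicts transitivity works only when that set is \emph{proper}: if $U\subset Y_k$ is open and $Y_k\ne Y$, then $V=Y\setminus Y_k$ is open nonempty and no orbit from $U$ can reach $V$. But nothing in your Baire argument rules out $Y_k=Y$ for some $k$, i.e.\ all orbits bounded in size by $k$. To conclude from $Y=Y_k$ that $Y$ is finite you are back to needing a point with dense (hence finite, hence equal to $Y$) orbit---which is precisely the first argument. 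So the second approach is not an independent proof; it is a detour that returns to the first one. Stick with the transitive-point argument.
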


\begin{proof}
Let $f:(X,S)\to(Y,T)$ be a factor map and choose some transitive point $x\in X$. Then $y=f(x)$ a transitive point in $Y$. But the system $Y$ is infinite, so the orbit of $y$ is infinite.
\end{proof}

\begin{proposition}
Let $(X,T)$ be a \ttfrac{$\Z$-sofic}{subshift} dynamical system. If $X$ is infinite, then some point in it has an infinite orbit.
\end{proposition}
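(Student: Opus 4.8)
I will argue by contraposition: assuming every point of $X$ is periodic, I will show that $X$ is finite. First I would invoke the characterization (established above) that a system is \ttfrac{$S_1$}{subshift} precisely when it is a factor of a subshift in $S_1$, to write $X$ as a factor $\pi : Y \to X$ of a sofic shift $Y$. Fix a finite labeled graph $G$ presenting $Y$ in which every vertex lies on some bi-infinite path, let $C_1,\dots,C_m$ be the (finitely many) strongly connected components of $G$, and for each $j$ let $Y_j \subseteq Y$ be the set of labels of bi-infinite paths of $G$ that stay inside $C_j$. Each $Y_j$ is a subshift (the image of a closed, shift-invariant set of paths under the label map) and a factor of the irreducible edge shift on $C_j$, so whenever it is nonempty it is a transitive sofic shift.

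The key combinatorial step is to show that every periodic point of $Y$ lies in some $Y_j$. Given a periodic point $y \in Y$, let $w$ be a period word of $y$, $p = |w|$, so $y$ is the bi-infinite repetition of $w$, and lift $y$ to a bi-infinite path $\gamma$ in $G$ through vertices $(v_i)_{i\in\Z}$. Among the vertices $v_0, v_p, v_{2p},\dots$ some vertex repeats by finiteness of $G$, say $v_{ap} = v_{bp}$ with $a < b$; then $\gamma_{[ap,\,bp-1]}$ is a cycle of $G$ labeled $w^{b-a}$, so its bi-infinite repetition is a cyclic path, contained in a single strongly connected component $C_j$, whose label is $y$. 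Hence $y \in Y_j$.

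Next I would use the classical fact that factor maps of sofic shifts lift periodic points: composing $\pi$ with an SFT cover of $Y$ reduces this to the case of a factor map out of an SFT, where for a periodic $x \in X$ one forms the finite ``fiber graph'' recording the paths of the SFT cover whose label sequence is $x$ and extracts a cycle. Thus every periodic $x \in X$ has a periodic preimage in $Y$, which by the previous step lies in $\bigcup_j Y_j$, so $X = \bigcup_{j=1}^{m} \pi(Y_j)$. Each $\pi(Y_j)$ is a factor of the transitive system $Y_j$; if it were infinite, then by the lemma that an infinite factor of a transitive system contains a point with infinite orbit it would contain a point of $X$ with infinite orbit, contradicting our assumption. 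Hence each $\pi(Y_j)$ is finite, so $X$ is a finite union of finite sets and therefore finite --- the desired contradiction.

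The main obstacle is essentially bookkeeping: the periodic-point-lifting fact, though standard, must be applied with some care through a sofic presentation, and one must check that the components $Y_j$ are genuinely transitive and that their $\pi$-images cover all of $X$ once all points of $X$ are assumed periodic. There is no deep difficulty beyond that; the heart of the argument is the pigeonhole observation that a periodic point of a sofic shift always arises from a cycle inside a single strongly connected component of a presentation, combined with the transitive-factor lemma already proved.
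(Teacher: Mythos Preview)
Your proof is correct and follows the same overall architecture as the paper's: argue by contraposition, pass to a cover with finitely many transitive pieces, use the preceding lemma to see that each piece has finite image, and conclude that $X$ is a finite union of finite sets. The one genuinely different step is how you show $X = \bigcup_j \pi(Y_j)$. You lift each periodic $x \in X$ to a periodic preimage and then use pigeonhole to place that preimage in a single strongly connected component. The paper instead takes an SFT cover $f : Z \to X$ and, for arbitrary $z \in Z$, extracts a limit $z' = \lim \sigma^{n_t}(z)$ lying in some transitive component $Z_i$; since $x = f(z)$ is periodic, the finite sequence $T^{n_t}(x)$ must eventually equal $f(z')$, forcing $x \in f(Z_i)$.

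Both routes work. The paper's compactness argument is a bit more self-contained (it does not invoke periodic-point lifting as an external fact), while your approach is more combinatorial and makes the role of the graph presentation explicit. Your bookkeeping remarks are accurate: the only care needed is that nonempty $Y_j$ are transitive (as factors of irreducible edge shifts) and that periodic-point lifting through a sofic cover is indeed standard.
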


\begin{proof}
Let $Z$ be a $\Z$-SFT and let $f:(Z,\sigma)\to (X,T)$ be a factor map and let $Z_1,\dots,Z_n$ be the transitive components of $Z$. Assume to the contrary that all points of $X$ are periodic; then by the previous lemma the systems $f(Z_i)$ are all finite. Now let $z\in Z$ be arbitrary and let $x=f(z)$. Choose a strictly increasing sequence of $n_t\in\N$ such that $(\sigma^{n_t}(z))_{t\in\N}$ converges to some point $z'$, which necessarily belongs to some transitive component $Z_i$. By continuity of $f$ it follows that $(f(\sigma^{n_t}(z)))_{t\in\N}=(T^{n_t}(f(z)))_{t\in\N}=(T^{n_t}(x))_{i\in\N}$ converges to $f(z')$. We assumed that all points of $X$ are periodic, so there are only finitely many distinct $T^{n_t}(x)$ and therefore $T^{n_t}(x)=f(z')$ for all sufficiently large $t$. Thus $x\in f(Z_i)$ and we may conclude that $X=\bigcup_{i=1}^n f(Z_i)$. This is a finite set, a contradiction.
\end{proof}

Theorem~\ref{thm:TrivialDynamics} has the following obvious corollary on automorphism groups.

\begin{proposition}
Let $X$ be any compact metrizable space. Then $\Homeo(X)$ is isomorphic to the automorphism group of some \ttfrac{subshift}{subshift}, as topological groups with the compact-open topologies.
\end{proposition}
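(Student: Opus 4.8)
The plan is to combine the previous theorem with the easy observation that the automorphism group of a dynamical system depends only on the pair (underlying space, acting homeomorphism), and that for trivial dynamics the automorphism group is simply the full homeomorphism group. First I would apply Theorem~\ref{thm:TrivialDynamics} to realize the trivial $\Z$-system $(X,\id_X)$ as $Y/K'$ for subshifts $Y$ and $K'$, so that $X$ (with trivial dynamics) is a \ttfrac{subshift}{subshift} system. Then I would unwind the definition: an automorphism of a dynamical system $(Z,S)$ is a self-homeomorphism of the underlying space commuting with $S$, so an automorphism of $(X,\id_X)$ is exactly a self-homeomorphism of $X$ commuting with $\id_X$, which is no condition at all. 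Hence $\Aut(X,\id_X)=\Homeo(X)$ as groups.

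Second, I would check that this identification respects the topologies. Both $\Aut(X,\id_X)$ and $\Homeo(X)$ carry the compact-open topology, and since $\Aut(X,\id_X)$ is by definition the subspace of $\Homeo(X)$ consisting of homeomorphisms commuting with the action — here, all of $\Homeo(X)$ — the compact-open topology on $\Aut(X,\id_X)$ is literally the subspace topology inherited from $\Homeo(X)$, which is $\Homeo(X)$ itself. So the isomorphism of groups is also a homeomorphism, i.e.\ an isomorphism of topological groups. Finally, since $X\cong Y/K'$ as a dynamical system, the automorphism group of the \ttfrac{subshift}{subshift} system $Y/K'$ is (canonically isomorphic, as a topological group with the compact-open topology, to) $\Aut(X,\id_X)=\Homeo(X)$, which is exactly the assertion.

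The proof is short and essentially formal; there is no real obstacle beyond being careful that the conjugacy $Y/K'\cong (X,\id_X)$ of dynamical systems induces an isomorphism of automorphism groups that is a homeomorphism for the compact-open topologies. This is standard: a conjugacy $h:(X_1,S_1)\to(X_2,S_2)$ induces $f\mapsto h f h^{-1}$ from $\Aut(X_1,S_1)$ to $\Aut(X_2,S_2)$, and conjugation by a fixed homeomorphism is a homeomorphism of $\Homeo(X_1)$ onto $\Homeo(X_2)$ in the compact-open topologies, hence restricts to one on the automorphism subgroups. The only mild subtlety worth a sentence is that $X$ is assumed compact metrizable, so $\Homeo(X)$ with the compact-open topology is a genuine topological group (as already noted in the excerpt via \cite{Ar46,McIb06}), and the compact-open topology agrees with the topology of uniform convergence; thus there is no ambiguity in which topology we equip the groups with. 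I would write the proof as follows:

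\begin{proof}
By Theorem~\ref{thm:TrivialDynamics}, the trivial system $(X,\id_X)$ is \ttfrac{subshift}{subshift}, say $(X,\id_X)\cong Y/K'$ for subshifts $Y$ and $K'$. An automorphism of the dynamical system $(X,\id_X)$ is a self-homeomorphism of $X$ commuting with $\id_X$, which is no restriction; hence $\Aut(X,\id_X)=\Homeo(X)$ as groups. Since $\Aut(X,\id_X)$ carries by definition the subspace topology it inherits from $\Homeo(X)$ (with the compact-open topology), this equality is also an equality of topological groups. Finally, a conjugacy of dynamical systems induces an isomorphism of automorphism groups by conjugation, and conjugation by a fixed homeomorphism is a homeomorphism for the compact-open topologies; thus $\Aut(Y/K')\cong\Aut(X,\id_X)=\Homeo(X)$ as topological groups.
\end{proof}
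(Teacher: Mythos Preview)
Your proof is correct and follows essentially the same approach as the paper: apply Theorem~\ref{thm:TrivialDynamics} and observe that $\Aut(X,\id_X)=\Homeo(X)$ since commutation with the identity is automatic. The paper's proof is a one-liner omitting the topological details you spell out; your additional care with the compact-open topologies and the conjugation argument is fine but not strictly needed.
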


\begin{proof}
This is direct from Theorem~\ref{thm:TrivialDynamics}, as commutation with the identity map is automatic, so $\Aut(X, \id) = \Homeo(X)$.
\end{proof}

It is not known whether $\Homeo(X)$ for a compact metrizable space $X$ can have dimension other than $0$, $1$ or $\infty$~\cite{Br01}. Can $\Aut(X)$ (in the compact-open topology) have a dimension other than $0$, $1$ or $\infty$ for a \ttfrac{subshift}{subshift} $X$? What about the automorphism group of a general dynamical $\Z$-system $X$?

\section{Case study 1: $\beta$-shifts}
\label{sec:BetaShifts}

In this section we consider a natural class of \ttfrac{$\N$-subshift}{subshift}-systems associated to $\beta$-shifts. In Definition~\ref{def:BetaShift} we define the usual $\beta$-shift $S_\beta$ associated to a real number $\beta > 1$. The $\beta$-shift codes the greedy $\beta$-adic expansions of real numbers. In Definition~\ref{def:BetaKernel}, we define the $\beta$-kernel $K_\beta$. Dividing a $\beta$-shift by the $\beta$-kernel gives, in a precise sense, the most natural map to the real numbers modulo $1$ under multiplication by $\beta$: this naturalness will be explained in more detail and in greater generality in Subsection~\ref{sec:TDSification}. The main theorem of this section gives a characterization of the soficity/SFTness types of the natural formal quotients $\ccfrac{S_\beta}{K_\beta}$.

\begin{theorem}
\label{thm:BetaShiftClassificationProof}
Let $\beta > 1$. Then the following hold:
\begin{enumerate}
\item $\ccfrac{S_\beta}{K_\beta}$ is \ttfrac{SFT}{SFT} if and only if $\beta \in \N$,
\item $\ccfrac{S_\beta}{K_\beta}$ is \ttfrac{SFT}{proper sofic} if and only if $\beta \notin \N$ and $d^*_\beta(1)$ is totally periodic,
\item $\ccfrac{S_\beta}{K_\beta}$ is \ttfrac{proper sofic}{proper sofic} if and only if $d^*_\beta(1)$ is strictly eventually periodic,
\item $\ccfrac{S_\beta}{K_\beta}$ is \ttfrac{non-sofic}{SFT}
if and only if $\{\sigma^i(d^*_\beta(1)) \;|\; i\in\N\}$ is dense in $S_\beta$.
\item $\ccfrac{S_\beta}{K_\beta}$ is \ttfrac{non-sofic}{non-sofic} in other cases.
\end{enumerate}
In particular, $\ccfrac{S_\beta}{K_\beta}$ is never \ttfrac{SFT}{non-sofic}, \ttfrac{non-sofic}{proper sofic} or \ttfrac{proper sofic}{SFT}.
\end{theorem}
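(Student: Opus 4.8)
The plan is to first recall the structure of the $\beta$-shift and the $\beta$-kernel, and then work out what $K_\beta$ looks like in terms of $d^*_\beta(1)$. Recall that $S_\beta$ is characterized by the lexicographic condition: $x \in S_\beta$ iff $\sigma^i(x) \preceq d^*_\beta(1)$ lexicographically for all $i \in \N$. The kernel $K_\beta$ is the smallest closed shift-invariant equivalence relation identifying $1$ with its orbit under multiplication by $\beta$; concretely, when the greedy expansion $d_\beta(1) = t_1 t_2 \cdots$ is finite, say $d_\beta(1) = t_1 \cdots t_{k-1} (t_k{+}1)$ with $t_k{+}1 > 0$ at the end, $K_\beta$ should identify $t_1 \cdots t_{k-1}(t_k{+}1) 0^\infty$ with $(t_1 \cdots t_k)^\infty = d^*_\beta(1)$ and propagate this under the shift and under taking closure. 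I would first establish a clean description of $K_\beta$ (citing the general construction in Subsection~\ref{sec:TDSification} / Theorem~\ref{Kchar}), then read off its automaton when $d^*_\beta(1)$ is eventually periodic.

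Next I would prove the five ``if'' directions, which I expect to follow a common template. For item (1): $\beta \in \N$ forces $S_\beta = \{0,\dots,\beta{-}1\}^\N$, a full shift, hence SFT, and $d_\beta(1)$ is finite with a very simple tail so $K_\beta$ is the ``carry'' relation $A_{\mathrm{req}}$-style SFT identifying base-$\beta$ representations of the same real (cf.\ Lemma~\ref{lem:TimesTwoN}); one checks it is a relative SFT. For (2): $\beta \notin \N$ with $d^*_\beta(1)$ totally periodic makes $S_\beta$ an SFT (classical), while the single identification $t_1\cdots t_{k-1}(t_k{+}1)0^\infty \sim (t_1\cdots t_k)^\infty$ and its shift-orbit-closure cannot be cut out by a finite window because the periodic word $(t_1\cdots t_k)^\infty$ must be matched arbitrarily far; hence $K_\beta$ is sofic but proper sofic — I would make the non-SFT-ness precise by a pumping argument on the relation, exactly in the style of the ``$Z$ is not SFT'' argument in the proof of Theorem~\ref{thm:ProperSofic}. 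For (3): $d^*_\beta(1)$ strictly eventually periodic makes $S_\beta$ proper sofic (classical: sofic since eventually periodic, not SFT since not totally periodic), and $K_\beta$ is proper sofic by the same window argument. For (4): if $\{\sigma^i d^*_\beta(1)\}$ is dense in $S_\beta$ then $S_\beta$ is non-sofic (density of the orbit of $d^*_\beta(1)$ is incompatible with $d^*_\beta(1)$ being eventually periodic, since an eventually periodic sequence has finite orbit closure — here one needs that $S_\beta$ infinite, i.e.\ $\beta \notin \N$ is automatic, and that non-eventual-periodicity of $d^*_\beta(1)$ is equivalent to non-soficity of $S_\beta$); and $K_\beta$ is an SFT precisely because there is effectively nothing to identify beyond a bounded-window carry relation when the greedy expansion of $1$ is already ``maximally spread out'' — I would verify $K_\beta$ reduces to the trivial/diagonal-plus-carry SFT in this regime. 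Item (5) is then the complement and should follow once (1)–(4) are shown to be exhaustive of their stated conditions, with $S_\beta$ non-sofic (since $d^*_\beta(1)$ not eventually periodic) and $K_\beta$ non-sofic (the orbit of $d^*_\beta(1)$ is not dense, so there is a genuine non-sofic identification to make).

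The main obstacle I anticipate is the careful analysis of $K_\beta$ in cases (4) and (5): one must show that when the orbit of $d^*_\beta(1)$ is dense, the equivalence relation $K_\beta$ ``collapses'' to an SFT, whereas when it is non-dense but $d^*_\beta(1)$ is still not eventually periodic, $K_\beta$ fails to be sofic. Both require understanding precisely which pairs $K_\beta$ identifies — i.e.\ computing the least closed shift-invariant equivalence relation containing the generating identification — and this is where I would lean hardest on the universal-property description from Subsection~\ref{sec:TDSification}. For the non-sofic direction in (5), the key point is that soficity of an equivalence relation $Z \subset (S_\beta^2)$ would give a regular language, and one can pump a forbidden configuration that witnesses a ``missed'' identification at arbitrarily large scale, contradicting regularity; making the pumping work uniformly across the orbit-closure of $d^*_\beta(1)$ is the delicate part. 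I would also need to double-check the boundary claims in the last sentence (never \ttfrac{SFT}{non-sofic}, etc.), but those are immediate consequences of the biconditionals in (1)–(5): e.g.\ \ttfrac{SFT}{non-sofic} would require $S_\beta$ SFT (so $\beta \in \N$ or $d^*_\beta(1)$ totally periodic) but then $K_\beta$ is SFT by (1)–(2), contradiction; and similarly for the other two combinations, using that ``$d^*_\beta(1)$ eventually periodic'' splits cleanly into the totally periodic / strictly eventually periodic subcases covered by (2) and (3).
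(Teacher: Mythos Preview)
Your overall architecture matches the paper's: reduce to the ``if'' directions by mutual exclusivity of the five hypotheses and of the five conclusion-types, then treat each case separately using the classical characterization of when $S_\beta$ is SFT/sofic together with the explicit description of $K_\beta$ from Theorem~\ref{Kchar}. Cases (1)--(3) are essentially right, and your pumping idea for (5) is the correct one (the paper packages it as Lemma~\ref{lem:Pumping} and Theorem~\ref{thm:NotDenseNotSofic}).

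The real gap is in case (4). Your intuition that $K_\beta$ ``reduces to the trivial/diagonal-plus-carry SFT'' because ``there is effectively nothing to identify'' is backwards. Recall from Theorem~\ref{Kchar} that $K_\beta = \Delta_{S_\beta}\cup\Delta'\cup(S'\times S')$, where $S'$ contains the orbit closure of $d^*_\beta(1)$. When that orbit is dense, $S' = S_\beta$, so $S'\times S' = S_\beta\times S_\beta$ and hence $K_\beta = S_\beta\times S_\beta$: \emph{everything} is identified, the quotient $S_\beta/K_\beta$ is a single point, and $K_\beta$ is the relative full shift --- trivially a relative SFT. This is Theorem~\ref{thm:DenseCase}. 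If you pursue your stated plan of verifying that $K_\beta$ is only ``diagonal-plus-carry,'' you will find it false, because the $(S'\times S')$ term swallows everything. Once you correct this, case (4) becomes the easiest of the five rather than the hardest, and the contrast with case (5) becomes sharp: there the orbit closure is a proper subset, so $S'\times S'$ is a proper non-sofic piece of $K_\beta$ and the pumping argument has something to bite on.
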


\begin{proof}
By Lemma~\ref{lem:DenseNotSP}, if $\{\sigma^i(d^*_\beta(1)) \;|\; i\in\N\}$ is dense then $d^*_\beta(1)$ is not eventually periodic, so it is clear that any possible $\beta$ is covered by exactly one of the cases. Since the types \ttfrac{X}{Y} occurring in the antecedent are also mutually exclusive, it is enough to show the ``if'' directions.

Suppose first that $\beta \in \N$. Then by Lemma~\ref{Kint}, $d^*_\beta(1)$ is totally periodic, so by Theorem~\ref{thm:LothaireSFTSofic}, $S_\beta$ is SFT. By Theorem~\ref{thm:Kint}, $K_\beta$ is also SFT in this case.

Suppose then $\beta \notin \N$ and $d^*_\beta(1)$ totally periodic. Again by Theorem~\ref{thm:LothaireSFTSofic}, $S_\beta$ is SFT. By Theorem~\ref{thm:KIfSSFT}, $K_\beta$ is proper sofic.

Similarly, if $\beta$ is strictly eventually periodic, then Theorem~\ref{thm:LothaireSFTSofic} shows $S_\beta$ is proper sofic, and then by Theorem~\ref{thm:KIfSSofic} $K_\beta$ is proper sofic.

Suppose then that the orbit of $d^*_\beta(1)$ is dense in $S_\beta$. Then Lemma~\ref{lem:DenseNotSP} and Theorem~\ref{thm:LothaireSFTSofic} show that $S_\beta$ is non-sofic, and Theorem~\ref{thm:DenseCase} shows that $K_\beta$ is SFT, thus also sofic.

Suppose finally that $d^*_\beta(1)$ does not satisfy any of the above, i.e. it is aperiodic and its orbit is not dense. Then Theorem~\ref{thm:LothaireSFTSofic} shows that $S_\beta$ is non-sofic and Theorem~\ref{thm:NotDenseNotSofic} shows $K_\beta$ is non-sofic.
\end{proof}

\begin{remark}
Note that this theorem talks about the soficity/SFTness type of the formal quotient \ccfrac{S_\beta}{K_\beta}, not the abstract system \ttfrac{$S_\beta$}{$K_\beta$}. The ``if'' directions of course hold also for the abstract systems.
\end{remark}

\subsection{Topological dynamical system -ification}
\label{sec:TDSification}

In this section, because the application is $\beta$-shifts, we restrict to $\N$-actions for simplicity. We explain the general construction that attaches to a $\beta$-shift a corresponding multiplication map on the interval (or, equivalently, the circle). The idea is that if a compact metric space has non-continuous dynamics, then there is a unique best way to ``glue'' the underlying space so that the dynamics becomes continuous. This is expressed formally as a universal property in Theorem~\ref{maximal}.

\begin{definition}
A \emph{(non-topological) dynamical system (nTDS)} is a pair $(X,T)$, where $X$ is a non-empty set and $T:X\to X$ is a self-map on $X$.\end{definition}

If $R$ is a $T$-invariant equivalence relation for $(X,T)$, there is a unique map $T_R:X/R\to X/R$ such that $\pi_R\circ T=T_R\circ\pi_R$. If $(X,T)$ is a TDS and $R\subset X\times X$ is a topologically closed $T$-invariant equivalence relation on $X$, then the topological quotient space $X/R$ is a compact metric space and $T_R$ is a continuous map on $X/R$, so $(X/R,T_R)$ becomes a TDS.

If $(X,T)$ and $(Y,S)$ are dynamical systems and $\pi:X\to Y$ is a map such that the relation $S\circ\pi=\pi\circ T$ is satisfied, we write $\pi:(X,T)\to(Y,S)$. If $\pi:(X,T)\to(Y,S)$ is surjective, we say that $(Y,S)$ is a factor of $(X,T)$ (via the factor map $\pi$). If moreover $(X,T)$ and $(Y,S)$ are TDSs and $\pi$ is a continuous bijection, we say that $(X,T)$ and $(Y,S)$ are conjugate (via the conjugacy $\pi$).

\begin{definition}
We say that $\pi:(X,T)\to(Y,S)$ is a \emph{TDS factor} if $(X,T)$ is an nTDS, $X$ is a compact metric space, $(Y,S)$ is a TDS and $\pi$ is a continuous factor map.
\end{definition}

\begin{theorem}\label{maximal}Any nTDS $(X,F)$ where $X$ is a compact metric space possesses a maximal TDS factor $\pi:(X,F)\to(Y,G)$ such that for any TDS factor $\phi:(X,F)\to(Z,H)$ there is a unique factor map $\psi:(Y,G)\to(Z,H)$ with $\psi\circ\pi=\phi$. The TDS $(Y,G)$ is unique up to conjugacy. One maximal TDS factor is $\pi_R:(X,F)\to(Y/R,F_R)$, where $R\subset X\times X$ is the minimal closed $F$-invariant equivalence relation such that $F_R$ is continuous.
\end{theorem}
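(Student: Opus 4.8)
\textbf{Proof plan for Theorem~\ref{maximal}.}

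The plan is to construct the minimal closed $F$-invariant equivalence relation $R$ on $X$ such that $F_R : X/R \to X/R$ is continuous, and then verify that $\pi_R : (X,F) \to (X/R, F_R)$ has the stated universal property. First I would set up the framework: for a closed equivalence relation $S$ on $X$, the quotient $X/S$ is a compact metric space by Lemma~\ref{lem:QuotientMapsMetrizable}, and $F_S$ is the unique map with $\pi_S \circ F = F_S \circ \pi_S$, which exists precisely when $S$ is $F$-invariant (in the sense that $(x,y)\in S \implies (Fx,Fy)\in S$). So the question is for which such $S$ the induced map $F_S$ is continuous. The key observation is that $F_S$ is continuous if and only if $F_S$ is continuous as a map into the Hausdorff space $X/S$, equivalently (by Lemma~\ref{lem:QuotientMapsCharacterization}) if and only if the relation $\{(x,y) \;|\; (Fx, Fy)\in S\} = (F\times F)^{-1}(S)$ is closed in $X^2$ — wait, more carefully: $F_S$ is continuous iff $F_S \circ \pi_S$ is continuous (since $\pi_S$ is a quotient map), i.e. iff $\pi_S \circ F$ is continuous, which as a map $X \to X/S$ is continuous iff its ``kernel'' $(F\times F)^{-1}(S)$ together with $S$ behaves well; the cleanest equivalent condition is that $\pi_S \circ F : X \to X/S$ is continuous, and since $X$ is compact and $X/S$ is compact Hausdorff, continuity of $\pi_S\circ F$ is equivalent to its graph being closed. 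I would phrase the whole construction in terms of this ``closedness of the graph of $\pi_S\circ F$'' criterion.

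Next I would establish that the family $\mathcal{R}$ of closed $F$-invariant equivalence relations $S$ with $F_S$ continuous is closed under arbitrary intersections. Given a family $(S_\alpha)_{\alpha}$ in $\mathcal{R}$, put $S = \bigcap_\alpha S_\alpha$. It is clearly a closed equivalence relation and $F$-invariant. For continuity of $F_S$: I would use the diagonal embedding $X/S \hookrightarrow \prod_\alpha X/S_\alpha$, $[x]_S \mapsto ([x]_{S_\alpha})_\alpha$, which is a topological embedding (it is injective because $S = \bigcap S_\alpha$, continuous by definition of the product topology, and a homeomorphism onto its image since $X/S$ is compact and the product is Hausdorff by Lemma~\ref{lem:IsAHomeomorphism}). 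Under this embedding $F_S$ corresponds to the restriction of $\prod_\alpha F_{S_\alpha}$, which is continuous; hence $F_S$ is continuous. Therefore $R := \bigcap_{S \in \mathcal{R}} S$ is the minimal element of $\mathcal{R}$ (note $\mathcal{R}$ is nonempty: the full relation $X\times X$ is in it). This gives the maximal TDS factor $\pi_R : (X,F) \to (X/R, F_R)$ named in the statement.

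Then I would verify the universal property. Given any TDS factor $\phi : (X,F) \to (Z,H)$ with $Z$ a TDS and $\phi$ continuous and surjective, let $S_\phi = \ker\phi = \{(x,y) \;|\; \phi(x)=\phi(y)\}$. This is closed (since $Z$ is Hausdorff, $S_\phi = (\phi\times\phi)^{-1}(\Delta_Z)$), it is $F$-invariant (since $\phi\circ F = H\circ\phi$, so $\phi(x)=\phi(y)$ implies $\phi(Fx) = H\phi(x) = H\phi(y) = \phi(Fy)$), and the induced map $F_{S_\phi}$ on $X/S_\phi$ is continuous: indeed $X/S_\phi \cong Z$ by Lemma~\ref{lem:IsAHomeomorphism} (the natural bijection is continuous since $\phi$ is, and it is a homeomorphism by compactness), and under this identification $F_{S_\phi}$ corresponds to $H$, which is continuous. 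Hence $S_\phi \in \mathcal{R}$, so $R \subseteq S_\phi$. This inclusion gives a well-defined map $\bar\psi : X/R \to X/S_\phi$ on equivalence classes, which is continuous (as $X/R$ has the quotient topology and the composite $X \to X/R \to X/S_\phi$ is just $\pi_{S_\phi}$, continuous), surjective, and satisfies $\bar\psi \circ \pi_R = \pi_{S_\phi}$. Composing with the homeomorphism $X/S_\phi \cong Z$ gives $\psi : (Y,G) \to (Z,H)$ with $\psi\circ\pi_R = \phi$; a direct check that $\psi$ commutes with the dynamics (i.e. $H\circ\psi = \psi\circ G$) follows because it holds after precomposing with the surjection $\pi_R$. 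Uniqueness of $\psi$ is immediate since $\pi_R$ is surjective. Finally, uniqueness of $(Y,G)$ up to conjugacy is the standard abstract-nonsense argument: any two maximal TDS factors receive unique factor maps to each other, whose composites are forced to be the identity by uniqueness, hence are mutually inverse conjugacies. The main obstacle I anticipate is being careful about the exact continuity criterion for $F_S$ and the closure-under-intersection step (getting the product embedding right); the universal property itself is then routine.
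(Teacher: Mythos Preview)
Your proposal is correct and follows the same overall architecture as the paper: define the family $\mathcal{R}$ of closed $F$-invariant equivalence relations $S$ for which $F_S$ is continuous, take $R$ as the intersection, and then check the universal property via $\ker\phi\in\mathcal{R}$. The one substantive difference is in the proof that $F_R$ is continuous. The paper argues directly by contradiction with sequences: if $F_R$ were discontinuous, compactness would produce $x_i\to x$ with $F(x_i)\to y$ and $[y]_R\neq[F(x)]_R$, and then any $R'\in\mathcal{R}$ separating $x$ from $y$ gives a contradiction with continuity of $F_{R'}$. Your product-embedding argument is a clean alternative: the diagonal map $X/R\hookrightarrow\prod_{S\in\mathcal{R}}X/S$ is a topological embedding (injective since $R=\bigcap S$, continuous coordinatewise, and a homeomorphism onto its image by compactness/Hausdorffness), and under it $F_R$ becomes the restriction of the continuous product map $\prod F_S$. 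This avoids the sequence chase and makes the closure of $\mathcal{R}$ under intersection transparent; the paper's argument is more hands-on but equally short. The universal-property and uniqueness portions of your sketch match the paper essentially verbatim.
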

\[
\begin{tikzcd}
(X,F) \arrow{r}{\pi} \arrow[swap]{dr}{\phi} & (Y,G) \arrow{d}{\psi}\\
&(Z,H)
\end{tikzcd}
\]
\begin{proof}Let $\mathcal{R}$ be the family of all closed $F$-invariant equivalence relations $R$ on $X$ such that $F_R$ is a continuous map on $X/R$. Let $R$ be the intersection of all relations in $\mathcal{R}$. $R$ is also a closed $F$-invariant equivalence relation, so $X/R$ is a compact metric space and $F_R$ is well defined. We claim that $F_R$ is a continuous map on $X/R$. Indeed, if this were not so, by compactness there would be $x,x_i,y\in X$ ($i\in\N$) such that $([x_i])_{i\in\N}$ converges to $[x]_R\in X/R$ and $(F_R([x_i]_R))_{i\in\N}$ converges to $[y]_R$ but $[y]_R\neq [x]_R$. By passing to a suitable subsequence of $(x_i)_{i\in\N}$ and choosing $x,y$ properly we may also assume without loss of generality that $(x_i)_{i\in\N}$ converges to $x$ and $(F(x_i))_{i\in\N}$ converges to $y\neq x$. Choose a relation $R'\in\mathcal{R}$ such that $(x,y)\notin R'$; such a relation exists, because otherwise $(x,y)\in R$. Now $([x_i]_{R'})_{i\in\N}$ converges to $[x]_{R'}$ and $(F_{R'}([x_i]_{R'}))_{i\in\N}$ converges to $[y]_{R'}\neq [x]_{R'}$ contradicting the continuity of $F_{R'}$. Thus $\pi_R:(X,F)\to (X/R,F_R)$ is a TDS factor.

Denote $(Y,G)=(X/R,F_R)$. We claim that $\pi_R:(X,F)\to(Y,G)$ is a maximal TDS factor. Let therefore $\phi:(X,F)\to(Z,H)$ be an arbitrary TDS factor and consider the relation
\[R'=\{(x_1,x_2)\mid \phi(x_1)=\phi(x_2)\}.\]
$R'$ is an equivalence relation, and as the preimage of the closed set $\Delta_Z\subset Z\times Z$ under the continuous map $\phi\times\phi:X\times X\to Z\times Z$ it is closed. The map $\phi$ induces a homeomorphism between spaces $X/R'$ and $Z$, and via this the continuous map $H$ can be lifted to a continuous map on $X/R'$, which is equal to $F_{R'}$. Because $F_{R'}$ is continuous, it follows that $R'\in\mathcal{R}$ and instead of the factor $\phi:(X,F)\to(Z,H)$ it suffices to consider a factor of the form $\pi_{R'}:(X,F)\to (X/R',F_{R'})$, where $R'\in\mathcal{R}$. Define a map $\psi:X/R\to X/R'$ by $\psi([x]_{R})=[x]_{R'}$ for every $x\in X$; because $R$ is a refinement of $R'$, $\psi$ is well defined and it is the unique map which satisfies $\psi\circ\pi=\phi$. It is clear that $\psi$ is a continuous surjection satisfying $F_R'\circ\psi=\psi\circ F_{R}$, so $\psi$ is a factor map.

The uniqueness of $(Y,G)$ up to conjugacy follows by the standard argument.
\end{proof}

Occasionally we can construct a maximal TDS factor in a finite number of steps.

\begin{lemma}
If $R$ is an equivalence relation on a compact metric space $X$ that contains only one nontrivial equivalence class, then also $\cl{R}$ is an equivalence relation on $X$.
\end{lemma}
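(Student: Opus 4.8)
The statement to prove is: \emph{If $R$ is an equivalence relation on a compact metric space $X$ that contains only one nontrivial equivalence class, then also $\cl{R}$ is an equivalence relation on $X$.}

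Here is my plan.

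Write $C$ for the unique nontrivial equivalence class of $R$, so that $R = (C\times C)\cup \Delta_X$. Then $\cl R = \cl{(C\times C)}\cup\Delta_X$, and since products commute with closure in the product topology, $\cl{(C\times C)} = \cl C\times\cl C$. Hence $\cl R = (\cl C\times \cl C)\cup \Delta_X$. Now I simply verify the three axioms directly for this explicit set. Reflexivity is immediate because $\Delta_X\subset\cl R$. Symmetry is immediate because both $\cl C\times\cl C$ and $\Delta_X$ are symmetric. The only point requiring an argument is transitivity: suppose $(x,y),(y,z)\in\cl R$. If either pair lies in $\Delta_X$, then $(x,z)$ equals the other pair (or the diagonal) and we are done. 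Otherwise both pairs lie in $\cl C\times\cl C$, so in particular $x,z\in\cl C$, whence $(x,z)\in\cl C\times\cl C\subset\cl R$. This finishes transitivity, and thus $\cl R$ is an equivalence relation.

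The main (and essentially only) subtlety is establishing the decomposition $\cl R = (\cl C\times\cl C)\cup\Delta_X$. The inclusion $\supset$ is clear since $C\times C\subset R$ and $\Delta_X = \cl{\Delta_X}\subset\cl R$ (the diagonal is closed as $X$ is Hausdorff). For $\subset$: $R = (C\times C)\cup\Delta_X$, and closure distributes over finite unions, so $\cl R = \cl{(C\times C)}\cup\cl{\Delta_X} = (\cl C\times\cl C)\cup\Delta_X$, using that $\cl{A\times B} = \cl A\times\cl B$ in a product of topological spaces and that $\Delta_X$ is already closed. No compactness is actually needed for this lemma; Hausdorffness (hence metrizability is more than enough) suffices, and the statement is purely formal once the shape of $R$ is made explicit. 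I do not anticipate any real obstacle — the only thing to be careful about is not to conflate ``$\cl R$ is an equivalence relation'' with ``$\cl R = \cl C\times\cl C$'', which would be false when $\cl C\neq X$ because then $\Delta_X\not\subset\cl C\times\cl C$; the diagonal term must be kept.

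So the write-up will be: (1) record $R = (C\times C)\cup\Delta_X$; (2) compute $\cl R = (\cl C\times\cl C)\cup\Delta_X$ via distributivity of closure over finite unions and $\cl{A\times B}=\cl A\times\cl B$; (3) check reflexivity, symmetry, transitivity of this explicit relation, with transitivity being the only line that needs the case split on whether a given pair lies on the diagonal.
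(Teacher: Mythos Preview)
Your proof is correct. The approach differs from the paper's: rather than computing $\cl{R}$ explicitly, the paper argues transitivity directly by taking sequences $(x_i,y_i)\in R$ and $(y_i',z_i)\in R$ converging to $(x,y)$ and $(y,z)$, observing that (when $x\neq y$ and $y\neq z$) eventually $x_i\neq y_i$ and $y_i'\neq z_i$, which forces $y_i,y_i'$ into the single nontrivial class and hence $(x_i,z_i)\in R$ by transitivity of $R$. Your route---writing $R=(C\times C)\cup\Delta_X$, using that closure distributes over finite unions and $\cl{A\times B}=\cl A\times\cl B$, and then verifying the axioms for the explicit relation $(\cl C\times\cl C)\cup\Delta_X$---is more structural and makes the shape of $\cl R$ transparent. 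Your observation that only Hausdorffness (not compactness) is needed is also correct; the paper's proof likewise does not invoke compactness.
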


\begin{proof}
Clearly $\cl{R}$ is reflexive and symmetric, so it remains to prove transitivity. Let therefore $(x,y),(y,z)\in\cl{R}$. We need to prove that $(x,z)\in\cl{R}$ and we may assume that $x\neq y$, $y\neq z$. There exist $(x_i,y_i)\in R$, $(y_i',z_i)\in R$ ($i\in\N$) such that $(x_i,y_i)_{i\in\N}$ converges to $(x,y)$ and $(y_i',z_i)_{i\in\N}$ converges to $(y,z)$. Since $x\neq y$, $y\neq z$, we may assume that $x_i\neq y_i$ and $y_i'\neq z_i$ for $i\in\N$. Therefore all $y_i$ and $y_i'$ belong to the only nontrivial equivalence class of $R$ and $(y_i,y_i')\in R$. By transitivity $(x_i,z_i)\in R$ for $i\in\N$ and $(x,z)\in\cl{R}$ because it is the limit of $(x_i,z_i)_{i\in\N}$.
\end{proof}

\begin{theorem}
\label{construct}
Let $(X,F)$ be an nTDS where $X$ is a compact metric space. Define the following relations on $X$.
\begin{itemize}

\item $R_1$ contains the whole $\Delta_X$ and a point $(z_1,z_2)\in X\times X$ if there are sequences $(x_i)_{i\in\N}$ and $(y_i)_{i\in\N}$ on $X$ that converge to a common point such that $(F(x_i),F(y_i))_{i\in\N}$ converges to $(z_1,z_2)$.
\item $R_2$ is the $F$-invariant closure of $R_1$, i.e.\ $R_2=\{(F^t(x),F^t(y))\mid(x,y)\in R_1,t\in\N\}$.
\item $R_3$ is the transitive closure of $R_2$, i.e.\ $(x_1,x_n)\in R_3$ if and only if there exist $x_1,\dots,x_n\in X$ such that $(x_i,x_{i+1})\in R_2$ for $1\leq i<n$.
\item $R_4$ is the topological closure of $R_3$ in $X\times X$.
\end{itemize}
If $R_3$ has only one nontrivial equivalence class, then $R_4$ is the minimal closed $F$-invariant equivalence relation such that $F_{R_4}$ is continuous and $\pi_{R_4}:(X,F)\to(X/R_4,F_{R_4})$ is a maximal TDS factor.
\end{theorem}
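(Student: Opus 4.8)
The plan is to identify $R_4$ with the minimal closed $F$-invariant equivalence relation $R$ whose quotient map $F_R$ is continuous, which exists by Theorem~\ref{maximal}; once this identification is made, the assertion that $\pi_{R_4}:(X,F)\to(X/R_4,F_{R_4})$ is a maximal TDS factor is exactly the last sentence of Theorem~\ref{maximal}. So the work splits into two parts: (i) showing $R_4$ is itself a closed $F$-invariant equivalence relation with $F_{R_4}$ continuous, and (ii) showing $R_4$ is contained in every such relation.

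For (i), first observe that $R_1$ is reflexive (it contains $\Delta_X$) and symmetric (by the symmetry of its defining condition), hence so is its $F$-invariant closure $R_2$, hence the transitive closure $R_3$ is an equivalence relation with, by hypothesis, a single nontrivial class; the preceding lemma then gives that $R_4=\overline{R_3}$ is an equivalence relation, and it is closed by construction. The relation $R_3$ is $F$-invariant because a finite $R_2$-chain witnessing $(x,y)\in R_3$ maps under $F$ to an $R_2$-chain witnessing $(F(x),F(y))\in R_3$. To pass to $F$-invariance of $R_4$, take $(x,x')\in R_4$ and $(a_i,b_i)\in R_3$ with $(a_i,b_i)\to(x,x')$; using compactness of $X$, pass to a subsequence along which $F(a_i)\to z_1$ and $F(b_i)\to z_2$. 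Then $(z_1,z_2)\in\overline{R_3}=R_4$ by $F$-invariance of $R_3$, while comparing $a_i\to x$ with the constant sequence $x$ gives $(z_1,F(x))\in R_1\subseteq R_4$, and likewise $(z_2,F(x'))\in R_1\subseteq R_4$; since $R_4$ is an equivalence relation, $(F(x),F(x'))\in R_4$, so $F_{R_4}$ is well defined on $X/R_4$.

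Continuity of $F_{R_4}$ is the heart of the argument, since no continuity of $F$ is assumed. Write $\pi=\pi_{R_4}$; as $R_4$ is a closed equivalence relation, $X/R_4$ is compact metrizable by Lemmas~\ref{lem:QuotientMapsCharacterization} and~\ref{lem:QuotientMapsMetrizable}, so continuity may be tested sequentially. If $F_{R_4}$ were discontinuous, there would be $x_i,x\in X$ with $\pi(x_i)\to\pi(x)$ but $\mathrm{dist}(\pi(F(x_i)),\pi(F(x)))\geq\epsilon>0$ along a subsequence; passing to a further subsequence, $x_i\to x^*$ and $F(x_i)\to w$ for some $x^*,w\in X$. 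Continuity of $\pi$ forces $\pi(x^*)=\pi(x)$, i.e.\ $(x,x^*)\in R_4$. Comparing $x_i\to x^*$ with the constant sequence $x^*$ gives $(w,F(x^*))\in R_1\subseteq R_4$, while $F$-invariance of $R_4$ gives $(F(x),F(x^*))\in R_4$; hence $\pi(w)=\pi(F(x^*))=\pi(F(x))$, and then $\pi(F(x_i))\to\pi(w)=\pi(F(x))$, contradicting the choice of $\epsilon$. Thus $F_{R_4}$ is continuous.

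For (ii), let $S$ be any closed $F$-invariant equivalence relation with $F_S$ continuous. I claim $R_1\subseteq S$: if $(z_1,z_2)\in R_1$ with witnessing sequences $x_i,y_i\to p$ and $(F(x_i),F(y_i))\to(z_1,z_2)$, then $\pi_S(x_i),\pi_S(y_i)\to\pi_S(p)$ gives $\pi_S(F(x_i)),\pi_S(F(y_i))\to\pi_S(F(p))$ by continuity of $F_S$, while continuity of $\pi_S$ gives $\pi_S(F(x_i))\to\pi_S(z_1)$ and $\pi_S(F(y_i))\to\pi_S(z_2)$; since $X/S$ is Hausdorff, $\pi_S(z_1)=\pi_S(z_2)=\pi_S(F(p))$, so $(z_1,z_2)\in S$. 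As $S$ is $F$-invariant it then contains $R_2$, as $S$ is transitive it contains $R_3$, and as $S$ is closed it contains $\overline{R_3}=R_4$. Combining (i) and (ii), $R_4$ is the minimal closed $F$-invariant equivalence relation making the quotient map continuous, so by Theorem~\ref{maximal} the map $\pi_{R_4}$ is a maximal TDS factor. I expect the continuity step in the third paragraph to be the main obstacle: it is the only place where one must simultaneously use compactness of $X$ to extract subsequential limits of $F$-orbits, the definition of $R_1$ to reabsorb those limits into $R_4$, and the single-nontrivial-class hypothesis (through the preceding lemma) to know that $R_4$ is transitive — without transitivity the "comparison with a constant sequence" trick does not close up.
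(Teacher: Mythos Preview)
Your proof is correct and follows essentially the same route as the paper's: establish that $R_4$ is a closed $F$-invariant equivalence relation (using the preceding lemma for transitivity of the closure), prove continuity of $F_{R_4}$ by the ``compare with a constant sequence'' trick to land in $R_1$, and show minimality by checking $R_1\subseteq S$ for any competing $S$. Your continuity step is in fact slightly more explicit than the paper's, which hides the passage from $x_i\to x^*$ to $(F(x),F(x^*))\in R_4$ behind the phrase ``choosing $x,y\in X$ properly''; your version spells out that this uses the just-established $F$-invariance of $R_4$.
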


\begin{proof}
It is easy to verify that $R_1$ is reflexive and symmetric, that therefore $R_2$ is reflexive, symmetric and $F$-invariant and that therefore $R_3$ is an $F$-invariant equivalence relation. By the previous lemma $R_4$ is a closed equivalence relation. To show that $R_4$ is $F$-invariant, let $(x,y)\in R_4$ and let $(x_i,y_i)\in R_3$ ($i\in\N$) be such that $(x_i,y_i)_{i\in\N}$ converges to $(x,y)$. By passing to a suitable subsequence if necessary, we may assume without loss of generality that $(F(x_i),F(y_i))_{i\in\N}$ converges to some point $(z_1,z_2)$. Since all $(F(x_i),F(y_i))$ belong to $R_3$, it follows that $(z_1,z_2)\in R_4$. Since $(x_i)_{i\in\N}$, $(y_i)_{i\in\N}$ converge to the same points as the constant sequences $(x)_{i\in\N}$, $(y)_{i\in\N}$ respectively, then the limits $(F(x),z_1)$ and $(z_2,F(y))$ of the sequences $(F(x),F(x_i))_{i\in\N}$ and $(F(y_i),F(y))_{i\in\N}$ belong to $R_1\subset R_4$. By transitivity of $R_4$ also $(F(x),F(y))$ belongs to $R_4$, which proves that $R_4$ is $F$-invariant.

To show that $F_{R_4}$ is continuous, assume to the contrary (and using the compactness of $X/R_4$) that there is a sequence $([x_i]_{R_4})_{i\in\N}$ in $X/R_4$ converging to $[x]_{R_4}$ such that $(F_{R_4}([x_i]_{R_4}))_{i\in\N}$ converges to a point $[y]_{R_4}$ with $[y]_{R_4}\neq F_{R_4}([x]_{R_4})$. By passing to a suitable subsequence and choosing $x,y\in X$ properly we may also assume that $(x_i)_{i\in\N}$ converges to $x$ and $(F(x_i))_{i\in\N}$ converges to $y$. But because $(x_i)_{i\in\N}$ and the constant sequence $(x)_{i\in\N}$ converge to a common point, it follows that the limit $(F(x),y)$ of the sequence $(F(x),F(x_i))_{i\in\N}$ belongs to $R_1\subset R_4$, a contradiction with $[y]_{R_4}\neq F_{R_4}([x]_{R_4})$.

Now let $R$ be the minimal closed $F$-invariant equivalence relation such that $F_{R}$ is continuous. Because $R_4$ is a closed $F$-invariant equivalence relation such that $F_{R_4}$ is continuous, it follows that $R\subset R_4$. To show that $R_4\subset R$, it is sufficient to prove that $R_1\subset R$. Let therefore $(z_1,z_2)\in R_1\setminus\Delta_X$ and let $(x_i)_{i\in\N},(y_i)_{i\in\N}$ be sequences converging to a common point such that $(F(x_i),F(y_i))_{i\in\N}$ converges to $(z_1,z_2)$. Since $([x_i]_R)_{i\in\N}$ and $([y_i]_R)_{i\in\N}$ converge to a common point in $X/R$, by continuity of $F_R$ it follows that the limits $[z_1]_R$ and $[z_2]_R$ of $(F_R([x_i]_R))_{i\in\N}$ and $(F_R([y_i]_R))_{i\in\N}$ are equal and $(z_1,z_2)\in R$.
\end{proof}

\begin{example}
If $(X,F)$ is a TDS, then the trivial equivalence relation $R$ on $X$ is closed, $F$-invariant and $F_R=F$ is a continuous map on $X/R=X$. Thus $\id:(X,F)\to(X,F)$ is a maximal TDS factor of $(X,F)$. \qee
\end{example} 

\begin{example}
If $(X,F)$ is an nTDS with a maximal TDS factor $\pi:(X,F)\to(Y,G)$, then even if $F$ is discontinuous at every point of $X$, it does not follow that $(Y,G)$ consists of the trivial map on the one-point space. Consider for example the system $(X,F)$ where $X=[0,1]$ and $F$ maps rational numbers to $0$ and irrational numbers to $1$, in which case $(Y,G)$ consists of the one-dimensional torus and $G$ maps every point to $0$. \qee
\end{example}

Given a sufficiently good symbolic representation of a sufficiently nice nTDS there is a simple direct construction of the maximal TDS factor as a factor of a subshift. This follows from the following theorem when $(Z,T)$ is a subshift.

\begin{theorem}
\label{symbconstruct}
Let $(X,F)$ be an nTDS with $X$ a compact metrizable space and $F$ such that for every open $U\subset X$ the set $F^{-1}(U)$ is the closure of its interior. Let $(Z,T)$ be a TDS and let $\phi:Z\to X$ be a surjective continuous map such that $F(\phi(z))=\phi(T(z))$ for $z$ in a dense set $Z'\subset Z$. If $K$ is the minimal closed $T$-invariant equivalence relation on $Z$ containing $R_\phi=\{(z_1,z_2)\in Z^2\mid \phi(z_1)=\phi(z_2)\}$ and $\pi$ is the unique map such that $\pi_K=\pi\circ\phi$, then $\pi:(X,F)\to (Z/K,T_K)$ is a maximal TDS factor of $(X,F)$.
\end{theorem}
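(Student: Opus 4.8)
The plan is to verify the universal property of the maximal TDS factor directly, showing that $\pi\colon(X,F)\to(Z/K,T_K)$ satisfies the characterization given in Theorem~\ref{maximal}. First I would record the basic well-definedness facts: since $K\supset R_\phi$ and $\pi_K=\pi\circ\phi$, the map $\pi$ is well-defined as a function on $X$; it is surjective because $\phi$ is surjective; and it is continuous because $\phi$ is a quotient map (being a continuous surjection between compact metrizable spaces), so a set in $Z/K$ is open iff its $\pi$-preimage in $X$ is open, using that $\pi_K$ is continuous. The intertwining relation $T_K\circ\pi = \pi\circ F$ needs to be checked: on the dense set $\phi(Z')$ it follows from $F(\phi(z))=\phi(T(z))$ together with $\pi_K\circ T = T_K\circ\pi_K$, and then one extends to all of $X$ using continuity of both sides — here is where the hypothesis that $F^{-1}(U)$ is the closure of its interior enters, since it guarantees $F$ is ``continuous enough'' that the intertwining passes to the closure; more precisely, I would argue that $\phi(Z')$ is dense in $X$ and that the set where $T_K\circ\pi=\pi\circ F$ holds is closed, which requires knowing that $x\mapsto \pi(F(x))$ agrees with a continuous map on a dense set, and the regularity hypothesis on $F$ is exactly what forces the two continuous candidates to agree everywhere.

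Next I would show maximality. Let $\phi'\colon(X,F)\to(W,H)$ be any TDS factor, i.e.\ $W$ a compact metrizable space, $H$ continuous, $\phi'$ continuous surjective with $H\circ\phi'=\phi'\circ F$. Then $\phi'\circ\phi\colon Z\to W$ is a continuous surjection, and on the dense set $Z'$ we have $(\phi'\circ\phi)(T(z)) = \phi'(F(\phi(z))) = H(\phi'(\phi(z))) = H((\phi'\circ\phi)(z))$, so $\phi'\circ\phi$ is a continuous factor map from $(Z,T)$ to $(W,H)$ on a dense set, hence everywhere by continuity. Let $L = \ker(\phi'\circ\phi) \subset Z^2$; this is a closed $T$-invariant equivalence relation on $Z$, and it contains $R_\phi$ (if $\phi(z_1)=\phi(z_2)$ then certainly $\phi'(\phi(z_1))=\phi'(\phi(z_2))$). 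By minimality of $K$, $K\subset L$. Therefore the map $[z]_K\mapsto (\phi'\circ\phi)(z)$ is well-defined; call it $\psi\colon Z/K\to W$. It is continuous (since $\pi_K$ is a quotient map and $\phi'\circ\phi$ is continuous), surjective, commutes with the dynamics (a direct computation using $(\phi'\circ\phi)\circ T = H\circ(\phi'\circ\phi)$ and $T_K\circ\pi_K=\pi_K\circ T$), and satisfies $\psi\circ\pi = \phi'$ because for $x=\phi(z)$ we get $\psi(\pi(x)) = \psi(\pi_K(z)) = (\phi'\circ\phi)(z) = \phi'(x)$ using surjectivity of $\phi$. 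Uniqueness of $\psi$ with $\psi\circ\pi=\phi'$ is immediate from surjectivity of $\pi$.

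Finally, to conclude that $\pi$ is literally ``a maximal TDS factor'' in the sense of Theorem~\ref{maximal} (not merely one satisfying the universal property up to the conjugacy there), I would invoke the uniqueness clause of Theorem~\ref{maximal}: any two TDS factors satisfying the universal property are conjugate, and the property is exactly what we have just verified, so $(Z/K,T_K)$ is conjugate to the canonical maximal TDS factor $(X/R,F_R)$; one may then either rephrase the statement as ``$\pi$ is \emph{a} maximal TDS factor'', which is how it is phrased, or explicitly exhibit the conjugacy. The main obstacle I anticipate is the extension-by-density step for the intertwining identity $T_K\circ\pi=\pi\circ F$: one must be careful that $\pi\circ F$ need not be continuous a priori (since $F$ is only assumed to satisfy the closure-of-interior condition), so the argument has to go through $\phi$ and $T$ on $Z'$ rather than working with $F$ directly on $X$. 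Concretely, I expect to argue: the set $Z'' = \{z\in Z : \pi_K(T(z)) = T_K(\pi_K(z))\}$ is all of $Z$ (it is closed and contains $Z'$), hence for every $x\in X$, picking $z$ with $\phi(z)=x$, I need $\pi(F(x)) = T_K(\pi(x))$, and this reduces to knowing $\phi(T(z))$ represents $F(\phi(z))$ in $Z/K$ — which is only given for $z\in Z'$; bridging this gap for general $z$ is where the hypothesis on $F$ (ensuring preimages of opens are regular closed, so $F$ behaves well under the relevant limits) must be used, likely by showing that the $K$-class of $\phi(T(z))$ depends only on $x=\phi(z)$ and varies continuously, forcing agreement with $T_K(\pi(x))$ on the dense set and hence everywhere.
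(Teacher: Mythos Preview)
Your plan follows essentially the same route as the paper's proof: both set up the correspondence between TDS factors of $(X,F)$ and closed $T$-invariant equivalence relations on $Z$ containing $R_\phi$, and both deduce maximality from the minimality of $K$. Your treatment of the universal property (pulling back $\phi'$ through $\phi$, checking that $\ker(\phi'\circ\phi)$ is a closed $T$-invariant equivalence relation containing $R_\phi$, hence containing $K$) matches the paper's argument almost exactly.

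The one place where your plan is underspecified is the intertwining $T_K\circ\pi=\pi\circ F$, which you correctly flag as the main obstacle. Your remark that $Z''=\{z:\pi_K(T(z))=T_K(\pi_K(z))\}$ is all of $Z$ is true but vacuous, since $T_K\circ\pi_K=\pi_K\circ T$ holds by definition of $T_K$; this does not by itself give $\pi(F(\phi(z)))=T_K(\pi(\phi(z)))$ for $z\notin Z'$. The paper's precise mechanism is this: given $x\in X$, take a neighborhood basis $(U_i)$ of $F(x)$ and let $V_i$ be the interior of $F^{-1}(U_i)$. The regularity hypothesis says $x\in F^{-1}(U_i)=\overline{V_i}$, so one can choose $z_i\in Z'\cap\phi^{-1}(V_i)$ with $\phi(z_i)\to x$. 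The point is that $\phi(z_i)\in V_i\subset F^{-1}(U_i)$ forces $F(\phi(z_i))\in U_i$, so $F(\phi(z_i))\to F(x)$ as well. With \emph{both} convergences in hand, continuity of $\pi$ and $T_K$ and the relation $F(\phi(z_i))=\phi(T(z_i))$ on $Z'$ give
\[
\pi(F(x))=\lim_i \pi(F(\phi(z_i)))=\lim_i \pi_K(T(z_i))=\lim_i T_K(\pi_K(z_i))=T_K(\pi(x)).
\]
This is exactly the ``bridging'' you anticipated; filling it in completes your argument.
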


\begin{proof}
Let $C_Z$ be the set of those TDS factors $\pi_R:(Z,T)\to(Z/R,T_R)$ such that $[z]_R=[w]_R$ for $(z,w)\in R_\phi$ and let $C_X$ be the set of TDS factors of $X$ (up to conjugacy). We will show that there is a natural one-to-one correspondence between the elements of these sets.

For one direction, let $\pi:(X,F)\to(Y,G)$ be a TDS factor of $(X,F)$ and $\psi=\pi\circ\phi$. Clearly $\psi$ is continuous and $\psi(z)=\psi(w)$ for $(z,w)\in R_\phi$, so to see that $\psi:(Z,T)=(Y,G)$ is in $C_Z$ it remains to show that $G\circ\psi=\psi\circ T$. Let $z\in Z$ and let $(z_i)_{i\in\N}$ be a sequence in $Z'$ that converges to $z$. Then
\[(G(\psi(z))=\lim_{i\to\infty}G(\psi(z_i))=\lim_{i\to\infty}\pi(F(\phi(z_i))))=\lim_{i\to\infty}\psi(T(z_i))=\psi(T(z)).\]

For the other direction, let $\pi_R:(Z,T)\to(Z/R,T_R)$ be from $C_Z$. It is easy to check that there is a unique map $\pi:X\to Z/R$ such that $\pi_R=\pi\circ\phi$. If $U\subset Z/R$ is open, the set $\phi^{-1}(\pi^{-1}(U))=\pi_R^{-1}(U)$ is also open. As a continuous map between compact metric spaces $\phi$ is a quotient map, so $\pi^{-1}(U)$ is open and $\pi$ is continuous. To see that $T_R\circ \pi=\pi\circ F$, let $x\in X$ be arbitrary and let $(U_i)_{i\in\N}$ be a sequence of open sets in $X$ whose intersection equals $F(x)$. Let $V_i$ be the interior of $F^{-1}(U_i)$. By the assumption on $F$ the closure of $V_i$ contains $x$. Let $z_i\in\phi^{-1}(V_i)\cap Z'$ such that $(\phi(z_i))_{i\in\N}$ converges to $x$. Then $(F(\phi(z_i)))_{i\in\N}$ converges to $F(x)$ and
\begin{flalign*}
&\pi(F(x))=\lim_{i\to\infty}\pi(F(\phi(z_i)))=\lim_{i\to\infty}\pi(\phi(T(z_i))) \\
&=\lim_{i\to\infty}\pi_R(T(z_i))=\lim_{i\to\infty}T_R(\pi_R(z_i))=\lim_{i\to\infty}T_R(\pi(\phi(z_i)))=T_R(\pi(x)).
\end{flalign*}s
Thus $\pi_R:(Z,T)\to(Z/R,T_R)$ is a TDS factor of $(X,F)$ and belongs to $C_Z$.

The correspondence between $C_Z$ and $C_X$ yields, starting from $\pi_K:(Z,T)\to(Z/K,T_K)$ with $K$ minimal w.r.t $C_Z$, the factor $\pi:(X,F)\to (Z/K,T_K)$ with $K$ minimal w.r.t $C_X$, i.e. this is a maximal TDS factor of $(X,F)$.
\end{proof}

\subsection{Application to $\beta$-shifts}

In this subsection we apply the results of the previous subsection to $\beta$-shifts and the corresponding multiplication map on the interval and prove the lemmas and theorems leading to the main result, Theorem~\ref{thm:BetaShiftClassificationProof}.

For $x\in\R$ we denote $\fr{x}=x-\left\lfloor x\right\rfloor$, for example $\fr{1.2}=0.2$ and $\fr{1}=0$.

\begin{definition}
For every real number $\beta>1$ we define a dynamical system $(\I,T_\beta)$, where $\I=[0,1]$ and $T_\beta(x)=\fr{\beta x}$ for every $x\in\I$.
\end{definition}

In the following let $\beta>1$ be fixed and denote $T=T_\beta$. Throughout this section $R$ denotes the $T$-invariant equivalence relation on $\I$ such that $\pi_R:(\I,T)\to(\I/R,T_R)$ is a maximal TDS factor.

\begin{lemma}\label{Teq}
Let $R'$ be the smallest equivalence relation on $\I$ which contains $(T^i(1),0)$ for all $i\in\N$, i.e. $0,1,T(1),T^2(1),\dots$ all belong to the same equivalence class, and let $R$ be the topological closure of $R'$ in $\I\times\I$. Then $\pi_R:(\I,T)\to(\I/R,T_R)$ is a maximal TDS factor. Moreover, if $T^i(1)=T^j(1)$ for some $i\neq j$, then $R'=R$.
\end{lemma}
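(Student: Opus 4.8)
The plan is to verify the hypotheses of Theorem~\ref{symbconstruct} with $(X,F) = (\I, T)$ and $(Z,T) = (S_\beta, \sigma)$, using the standard coding map $\phi : S_\beta \to \I$ that sends a sequence to the real number it represents in base $\beta$. The first task is to check the regularity hypothesis on $F = T_\beta$, namely that $T^{-1}(U)$ is the closure of its interior for every open $U \subset \I$. Since $T$ is piecewise linear with slope $\beta$ and only finitely many (or countably many, accumulating only at the right endpoint) breakpoints, the preimage of an open set is a finite (or locally finite) union of intervals together with possibly a few isolated breakpoint preimages; removing those isolated points and re-closing recovers the same set because each piece is a genuine interval. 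I would phrase this cleanly: $T$ restricted to each maximal interval of continuity is an open map, and the set of discontinuity points is nowhere dense, so $T^{-1}(U) \setminus T^{-1}(U)^\circ$ is contained in the (nowhere dense) preimage of the discontinuity set plus its own boundary, hence nowhere dense, which gives the required identity.

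Next I would check that $\phi : S_\beta \to \I$ is a surjective continuous map and that $T(\phi(z)) = \phi(\sigma(z))$ holds on a dense subset $Z' \subset S_\beta$. Continuity and surjectivity of $\phi$ are classical facts about the $\beta$-expansion map. The semiconjugacy $T \circ \phi = \phi \circ \sigma$ holds for every $z \in S_\beta$ \emph{except} at the sequences whose orbit under $\sigma$ hits the ``boundary'' behavior, i.e. essentially the countably many sequences that are tails/shifts of $d^*_\beta(1)$ or more precisely the points where the greedy algorithm's tie-breaking matters; these form a countable, hence meager but we need \emph{dense complement}, set. I would take $Z'$ to be the set of $z \in S_\beta$ such that $\sigma^n(z) \ne$ (the relevant exceptional sequences) for all $n$ — concretely, $z$ whose image $\phi(z)$ never lands exactly on $0$ and such that no shift of $z$ equals $d^*_\beta(1)$'s relevant form — and argue this is dense since its complement is a countable union of nowhere dense cylinder-type sets (or directly: generic $\beta$-expansions of irrationals work, and these are dense).

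Then Theorem~\ref{symbconstruct} applies verbatim: letting $K$ be the minimal closed $\sigma$-invariant equivalence relation containing $R_\phi = \{(z_1,z_2) : \phi(z_1) = \phi(z_2)\}$, the induced map $\pi : \I \to S_\beta/K$ is a maximal TDS factor of $(\I, T)$. It remains to identify this $K$ with the relation $R$ described in Lemma~\ref{Teq}: the equivalence relation generated by the ``collisions'' of $\phi$ is exactly the one identifying the two $\beta$-expansions of each point that has two, and the only points of $\I$ with two $\beta$-expansions are those in the orbit of $1$ together with $0$ (the greedy expansion $d^*_\beta(1)$ versus the ``improper'' one) — so $R_\phi$, pushed through $\phi$, is generated by identifying $0$ with $T^i(1)$ for all $i$, which is precisely $R'$; taking closures and using that a maximal TDS factor is unique up to conjugacy (Theorem~\ref{maximal}) matches $S_\beta/K$ with $\I/R$, and transporting back identifies $R$ on $\I$ as the closure of $R'$. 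Finally, for the ``moreover'' clause: if $T^i(1) = T^j(1)$ with $i \ne j$, then the orbit $\{0, 1, T(1), T^2(1), \dots\}$ is finite, so $R'$ already has a single nontrivial equivalence class which is a finite set, hence closed; therefore $R' = \overline{R'} = R$.

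The main obstacle I expect is pinning down precisely which sequences must be excluded to form the dense set $Z'$ on which $T\phi = \phi\sigma$, and correspondingly giving a clean description of $R_\phi$ — one has to be careful about the exact tie-breaking conventions in the greedy $\beta$-expansion and about whether $\phi$ is genuinely $2$-to-$1$ only on the orbit of $1$ (this is where the quasi-greedy expansion $d^*_\beta(1)$ enters rather than $d_\beta(1)$). Once that bookkeeping is done correctly, the rest is a routine application of the already-established machinery in Theorems~\ref{maximal}, \ref{construct}, and \ref{symbconstruct}.
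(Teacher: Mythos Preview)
Your route through Theorem~\ref{symbconstruct} and the $\beta$-shift is genuinely different from the paper's, which applies Theorem~\ref{construct} directly on $\I$: the only discontinuities of $T$ are at the points $k/\beta$, where the one-sided limits of $T$ are $0$ and $1$, so $R_1 = \Delta_\I \cup \{(0,1),(1,0)\}$; then $R_2 = \Delta_\I \cup \{(0,T^i(1)),(T^i(1),0) : i \in \N\}$, $R_3 = R'$, $R_4 = R$, and since $R_3$ has a single nontrivial equivalence class Theorem~\ref{construct} applies. This is essentially a four-line proof.

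Your approach has a real gap at the ``transport back'' step. Theorem~\ref{symbconstruct} hands you a maximal TDS factor $\pi : \I \to S_\beta/K$, and you must then show $\ker \pi = R$ as a relation on $\I$. Two problems arise. First, your claim that ``the only points of $\I$ with two $\beta$-expansions are those in the orbit of $1$ together with $0$'' is false: by Lemma~\ref{multirep} every point with a finite greedy expansion (equivalently, every $x$ with $T^n(x)=0$ for some $n>0$) has two $\phi$-preimages, and these are dense in $\I$. Second, ``$R_\phi$ pushed through $\phi$'' is literally the diagonal; what you need is the pushforward of $K$ (the closed $\sigma$-invariant hull of $R_\phi$). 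One inclusion is tractable---shifting the $R_\phi$-pair $(10^\infty,\,0\,d^*_\beta(1))$ gives $(0^\infty,\sigma^i(d^*_\beta(1)))\in K$, hence $(0,T^i(1))\in\ker\pi$, so $R\subset\ker\pi$. But the reverse inclusion requires showing that $R=\overline{R'}$ is itself a closed $T$-invariant equivalence relation with $T_R$ continuous, and since $T$ is discontinuous the $T$-invariance of $\overline{R'}$ is not automatic: establishing it is precisely the content of Theorem~\ref{construct}, so your detour through $S_\beta$ has not avoided that work. Your handling of the ``moreover'' clause is correct.
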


\begin{proof}
Let $R_1,R_2,R_3,R_4$ be the relations of Theorem \ref{construct}. The only discontinuities of the map $T$ are at points $k/\beta$ where $k=1,2,\dots,\left\lfloor\beta\right\rfloor$, and near these points the values of $T$ are near $0$ or $1$, so $R_1=\Delta_\I\cup\{(0,1),(1,0)\}$. Then $R_2=\Delta_I\cup\{(0,T^i(1),(T^i(1),0)\mid i\in\N\}$, $R_3=R'$ and $R_4=R$. By Theorem \ref{construct} $\pi_R:(\I,T)\to(\I/R,T_R)$ is a maximal TDS factor. If $T^i(1)=T^j(1)$ for some $i\neq j$, then $R'\setminus\Delta_I$ is a finite set and $R'$ is a closed relation, so $R'=R$.
\end{proof}

\begin{definition}
The $\beta$-expansion of a number $x\in\I$ is the sequence $d_\beta(x)=(x_i)_{i\in\N}$ where $x_i=\left\lfloor\beta T^{i}(x)\right\rfloor$ for $i\in\N$. The digits $x_i$ belong to the alphabet $A=\{0,1,\dots,\left\lfloor\beta\right\rfloor\}$. The set of $\beta$-expansions of numbers from $[0,1)$ is denoted by $D_\beta$, and it is a shift-invariant subset of $A^\N$.
\end{definition}

\begin{definition}
\label{def:BetaShift}
The $\beta$-shift is the TDS $(S_\beta,\sigma)$, where $S_\beta$ is the topological closure of $D_\beta$ in $A^\N$ and $\sigma$ is the usual shift map.
\end{definition}

We define a special expansion for the number $1$. Let $d_\beta(1)=(t_i)_{i\in\N}$. If $d_\beta(1)$ is finite, i.e. $d_\beta(1)=t_0t_1\dots t_m 0^\infty$ for $m\in\N$ such that $t_m\neq 0$, let $d_\beta^*(1)=(t_0t_1\dots (t_m-1))^\infty$. If $d_\beta(1)$ is infinite, let $d_\beta^*(1)=d_\beta(1)$.

\begin{theorem}[{\cite[Theorem 7.2.9]{Lo02}}]
\label{loth}
Let $s\in\N^\N$. The sequence $s$ belongs to $D_\beta$ if and only if for all $p\in\N$
\[\sigma^p(s) < d_\beta^*(1)\]
and $s$ belongs to $S_\beta$ if and only if for all $p\in\N$
\[\sigma^p(s)\leq d_\beta^*(1)\]
(here $\leq$ denotes the usual lexicographic ordering of infinite sequences).
\end{theorem}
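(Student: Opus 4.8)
The statement to be proved is Theorem~\ref{loth} (cited as \cite[Theorem 7.2.9]{Lo02}), which is Parry's classical characterization of $\beta$-expansions via the lexicographic order against $d_\beta^*(1)$.

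\medskip

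\noindent\textbf{Proof proposal.} This is a citation of Parry's theorem, so the natural plan is to reconstruct the standard argument. First I would establish the ``admissibility'' direction for $D_\beta$: if $s = d_\beta(x)$ for some $x \in [0,1)$, then for every $p$ the shifted sequence $\sigma^p(s)$ equals $d_\beta(T^p(x))$, so it suffices to show $d_\beta(y) < d_\beta^*(1)$ for all $y \in [0,1)$. The key is the order-preserving property of $\beta$-expansions: the map $y \mapsto d_\beta(y)$ is strictly increasing from $[0,1)$ into $A^\N$ with the lexicographic order, which follows by induction on coordinates from the greedy construction $y_i = \lfloor \beta T^i(y)\rfloor$ and the fact that $T^i(y) \in [0,1)$ forces $\sum_{j\ge 1}\beta^{-j}y_{i+j} = T^i(y) < 1$. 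Comparing with the expansion of $1$ itself (using the ``starred'' modification to handle the case of a finite expansion $d_\beta(1) = t_0\cdots t_m 0^\infty$, where $d_\beta^*(1) = (t_0\cdots(t_m-1))^\infty$ is the sup of $\{d_\beta(y) : y < 1\}$) gives the strict inequality.

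\medskip

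\noindent For the converse direction, given $s \in A^\N$ with $\sigma^p(s) < d_\beta^*(1)$ for all $p$, I would define $x = \sum_{i\ge 0} s_i \beta^{-i-1}$ and verify that $x \in [0,1)$ and that $d_\beta(x) = s$. That $x < 1$ follows from $s < d_\beta^*(1) \le d_\beta(1)$ together with the fact that $d_\beta(1)$ represents $1$; that $d_\beta(x) = s$ is shown coordinate by coordinate, checking that the greedy algorithm applied to $x$ selects exactly $s_0, s_1, \dots$, which reduces to the inequality $\sum_{j\ge 1}\beta^{-j}s_{p+j} < 1$ for each $p$ — and this is precisely the hypothesis $\sigma^{p+1}(s) < d_\beta^*(1)$ rephrased in terms of real values. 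The $S_\beta$ statement then follows by taking topological closures: $S_\beta = \overline{D_\beta}$, and the set $\{s : \forall p,\ \sigma^p(s) \le d_\beta^*(1)\}$ is exactly the closure of $\{s : \forall p,\ \sigma^p(s) < d_\beta^*(1)\}$, since lexicographic limits of sequences strictly below $d_\beta^*(1)$ are those $\le d_\beta^*(1)$, and conversely any $s$ with all shifts $\le d_\beta^*(1)$ can be approximated by truncating and decrementing.

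\medskip

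\noindent The main obstacle is the bookkeeping around the ``starred'' expansion $d_\beta^*(1)$: one must carefully treat the dichotomy between $d_\beta(1)$ being finite versus infinite, and check that in the finite case the periodic sequence $(t_0\cdots(t_m-1))^\infty$ genuinely serves as the correct lexicographic threshold (in particular that it is itself admissible, i.e.\ all its shifts are $\le$ itself, which is a small combinatorial lemma about the digits $t_i$). Since the paper explicitly cites \cite{Lo02} for this result, in the actual write-up I would simply quote it rather than reproduce the argument; the above is the proof one would give if a self-contained treatment were required.
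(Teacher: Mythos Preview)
The paper does not prove this theorem at all; it is stated with the citation \cite[Theorem 7.2.9]{Lo02} and used as a black box. You correctly recognize this in your final paragraph, and the standard Parry-type argument you sketch is indeed how one would prove it if a self-contained treatment were needed.
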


\begin{definition}
Let $\real:S_\beta\to\I$ map $\real((x_i)_{i\in\N})=\sum_{i=0}^{\infty}x_i\beta^{-(i+1)}$ and let $\real_R=\pi_R\circ\real$.
\end{definition}

\begin{lemma}
\label{multirep}
Let $s=(s_i)_{i\in\N},t=(t_i)_{i\in\N}\in S_\beta$ be two distinct sequences. Then $\real(s)=\real(t)$ if and only if $s$ and $t$ are of the form $a_0\dots a_m0^\infty$ and $a_0\dots(a_m-1)d_\beta^*(1)$ for some $m\in\N$, $a_m\neq 0$.
\end{lemma}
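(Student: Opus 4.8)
The statement is Lemma~\ref{multirep}, characterizing when two distinct sequences in $S_\beta$ represent the same real number under $\real$. The plan is to argue directly using the greedy nature of $\beta$-expansions together with the lexicographic characterization of $S_\beta$ from Theorem~\ref{loth}. The ``if'' direction is an easy computation: if $s = a_0 \cdots a_m 0^\infty$ with $a_m \neq 0$, then $\real(s) = \sum_{i=0}^m a_i \beta^{-(i+1)}$, and replacing $a_m$ by $a_m - 1$ contributes $-\beta^{-(m+1)}$, while appending the tail $d_\beta^*(1)$ starting at position $m+1$ contributes $\beta^{-(m+1)} \cdot \real(d_\beta^*(1))$; so I need the identity $\real(d_\beta^*(1)) = 1$, which I would verify separately (it holds both when $d_\beta(1)$ is finite, using the telescoping $\sum (t-1)\beta^{-1}\cdots = 1$ coming from $\real(d_\beta(1))=1$, and trivially when $d_\beta(1)$ is infinite). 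Then the two contributions cancel and the second sequence also maps to $\real(s)$. I should also check the second sequence genuinely lies in $S_\beta$: this follows from Theorem~\ref{loth} since all its shifts are $\leq d_\beta^*(1)$ — the tail is literally $d_\beta^*(1)$, and the prefix shifts are bounded because $s \in S_\beta$ and lowering a digit only decreases things lexicographically.

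For the ``only if'' direction, suppose $s \neq t$ with $\real(s) = \real(t)$. Let $m$ be the first coordinate where they differ, say WLOG $s_m > t_m$, so $s_m \geq t_m + 1$. Writing $\real(s) - \real(t) = 0$ and splitting off the common prefix $a_0\cdots a_{m-1}$, I get
\[ \beta^{-(m+1)}\Big( (s_m - t_m) + \sum_{i \geq 1} (s_{m+i} - t_{m+i})\beta^{-i} \Big) = 0, \]
so $\sum_{i\geq 1}(t_{m+i} - s_{m+i})\beta^{-i} = s_m - t_m \geq 1$. Since each $t_{m+i} \leq \lfloor \beta \rfloor$ and each $s_{m+i} \geq 0$, the left side is at most $\sum_{i\geq 1}\lfloor\beta\rfloor \beta^{-i} = \lfloor\beta\rfloor/(\beta - 1)$; combined with $\real(\sigma^{m+1}t) \leq \real(d_\beta^*(1)) = 1$ type bounds this forces $s_m - t_m = 1$ and pins down the two tails: $\sigma^{m+1}(s)$ must be $0^\infty$ and $\sigma^{m+1}(t)$ must have real value exactly $1$. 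The key point is then that the \emph{only} sequence in $S_\beta$ with $\real$-value $1$ is $d_\beta^*(1)$ itself — this is essentially the defining maximality property of $d_\beta^*(1)$ in $S_\beta$ (every shift of an element of $S_\beta$ is $\leq d_\beta^*(1)$, and $d_\beta^*(1)$ is the supremum), so I would extract a short lemma: $x \in S_\beta$, $\real(x) = 1 \iff x = d_\beta^*(1)$. Also $s_{m+i} = 0$ for $i \geq 1$ is forced because once $\sum(t_{m+i}-s_{m+i})\beta^{-i}$ is squeezed to equal exactly $1$ with the maximal possible tail on the $t$ side, any positive $s_{m+i}$ would strictly decrease the sum. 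Setting $a_i = s_i = t_i$ for $i < m$ and $a_m = s_m$, we read off exactly the claimed forms, noting $a_m = s_m \geq 1 \neq 0$.

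\textbf{Anticipated obstacle.} The main technical care is in the ``only if'' direction's inequality bookkeeping: showing that $\sum_{i\geq 1}(t_{m+i}-s_{m+i})\beta^{-i} \geq 1$ forces \emph{simultaneously} that $s_m - t_m = 1$, that $\sigma^{m+1}(s) = 0^\infty$, and that $\sigma^{m+1}(t)$ is the unique $S_\beta$-sequence of $\real$-value $1$. The cleanest route is to use $\real(\sigma^{m+1}(s)) \geq 0$ and $\real(\sigma^{m+1}(t)) \leq 1$ (the latter from Theorem~\ref{loth} applied to $\sigma^{m+1}(t) \in S_\beta$, since $\real$ is monotone in the lexicographic order and $\real(d_\beta^*(1)) = 1$), giving $1 \leq s_m - t_m = \real(\sigma^{m+1}(t)) - \real(\sigma^{m+1}(s)) \leq 1 - 0 = 1$, so equality holds throughout; then $\real(\sigma^{m+1}(s)) = 0$ forces $\sigma^{m+1}(s) = 0^\infty$ and $\real(\sigma^{m+1}(t)) = 1$ forces $\sigma^{m+1}(t) = d_\beta^*(1)$ by the uniqueness lemma. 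So the real content is packaged into (i) $\real(d_\beta^*(1)) = 1$ and (ii) uniqueness of the $\real$-preimage of $1$ in $S_\beta$; both are standard facts about $\beta$-expansions that I would prove from Theorem~\ref{loth} and the greedy algorithm, and everything else is routine.
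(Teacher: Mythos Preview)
Your proposal is correct and follows essentially the same approach as the paper's proof: reduce to the first position of disagreement, squeeze the difference to show it equals exactly $1$, deduce the larger tail is $0^\infty$ and the smaller tail has $\real$-value $1$, and then identify that tail as $d_\beta^*(1)$. One small difference worth noting: you package the uniqueness of the $\real$-preimage of $1$ in $S_\beta$ as a separate lemma to be proved from Theorem~\ref{loth}, whereas the paper handles it by a neat bootstrap---if $u \neq d_\beta^*(1)$ but $\real(u) = \real(d_\beta^*(1)) = 1$, then reapplying the very argument just established to the pair $(d_\beta^*(1), u)$ forces $d_\beta^*(1)$ to end in $0^\infty$, contradicting its definition. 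Also, your remark about checking that $a_0\cdots(a_m-1)d_\beta^*(1)$ lies in $S_\beta$ is unnecessary in the ``if'' direction, since membership in $S_\beta$ is part of the hypothesis of the lemma.
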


\begin{proof}
The ``if'' part is evidently true. To prove the other direction, assume that $\real(s)=\real(t)$. By sufficient shifting of $s$ and $t$ we may assume without loss of generality that $s_0>t_0$. Then
\[\beta^{-1}\leq (s_0-t_0)\beta^{-1}\leq (s_0-t_0)\beta^{-1}+\sum_{i\geq 1}s_i\beta^{-(i+1)}=\sum_{i\geq 1}t_i\beta^{-(i+1)}\]
and by defining the sequence $u=(u_i)_{i\in\N}$ by $u_i=t_{i+1}$ we find that $\real(u)=\sum_{i\in\N}t_{i+1}\beta^{-(i+1)}\geq 1$. Since $u$ is an element of $S_\beta$, in fact the equality $\real(u)=1$ must hold and therefore $s_0-t_0=1$ and $s_i=0$ for $i\geq 1$. To finish the proof we need to show that $u=d_\beta^*(1)$. If this were not so, then by Theorem \ref{loth} it must hold that $u<d_\beta^*(1)$. On the other hand, because $\real(u)=\real(d_\beta^*(1))$, we can repeat the argument above to show that the sequence $d_\beta^*(1)$ eventually contains only zeroes, contradicting the definition of $d_\beta^*(1)$.
\end{proof}

\begin{lemma}\label{equivariance}
If $x\in S_\beta$ and $\real(\sigma(x))\neq 1$, then $T(\real(x))=\real(\sigma(x))$.
\end{lemma}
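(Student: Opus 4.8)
The statement is essentially the assertion that the shift on $S_\beta$ intertwines with multiplication by $\beta$ on $\I$ under $\real$, away from the "bad" fiber over $1$, and I would prove it by a direct computation with the defining series of $\real$. The key algebraic fact is that the digits $x_i$ lie in $A = \{0,1,\dots,\lfloor\beta\rfloor\} \subset \Z$, so they disappear when we take fractional parts. First I would record the one-line identity
\[ \beta\,\real(x) = \beta\sum_{i\geq 0} x_i\beta^{-(i+1)} = x_0 + \sum_{i\geq 1} x_i\beta^{-i} = x_0 + \real(\sigma(x)), \]
valid for every $x\in A^\N$, where the rearrangement is justified by absolute convergence of the series.

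From this identity, since $x_0\in\Z$, the fractional parts agree: $T(\real(x)) = \fr{\beta\real(x)} = \fr{x_0 + \real(\sigma(x))} = \fr{\real(\sigma(x))}$. Thus the whole statement reduces to showing that $\fr{\real(\sigma(x))} = \real(\sigma(x))$, i.e. that $\real(\sigma(x)) \in [0,1)$. Here is where the hypotheses enter: $\sigma(x)\in S_\beta$ since $S_\beta$ is shift-invariant, and $\real$ maps $S_\beta$ into $\I = [0,1]$, so $\real(\sigma(x))\in[0,1]$; combining this with the standing assumption $\real(\sigma(x))\neq 1$ gives $\real(\sigma(x))\in[0,1)$, whence $\fr{\real(\sigma(x))} = \real(\sigma(x))$ and the conclusion follows.

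The only point requiring a small argument is the inclusion $\real(S_\beta)\subseteq\I$. I would get this from continuity and density: the partial sums of $\sum_i x_i\beta^{-(i+1)}$ converge uniformly on $A^\N$ (dominated by $\lfloor\beta\rfloor\sum_i\beta^{-(i+1)}$), so $\real$ is continuous on $A^\N$; on $D_\beta$ we have $\real(d_\beta(y)) = y\in[0,1)$ by the standard property of $\beta$-expansions; and $S_\beta = \overline{D_\beta}$, so $\real(S_\beta)\subseteq\overline{[0,1)} = [0,1] = \I$. I do not anticipate any real obstacle here; the whole lemma is a bookkeeping computation, and the only thing one must be careful about is not to conflate $\fr{\real(\sigma(x))}$ with $\real(\sigma(x))$ without invoking the hypothesis $\real(\sigma(x))\neq 1$ — this is precisely the case the lemma is designed to exclude.
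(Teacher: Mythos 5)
Your proof is correct and follows essentially the same route as the paper: the one-line rearrangement $\beta\,\real(x)=x_0+\real(\sigma(x))$, then discarding the integer $x_0$ under the fractional part and using $\real(\sigma(x))\neq 1$ to conclude $\fr{\real(\sigma(x))}=\real(\sigma(x))$. The only difference is that you explicitly justify the side facts $x_0\in\Z$ and $\real(S_\beta)\subseteq[0,1]$ (via continuity and density of $D_\beta$), which the paper leaves implicit.
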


\begin{proof}
Let $x=(x_i)_{i\in\N}$. We see that
\begin{flalign*}
T(\real(x))&=\fr{\beta\sum_{i\in\N} x_i\beta^{-(i+1)}}=\fr{x_0+\sum_{i\in\N} x_{i+1}\beta^{-(i+1)}} \\
&\overset{*}{=}\sum_{i\in\N}x_{i+1}\beta^{-(i+1)}=\real(\sigma(x)),
\end{flalign*}
where the marked equality follows from the assumption that $\sum_{i\in\N}x_{i+1}\beta^{-(i+1)}=\real(\sigma(x))\neq 1$
\end{proof}

\begin{corollary}
\label{dbcorr}
If there is a minimal $t>0$ such that $\sigma^t(d_\beta^*(1))=d_\beta^*(1)$, then
\begin{flalign*}
&T^i(1)=\real(\sigma^i(d_\beta^*(1)))\mbox{ for } i<t \mbox{ and } \\
&T^t(1)=0 \mbox{ and }\real(\sigma^t(d^*(1)))=1.
\end{flalign*}
If such a $t$ does not exist, then $T^i(1)=\real(\sigma^i(d_\beta^*(1)))$ for all $i\in\N$.
\end{corollary}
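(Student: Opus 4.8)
The plan is to combine Lemma~\ref{equivariance} (which computes $T$ through $\real$ away from the value $1$) with an induction on $i$, using Lemma~\ref{multirep} only to rule out the single exceptional value. Write $s = d_\beta^*(1)$. First I would record three elementary facts. (i) $s \in S_\beta$: by Theorem~\ref{loth} this is the assertion $\sigma^p(s) \le s$ for all $p$, which is Parry's admissibility condition and is standard; hence $\sigma^i(s) \in S_\beta$ for all $i$, by shift-invariance. (ii) $\real(s) = 1$: iterating the $\beta$-expansion identity $\beta T^i(1) = t_i + T^{i+1}(1)$ for $d_\beta(1) = (t_i)_{i\in\N}$ gives $1 = \sum_{i<n} t_i\beta^{-(i+1)} + \beta^{-n}T^n(1)$, hence $\sum_{i} t_i\beta^{-(i+1)} = 1$; this is already $\real(s) = 1$ when $d_\beta(1)$ is infinite, and when $d_\beta(1) = t_0\cdots t_m 0^\infty$ it yields $\real(s)\,(1-\beta^{-(m+1)}) = 1-\beta^{-(m+1)}$. (iii) $s$ is not eventually $0^\infty$ (immediate from the definition of $d_\beta^*(1)$ together with (ii)). From (i)--(iii) I would deduce the key statement: for $v \in S_\beta$, $\real(v) = 1$ if and only if $v = s$. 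One direction is (ii); conversely, if $v \ne s$ but $\real(v) = \real(s) = 1$, then Lemma~\ref{multirep} says $\{v,s\} = \{a_0\cdots a_m 0^\infty,\ a_0\cdots(a_m-1)s\}$ for some word with $a_m \ne 0$, and if $s = a_0\cdots a_m 0^\infty$ this contradicts (iii), while otherwise $s = a_0\cdots(a_m-1)s$ is periodic and $v = a_0\cdots a_m 0^\infty > s$ lexicographically, contradicting $v \le s$ (Theorem~\ref{loth}).

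Next I would upgrade Lemma~\ref{equivariance} to the dichotomy: for every $x \in S_\beta$, $T(\real(x)) = \real(\sigma(x))$ when $\real(\sigma(x)) \ne 1$, and $T(\real(x)) = 0$ when $\real(\sigma(x)) = 1$; this is immediate from $\beta\,\real(x) = x_0 + \real(\sigma(x))$, since $x_0 \in \{0,1,\dots,\lfloor\beta\rfloor\}$ and $\real(\sigma(x)) \in [0,1]$. The corollary then follows by induction on $i$, taking $t$ to be the minimal positive period of $s$ when it exists and $t = \infty$ otherwise. The base case is $T^0(1) = 1 = \real(s)$. For the inductive step with $i+1 < t$: minimality of $t$ (resp. non-periodicity) gives $\sigma^{i+1}(s) \ne s$, so by the key statement $\real(\sigma^{i+1}(s)) \ne 1$, and applying the dichotomy to $x = \sigma^i(s) \in S_\beta$ yields $T^{i+1}(1) = T(\real(\sigma^i(s))) = \real(\sigma^{i+1}(s))$. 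Finally, if $t < \infty$ then $\sigma(\sigma^{t-1}(s)) = \sigma^t(s) = s$ has $\real$-value $1$, so the second branch of the dichotomy gives $T^t(1) = T(\real(\sigma^{t-1}(s))) = 0$, while $\real(\sigma^t(s)) = \real(s) = 1$.

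The only point requiring real care --- the main obstacle --- is the uniqueness statement that $s$ is the unique point of $S_\beta$ with $\real$-value $1$, since this is exactly what licenses the appeal to Lemma~\ref{equivariance} at each step of the induction; everything else is bookkeeping. I note that this uniqueness is essentially already established inside the proof of Lemma~\ref{multirep}, so it could be lifted from there rather than reproved.
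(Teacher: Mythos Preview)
Your proof is correct and follows essentially the same route as the paper's: induct via Lemma~\ref{equivariance}, and use Lemma~\ref{multirep} together with the fact that $d_\beta^*(1)$ is never eventually $0^\infty$ to rule out $\real(\sigma^i(d_\beta^*(1)))=1$ for $0<i<t$. The paper's argument is terser: rather than proving the full uniqueness statement ``$\real(v)=1\iff v=d_\beta^*(1)$'', it observes directly that $\sigma^i(d_\beta^*(1))$ is a tail of $d_\beta^*(1)$ and hence not eventually $0^\infty$, so your Case~B (invoking Theorem~\ref{loth} to exclude $v>s$) is not needed here. Your explicit ``dichotomy'' upgrade of Lemma~\ref{equivariance} also makes the $T^t(1)=0$ conclusion cleaner than the paper's somewhat implicit treatment.
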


\begin{proof}
This follows by repeated application of the previous lemma if there does not exist $0<i<t$ such that $\real(\sigma^i(d_\beta^*(1)))=1$. If such an $i$ were to exist, then in any case $\sigma^i(d_\beta^*(1))\neq d_\beta^*(1)$ by the choice of $t$, so by Lemma \ref{multirep} necessarily $\sigma^i(d_\beta^*(1))$ must be the finite representation of $1$. This is impossible, because $d_\beta^*(1)$ is not a finite sequence.
\end{proof}

\begin{definition}
\label{def:BetaKernel}
We denote by $K_\beta\subset S_\beta\times S_\beta$ the minimal closed $\sigma$-invariant equivalence relation containing
\[K'_\beta=\{(x_1,x_2)\in S_\beta\times S_\beta\mid \real(x_1)=\real(x_2)\}.\]
We omit the subscripts $\beta$ when it is clear by the context.\end{definition}

\begin{lemma}
The map $\real_R:(S_\beta,\sigma)\to(\I/R,T_R)$ is a continuous factor map between TDSs with kernel equal to $K_\beta$. In particular
\[K_\beta=\{(x_1,x_2)\in S_\beta\times S_\beta\mid \real_R(x_1)=\real_R(x_2)\}\subset S_\beta\times S_\beta.\]
\end{lemma}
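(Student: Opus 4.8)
The plan is to use the universal property of the maximal TDS factor (Theorem~\ref{maximal}) together with the symbolic construction (Theorem~\ref{symbconstruct}), applied to the $\beta$-shift cover. First I would verify that $T = T_\beta$ satisfies the hypothesis of Theorem~\ref{symbconstruct}: for every open $U \subset \I$, the set $T^{-1}(U)$ is the closure of its interior. This is immediate because $T$ is piecewise linear with finitely many discontinuities, all of the form $k/\beta$, and away from these points $T$ is a local homeomorphism; so $T^{-1}(U)$ differs from an open set by at most finitely many points, and in fact equals the closure of its interior. Next I would take $(Z,T') = (S_\beta, \sigma)$ and $\phi = \real : S_\beta \to \I$. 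By definition $\real$ is continuous and surjective onto $\I = [0,1]$ (it maps $S_\beta$ onto $[0,1]$ since $d_\beta$ gives a section), and by Lemma~\ref{equivariance}, $T(\real(x)) = \real(\sigma(x))$ holds whenever $\real(\sigma(x)) \neq 1$; the set $Z' = \{x \in S_\beta \mid \real(\sigma(x)) \neq 1\}$ is dense in $S_\beta$ (its complement is contained in a proper closed subshift, as $\real(\sigma(x)) = 1$ forces $\sigma(x)$ to be the greedy or a related expansion of $1$, a nowhere dense condition, or more directly one can perturb any point to avoid this). So the hypotheses of Theorem~\ref{symbconstruct} are met.

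Then the content is to identify the relation $K$ produced by Theorem~\ref{symbconstruct} with $K_\beta$ from Definition~\ref{def:BetaKernel}. Theorem~\ref{symbconstruct} says $K$ is the minimal closed $\sigma$-invariant equivalence relation on $S_\beta$ containing $R_{\real} = \{(z_1,z_2) \mid \real(z_1) = \real(z_2)\}$. But $R_{\real}$ is exactly $K'_\beta$ of Definition~\ref{def:BetaKernel}, so $K = K_\beta$ by definition (both are defined as the minimal closed $\sigma$-invariant equivalence relation containing the same set). Consequently, Theorem~\ref{symbconstruct} gives us that the induced map $\pi : (\I, T) \to (S_\beta/K_\beta, \sigma_{K_\beta})$ is a maximal TDS factor of $(\I, T)$, where $\pi$ is the unique map with $\pi_K = \pi \circ \real$, and here $\pi_K = \pi_{K_\beta} : S_\beta \to S_\beta/K_\beta$ is the quotient projection.

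Finally I would connect this to $R$ and $\real_R$. By Lemma~\ref{Teq}, $\pi_R : (\I, T) \to (\I/R, T_R)$ is also a maximal TDS factor of $(\I, T)$, hence by the uniqueness clause of Theorem~\ref{maximal} there is a conjugacy $h : (\I/R, T_R) \to (S_\beta/K_\beta, \sigma_{K_\beta})$ with $h \circ \pi_R = \pi$. Now $\real_R = \pi_R \circ \real$ by definition, so $h \circ \real_R = h \circ \pi_R \circ \real = \pi \circ \real = \pi_{K_\beta}$. Since $h$ is a homeomorphism (in particular injective), for $x_1, x_2 \in S_\beta$ we get $\real_R(x_1) = \real_R(x_2) \iff \pi_{K_\beta}(x_1) = \pi_{K_\beta}(x_2) \iff (x_1, x_2) \in K_\beta$. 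This shows simultaneously that $\real_R$ is a continuous factor map between TDSs (it is a composition of the continuous factor maps $\real$ and $\pi_R$, both of which commute with the respective dynamics — the equivariance $\pi_R \circ T = T_R \circ \pi_R$ holds by construction and $\real$ intertwines the dynamics up to $K_\beta$, which is collapsed) with kernel exactly $K_\beta$, giving the displayed formula.

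The main obstacle I anticipate is the careful verification that $Z'$ is dense and that $\real$ is genuinely surjective and equivariant in the precise sense needed — i.e.\ bookkeeping around the points where $\real(\sigma(x)) = 1$ and the non-uniqueness of $\beta$-expansions (Lemma~\ref{multirep}); one must make sure the ``bad set'' where equivariance fails is topologically negligible so that it does not affect the minimal closed invariant relation. The other subtlety is matching up the two appearances of the ``minimal closed $\sigma$-invariant equivalence relation containing $K'_\beta$'': this is a definitional identity, but one should be explicit that Theorem~\ref{symbconstruct}'s $K$ and Definition~\ref{def:BetaKernel}'s $K_\beta$ are literally the same object, so no further argument is needed there. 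Everything else is a formal diagram chase using the universal property.
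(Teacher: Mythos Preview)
Your proposal is correct and follows essentially the same route as the paper: verify the hypotheses of Theorem~\ref{symbconstruct} for $(\I,T_\beta)$ and the cover $\real:S_\beta\to\I$, identify the relation $K$ produced there with $K_\beta$ (both being the minimal closed $\sigma$-invariant equivalence relation containing $K'_\beta=R_{\real}$), and then use the uniqueness of the maximal TDS factor (via Lemma~\ref{Teq}) to conjugate $S_\beta/K_\beta$ with $\I/R$ and conclude that $\real_R$ has kernel $K_\beta$. The paper's argument is the same diagram chase, only terser; your conjugacy $h$ is the inverse of the paper's $\psi$.
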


\begin{proof}
By Lemma \ref{equivariance} $T(\real(x))=\real(\sigma(x))$ holds for $x$ in a dense subset of $S_\beta$. Since $T$ is a piecewise continuous map on $\I$ it follows that preimages of open sets under $T$ have dense interiors. Let $\pi:(\I,T)\to(S_\beta/K,\sigma_K)$ be the maximal TDS factor given by Theorem \ref{symbconstruct} and let $\psi:(S_\beta/K,\sigma_K)\to(I/R,T_R)$ be the unique conjugacy such that $\pi_R=\psi\circ \pi$. Then
\[\real_R=\pi_R\circ\real=\psi\circ\pi\circ\real=\psi\circ\pi_K\]
is a continuous factor map between TDSs that has the same kernel as $\pi_K$.
\end{proof}

We denote
\[S'=\{x\in S_\beta\mid \real(x)\in\{0\}\cup\cl{\{T^i(1)\mid i\in\N\}}\}\]
and
\[\Delta'=\{(wa0^\infty,w(a-1)d_\beta^*(1)),(w(a-1)d_\beta^*(1),wa0^\infty)\in S_\beta^2\mid w\in A^*,a\in A\setminus\{0\}\}.\]
We think of the set $\Delta'$ as an addition to the diagonal $\Delta_{S_\beta}$ in the sense that it contains those nontrivial pairs that represent the same number.

\begin{theorem}\label{Kchar}
\[K=\Delta_{S_\beta}\cup \Delta'\cup(S'\times S').\]
\end{theorem}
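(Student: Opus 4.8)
The plan is to prove the characterization $K = \Delta_{S_\beta} \cup \Delta' \cup (S' \times S')$ by a two-way inclusion, using the description of $K$ as the minimal closed $\sigma$-invariant equivalence relation containing $K'_\beta = \{(x_1,x_2) \mid \real(x_1) = \real(x_2)\}$.

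First I would establish that the right-hand side, call it $E := \Delta_{S_\beta} \cup \Delta' \cup (S' \times S')$, is a closed $\sigma$-invariant equivalence relation containing $K'_\beta$; this gives $K \subset E$. Containment of $K'_\beta$ follows from Lemma~\ref{multirep}: if $\real(x_1) = \real(x_2)$ with $x_1 \neq x_2$, then the pair is of the special form $wa0^\infty$, $w(a-1)d^*_\beta(1)$, hence in $\Delta'$. Reflexivity and symmetry of $E$ are immediate. For $\sigma$-invariance, note $\Delta'$ is $\sigma$-invariant by inspection (shifting $wa0^\infty$ and $w(a-1)d^*_\beta(1)$ either shortens $w$ or lands on the pair $(0^\infty, d^*_\beta(1))$, which is in $S' \times S'$ since $\real(0^\infty) = 0$ and $\real(d^*_\beta(1)) \in \cl{\{T^i(1)\}}$ by Corollary~\ref{dbcorr}), and $S'$ is $\sigma$-invariant because $\real(\sigma(x)) \in \{1\} \cup T(\real(x))$-type relations preserve membership in $\{0\} \cup \cl{\{T^i(1) \mid i \in \N\}}$ (using Lemma~\ref{equivariance} on a dense set and closedness). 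Transitivity is the combinatorial heart of this direction: a chain through $E$ either stays in $\Delta_{S_\beta}$, or uses $\Delta'$ (which is "almost" a partial matching — each $x \in S_\beta$ has at most one $\Delta'$-partner), or enters $S' \times S'$; the only subtlety is when a $\Delta'$-step connects into $S' \times S'$, and here one checks that any point $\Delta'$-related to a point of $S'$ is itself in $S'$. Closedness of $E$: $\Delta_{S_\beta}$ is closed, $S' \times S'$ is closed since $S'$ is defined by a closed condition ($\real$ is continuous and $\{0\} \cup \cl{\{T^i(1)\}}$ is closed), and the closure of $\Delta'$ adds only limit points, which one computes land in $S' \times S'$ (limits of pairs $w^{(k)}a^{(k)}0^\infty$, $w^{(k)}(a^{(k)}-1)d^*_\beta(1)$ with $|w^{(k)}| \to \infty$ force both coordinates to have $\real$-value in $\cl{\{T^i(1)\}}$).

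For the reverse inclusion $E \subset K$, I must show $\Delta' \subset K$ and $S' \times S' \subset K$. The inclusion $\Delta' \subset K$ is exactly $K'_\beta \subset K$ combined with Lemma~\ref{multirep} (every element of $\Delta'$ has equal $\real$-values). For $S' \times S' \subset K$: it suffices to show that all of $S'$ lies in a single $K$-class. Any $x \in S'$ has $\real(x) \in \{0\} \cup \cl{\{T^i(1) \mid i \in \N\}}$. If $\real(x) = T^i(1)$ for some $i$, then since $K'_\beta$ identifies all points with the same $\real$-value, and we can relate $T^i(1)$ back towards $1$ and then $0$: concretely, the sequence $\sigma^i(d^*_\beta(1))$ has $\real$-value $T^i(1)$ (Corollary~\ref{dbcorr}) and $\sigma$-invariance of $K$ pushes all these into one class together with the class of $\real$-value $0$ — one uses that $\sigma^t(d^*_\beta(1)) = d^*_\beta(1)$ in the periodic case gives $T^t(1) = 0$, and in the aperiodic case the orbit of $d^*_\beta(1)$ under $\sigma$ stays $K$-equivalent and its closure is forced in by closedness of $K$. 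For a general $\real(x)$ a limit of $T^{i_k}(1)$, pick preimages under $\real$ converging appropriately and use closedness of $K$. The point $0 = \real(0^\infty)$ is in this class by definition of $K'_\beta$.

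The main obstacle I anticipate is the transitivity check for $E$ and, dually, the argument that $S'$ forms a single $K$-class — both require careful bookkeeping about how $\Delta'$-edges interact with the block $S' \times S'$, and about the role of whether $d^*_\beta(1)$ is (eventually) periodic. One should handle the periodic and aperiodic cases of $d^*_\beta(1)$ with a unified argument via Corollary~\ref{dbcorr} and closedness, rather than splitting; this keeps the proof short. A secondary technical point is verifying that the topological closure operation in the definition of $K$ (via $R_4$ in Theorem~\ref{construct}, transported through $\real$) does not add anything beyond $S' \times S'$ — this is where the closedness of $E$ that we proved in the first paragraph does the work, since $K$ is the \emph{minimal} such relation.
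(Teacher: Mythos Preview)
Your plan is essentially correct but takes a genuinely different route from the paper. The paper does not verify directly that $E := \Delta_{S_\beta}\cup\Delta'\cup(S'\times S')$ is a closed $\sigma$-invariant equivalence relation. Instead it relies on the lemma immediately preceding Theorem~\ref{Kchar} (proved via Theorem~\ref{symbconstruct}), which identifies $K_\beta$ as the kernel of the continuous factor map $\real_R=\pi_R\circ\real:S_\beta\to\I/R$. Once that is known, the theorem is a two-line computation: by Lemma~\ref{Teq} the relation $R$ on $\I$ collapses exactly the set $\{0\}\cup\cl{\{T^i(1)\}}$ to a point, so $(x,y)\in K$ iff $\real(x)=\real(y)$ or both values lie in that set; the first alternative is $K'_\beta=\Delta_{S_\beta}\cup\Delta'$ by Lemma~\ref{multirep}, the second is $S'\times S'$ by definition. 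What your approach buys is independence from the TDS-ification machinery; what the paper's approach buys is brevity and a conceptual explanation of \emph{why} the answer has this shape.

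Your sketch has a couple of soft spots worth tightening. First, the closure of $\Delta'$: limits of pairs $(w^{(k)}a^{(k)}0^\infty,\,w^{(k)}(a^{(k)}-1)d^*_\beta(1))$ with $|w^{(k)}|\to\infty$ land on the \emph{diagonal} (the two coordinates agree on arbitrarily long prefixes), not in $S'\times S'$ as you write; either way they are in $E$, so the conclusion survives. Second, the $\sigma$-invariance of $S'$ cannot be argued ``by Lemma~\ref{equivariance} on a dense set and closedness'' since $T$ is discontinuous; you need that at each discontinuity of $T$ the one-sided limits are $0$ and $1$, both in $\{0\}\cup\cl{\{T^i(1)\}}$. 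Third, and most importantly, ``$\sigma$-invariance of $K$ pushes all $\sigma^i(d^*_\beta(1))$ into one class'' is not automatic: $\sigma$-invariance says $(x,y)\in K\Rightarrow(\sigma x,\sigma y)\in K$, not that $x$ and $\sigma x$ are $K$-related. The missing step is to first obtain $(0^\infty,d^*_\beta(1))\in K$, e.g.\ from $(10^\infty,\,0\,d^*_\beta(1))\in\Delta'\subset K$ followed by one shift; then $\sigma$-invariance gives $(0^\infty,\sigma^i(d^*_\beta(1)))\in K$ for all $i$, and closedness plus Lemma~\ref{multirep} finishes the $S'\times S'\subset K$ inclusion as you outline.
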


\begin{proof}
By Lemma \ref{Teq} the equivalence relation $R$ just identifies the points in the closure of $\{T^i(1)\mid i\in\N\}$ with $0$. Therefore $K=K'_\beta\cup(S'\times S')$. We claim that $K'_\beta=\Delta_{S_\beta}\cup\Delta'$. To prove this, first note that the case $\real(x)=\real(y)$ occurs at least when $x=y$. If on the other hand the sequences $x,y$ are distinct, then by Lemma \ref{multirep} $\real(x)=\real(y)$ can occur exactly if $x$ and $y$ are of the form $wa0^\infty$ and $w(a-1)d_\beta^*(1)$ for some $w\in A^*$, $a\in A\setminus\{0\}$.
\end{proof}

\begin{lemma}
\label{Kint}
We have $d_\beta^*(1)=b^\infty$ for some $b\in A\setminus\{0\}$ if and only if $\beta$ is an integer. In this case $S'=\{0^\infty,d_\beta^*(1)\}$.
\end{lemma}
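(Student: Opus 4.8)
## Proof Proposal for Lemma \ref{Kint}

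The plan is to prove the two claims separately. For the first claim -- that $d_\beta^*(1) = b^\infty$ for some $b \in A \setminus \{0\}$ if and only if $\beta \in \N$ -- I would argue directly using the defining relation $\real(d_\beta^*(1)) = 1$, which holds because $d_\beta^*(1) \in S_\beta$ and by construction (or by Corollary~\ref{dbcorr} applied at $t$ being the period, using $\real(\sigma^t(d^*_\beta(1))) = 1$). First, suppose $d_\beta^*(1) = b^\infty$. Then $1 = \real(b^\infty) = \sum_{i \geq 0} b \beta^{-(i+1)} = \frac{b}{\beta - 1}$, so $\beta = b + 1 \in \N$. Conversely, suppose $\beta = k \in \N$ with $k \geq 2$. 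Then $d_\beta(1) = (k-1)\, k^{-1} + k^{-1} = \dots$; more carefully, $\lfloor \beta \cdot 1 \rfloor = k$ and $T(1) = \fr{k} = 0$, so $d_\beta(1) = k\, 0^\infty$ is finite with leading digit $t_0 = k$ (here $m = 0$). By the definition of $d_\beta^*(1)$ for finite $d_\beta(1)$, we get $d_\beta^*(1) = (t_0 - 1)^\infty = (k-1)^\infty = b^\infty$ with $b = k - 1 \in A \setminus \{0\}$.

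For the second claim, suppose $\beta \in \N$, so $d_\beta^*(1) = b^\infty$ with $b = \beta - 1$ as above; I must show $S' = \{0^\infty, d_\beta^*(1)\}$. By definition $S' = \{x \in S_\beta \mid \real(x) \in \{0\} \cup \overline{\{T^i(1) \mid i \in \N\}}\}$. Since $T(1) = 0$ (as computed above), we have $\{T^i(1) \mid i \in \N\} = \{0\}$ (for $i \geq 1$) together with... actually $T^0(1) = 1$, but $1 \notin \I = [0,1)$ in the relevant sense -- here one should be careful: $T^0(1)$ is just $1$, and $T^i(1) = 0$ for all $i \geq 1$. So $\overline{\{T^i(1) \mid i \in \N\}} = \{0, 1\}$, hence $\{0\} \cup \overline{\{T^i(1)\}} = \{0,1\}$. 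Thus $S' = \{x \in S_\beta \mid \real(x) \in \{0, 1\}\}$. Now $\real(x) = 0$ forces $x = 0^\infty$ (since all digits are non-negative and $\beta > 1$), and $\real(x) = 1$ with $x \in S_\beta$: I claim this forces $x = d_\beta^*(1) = b^\infty$. Indeed if $\real(x) = 1 = \real(d_\beta^*(1))$ and $x \neq d_\beta^*(1)$, then by Lemma~\ref{multirep} one of $x, d_\beta^*(1)$ is a finite representation $a_0 \cdots a_m 0^\infty$ and the other is $a_0 \cdots (a_m - 1) d_\beta^*(1)$; but $d_\beta^*(1) = b^\infty$ is not finite and is not of the form $a_0 \cdots (a_m-1) d_\beta^*(1)$ with a strictly shorter prefix unless $x = b^\infty$ too. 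A short case analysis (or noting that $b^\infty$ is the unique $\beta$-expansion-type sequence in $S_\beta$ with real value $1$ when $\beta$ is an integer) closes this, giving $S' = \{0^\infty, b^\infty\} = \{0^\infty, d_\beta^*(1)\}$.

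The main obstacle I anticipate is the careful bookkeeping around the point $1$ versus $T^i(1)$ for $i \geq 1$, and making sure the closure $\overline{\{T^i(1) \mid i \in \N\}}$ and the set $S'$ are computed correctly (in particular whether $1$ itself should be counted, and how this interacts with the half-open interval $\I = [0,1]$ as defined -- the excerpt says $\I = [0,1]$). One must also verify using Lemma~\ref{multirep} that no sequence in $S_\beta$ other than $b^\infty$ has real value $1$: the subtlety is that Lemma~\ref{multirep} classifies pairs of distinct sequences with equal real value, so I would apply it with one sequence being $d_\beta^*(1) = b^\infty$ and deduce that any other preimage of $1$ would force $b^\infty$ to be a finite representation (contradiction) or to start with a proper prefix followed by $d_\beta^*(1)$ again, which by uniqueness of the shape forces equality. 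Everything else is a routine geometric-series computation.
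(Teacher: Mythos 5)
Your proof of the first claim is correct and follows a slightly different route from the paper's: you compute $\real(b^\infty)=b/(\beta-1)=1$ directly by geometric series, while the paper observes that $d_\beta^*(1)=b^\infty$ forces $d_\beta(1)=(b+1)0^\omega$, hence $\beta=b+1$; the conclusions are the same and both approaches are clean. Your verification of the converse (an integer $\beta=k$ gives $d_\beta(1)=k0^\infty$ and hence $d_\beta^*(1)=(k-1)^\infty$) matches the paper.

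The second claim has a genuine gap. You apply Lemma~\ref{multirep} to a hypothetical $x\in S_\beta$ with $\real(x)=1$ and $x\neq d_\beta^*(1)$, and you try to derive a contradiction from the structure of the two possible forms alone, saying that $b^\infty$ is ``not of the form $a_0\cdots(a_m-1)d_\beta^*(1)$ with a strictly shorter prefix unless $x=b^\infty$ too.'' This is false: taking $a_0=\cdots=a_{m-1}=b$ and $a_m=b+1$ gives $a_0\cdots(a_m-1)d_\beta^*(1)=b^{m}\cdot b\cdot b^\infty=b^\infty$ for every $m\geq 0$, so $d_\beta^*(1)$ \emph{is} of that form, and the corresponding alternate sequence is $x=b^m(b+1)0^\infty$, which is a legitimate distinct point of $A^\N$ with $\real(x)=1$. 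The structure of Lemma~\ref{multirep} does not by itself rule these out; what rules them out is Theorem~\ref{loth}: since $\sigma^m(b^m(b+1)0^\infty)=(b+1)0^\infty>_{\mathrm{lex}}b^\infty=d_\beta^*(1)$, the sequence $b^m(b+1)0^\infty$ lies outside $S_\beta$. This is exactly the step the paper uses and the step your argument is missing; without it, your ``uniqueness of the shape forces equality'' does not hold and the case analysis does not close.
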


\begin{proof}
If $d_\beta^*(1)=b^\infty$, then $d_{\beta}(1) = (b+1)0^\omega$ so $\beta = b+1$, and vice versa.

Now suppose the base $\beta$ is an integer, so $T^i(1)=0$ for $i>0$ and $S'=\{x\in S_\beta\mid\real(x)\in\{0,1\}\}$. Our claim follows after showing that $x=d_\beta^*(1)$ is the only element of $S_\beta$ for which $\real(x)=1$. Assuming to the contrary, if $y\in S_\beta$ were another such element, then by Lemma \ref{multirep} $y=b^n(b+1)0^\infty$ for some $n\in\N$. But then $y\notin S_\beta$ by Theorem \ref{loth}, a contradiction.
\end{proof}

\begin{lemma}
\label{Ksofic}
If $d_\beta^*(1)=t_0\dots t_{m-1}(t_{m}\dots t_{m+p-1})^\infty$ with a minimal choice of $m,p>0$ (i.e. $d_\beta^*(1)$ is strictly eventually periodic with period $p$), then
\[S'=\{0^\infty\}\cup\{\sigma^i(d_\beta^*(1))\mid 0\leq i< m+p\}.\]
\end{lemma}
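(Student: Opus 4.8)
The plan is to compute the set $S' = \{x \in S_\beta \mid \real(x) \in \{0\} \cup \overline{\{T^i(1) \mid i \in \N\}}\}$ by first identifying the closure of the orbit of $1$ inside the interval, and then lifting these real numbers back to $S_\beta$ via $\real$. Since $d_\beta^*(1) = t_0\cdots t_{m-1}(t_m\cdots t_{m+p-1})^\infty$ is strictly eventually periodic, Corollary~\ref{dbcorr} applies: no minimal $t>0$ with $\sigma^t(d_\beta^*(1)) = d_\beta^*(1)$ exists (the orbit of $d_\beta^*(1)$ under $\sigma$ is the finite set $\{\sigma^i(d_\beta^*(1)) \mid 0 \le i < m+p\}$, none of which equals $d_\beta^*(1)$ except — wait, actually $\sigma^i(d_\beta^*(1)) = d_\beta^*(1)$ would force total periodicity; by strict eventual periodicity this never happens for $i>0$), so $T^i(1) = \real(\sigma^i(d_\beta^*(1)))$ for all $i \in \N$. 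Thus $\{T^i(1) \mid i \in \N\} = \{\real(\sigma^i(d_\beta^*(1))) \mid i \in \N\}$, which is the image of a finite set, hence already closed. Therefore $\overline{\{T^i(1) \mid i \in \N\}} = \{\real(\sigma^i(d_\beta^*(1))) \mid 0 \le i < m+p\}$, and the set of relevant real numbers is $\{0\} \cup \{\real(\sigma^i(d_\beta^*(1))) \mid 0 \le i < m+p\}$.

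Next I would lift these to $S_\beta$. The number $0$ has $\real$-preimage $\{0^\infty\}$: indeed if $\real(x) = 0$ with $x \ne 0^\infty$, Lemma~\ref{multirep} would force $x = a_0\cdots a_m 0^\infty$ and $0^\infty = a_0\cdots(a_m-1)d_\beta^*(1)$ simultaneously, which is absurd since $d_\beta^*(1) \ne 0^\infty$ (a $\beta$-expansion of $1$ cannot be eventually zero by the definition of $d^*_\beta$). For each $i$ with $0 \le i < m+p$, the sequence $\sigma^i(d_\beta^*(1))$ is itself in $S_\beta$ (by Theorem~\ref{loth}, since it satisfies $\sigma^p(\cdot) \le d_\beta^*(1)$ for all $p$) and maps to $T^i(1)$ under $\real$. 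The point to check is whether there could be a \emph{second} element of $S_\beta$ mapping to the same $T^i(1)$. By Lemma~\ref{multirep}, a second preimage exists only if $\sigma^i(d_\beta^*(1))$ is of the form $a_0\cdots a_k 0^\infty$ (finite representation) — but $d_\beta^*(1)$, being strictly eventually periodic, has no tail equal to $0^\infty$, so $\sigma^i(d_\beta^*(1))$ is never eventually zero, and hence it is the \emph{unique} preimage of $T^i(1)$. (If on the other hand $\sigma^i(d_\beta^*(1))$ were the \emph{other} shape $a_0\cdots (a_k-1) d_\beta^*(1)$ in such a pair, then the first element $a_0\cdots a_k 0^\infty$ would be a finite sequence in $S_\beta$ also mapping to $T^i(1)$; but again we would need $\sigma^i(d^*_\beta(1))$'s partner to be eventually zero, contradiction — so no finite-representation partner arises.) Hence each $T^i(1)$ has exactly one $\real$-preimage in $S_\beta$, namely $\sigma^i(d_\beta^*(1))$.

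Putting this together, $S' = \real^{-1}(\{0\} \cup \overline{\{T^i(1) \mid i \in \N\}}) = \{0^\infty\} \cup \{\sigma^i(d_\beta^*(1)) \mid 0 \le i < m+p\}$, as claimed. The only real subtlety — and the step I would be most careful about — is the uniqueness-of-preimage argument via Lemma~\ref{multirep}: one must correctly invoke that $d_\beta^*(1)$ is not eventually zero (which holds precisely because in the strictly-eventually-periodic case $d^*_\beta$ is either the genuinely infinite $d_\beta(1)$ or the purely periodic modification of a finite one, and in this lemma's hypothesis it is strictly eventually periodic with $p > 0$, so it cannot be eventually $0^\infty$), and that therefore none of its shifts $\sigma^i(d_\beta^*(1))$ can be the "finite representation" member of a multirep pair. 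Everything else is bookkeeping with Corollary~\ref{dbcorr} and Theorem~\ref{loth}.
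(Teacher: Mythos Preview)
Your overall strategy matches the paper's: compute the finite orbit closure of $1$ via Corollary~\ref{dbcorr}, then use Lemma~\ref{multirep} to show each $T^i(1)$ has a unique $\real$-preimage in $S_\beta$. The first part and the treatment of $\real^{-1}(0)$ are fine.

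However, your uniqueness argument has a genuine gap in the second case of Lemma~\ref{multirep}. You correctly rule out $\sigma^i(d_\beta^*(1)) = a_0\cdots a_k 0^\infty$ (since $d_\beta^*(1)$ is not eventually zero). But in the remaining case, where $\sigma^i(d_\beta^*(1)) = a_0\cdots(a_k-1)\, d_\beta^*(1)$ and the putative second preimage is $y = a_0\cdots a_k 0^\infty$, your parenthetical concludes ``we would need $\sigma^i(d_\beta^*(1))$'s partner to be eventually zero, contradiction'' --- but the partner $y$ \emph{is} eventually zero by construction, so nothing contradictory has been derived. This case is not closed; you have only restated its hypothesis.

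The paper's fix is short and is the missing idea: if $\sigma^i(d_\beta^*(1)) = a_0\cdots(a_k-1)\, d_\beta^*(1)$, then $d_\beta^*(1)$ occurs as a \emph{proper} suffix of $\sigma^i(d_\beta^*(1))$, hence $\sigma^{i+k+1}(d_\beta^*(1)) = d_\beta^*(1)$ with $i+k+1 > 0$. This forces $d_\beta^*(1)$ to be totally periodic, contradicting the hypothesis of strict eventual periodicity. With this argument inserted, your proof is complete and coincides with the paper's.
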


\begin{proof}
By Corollary \ref{dbcorr} $\cl{\{T^i(1)\mid i\in\N\}}=\{T^i(1)\mid 0\leq i<m+p\}$, so $S'=\{x\in S_\beta\mid \real(x)\in\{0\}\cup\{T^i(1)\mid 0\leq i<m+p\}\}$. Our claim follows after showing that $x=\sigma^i(d_\beta^*(1))$ is the only element of $S_\beta$ for which $\real(x)=T^i(1)$ whenever $0\leq i<m+p$. In any case by Corollary \ref{dbcorr} $x$ is one such element. Assuming to the contrary, if $y\in S_\beta$ were another such element, then by Lemma \ref{multirep} the infnite sequence $x=\sigma^i(d_\beta^*(1))$ has $d_\beta^*(1)$ as some proper suffix. But then $d_\beta^*(1)$ would also have $d_\beta^*(1)$ as a proper suffix, contradicting the assumption that $d_\beta^*(1)$ is not totally periodic.
\end{proof}

\begin{lemma}
\label{Ksft}
If $d_\beta^*(1)=(t_0\dots t_{p-1})^\infty$ with a minimal choice of $p>1$ (i.e. $d_\beta^*(1)$ is totally periodic and non-constant), then
\[S'=\{0^\infty\}\cup\{\sigma^i(d_\beta^*(1))\mid 0\leq i<p\}\cup\{\sigma^i(d_\beta(1))\mid 0<i<p\}.\]
\end{lemma}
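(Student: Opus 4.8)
The plan is to mirror the proof of Lemma~\ref{Ksofic}, with one extra subtlety coming from the fact that when $d_\beta^*(1)$ is totally periodic, the number $1$ itself lies in the orbit $\{T^i(1)\mid 0\le i<p\}$ (indeed $T^p(1)=1$), so the \emph{finite} expansion $d_\beta(1)=(t_0\cdots t_{p-1})$ with its last digit incremented, i.e.\ $d_\beta(1)=(t_0\cdots t_{p-2}(t_{p-1}+1))0^\infty$ (using that $d_\beta^*(1)$ was obtained from a finite $d_\beta(1)$ by decrementing the last digit of the period), contributes additional sequences in $S_\beta$ representing the points $T^i(1)$ for $0<i<p$. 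First I would invoke Corollary~\ref{dbcorr}: since $\sigma^p(d_\beta^*(1))=d_\beta^*(1)$ with $t$ minimal equal to $p$, we get $T^i(1)=\real(\sigma^i(d_\beta^*(1)))$ for $0\le i<p$ and $T^p(1)=0$, so $\cl{\{T^i(1)\mid i\in\N\}}=\{T^i(1)\mid 0\le i<p\}$ is finite and hence $S'=\{x\in S_\beta\mid \real(x)\in\{0\}\cup\{T^i(1)\mid 0\le i<p\}\}$.

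The core of the argument is then to determine, for each $0\le i<p$, exactly which $x\in S_\beta$ satisfy $\real(x)=T^i(1)$. For $i=0$ this is the question of which $x\in S_\beta$ have $\real(x)=1$; by Lemma~\ref{multirep} any such $x$ other than a finite expansion must be $a_0\cdots a_{m-1}(a_m-1)d_\beta^*(1)$ with $\real(a_0\cdots a_m0^\infty)=1$, and since $\real(\sigma^{m+1}(\text{that}))=\real(d_\beta^*(1))=1$ one checks as in Lemma~\ref{Ksofic} that the only possibilities are $x=d_\beta^*(1)$ itself and $x=d_\beta(1)=(t_0\cdots t_{p-2}(t_{p-1}+1))0^\infty$; the latter genuinely lies in $S_\beta$ precisely because $d_\beta^*(1)=(t_0\cdots t_{p-1})^\infty$ is the sup-condition and incrementing the last digit and padding with zeros keeps every shift lexicographically $\le d_\beta^*(1)$ — this is the one place totally periodicity (as opposed to strict eventual periodicity) is used, and where the extra term $\{\sigma^i(d_\beta(1))\mid 0<i<p\}$ comes from. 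For general $0<i<p$ I would apply $\sigma^i$: since $\sigma^i(d_\beta(1))=(t_i\cdots t_{p-2}(t_{p-1}+1))0^\infty\in S_\beta$ and $\sigma^i(d_\beta^*(1))\in S_\beta$ both represent $T^i(1)$, and by Lemma~\ref{multirep} any third representative would force $d_\beta^*(1)$ to have $d_\beta^*(1)$ as a proper suffix (contradicting minimality of $p$ as in Lemma~\ref{Ksofic}) or would again be the finite expansion $\sigma^i(d_\beta(1))$, we conclude the list is complete.

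Finally I would assemble: $x\in S'$ with $\real(x)=0$ forces $x=0^\infty$ by Lemma~\ref{multirep} (no nontrivial pair represents $0$ except via a finite expansion of $0$, which is just $0^\infty$); $\real(x)\in\{T^i(1)\mid 0\le i<p\}$ gives $x\in\{\sigma^i(d_\beta^*(1))\mid 0\le i<p\}\cup\{\sigma^i(d_\beta(1))\mid 0<i<p\}$ by the previous paragraph (note $i=0$ contributes $d_\beta^*(1)$ and $d_\beta(1)$, the latter of which I absorb by writing $0<i<p$ for the incremented-finite family and noting $d_\beta(1)=\sigma^0(d_\beta(1))$ is \emph{not} separately needed since $\real(d_\beta(1))=1=T^p(1)=T^0(1)$ — here I must be careful about the index bookkeeping, which is the main fiddly point); conversely each listed sequence is checked to lie in $S_\beta$ via Theorem~\ref{loth} and to land in $S'$. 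The main obstacle I anticipate is purely the index/edge-case bookkeeping around $i=0$ versus $i=p$ and verifying via Theorem~\ref{loth} that $\sigma^i(d_\beta(1))\in S_\beta$ for all relevant $i$, i.e.\ that incrementing the final digit of a length-$p$ period and appending zeros does not create a shift exceeding $d_\beta^*(1)$; this is a short lexicographic computation using that $(t_0\cdots t_{p-1})^\infty$ dominates all its own shifts.
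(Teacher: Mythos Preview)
Your overall structure is right, but there is a genuine error in the $i=0$ case that then distorts your explanation of why the second family is indexed by $0<i<p$.

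You claim that $d_\beta(1)=t_0\cdots t_{p-2}(t_{p-1}+1)0^\infty$ lies in $S_\beta$. It does not: comparing with $d_\beta^*(1)=(t_0\cdots t_{p-1})^\infty$, the first $p-1$ digits agree and then $t_{p-1}+1>t_{p-1}$, so $d_\beta(1)>_{\mathrm{lex}}d_\beta^*(1)$ and Theorem~\ref{loth} fails already at the $0$th shift. The paper handles this correctly: $d_\beta^*(1)$ is the \emph{unique} element of $S_\beta$ with $\real(x)=1$, precisely because the alternative finite representation from Lemma~\ref{multirep} is lexicographically too large. This is the real reason $i=0$ is excluded from the $d_\beta(1)$-family, not the ``index bookkeeping'' you describe.

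Two smaller points. First, your appeal to the argument ``as in Lemma~\ref{Ksofic}'' (a third representative would force $d_\beta^*(1)$ to be a proper suffix of itself, contradiction) does not work here: $d_\beta^*(1)$ is totally periodic, so it \emph{is} a proper suffix of itself. Fortunately this step is unnecessary, since Lemma~\ref{multirep} already says a real number has at most two representations in $S_\beta$; once you exhibit both $\sigma^i(d_\beta^*(1))$ and $\sigma^i(d_\beta(1))$ you are done. Second, for showing $\sigma^i(d_\beta(1))\in S_\beta$ when $0<i<p$, the paper avoids your proposed lexicographic computation by observing that $T^{p-i}(T^i(1))=T^p(1)=0$, so the greedy expansion $d_\beta(T^i(1))$ is finite of length $\le p-i$; since greedy expansions of numbers in $[0,1)$ lie in $D_\beta\subset S_\beta$ automatically, and Lemma~\ref{multirep} then identifies this finite expansion as $\sigma^i(d_\beta(1))$, membership in $S_\beta$ comes for free.
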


\begin{proof}
By Corollary \ref{dbcorr} $\cl{\{T^i(1)\mid i\in\N\}}=\{T^i(1)\mid 0\leq i<p\}\cup\{0\}$, so $S'=\{x\in S_\beta\mid \real(x)\in\{0\}\cup\{T^i(1)\mid 0\leq i<p\}\}$. First note that $x=d_\beta^*(1)$ is the only element of $S_\beta$ for which $\real(x)=1$, because by Lemma \ref{multirep} the other possible representatives of $1$ would be greater in the lexicographic ordering, contradicting Theorem \ref{loth}. Now our claim follows after showing that $x=\sigma^i(d_\beta^*(1))$ and $x'=\sigma^i(d_\beta(1))$ are precisely the elements for which $\real(x)=\real(x')=T^i(1)$ whenever $0<i<p$. In any case by Corollary \ref{dbcorr} $x$ is one such element and $T^{p-i}(\real(x))=T^p(1)=0$. Since $\real(x)<1$, the sequence $d_\beta(\real(x))=(s_j)_{j\in\N}\in D_\beta$ is another representative of $\real(x)$ and it satisfies $s_j=0$ for $j\geq p-i$. By Lemma \ref{multirep} $d_\beta(\real(x))$ is of the form $t_i\dots(t_{p-1}+1)0^\infty=\sigma^i(d_\beta(1)))=x'$.
\end{proof}

The following lemma shows the general form of $S'$. Let us call $\Delta'$ the \emph{companion relation} and say $(x, y) \in \Delta' \iff x \mbox{ is a companion of } y$. Then the following says precisely that whenever $d^*_\beta(1)$ is not totally periodic, $S'$ consists of $0^\omega$ and the companions of points in the orbit closure of $d_\beta^*(1)$.

\begin{lemma}
\label{Kgen}
If $d_\beta^*(1)$ is not totally periodic, then
\begin{flalign*}
S'=&\{0^\infty\}\cup \orbc{d_\beta^*(1)} \\
\cup&\{wa0^\infty\mbox{ and }w(a-1)d_\beta^*(1) \\
&\mid wa0^\infty\in \orbc{d_\beta^*(1)}\mbox{ or }w(a-1)d_\beta^*(1)\in \orbc{d_\beta^*(1)}, w\in A^*, a\in A\setminus\{0\}\}.
\end{flalign*}
\end{lemma}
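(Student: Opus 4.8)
The plan is to follow the same pattern as the proofs of Lemmas~\ref{Ksofic} and \ref{Ksft}, but now without the periodicity assumption, so that the orbit closure $\orbc{d_\beta^*(1)}$ can be infinite and we must argue about it topologically rather than enumerate it. As always, $S' = \{x \in S_\beta \mid \real(x) \in \{0\} \cup \cl{\{T^i(1) \mid i \in \N\}}\}$ by definition, and by Lemma~\ref{Teq} (and the fact that $d_\beta^*(1)$ is not totally periodic, so $T^i(1) \neq T^j(1)$ for $i \neq j$, hence Corollary~\ref{dbcorr} applies in its second case) we have $T^i(1) = \real(\sigma^i(d_\beta^*(1)))$ for all $i \in \N$. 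First I would establish $\cl{\{T^i(1) \mid i \in \N\}} = \real(\orbc{d_\beta^*(1)})$: the inclusion ``$\subseteq$'' is immediate from continuity of $\real$ and $T^i(1) = \real(\sigma^i(d_\beta^*(1)))$; for ``$\supseteq$'', take $x \in \orbc{d_\beta^*(1)}$, write $x = \lim \sigma^{i_k}(d_\beta^*(1))$, and use continuity of $\real$ to get $\real(x) = \lim T^{i_k}(1)$, which lies in the closure. So $S' = \{x \in S_\beta \mid \real(x) \in \{0\} \cup \real(\orbc{d_\beta^*(1)})\}$.

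It then remains to identify, for each target value $v = \real(x_0)$ with $x_0 \in \orbc{d_\beta^*(1)}$, exactly which elements of $S_\beta$ map to $v$ under $\real$. By Lemma~\ref{multirep}, the fiber $\real^{-1}(v)$ in $S_\beta$ has at most two elements, and if it has two they are of the form $wa0^\infty$ and $w(a-1)d_\beta^*(1)$ for some $w \in A^*$, $a \in A \setminus \{0\}$. Thus every point of $\real^{-1}(v)$ is either $x_0$ itself or a companion (in the sense of $\Delta'$) of some point with the same $\real$-value; since $x_0 \in \orbc{d_\beta^*(1)}$ and companionship is symmetric, every element of $S'$ is either $0^\infty$, or in $\orbc{d_\beta^*(1)}$, or of the form $wa0^\infty$ or $w(a-1)d_\beta^*(1)$ with the companion (the ``other'' representative) lying in $\orbc{d_\beta^*(1)}$. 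Conversely, for $0^\infty$ clearly $\real(0^\infty) = 0 \in S'$; for $x \in \orbc{d_\beta^*(1)}$ we have $\real(x) \in \real(\orbc{d_\beta^*(1)})$ so $x \in S'$; and for a companion pair $wa0^\infty$, $w(a-1)d_\beta^*(1)$ with one of the two in $\orbc{d_\beta^*(1)}$, both have the same $\real$-value (by Lemma~\ref{multirep}), which is then in $\real(\orbc{d_\beta^*(1)})$, so both lie in $S'$. This gives exactly the claimed description.

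The one point requiring a little care — and the main (mild) obstacle — is ruling out a third representative or a pathological companion: I need to be sure that when $w(a-1)d_\beta^*(1) \in S_\beta$, this is a legitimate element (which follows from Theorem~\ref{loth} since $d_\beta^*(1)$ is a suffix and every shift of it is $\leq d_\beta^*(1)$, and one checks the finitely many extra positions coming from $w(a-1)$), and symmetrically that $wa0^\infty$ is legitimate. I also need the observation, used implicitly above, that $wa0^\infty = x_0$ is impossible when $x_0$ lies in $\orbc{d_\beta^*(1)}$ \emph{and} $x_0$ has a nontrivial companion — but this is not actually needed for the set-equality, since the statement groups $\orbc{d_\beta^*(1)}$ together with the companion pairs via a disjunction anyway. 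I would close by noting that no further identifications arise because $\real^{-1}$ of any single value has been completely accounted for, so the displayed union is exactly $S'$.
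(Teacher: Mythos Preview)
Your overall approach matches the paper's: identify $\cl{\{T^i(1)\}}$ with $\real(\orbc{d_\beta^*(1)})$ via Corollary~\ref{dbcorr} and continuity/compactness, then use Lemma~\ref{multirep} to describe fibers of $\real$. Two points need correction.

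First, a small one: your parenthetical ``so $T^i(1)\neq T^j(1)$ for $i\neq j$'' is false when $d_\beta^*(1)$ is strictly eventually periodic. Fortunately you do not need it: the second case of Corollary~\ref{dbcorr} applies directly from the hypothesis that $d_\beta^*(1)$ is not \emph{totally} periodic, since that is exactly the statement that no $t>0$ satisfies $\sigma^t(d_\beta^*(1))=d_\beta^*(1)$.

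Second, and this is the real gap: the two $S_\beta$-membership checks are \emph{not} symmetric. Showing $w(a-1)d_\beta^*(1)\in S_\beta$ from $wa0^\infty\in S_\beta$ is easy, since the former is lexicographically smaller at every shift that sees the difference. But the converse---showing $wa0^\infty\in S_\beta$ from $x=w(a-1)d_\beta^*(1)\in\orbc{d_\beta^*(1)}$---requires work, because $wa0^\infty$ is \emph{larger} than $x$. The paper handles this indirectly: it first shows $\real(x)<1$ (if $\real(x)=1$ then, since $d_\beta^*(1)$ is not totally periodic, Lemma~\ref{multirep} forces $x=d_\beta^*(1)$, but then $d_\beta^*(1)$ would be a proper suffix of itself, contradicting aperiodicity), then argues that the greedy expansion $d_\beta(\real(x))$ is eventually zero and hence, by Lemma~\ref{multirep}, equals $wa0^\infty$; membership in $S_\beta$ is then automatic since $d_\beta(\real(x))\in D_\beta$. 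Alternatively one can argue directly by lexicographic comparison, but at each potentially bad shift one must invoke not-total-periodicity to rule out $\sigma^k(d_\beta^*(1))=d_\beta^*(1)$. Either way, ``symmetrically'' does not suffice; this step genuinely uses the hypothesis of the lemma.
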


\begin{proof}
First we check that the right hand side of the equation is a subset of $S_\beta$. Clearly $\{0^\infty\}\cup \orbc{d_\beta^*(1)}\subset S_\beta$, so we concentrate on the last set in the union. First, if $wa0^\infty\in \orbc{d_\beta^*(1)}$, then $w(a-1)d_\beta^*(1)\in S_\beta$ by Theorem \ref{loth}. Then it remains to prove that if $x=w(a-1)d_\beta^*(1)\in \orbc{d_\beta^*(1)}$ then $x'=wa0^\infty\in S_\beta$. By Corollary \ref{dbcorr} $T^{\abs{w}+1}(x)=0$. Since $d_\beta^*(1)$ is not totally periodic, it follows by applying Lemma \ref{multirep} that $\real(x)<1$. Since $\real(x)<1$, the sequence $d_\beta(\real(x))=(s_i)_{i\in\N}$ is another representative of $\real(x)$ and it satisfies $s_i=0$ for $i\geq \abs{w}+1$. By Lemma \ref{multirep} $d_\beta(\real(x))\in S_\beta$ is of the form $wa0^\infty=x'$.

To prove the equality of the two given sets, we first prove the inclusion from left to right. Assume that $x\in S'$. If $\real(x)=0$ then $x=0^\infty$. Otherwise there is a sequence $(t_i)_{i\in\N}\in \N^\N$ such that $(T^{t_i}(1))_{i\in\N}$ tends to $\real(x)$, and by Corollary \ref{dbcorr} $(T^{t_i}(1))_{i\in\N}=(\real(\sigma^{t_i}(d_\beta^*(1))))_{i\in\N}$. By passing to a subsequence we may assume that $(\sigma^{t_i}(d_\beta^*(1)))_{i\in\N}$ converges to an element $x'\in\orbc{d_\beta^*(1)}$, and by continuity of the map $\real$ it follows that $\real(x)=\real(x')$. If $x=x'$, we are done. Otherwise by Lemma \ref{multirep} $x$ and $x'$ are of the form $wa0^\infty$ and $w(a-1)d_\beta^*(1)$ and we are done.

Finally we prove the inclusion from right to left. If $x=0^\infty$, then $\real(x)=0$ and $x\in S'$. If $x\in\orbc{d_\beta^*(1)}$, then there is a sequence $(t_i)_{i\in\N}\in \N^\N$ such that $(\sigma^{t_i}(d_\beta^*(1)))_{i\in\N}$ tends to $x$. By Corollary \ref{dbcorr} $(T^{t_i}(1))_{i\in\N}=(\real(\sigma^{t_i}(d_\beta^*(1))))_{i\in\N}$, and by continuity of the map $\real$ it follows that this sequence converges to $\real(x)$ which is therefore an element of $\cl{\{T^i(1)\mid i\in\N\}}$ and thus $x\in S'$. If $x\in\{wa0^\infty,w(a-1)d_\beta^*(1)\}$ and either $wa0^\infty\in \orbc{d_\beta^*(1)}$ or $w(a-1)d_\beta^*(1)\in \orbc{d_\beta^*(1)}$, then one of these sequences is in $S'$ and thus both of them are in $S'$.
\end{proof}

\begin{theorem}[{\cite[Theorems 7.2.13, 7.2.15]{Lo02}}]
\label{thm:LothaireSFTSofic}
$S_\beta$ is an SFT if and only if $d_\beta(1)$ is finite (or equivalently $d_\beta^*(1)$ is totally periodic). $S_\beta$ is a sofic shift if and only if $d_\beta(1)$ is strictly eventually periodic (or equivalently $d_\beta^*(1)$ is strictly eventually periodic).
\end{theorem}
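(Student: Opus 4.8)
The plan is to work entirely from the lexicographic description of $S_\beta$ in Theorem~\ref{loth} and to reduce both equivalences to a description of the follower sets of the language $L(S_\beta)$; throughout write $d^*$ for $d_\beta^*(1)$. The first step is the finite-word form of Theorem~\ref{loth}: since a legal word stays legal after appending zeros, $w=w_0\cdots w_{k-1}\in L(S_\beta)$ if and only if $w_{[p,k-1]}\le d^*_{[0,k-1-p]}$ (lexicographically, between words of equal length) for every $0\le p<k$. From this I read off the follower set $F(w)=\{v: wv\in L(S_\beta)\}$: letting $J(w)$ be the set of $\ell\in[1,|w|]$ such that the prefix $d^*_{[0,\ell-1]}$ of $d^*$ is a suffix of $w$, one gets $wv\in L(S_\beta)\iff v\in L(S_\beta)\text{ and }v\preceq\sigma^\ell(d^*)\text{ for all }\ell\in J(w)$, where $v\preceq u$ means that $v$ agrees with $u$ at every position or is lexicographically smaller at the first disagreement. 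Thus $F(w)$ depends on $w$ only through the lexicographically least tail $m(w):=\min\{\sigma^\ell(d^*):\ell\in J(w)\}$ (and $F(w)=L(S_\beta)$ when $J(w)=\emptyset$), so $S_\beta$ has at most $1+|\{\sigma^\ell(d^*):\ell\ge1\}|$ distinct follower sets.

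The ``if'' directions are now short. If $d^*$ is eventually periodic then $\{\sigma^\ell(d^*):\ell\ge1\}$ is finite, hence $L(S_\beta)$ has finitely many follower sets, hence is regular, so $S_\beta$ is sofic by the standard follower-set criterion (see \cite{LiMa95}). If $d^*$ is totally periodic, $d^*=(t_0\cdots t_{\ell-1})^\infty$, I would instead argue directly that $s\in S_\beta$ iff every factor of $s$ of length $\ell$ is $\le t_0\cdots t_{\ell-1}$ lexicographically: the nontrivial implication uses that if $\sigma^p(s)\succ d^*$ with first mismatch at index $j\ge\ell$, then periodicity moves this to a mismatch at index $j-\ell$ for $\sigma^{p+\ell}(s)$, so one reduces to $j<\ell$, which says precisely that some length-$\ell$ factor of $s$ exceeds $t_0\cdots t_{\ell-1}$. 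This presents $S_\beta$ as an SFT.

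For the converses I would argue the contrapositives, and this is the substantial part. Both rely on the basic fact that $d^*\in S_\beta$ — so $\sigma^\ell(d^*)\preceq d^*$ for all $\ell$ and every factor of $d^*$ is a legal word — together with the remark that two words whose least tails $u\ne u'$ first differ at position $M$ with, say, $u_M<u'_M$ have distinct follower sets, because $v:=u'_{[0,M]}$ is a factor of $d^*$, hence legal, and lies in the follower set with tail $u'$ but not in the one with tail $u$. Granting this, ``$S_\beta$ sofic $\Rightarrow d^*$ eventually periodic'' becomes: if $d^*$ is not eventually periodic then the tails $\sigma^\ell(d^*)$ are pairwise distinct and unboundedly many of them are realized as some $m(w)$, giving infinitely many follower sets; and ``$S_\beta$ SFT $\Rightarrow d^*$ totally periodic'' becomes: the minimal forbidden words of $S_\beta$ are exactly those words $d^*_{[0,j-1]}a$ with $a>d^*_j$ that happen to be minimal, and one has to show these have unbounded length unless $d^*$ is totally periodic (note that $d^*_j<\lfloor\beta\rfloor$ for infinitely many $j$, else admissibility forces $d^*=\lfloor\beta\rfloor^\infty$). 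The main obstacle I anticipate is exactly this last point: ruling out the configuration in which every prefix $d^*_{[0,n-1]}$ of an aperiodic $d^*$ has a short border that keeps the realized least tail (resp.\ the minimal forbidden words) bounded. This is a combinatorics-on-words statement that I would attack via the Fine--Wilf theorem applied to the nested periods of the prefixes $d^*_{[0,n-1]}$ that such short borders impose, concluding that $d^*$ is eventually periodic and contradicting aperiodicity. (This is, in essence, Parry's analysis of $\beta$-expansions, which is the path taken in \cite{Lo02}.)
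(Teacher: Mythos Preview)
The paper does not prove this theorem; it is simply quoted from \cite{Lo02} and used as a black box. So there is no ``paper's own proof'' to compare against, only the Lothaire reference.

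Your ``if'' directions are correct and essentially complete: the follower-set count gives soficity when $d^*$ is eventually periodic, and the reduction of a lexicographic violation to one with mismatch index $<\ell$ gives the SFT description when $d^*$ has period $\ell$.

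For ``sofic $\Rightarrow$ eventually periodic'' you are making life harder than necessary. You reduce to showing that the values $m(w_n)$ are infinitely many when $d^*$ is aperiodic, and then reach for Fine--Wilf to rule out persistent short borders. But you can bypass this entirely. Work directly with the prefixes $w_n=d^*_{[0,n-1]}$: if $F(w_{n_1})=F(w_{n_2})$ then (by compactness) the sets of legal \emph{infinite} continuations coincide. Since $w_{n_1}\sigma^{n_1}(d^*)=d^*\in S_\beta$, also $w_{n_2}\sigma^{n_1}(d^*)\in S_\beta$; evaluating the admissibility condition at $p=0$ and using that the first $n_2$ symbols agree with $d^*$ forces $\sigma^{n_1}(d^*)\le\sigma^{n_2}(d^*)$. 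The symmetric argument gives the reverse inequality, hence $\sigma^{n_1}(d^*)=\sigma^{n_2}(d^*)$ and $d^*$ is eventually periodic. No border analysis, no Fine--Wilf.

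For ``SFT $\Rightarrow$ totally periodic'' your plan (bounding minimal forbidden words, then arguing via borders of prefixes of $d^*$) is the right shape but, as you say, the combinatorial step is genuinely where the work is. One clean way to close it: once you know $d^*$ is eventually periodic (from the sofic direction), assume a preperiod $m\ge1$ and use the two tails $\sigma^{m-1}(d^*)$ and $\sigma^{m+p-1}(d^*)$, which agree at all positions $\ge1$ but differ at position $0$, to build arbitrarily long words whose follower sets disagree only via information at distance $\ge m$ from the right end; this contradicts an SFT window. Alternatively, your forbidden-word analysis can be completed: for each $j$ with $d^*_j<\lfloor\beta\rfloor$ and $j$ large, an SFT window $N$ forces a border of length $<N$ of $d^*_{[0,j]}$ matching the prefix, and combining $\sigma^{j-j'}(d^*)\le d^*$ with your inequality $d^*_{j'}\le d^*_j$ gives $d^*_{[j-j',j]}=d^*_{[0,j']}$; iterating this pins down a period. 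Either route works, but neither is a one-liner---your acknowledgement of the obstacle is honest.
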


\begin{theorem}
\label{thm:Kint}
If $\beta>1$ is an integer, then $K$ is a relative SFT in $S_\beta\times S_\beta$.
\end{theorem}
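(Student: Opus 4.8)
The plan is to deduce this from Fried's expansivity criterion, Lemma~\ref{lem:IffRelativeSFT}: it suffices to show that $S_\beta/K_\beta$ is an \emph{expansive} dynamical system, because $K_\beta$ is a subshift equivalence relation in $S_\beta^2$ and Lemma~\ref{lem:IffRelativeSFT} (with $Y=S_\beta$, $K=K_\beta$) then gives exactly the claim. For integer $\beta$ the quotient system turns out to be nothing but multiplication by $\beta$ on the circle, which is classically expansive.

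First I would pin down the relation $R$ of Lemma~\ref{Teq} in the integer case. If $\beta\in\N$, then $T_\beta(1)=\fr{\beta\cdot 1}=\fr{\beta}=0$, so $T_\beta^i(1)=0$ for every $i\geq 1$ and the $T_\beta$-orbit of $1$ is $\{1,0\}$ as a set. Hence the relation $R'$ of Lemma~\ref{Teq} has the single nontrivial class $\{0,1\}$, so $R'\setminus\Delta_\I$ is finite, $R'$ is already closed, and $R=R'$; consequently $\I/R$ is $[0,1]$ with its two endpoints glued, i.e.\ $\I/R\cong\T=\R/\Z$, and $T_R$ is the $\beta$-fold covering map $t\mapsto\beta t\bmod 1$. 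Next I would invoke the lemma immediately preceding the theorem, which asserts that $\real_R\colon(S_\beta,\sigma)\to(\I/R,T_R)$ is a continuous factor map with kernel $K_\beta$; since $S_\beta/K_\beta$ and $\I/R$ are both compact metrizable and the induced map is a continuous shift-commuting bijection, this exhibits a conjugacy $S_\beta/K_\beta\cong(\T,\ t\mapsto\beta t\bmod 1)$. This circle map is expansive for exactly the reason that $\times 2$ on $\R/\Z$ is (cf.\ \cite{Ku03}); e.g.\ $1/(2\beta)$ is an expansivity constant. Therefore $S_\beta/K_\beta$ is expansive, and Lemma~\ref{lem:IffRelativeSFT} finishes the proof.

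This argument is mostly bookkeeping on top of machinery already in place; the only steps that take any thought are the computation $\fr{\beta}=0$ that collapses the $T_\beta$-orbit of $1$ (so that $\I/R$ really is the circle rather than a more complicated gluing of $[0,1]$) and the standard fact that an expanding circle map is expansive, neither of which is a genuine obstacle. An alternative, more explicit route bypasses expansivity: from Theorem~\ref{Kchar} and Lemma~\ref{Kint}, when $\beta\in\N$ one has $d^*_\beta(1)=(\beta-1)^\infty$, $S_\beta$ is the full shift on $\{0,\dots,\beta-1\}$, and $S'=\{0^\infty,(\beta-1)^\infty\}$, so $K_\beta$ is precisely the diagonal together with the pairs $(wa0^\infty,\,w(a-1)(\beta-1)^\infty)$ and their flips and the pair $(0^\infty,(\beta-1)^\infty)$ and its flip; reading off the admissible length-$2$ windows over $\{0,\dots,\beta-1\}^2$ shows $K_\beta$ is a $2$-step SFT (for $\beta\geq 3$ the relevant families of window symbols are pairwise disjoint and this is routine; for $\beta=2$ the relation is literally the SFT $E_\N$ of Lemma~\ref{lem:TimesTwoN}). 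I would use the soft argument in the main text and, if desired, record the explicit forbidden pattern list in a remark.
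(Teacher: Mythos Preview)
Your soft argument via expansivity is correct and is a genuinely different route from the paper. The paper proceeds explicitly: it invokes Theorem~\ref{Kchar} and Lemma~\ref{Kint} to write down $K$ concretely as $\Delta_{S_\beta}\cup\Delta'\cup\{0^\infty,b^\infty\}^2$ with $b=\beta-1$, then exhibits a list of allowed length-$3$ patterns $P\subset(A\times A)^3$ and checks by a short case analysis that the relative SFT defined by $P$ coincides with $K$. Your argument instead identifies $S_\beta/K_\beta$ with the $\beta$-fold circle map (via Lemma~\ref{Teq} and the lemma establishing that $\real_R$ has kernel $K_\beta$), notes that this map is expansive, and applies Lemma~\ref{lem:IffRelativeSFT} to conclude that $K_\beta$ is a relative SFT without ever writing it down.

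Each approach has its virtue: the paper's explicit proof yields a concrete window size (three) and a pattern list, which ties in directly with Lemma~\ref{lem:TimesTwoN} and the general theme of computing presentations; your soft proof is shorter, demonstrates that the general machinery of Section~\ref{sec:DivisionBySFT} already does the work, and makes transparent \emph{why} the integer case is special (the gluing collapses to a single identification, so the quotient is still a manifold with expansive dynamics). Your parenthetical sketch of the explicit alternative is essentially the paper's proof.
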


\begin{proof}
Let $d_\beta^*(1)=b^\infty$, $b\in A\setminus\{0\}$. The set $K$ has been characterized in Theorem \ref{Kchar} and Lemma \ref{Kint}. We will define a relative SFT $S\subset S_\beta\times S_\beta$ by giving a set of allowed patterns $P\subset(A\times A)^{3}$ with the intention of making $S$ equal to $K$.

For every $w\in A^{3}$ let $(w,w)\in P$, which guarantees that $\Delta_{S_\beta}\subset S$. To guarantee that $\Delta'\subset S$, let $w\in A^{2}$, $a\in A\setminus\{0\}$ and add every subword of length $3$ from the infinite words $(wa0^\infty,w(a-1)d_\beta^*(1))$ and $(w(a-1)d_\beta^*(1),wa0^\infty)$ to $P$. This also guarantees that $S'\times S'\in S$, since $S'=\{0^\infty, b^\infty\}$. In total we have guaranteed that $K\subset S$.

We claim that $S\subset K$. To see this, let $x=(x_i)_{i\in\N}$, $y=(y_i)_{i\in\N}$ be distinct sequences such that $(x,y)\in S$ and let $i\in\N$ be minimal such that $x_i\neq y_i$. It follows that $x_j\neq y_j$ for all $j\geq i$, because in all pairs $(v,u)\in P$ a pair of distinct letters in the beginning is followed by another pair of distinct letters. If $x_i=x_{i+1}$, then without loss of generality $x_j=0$ and $y_j=b$ for $j\geq i$, because the only pairs $(v,u)\in P$ that start with a pair of distinct letters and where the first two letters of $v$ are the same are of the form $(000,bbb)$ or $(bbb,000)$. In this case $i=0$, because $P$ does not contain a pair of the form $(a00,abb)$ ($a\in A$), so $(x,y)=(0^\infty,b^\infty)\in S'\times S'\subset K$. Consider then the case $x_i\neq x_{i+1}$. Without loss of generality $x_i x_{i+1}x_{i+2}=a00$ and $y_i y_{i+1}y_{i+2}=(a-1)bb$ $(a\in A\setminus\{0\})$, because the only pairs $(v,u)\in P$ that start with a pair of distinct letters and where the first two letters of $v$ are distinct are of the form $(a00,(a-1)bb)$ or $((a-1)bb,a00)$. Because $x_{i+1}=x_{i+2}$, it follows as above that $x_i x_{i+1}x_{i+2}\dots=a0^\infty$ and $y_i y_{i+1}y_{i+2}\dots=(a-1)b^\infty$. By the choice of $i$ it follows that $x=wa0^\infty$ and $y=w(a-1)b^\infty$ for some $w\in A^*$ and thus also in this case $(x,y)\in \Delta'\subset K$.
\end{proof}

\begin{theorem}
\label{thm:KIfSSofic}
If $S_\beta$ is a non-SFT sofic shift, then $K$ is not a relative SFT in $S_\beta\times S_\beta$ but it is a sofic shift.
\end{theorem}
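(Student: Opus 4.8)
The statement has two halves, and the plan is to read both off the explicit description $K=\Delta_{S_\beta}\cup\Delta'\cup(S'\times S')$ from Theorem~\ref{Kchar}, together with the fact (Lemma~\ref{Ksofic}) that in this regime $S'=\{0^\infty\}\cup\orbc{d^*_\beta(1)}$ is a \emph{finite} set, because $d^*_\beta(1)$ is strictly eventually periodic; I will write $d^*_\beta(1)=\tau\rho^\infty$ with $\tau=t_0\cdots t_{m-1}$, $\rho=t_m\cdots t_{m+p-1}$ and $m,p\ge 1$ minimal, so that in particular $t_{m-1}\neq t_{m+p-1}$.

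For soficity I would show each of the three pieces is $\N$-automatic. The diagonal $\Delta_{S_\beta}$ is automatic because $S_\beta$, being sofic, is automatic and ``$x=y$'' is automatic. The set $S'\times S'$ is automatic because $S'$ is a finite set of (eventually periodic, hence $\omega$-regular) sequences, and products of automatic sets are automatic. Finally $\Delta'$ is automatic: a B\"uchi automaton can scan $(x,y)$, guess the unique position $i$ where the two coordinates may disagree, check $\{x_i,y_i\}=\{a,a-1\}$ for some $a\in A\setminus\{0\}$, and then check that one of the tails $x_{[i+1,\infty)},y_{[i+1,\infty)}$ is $0^\infty$ while the other spells out the eventually periodic word $d^*_\beta(1)$; intersecting with the automatic condition $x,y\in S_\beta$ gives $\Delta'$. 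Since $\N$-automatic sets are closed under finite union, $K$ is $\N$-automatic, and as $K$ is topologically closed and shift-invariant (it is a closed shift-invariant equivalence relation), Lemma~\ref{lem:AutomaticSoficConnection} yields that $K$ is sofic. All of this can moreover be carried out effectively using the manipulations of Lemma~\ref{lem:DecidableThings}.

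For the failure of finite type I would use the criterion that $K$ is a relative SFT in $S_\beta\times S_\beta$ iff for some window width $N$ every pair $(x,y)\in S_\beta\times S_\beta$, all of whose length-$N$ factors occur in some element of $K$, already lies in $K$; so for each $N$ I must produce a pair $(x,y)\in(S_\beta\times S_\beta)\setminus K$ whose every length-$N$ factor occurs in an element of $K$. The mechanism is the interaction of $\Delta'$ with the eventual periodicity of $d^*_\beta(1)$: after a ``drop'' $\binom{a}{a-1}$ the companion relation forces the lower track to replay $d^*_\beta(1)=\tau\rho^\infty$, but once one is past the transient $\tau$ this is, at any fixed finite scale, indistinguishable from replaying the pure period $\rho^\infty$ (the relevant length-$N$ windows only ever see factors of $d^*_\beta(1)$, which include all factors of $\rho^\infty$). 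Concretely I would take a genuine companion pair and desynchronise the lower track's transient from the position of the drop by splicing in extra copies of $\rho$ -- equivalently, compare two sequences agreeing on a long prefix and then continuing into different $\sigma$-shifts of $\rho^\infty$ -- arranging the seam so that every window straddling it has one of the forms that occur in $K$: a diagonal window $\binom{u}{u}$ with $u\in B_N(S_\beta)$, a ``straddle'' window $\binom{Wa0^k}{W(a-1)t_0\cdots t_{k-1}}$, or an ``$S'\times S'$'' window $\binom{u}{v}$ with $u,v$ factors of $d^*_\beta(1)$ or equal to $0^N$; meanwhile the global pair is not in $K$ because it is neither a companion pair nor a pair of points of the finite set $S'$. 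This is the same phenomenon that makes $S_\beta$ itself non-SFT in this regime (Theorem~\ref{thm:LothaireSFTSofic}) and that drives the non-SFT part of Theorem~\ref{thm:KIfSSFT}.

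The main obstacle will be the construction and verification of this pseudo-pair. The straddle windows are rigid about what may follow a drop (they force the lower track to reproduce a prefix of $d^*_\beta(1)$), so the desynchronisation must be hidden \emph{entirely} inside $\rho^\infty$-factor windows, which constrains where the seam can be placed; one must also keep the resulting pair outside $K$, which is where the strictness $t_{m-1}\neq t_{m+p-1}$ (minimality of $m$) enters, guaranteeing that the altered sequence is genuinely new and not merely a shift of $d^*_\beta(1)$. I would first settle this in the totally periodic case (which is anyway needed for Theorem~\ref{thm:KIfSSFT}, where $\rho^\infty=d^*_\beta(1)$ and the bookkeeping is cleanest) and then adapt the argument to the strictly eventually periodic case.
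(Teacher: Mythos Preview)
Your soficity argument is correct and essentially equivalent to the paper's: both assemble $K=\Delta_{S_\beta}\cup\Delta'\cup(S'\times S')$ from automatic pieces (the paper writes down an explicit automaton for $\Delta_{S_\beta}\cup\Delta'$ and notes $S'\times S'$ is finite; you invoke the closure properties and Lemma~\ref{lem:AutomaticSoficConnection}).

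For the non-SFT half, your general strategy (produce a pseudo-pair whose windows all occur in $K$) is right, and you correctly isolate the key fact $t_{m-1}\neq t_{m+p-1}$ coming from minimality of $m$. But the mechanism you propose --- working through $\Delta'$ and ``desynchronising the transient'' --- is a detour, and the construction remains vague precisely because the straddle windows of $\Delta'$ are rigid in the way you note. The paper exploits $S'\times S'$ instead, and the argument collapses to two lines. Since $\sigma^{m-1}(d^*_\beta(1))=t_{m-1}\rho^\infty$ and $\sigma^{m+p-1}(d^*_\beta(1))=t_{m+p-1}\rho^\infty$ both lie in $S'$, any allowed-pattern set $P$ of width $M$ (with $p\mid M-1$) must contain $\bigl(t_{m-1}\rho^{(M-1)/p},\,t_{m+p-1}\rho^{(M-1)/p}\bigr)$, together with all diagonal patterns $(w,w)$. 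But then the pair
\[
\bigl(t_{m-1}\rho^{(M-1)/p}0^\infty,\ t_{m+p-1}\rho^{(M-1)/p}0^\infty\bigr)
\]
has every length-$M$ factor in $P$ (the first factor is the $S'\times S'$ pattern above, all later ones are diagonal), yet it lies in none of $\Delta_{S_\beta}$, $\Delta'$, $S'\times S'$: the coordinates differ only at position $0$, neither tail is $d^*_\beta(1)$, and $0^\infty$ is the only eventually-zero element of $S'$. So the pseudo-pair you are looking for is obtained not by perturbing a companion pair, but by \emph{truncating} the $S'\times S'$ pair $(t_{m-1}\rho^\infty,t_{m+p-1}\rho^\infty)$ to a common $0^\infty$ tail. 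The $\Delta'$ relation plays no role in the non-SFT argument here.
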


\begin{proof}
Let $d_\beta^*(1)=t=(t_i)_{i\in\N}=t_0\dots t_{m-1}(t_m\dots t_{m+p-1})^\infty$ with a minimal choice of $m,p>0$. The set $K$ has been characterized in Theorem \ref{Kchar} and Lemma \ref{Ksofic}.

Now assume that $K$ is a relative SFT which is defined by a set of allowed patterns $P\subset(A\times A)^M$ for some $M>1$ such that $M-1$ is divisible by $p$. We have
\[ (t_{m-1}(t_m\dots t_{m+p-1})^\infty,t_{m+p-1}(t_m\dots t_{m+p-1})^\infty)=(\sigma^{m-1}(d_\beta^*(1)),\sigma^{m+p-1}(d_\beta^*(1)))\in S'\times S'\subset K \]
and by minimality of $m$ it follows that $t_{m-1}\neq t_{m+p-1}$, so at the very least $P$ must contain the pairs $(a(t_m\dots t_{m+p-1})^{(M-1)/p},a'(t_m\dots t_{m+p-1})^{(M-1)/p})$ for some $a\in A\setminus\{0\}$, $a'\in A$ ($a\neq a'$) and $(w,w)$ for all $w\in A^M$. Let $x=a(t_m\dots t_{m+p-1})^{(M-1)/p} 0^\infty$ and $y=a'(t_m\dots t_{m+p-1})^{(M-1)/p}0^\infty$. We have $(x,y)\in K=\Delta_{S_\beta}\cup\Delta'\cup (S'\times S')$ because all its factors of length $M$ are from $P$. Clearly $(x,y)\notin \Delta_{S_\beta}$. Also $(x,y)\notin\Delta'$, because this would require either $x$ or $y$ to consist of only zeroes after the first coordinate where they disagree, but $(t_m\dots t_{m+p-1})$ contains at least one non-zero digit. The only remaining possibility is $(x,y)\in S'\times S'$. This is also impossible, because $0^\infty$ is the only sequence in $S'$ that ends with all zeroes.

To prove that $K$ is a sofic shift, we construct a finite automaton $(Q,(A\times A),\delta)$ that almost recognizes $K$. The state set is $Q=\{q_I,q_{-,i},q_{+,i}\mid 0\leq i<m+p\}$ with transitions
\begin{flalign*}
&\delta(q_I,(a,a))=q_I \\
&\delta(q_I,(a',a'-1))=q_{-,0}\qquad\mbox{and}\qquad\delta(q_I,(a'-1,a'))=q_{+,0} \\
&\delta(q_{-,i},(0,t_i))=q_{-,i+1}\qquad\mbox{and}\qquad\delta(q_{+,i},(t_i,0))=q_{+,i+1} \\
&\delta(q_{-,m+p-1},(0,t_{m+p-1}))=q_{-,m}\qquad\mbox{and}\qquad\delta(q_{+,m+p-1},(t_{m+p-1},0))=q_{+,m}
\end{flalign*}
for $a\in A, a'\in A\setminus\{0\}$ and $0\leq i<m+p-1$. Let $L$ be the intersection of the subshift recognized by the automaton with the sofic shift $S_\beta\times S_\beta$. Clearly $L$ contains the elements of $\Delta_{S_\beta}\cup\Delta'$ and all pairs $(x,y)$ such that $x=0^\infty$ and $y$ is a shift of $d_\beta^*(1)$ or vice versa. The remaining set $K\setminus L\subset S'\times S'$ is finite and therefore sofic, so $K$ is also sofic as the union $L\cup (K\setminus L)$ of two sofic shifts.
\end{proof}

\begin{theorem}
\label{thm:KIfSSFT}
If $S_\beta$ is an SFT and $\beta$ is not an integer, then $K$ is not a relative SFT in $S_\beta\times S_\beta$ but it is a sofic shift.
\end{theorem}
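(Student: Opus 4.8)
The plan is to treat the two halves of the statement separately, following the structure already used in Theorem~\ref{thm:Kint} and Theorem~\ref{thm:KIfSSofic}. In the present case $d_\beta^*(1)$ is totally periodic, say $d_\beta^*(1) = (t_0 \cdots t_{p-1})^\infty$ with minimal $p > 1$ (the hypothesis $\beta \notin \N$ rules out $p=1$, by Lemma~\ref{Kint}), and the relevant description of $K$ is $K = \Delta_{S_\beta} \cup \Delta' \cup (S' \times S')$ from Theorem~\ref{Kchar}, together with the explicit form of $S'$ from Lemma~\ref{Ksft}: $S' = \{0^\infty\} \cup \{\sigma^i(d_\beta^*(1)) \mid 0 \le i < p\} \cup \{\sigma^i(d_\beta(1)) \mid 0 < i < p\}$.

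First I would show $K$ is not a relative SFT, by the same contradiction scheme as in Theorem~\ref{thm:KIfSSofic}. Suppose $K$ is defined by allowed patterns $P \subset (A \times A)^M$ with $M$ large and $M-1$ divisible by $p$. The key point is that $S' \times S'$ contains the pair $(\sigma^{p-1}(d_\beta^*(1)), \sigma^{p-1}(d_\beta(1)))$; since $d_\beta(1) = t_0 \cdots t_{p-2}(t_{p-1}+1)0^\infty$ and $d_\beta^*(1)$ is periodic, these two sequences disagree from the coordinate of the digit $t_{p-1}$ onwards, with one continuing periodically and the other becoming eventually $0$. Because $p > 1$, the periodic tail $(t_0 \cdots t_{p-1})$ contains a nonzero digit, so near this disagreement $P$ must allow a pattern of the form $(a \cdot (\text{periodic block})^{(M-1)/p}, a' \cdot 0^{M-1})$ for suitable $a \ne a'$. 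Then I would build $x = a(t_0\cdots t_{p-1})^{(M-1)/p}0^\infty$ and $y = a' \cdot 0^\infty$ (with the appropriate prefix), check that every length-$M$ factor of $(x,y)$ lies in $P$, so $(x,y)$ would have to be in $K$, and then rule out membership in each of $\Delta_{S_\beta}$, $\Delta'$, $S' \times S'$: it is not in $\Delta_{S_\beta}$ since $x \ne y$; not in $\Delta'$ since the periodic block is nonzero so neither side is eventually the partner sequence; and not in $S' \times S'$ since $0^\infty$ is the only element of $S'$ with an all-zero tail while $x$ is not $0^\infty$. This contradiction shows $K$ is not a relative SFT.

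Next I would show $K$ is sofic, again mimicking the automaton construction in Theorem~\ref{thm:KIfSSofic} but with the finite set $S'$ now slightly larger (it also contains the finite-looking representatives $\sigma^i(d_\beta(1))$). The idea: build a finite automaton over $A \times A$ that, from an initial state, either stays on the diagonal, or at the first disagreement branches into a ``companion tracker'' that verifies one side continues as a shift of $d_\beta^*(1)$ while the other becomes $0^\infty$ after one more digit (this captures $\Delta'$ and the pairs relating $0^\infty$ to an orbit point of $d_\beta^*(1)$). Intersect the recognized shift with the sofic shift $S_\beta \times S_\beta$ to get a sofic relation $L$ containing $\Delta_{S_\beta} \cup \Delta'$ plus all pairs linking $0^\infty$ with a shift of $d_\beta^*(1)$. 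The leftover $K \setminus L$ is contained in $S' \times S'$, hence finite, hence sofic, so $K = L \cup (K \setminus L)$ is sofic as a union of two sofic shifts.

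The main obstacle I anticipate is bookkeeping in the non-SFT half: one must choose the bad pair $(x,y)$ and the length $M$ so that all length-$M$ windows genuinely lie in whatever $P$ must contain, which requires carefully identifying exactly which patterns $P$ is forced to allow near the $(\sigma^{p-1}(d_\beta^*(1)), \sigma^{p-1}(d_\beta(1)))$ disagreement and propagating periodicity correctly. There is also a small subtlety that $\sigma^i(d_\beta(1))$ for $0 < i < p$ are themselves genuine elements of $S_\beta$ (by Theorem~\ref{loth}), which is needed both for Lemma~\ref{Ksft} to apply and to know $S'$ is the finite set claimed; I would double-check the edge behavior at $i$ near $p$ to be safe. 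The soficity half is essentially routine automata bookkeeping once the right states are set up.
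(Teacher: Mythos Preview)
Your soficity half is essentially the paper's argument and is fine.

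The non-SFT half has a real gap. From the pair $(\sigma^{p-1}(d_\beta^*(1)),\sigma^{p-1}(d_\beta(1)))=(t_{p-1}(t_0\cdots t_{p-1})^\infty,(t_{p-1}+1)0^\infty)$ you correctly deduce that $P$ must contain $(t_{p-1}(t_0\cdots t_{p-1})^{(M-1)/p},(t_{p-1}+1)0^{M-1})$ and the diagonal patterns, and also the patterns $(\sigma^j(d_\beta^*(1))_{[0,M-1]},0^M)$ for $j\ge 0$. But your candidate pair $(x,y)=(t_{p-1}(t_0\cdots t_{p-1})^{(M-1)/p}0^\infty,(t_{p-1}+1)0^\infty)$ has, already at position $1$, the window $\bigl((t_0\cdots t_{p-1})^{(M-1)/p}0,\,0^M\bigr)$. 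The first component ends in $0$ rather than $t_0>0$, so it is \emph{not} a subword of the periodic sequence $d_\beta^*(1)$, and the pair is not diagonal since $t_0>0$ occurs in it. This window is not among the patterns you have shown must lie in $P$, and I do not see any other pair in $\Delta_{S_\beta}\cup\Delta'\cup(S'\times S')$ that forces it. The same problem recurs at every position where $x$ transitions from the periodic block to the trailing zeros. So the claim ``every length-$M$ factor of $(x,y)$ lies in $P$'' fails, and this is not mere bookkeeping: truncating a tail-disagreement produces uncontrolled transition windows.

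The paper sidesteps this by choosing a pair in $S'\times S'$ that disagrees at a \emph{single} coordinate, namely $(0^\infty,(t_{p-1}+1)0^\infty)$. This forces only the one non-diagonal pattern $(0^m,(t_{p-1}+1)0^{m-1})$. One then perturbs both components \emph{identically} far to the right, forming $(0^{m+i+1}10^\infty,(t_{p-1}+1)0^{m+i}10^\infty)$ for large $i$; every window is either the one forced pattern (at position $0$) or diagonal, so membership in the SFT is automatic. These infinitely many pairs are visibly not in $\Delta_{S_\beta}\cup\Delta'$, and the finiteness of $S'\times S'$ gives the contradiction. If you want to rescue your route, you would need to replace your $(x,y)$ by one whose non-diagonal windows all genuinely occur in $K$; the single-coordinate-disagreement trick is the clean way to guarantee that.
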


\begin{proof}
Let $d_\beta^*(1)=(t_0\dots t_{p-1})^\infty$ with a minimal choice of $p>1$. The set $K$ has been characterized in Theorem \ref{Kchar} and Lemma \ref{Ksft}.

Now assume that $K$ is a relative SFT which is defined by a set of allowed patterns $P\subset(A\times A)^m$ for some $m>1$. We have $(0^\infty,(t_{p-1}+1) 0^\infty)=(0^\infty,\sigma^{p-1}(d_\beta(1)))\in S'\times S'\subset K$, so at the very least $P$ must contain the pairs $(0^m,(t_{p-1}+1)0^{m-1})$ and $(w,w)$ for all $w\in A^m$. As an element of $S_\beta$, $\sigma^{p-1}(d_\beta(1))$ is lexicographically smaller than $d_\beta^*(1)$ and for all sufficiently large $i\in\N$ it also holds that $(t_{p-1}+1)0^{m+i}10^\infty<d_\beta^*(1)$. Evidently all proper shifts of this sequence are smaller in lexicographic order than $d_\beta^*(1)$, so $(t_{p-1}+1)0^{m+i}10^\infty\in S_\beta$. For all such $i$ the relation $K$ contains pairs $(00^{m+i}10^\infty,(t_{p-1}+1) 0^{m+i}10^\infty)$, because $(0^m,t_p 0^{m-1})\in P$. By their form these infinitely many pairs are not elements of $\Delta_{S_\beta}\cup \Delta'$, so they must be contained in the finite set $S'\times S'$, a contradiction.

To prove that $K$ is a sofic shift, we construct a finite automaton $(Q,(A\times A),\delta)$ that almost recognizes $K$. The state set is $Q=\{q_I,q_{-,i},q_{+,i}\mid 0\leq i< p\}$ with transitions
\begin{flalign*}
&\delta(q_I,(a,a))=q_I \\
&\delta(q_I,(a',a'-1))=q_{-,0}\qquad\mbox{and}\qquad\delta(q_I,(a'-1,a'))=q_{+,0} \\
&\delta(q_{-,i},(0,t_i))=q_{-,i+1}\qquad\mbox{and}\qquad\delta(q_{+,i},(t_i,0))=q_{+,i+1} \\
&\delta(q_{-,p-1},(0,t_{p-1}))=q_{-,0}\qquad\mbox{and}\qquad\delta(q_{+,p-1},(t_{p-1},0))=q_{+,0}
\end{flalign*}
for $a\in A, a'\in A\setminus\{0\}$ and $0\leq i<p-1$. Let $L$ be the intersection of the subshift recognized by the automaton with the sofic shift $S_\beta\times S_\beta$. Clearly $L$ contains the elements of $\Delta_{S_\beta}\cup\Delta'$ and all pairs $(x,y)$ such that $x=0^\infty$ and $y$ is a shift of $d_\beta^*(1)$ or vice versa. The remaining set $K\setminus L\subset S'\times S'$ is finite and therefore sofic, so $K$ is also sofic as the union $L\cup (K\setminus L)$ of two sofic shifts.
\end{proof}

\begin{corollary}
If $\beta>1$ is not an integer and $S_\beta$ is a sofic shift, i.e. if $d_\beta^*(1)$ is non-constant and eventually periodic, then the TDS $(\I/R,T_R)$ belongs to the class \ttfrac{SFT}{sofic} but not in \ttfrac{SFT}{SFT}.
\end{corollary}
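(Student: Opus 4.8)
This corollary is an immediate consequence of the theorems just proved, so the plan is essentially to assemble the pieces. The statement to prove: if $\beta > 1$ is not an integer and $S_\beta$ is a sofic shift, then $(\I/R, T_R)$ is \ttfrac{SFT}{sofic} but not \ttfrac{SFT}{SFT}.

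\begin{proof}
By the preceding lemmas, $(\I/R, T_R)$ is conjugate to $S_\beta/K$ via the factor map $\real_R$ (Lemma stating $\real_R$ is a factor map with kernel $K$), so it suffices to analyze the formal quotient $\ccfrac{S_\beta}{K}$. By Theorem~\ref{thm:LothaireSFTSofic}, the hypothesis that $S_\beta$ is sofic means $d_\beta^*(1)$ is eventually periodic, and since $\beta$ is not an integer, $d_\beta^*(1)$ is non-constant by Lemma~\ref{Kint}. There are two subcases. If $d_\beta^*(1)$ is strictly eventually periodic, then Theorem~\ref{thm:LothaireSFTSofic} gives that $S_\beta$ is itself only proper sofic, so a fortiori the system is not \ttfrac{SFT}{SFT}; and by Theorem~\ref{thm:KIfSSofic}, $K$ is a sofic shift but not a relative SFT. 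If instead $d_\beta^*(1)$ is totally periodic (and non-constant, since $\beta \notin \N$), then $S_\beta$ is an SFT by Theorem~\ref{thm:LothaireSFTSofic}, and by Theorem~\ref{thm:KIfSSFT}, $K$ is a sofic shift but not a relative SFT in $S_\beta \times S_\beta$. In both subcases, $K$ is sofic (so the system is \ttfrac{SFT}{sofic}, using also that $S_\beta$ is sofic, and in the strictly eventually periodic subcase noting that a sofic numerator still gives membership in \ttfrac{SFT}{sofic} after recoding — but in fact the relevant point is just that the pair is \ttfrac{sofic}{sofic}, hence the underlying abstract system is sofically presented, and since $S_\beta$ is sofic we record the finer statement), while $K$ is never a relative SFT, so by Lemma~\ref{lem:IffRelativeSFT} (equivalently Theorem~\ref{thm:ExpansivityCharacterizationProof}), $S_\beta/K$ is not expansive, hence not finitely presented, i.e.\ not \ttfrac{SFT}{SFT}.
\end{proof}

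The main thing to be careful about is the bookkeeping of which subcase yields which soficity class for $S_\beta$ itself: in the strictly eventually periodic case $S_\beta$ is only proper sofic, not SFT, so the phrase ``belongs to \ttfrac{SFT}{sofic}'' must be read as ``belongs to the class, since proper sofic shifts are in particular sofic'' — there is no claim that the presentation $\ccfrac{S_\beta}{K}$ has an SFT numerator in that case, only that the abstract system lies in the class \ttfrac{SFT}{sofic} because $S_\beta$ can be replaced by an SFT cover (or because \ttfrac{sofic}{sofic} $=$ \ttfrac{SFT}{sofic} by Lemma~\ref{lem:EqsBetweenClasses}). I do not anticipate any real obstacle; this is purely a matter of citing Theorems~\ref{thm:LothaireSFTSofic}, \ref{thm:KIfSSofic}, \ref{thm:KIfSSFT} and Lemma~\ref{lem:IffRelativeSFT} in the right order.
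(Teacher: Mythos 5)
Your proof is correct and takes essentially the only sensible route: reduce via the lemma that $\real_R$ is a factor map with kernel $K_\beta$, split according to whether $d_\beta^*(1)$ is totally or strictly eventually periodic, cite Theorems~\ref{thm:LothaireSFTSofic}, \ref{thm:KIfSSofic}, \ref{thm:KIfSSFT} to settle the soficity/SFTness of $S_\beta$ and $K_\beta$, invoke Lemma~\ref{lem:EqsBetweenClasses} to handle the ``proper sofic numerator'' subcase, and conclude non-expansivity (hence non-\ttfrac{SFT}{SFT}) from Lemma~\ref{lem:IffRelativeSFT}. The paper omits a proof, and yours supplies the intended one; your careful parenthetical distinguishing the abstract system's class membership from the concrete presentation's type is exactly the right point to keep straight.
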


The following is obvious from Theorem~\ref{loth}.

\begin{lemma}
\label{lem:SSU}
For any $\beta > 1$ there exists $m \in \N$ such that $(0^m0^*1)^\N \subset S_\beta$.
\end{lemma}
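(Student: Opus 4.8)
The plan is to apply the lexicographic characterization of $S_\beta$ from Theorem~\ref{loth}: a sequence $s \in A^\N$ lies in $S_\beta$ if and only if $\sigma^p(s) \leq d_\beta^*(1)$ for every $p \in \N$. So it is enough to choose $m$ so that every suffix of every element of $(0^m0^*1)^\N$ is lexicographically at most $d_\beta^*(1)$.

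Write $t = d_\beta^*(1) = (t_i)_{i\in\N}$. I would first record two facts. Since $\beta > 1$, the leading digit is $t_0 = \lfloor\beta\rfloor \geq 1$. And $t$ is not of the form $t_0\,0^\infty$: if $d_\beta(1)$ is infinite then $t = d_\beta(1)$ is not eventually zero by definition, while if $d_\beta(1)$ is finite then $t$ is purely periodic with a period that is not identically zero (its first digit is $t_0 \geq 1$, or, in the degenerate case of period length one, the single digit is $\lfloor\beta\rfloor - 1 \geq 1$, since then $\beta$ is an integer $\geq 2$). Hence there is a least index $\ell \geq 1$ with $t_\ell \neq 0$, so $t = t_0\,0^{\ell-1}\,t_\ell\cdots$; I would set $m = \ell$.

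Now I would analyse the suffixes. Any $s \in (0^m0^*1)^\N$ has the form $0^{k_0}1\,0^{k_1}1\cdots$ with each $k_i \geq m$, so every suffix $\sigma^p(s)$ either begins with $0$, or has the form $1\,0^k\,1\cdots$ with $k \geq m = \ell$. In the first case $\sigma^p(s)_0 = 0 < t_0$, so $\sigma^p(s) < t$. In the second case: if $t_0 \geq 2$ then $\sigma^p(s)_0 = 1 < t_0$; and if $t_0 = 1$ then $\sigma^p(s)$ and $t$ agree in coordinates $0,1,\dots,\ell-1$ (both equal to $1,0,\dots,0$, using $k \geq \ell$), while $\sigma^p(s)_\ell = 0 < t_\ell$. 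In every case $\sigma^p(s) < d_\beta^*(1)$, so Theorem~\ref{loth} yields $s \in S_\beta$, proving $(0^m0^*1)^\N \subseteq S_\beta$.

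This is routine; the only point that is not purely formal is the existence of the index $\ell$, i.e. that $d_\beta^*(1)$ is not eventually zero, which follows directly from its definition together with $\beta > 1$.
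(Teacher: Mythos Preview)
Your proof is correct and follows exactly the approach the paper has in mind: the paper's own proof is simply ``This is obvious from Theorem~\ref{loth},'' and you have spelled out that obviousness. One small expository slip: the identity $t_0 = \lfloor\beta\rfloor$ fails when $\beta$ is an integer (then $d_\beta^*(1) = (\beta-1)^\infty$ so $t_0 = \beta-1$), but you catch this in your parenthetical and all you actually use is $t_0 \geq 1$, which does hold in every case.
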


\begin{lemma}
\label{lem:DenseNotSP}
If the orbit of $d^*_\beta(1)$ is dense in $S_\beta$, then it is not eventually periodic.
\end{lemma}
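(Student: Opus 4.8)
The plan is to argue by contradiction: suppose the orbit $\{\sigma^i(d^*_\beta(1)) \mid i \in \N\}$ is dense in $S_\beta$, but $d^*_\beta(1)$ is eventually periodic. By Theorem~\ref{thm:LothaireSFTSofic}, the latter assumption means $S_\beta$ is a sofic shift, so it suffices to show that a dense orbit of $d^*_\beta(1)$ forces $S_\beta$ to be infinite while the orbit closure $\orbc{d^*_\beta(1)}$ is finite — which is the contradiction, since the orbit closure is all of $S_\beta$ by density. Indeed, if $d^*_\beta(1)$ is eventually periodic, say $d^*_\beta(1) = t_0 \cdots t_{m-1}(t_m \cdots t_{m+p-1})^\infty$, then $\orbc{d^*_\beta(1)} = \{\sigma^i(d^*_\beta(1)) \mid 0 \le i < m+p\}$ is a finite set. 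So the crux is that $S_\beta$ is always infinite.

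To see $S_\beta$ is infinite, I would invoke Lemma~\ref{lem:SSU}: for any $\beta > 1$ there is $m \in \N$ with $(0^m 0^* 1)^\N \subset S_\beta$. This subshift contains, for each $k \ge m$, the periodic point $(0^k 1)^\N$, and these are pairwise distinct, so $S_\beta$ is infinite. (Alternatively, one notes directly from Theorem~\ref{loth} that $S_\beta$ contains all sequences that are lexicographically dominated termwise by a fixed sequence with infinitely many nonzero digits below $d^*_\beta(1)$, which is clearly an infinite set; but citing Lemma~\ref{lem:SSU} is cleanest.) Thus $|S_\beta| = \infty$, whereas under the eventual-periodicity assumption the orbit closure of $d^*_\beta(1)$ has at most $m+p$ elements. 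Since the orbit of $d^*_\beta(1)$ is dense by hypothesis, its closure is $S_\beta$, giving $\infty = |S_\beta| \le m + p < \infty$, a contradiction.

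I do not expect any serious obstacle here; the statement is essentially a packaging of the facts that $\beta$-shifts are always infinite (Lemma~\ref{lem:SSU}) and that eventually periodic sequences have finite orbit closures. The only point requiring a sentence of care is spelling out that the orbit closure of an eventually periodic sequence under the shift is finite: if $d^*_\beta(1) = u v^\infty$ with $|u| = m$, $|v| = p$, then $\sigma^i(d^*_\beta(1))$ depends only on $i \bmod p$ once $i \ge m$, so there are at most $m + p$ distinct shifts, and the set of these is already closed (a finite set in a Hausdorff space), hence equals the orbit closure. Combining with the density hypothesis yields the claim.
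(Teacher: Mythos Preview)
Your proposal is correct and follows essentially the same argument as the paper: if $d^*_\beta(1)$ were eventually periodic its orbit closure would be finite, but density forces this closure to equal $S_\beta$, which is infinite by Lemma~\ref{lem:SSU}. The only extraneous step is the mention of Theorem~\ref{thm:LothaireSFTSofic}, which is not needed here.
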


\begin{proof}
If $d^*_\beta(1)$ is eventually periodic and dense in $S_\beta$, then $S_\beta$ is finite. This can never happen by the previous lemma.
\end{proof}

\begin{theorem}
\label{thm:DenseCase}
If the orbit of $d^*_\beta(1)$ is dense in $S_\beta$, then $K_\beta$ is a relative SFT in $S_\beta \times S_\beta$. In fact, $K_\beta = S_\beta \times S_\beta$ so $K_\beta$ is the relative full shift.
\end{theorem}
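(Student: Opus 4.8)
The plan is to show that the orbit closure of $d^*_\beta(1)$ being all of $S_\beta$ forces $K_\beta$ to be the full product $S_\beta \times S_\beta$; soficity (indeed SFT-ness as a relative subshift) is then automatic, since the relative full shift is a relative SFT with no forbidden patterns at all. So the entire content is to prove $K_\beta = S_\beta \times S_\beta$, i.e.\ that any two points of $S_\beta$ are $K_\beta$-related.

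First I would recall from Lemma~\ref{Kgen} (applicable since $d^*_\beta(1)$ is not totally periodic by Lemma~\ref{lem:DenseNotSP}, using density) the description of $S'$: it consists of $0^\infty$, the orbit closure $\orbc{d^*_\beta(1)}$, and the companions of points of this orbit closure. But the orbit closure of $d^*_\beta(1)$ is all of $S_\beta$ by hypothesis, so $\orbc{d^*_\beta(1)} = S_\beta$. Hence every point of $S_\beta$ lies in $S'$ (it is already in $\orbc{d^*_\beta(1)} \subseteq S'$), so $S' = S_\beta$. Then by Theorem~\ref{Kchar}, $K = \Delta_{S_\beta} \cup \Delta' \cup (S' \times S') = S_\beta \times S_\beta$, since $S' \times S' = S_\beta \times S_\beta$. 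This gives the claim directly.

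Alternatively, and perhaps more transparently, I would argue via the factor map $\real_R : (S_\beta, \sigma) \to (\I/R, T_R)$: by the lemma preceding, $(x_1, x_2) \in K_\beta \iff \real_R(x_1) = \real_R(x_2)$, and by Lemma~\ref{Teq} the relation $R$ on $\I$ identifies the entire closure $\cl{\{T^i(1) \mid i \in \N\}}$ (together with $0$) to a single point. Density of the orbit of $d^*_\beta(1)$ in $S_\beta$, combined with Corollary~\ref{dbcorr} (which says $T^i(1) = \real(\sigma^i(d^*_\beta(1)))$ for all $i$, there being no periodic collapse here) and continuity of $\real$, shows that $\cl{\{T^i(1) \mid i \in \N\}} = \real(\orbc{d^*_\beta(1)}) = \real(S_\beta) = \I$. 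So $R$ collapses all of $\I$ to one point, $\I/R$ is a single point, $\real_R$ is constant, and therefore every pair $(x_1,x_2) \in S_\beta \times S_\beta$ satisfies $\real_R(x_1) = \real_R(x_2)$, i.e.\ $K_\beta = S_\beta \times S_\beta$.

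I do not anticipate a genuine obstacle; the only point requiring a little care is the passage from "orbit of $d^*_\beta(1)$ dense" to "$\cl{\{T^i(1)\}} = \I$" (or equivalently to "$S' = S_\beta$"), which needs Corollary~\ref{dbcorr} to identify the numbers $T^i(1)$ with $\real(\sigma^i(d^*_\beta(1)))$ and then the continuity and surjectivity of $\real : S_\beta \to \I$. The rest is bookkeeping with the characterization of $K$ already established in Theorem~\ref{Kchar} and Lemma~\ref{Kgen}. Once $K_\beta = S_\beta \times S_\beta$ is in hand, it is trivially a relative SFT in $S_\beta \times S_\beta$ (empty forbidden set), hence a sofic shift, completing the proof.
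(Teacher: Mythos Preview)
Your first argument is correct and essentially identical to the paper's proof: invoke Lemma~\ref{Kgen} (valid because density rules out eventual periodicity via Lemma~\ref{lem:DenseNotSP}), observe $\orbc{d^*_\beta(1)}=S_\beta\subset S'$, and conclude $K_\beta\supset S'\times S'=S_\beta\times S_\beta$. Your alternative route through $\real_R$ and the collapse of $\I/R$ is also fine but unnecessary.
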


\begin{proof}
Recall that we have $K_\beta = \Delta_{S_\beta} \cup \Delta' \cup (S' \times S')$ where by Lemma~\ref{Kgen} we have in particular $\orbc{d_\beta^*(1)} \subset S'$. It follows that
\[ S_\beta \times S_\beta \subset S' \times S' \subset K_\beta \subset S_\beta \times S_\beta. \]
\end{proof}

\begin{lemma}
\label{lem:Pumping}
Suppose $\Sigma$ is a finite totally ordered alphabet, suppose $x \in \Sigma^\N$ is not eventually periodic and $\sigma^n(x) \leq_{\mathrm{lex}} x$ for all $n$, let $L$ be any closed $\omega$-automatic language with $x \in L$. Then there exists a point $z \neq x$ in $L$ which 
satisfies $\sigma^n(z) \leq_{\mathrm{lex}} x$ for all $n$.
\end{lemma}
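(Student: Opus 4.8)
The plan is to use a pumping argument on a finite-state automaton for $L$, in the style of the pigeonhole arguments used in Theorem~\ref{thm:ProperSofic} and Theorem~\ref{thm:SoficSoficHyperbolic}. Fix a finite-state ($\omega$-)automaton recognizing $L$, with $q$ states, and let $\rho$ be an accepting run on $x$. Since $x$ is not eventually periodic, the sequence of prefixes $x_{[0,n)}$ takes infinitely many ``profiles'' in the sense of the largest suffix appearing so far; more usefully, since $x$ is aperiodic there exist arbitrarily long positions $n$ at which $x_{[n,\infty)} <_{\mathrm{lex}} x$ \emph{strictly} — indeed if $\sigma^n(x) = x$ for some $n>0$ then $x$ would be periodic, so $\sigma^n(x) <_{\mathrm{lex}} x$ for all $n \geq 1$. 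First I would pick an index $n_0 \geq 1$ and look at a long block $x_{[n_0, n_0 + N)}$ with $N$ huge compared to $q$; by the pigeonhole principle the run $\rho$ repeats a state at two positions $i < j$ inside this block, giving a loop labeled by the word $w = x_{[i,j)}$.

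The key point is to pump this loop to produce a \emph{different} point $z$ of $L$ while keeping the lexicographic constraint $\sigma^n(z) \leq_{\mathrm{lex}} x$ for all $n$. Repeating the loop gives $z = x_{[0,i)} w^k x_{[j,\infty)}$ for any $k$; this is automatically in $L$ because $L$ is $\omega$-automatic and the run stays accepting (one must check the acceptance condition, but repeating a loop inside a fixed run preserves the set of states visited infinitely often, which lies to the right of the pumped region). The point $z$ differs from $x$ as soon as $w \neq \epsilon$, which we may assume, and then $z$ is not eventually periodic either (its tail agrees with that of $x$). Alternatively, and perhaps more robustly, I would \emph{delete} the loop: $z = x_{[0,i)} x_{[j,\infty)}$, i.e.\ $z = x_{[0,i)}\,\sigma^{j}(x)$. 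This $z$ lies in $L$ by the same run argument, and $z \neq x$ because $x$ is aperiodic so it cannot be invariant under removing a nonempty block and shifting. To guarantee $z \neq x$ cleanly one should choose $i,j$ so that $x_{[0,i)} x_{[j,\infty)} \neq x$; aperiodicity of $x$ forces this for all but finitely many choices, and we have infinitely many loops available.

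The main obstacle is verifying the lexicographic inequality $\sigma^n(z) \leq_{\mathrm{lex}} x$ for every $n$. For $n \geq i$ (in the deletion version) we have $\sigma^n(z) = \sigma^{n'}(x)$ for some $n' \geq j > 0$, so $\sigma^n(z) <_{\mathrm{lex}} x$ by aperiodicity. The delicate case is $n < i$, where $\sigma^n(z) = x_{[n,i)}\,\sigma^j(x)$ shares the prefix $x_{[n,i)}$ with $\sigma^n(x)$, which we know satisfies $\sigma^n(x) \leq_{\mathrm{lex}} x$; one must ensure that continuing with $\sigma^j(x)$ instead of $\sigma^i(x)$ does not break the inequality. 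This is handled by choosing the cut points using the \emph{right-extremal} structure of $x$: pick $i$ to be a position where $x_{[i,\infty)}$ is lexicographically as large as possible among suffixes starting in a suitable window (a ``Lyndon-like'' choice), so that $x_{[n,i)} x_{[i,\infty)} = \sigma^n(x) \leq_{\mathrm{lex}} x$ already forces the prefix comparison to be resolved strictly \emph{before} position $i$, whence replacing the tail is harmless. Concretely, since we have infinitely many loops to choose from, I would select the loop $(i,j)$ so that $x_{i}$ is the first letter where $\sigma^{n}(x)$ first drops below $x$ for the relevant $n$, or simply shift the whole construction: apply the argument to $\sigma^{m}(x)$ for a large $m$ and note that $\sigma^{m}(x) \leq_{\mathrm{lex}} x$ too, then prepend $x_{[0,m)}$ back. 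I expect that once the cut points are chosen to lie at a position where all the finitely many ``active'' suffix comparisons with $x$ have already been strictly decided, the inequality $\sigma^n(z) \leq_{\mathrm{lex}} x$ follows for all $n$, completing the proof.
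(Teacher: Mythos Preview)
Your overall plan---pump on an accepting run for $x$ and show the resulting point $z$ still satisfies $\sigma^n(z)\le_{\mathrm{lex}} x$---is exactly the paper's approach, and you correctly isolate the only obstacle: the comparisons $\sigma^n(z)\le_{\mathrm{lex}} x$ for $n$ to the left of the pumped region. But the fix you propose (choose the cut point $i$ so that every ``active'' comparison is already strictly decided before position $i$) cannot work as stated, because the case $n=0$ is always active. Indeed, with your deletion $z=x_{[0,i)}\sigma^j(x)$ and $x=x_{[0,i)}\sigma^i(x)$ agreeing on $[0,i)$, the inequality $z\le_{\mathrm{lex}} x$ is equivalent to $\sigma^j(x)\le_{\mathrm{lex}}\sigma^i(x)$, which is a condition on the loop, not on the cut position; no Lyndon-type choice of $i$ removes it. The ``shift by $m$ and prepend $x_{[0,m)}$'' idea has the same defect: prepending $x_{[0,m)}$ reinstates exactly the $n=0$ comparison you were trying to avoid.

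The missing step is a single case split that the paper makes immediately. Write the loop as $x=wu\,v\,y$ with $wuy,\,wuvvy\in L$ (so $v=x_{[i,j)}$, $y=\sigma^j(x)$). Since $x$ is aperiodic, $vy\neq y$. If $vy<_{\mathrm{lex}} y$ (i.e.\ $\sigma^i(x)<_{\mathrm{lex}}\sigma^j(x)$), pump \emph{up}: $z=wuvvy$. If $y<_{\mathrm{lex}} vy$, delete: $z=wuy$. In either case, for $n$ to the left of the splice the sequences $\sigma^n(z)$ and $\sigma^n(x)$ share a prefix and then continue with the smaller of $vy,y$ versus the larger, giving $\sigma^n(z)<_{\mathrm{lex}}\sigma^n(x)\le_{\mathrm{lex}} x$; for $n$ to the right of the splice, $\sigma^n(z)$ is literally a shift of $x$. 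So both the $n=0$ case and all others fall out at once---there is no need to search for special cut positions.
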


\begin{proof}
By the pumping lemma, there exist $w, u, v \in \Sigma^*, |u|, |v| \geq 1, y \in \Sigma^\omega$, such that $x = wuvy$ and $wuy, wuvvy \in L$.

Since $x$ is not eventually periodic, we cannot have $v y = y$. Suppose first $vy <_{\mathrm{lex}} y$. Then pick $z = wuvvy \in L$. For $n \leq |wuv|$ we have
\[ \sigma^n(z) = \sigma^n(wuvvy) <_{\mathrm{lex}} \sigma^n(wuvy) = \sigma^n(x) \leq_{\mathrm{lex}} x, \]
and for any $n$ we have
\[ \sigma^{|wuv| + n}(z) = \sigma^{|wuv| + n}(wuvvy) = \sigma^{n}(vy) = \sigma^{|wu|+n}(x) \leq_{\mathrm{lex}} x, \]
and clearly $z \neq x$, concluding the proof.

Suppose then $y <_{\mathrm{lex}} vy$. Then let $z = wuy$, and similarly as above for $n \leq |w u|$ we have
\[ \sigma^n(z) = \sigma^n(wuy) <_{\mathrm{lex}} \sigma^n(wuvy) \leq_{\mathrm{lex}} x \]
and for all $n$ we have
\[ \sigma^{|wu|+n}(z) = \sigma^{|wu| + n}(wuy) = \sigma^n(y) = \sigma^{|wuv|+n}(x) \leq_{\mathrm{lex}} x, \]
so $z \in S_\beta$ and again clearly $z \neq x$.
\end{proof}

\begin{theorem}
\label{thm:NotDenseNotSofic}
If $d^*_\beta(1)$ is aperiodic and is not dense in $S_\beta$, then $K_\beta$ is not a sofic subshift of $S_\beta \times S_\beta$.
\end{theorem}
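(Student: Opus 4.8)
The goal is to show that when $d^*_\beta(1)$ is aperiodic and its orbit is not dense in $S_\beta$, the relation $K_\beta$ is not sofic. By Theorem~\ref{Kchar} we have $K_\beta = \Delta_{S_\beta} \cup \Delta' \cup (S' \times S')$, and by Lemma~\ref{Kgen} (since $d^*_\beta(1)$ is aperiodic, hence not totally periodic) we know $S' = \{0^\infty\} \cup \orbc{d_\beta^*(1)} \cup (\text{companions})$. The first thing I would do is use the fact that $\{\sigma^i(d^*_\beta(1)) \mid i \in \N\}$ is \emph{not} dense in $S_\beta$ to find a word $v \in L(S_\beta)$ such that $v$ never occurs in any point of $\orbc{d_\beta^*(1)}$. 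Indeed, non-density means there is a cylinder $[v]$ in $S_\beta$ disjoint from the orbit closure, so no shift of $d^*_\beta(1)$ contains $v$, and neither does $0^\infty$ (for a suitable choice with $|v|_1 \geq 1$, using Lemma~\ref{lem:SSU} to know such points with isolated $1$s exist in $S_\beta$). Thus no element of $S'$ other than the companions contains $v$; and one should check the companions $wa0^\infty$ / $w(a-1)d^*_\beta(1)$ with the orbit-closure part lying in the tail also cannot contain $v$ in the tail beyond the "junction", so essentially $v$ is a witness word that pins down membership.

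**Main argument.** The strategy is the standard one for proving non-soficity: assume $K_\beta$ is sofic, apply the pumping lemma (as packaged in Lemma~\ref{lem:Pumping}) to a cleverly chosen point of $K_\beta$, and derive a point that should not be in $K_\beta$. Concretely, consider the point $(d^*_\beta(1), d^*_\beta(1)) \in \Delta_{S_\beta} \subset K_\beta$, or better a point of the form $(x, y)$ where $x$ and $y$ eventually diverge and their "difference" forces membership only through the $S' \times S'$ clause. I would look at points $(wa0^\infty, w(a-1)d^*_\beta(1)) \in \Delta'$ where $w$ is chosen so that $wa0^\infty$ lies in $\orbc{d^*_\beta(1)}$ — these are in $S' \times S'$ too. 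Now apply Lemma~\ref{lem:Pumping} to the component $d^*_\beta(1)$ (which is aperiodic and satisfies $\sigma^n(d^*_\beta(1)) \leq_{\mathrm{lex}} d^*_\beta(1)$ by Theorem~\ref{loth}): pumping the $K_\beta$-automaton on such a pair produces a new pair $(x', y') \in K_\beta$ with $x' \neq y'$, where the "pumped" second coordinate $y'$ is a point of $S_\beta$ (the lemma guarantees $\sigma^n(y') \leq_{\mathrm{lex}} d^*_\beta(1)$) but is \emph{not} a shift of $d^*_\beta(1)$ and not of the form demanded by $\Delta_{S_\beta}$ or $\Delta'$. Since $(x', y') \notin \Delta_{S_\beta}$ (distinct), $(x',y') \notin \Delta'$ (the pumped tail is not all-zero, because $d^*_\beta(1)$ contains infinitely many nonzero digits — use Lemma~\ref{lem:SSU} or aperiodicity), we must have $(x', y') \in S' \times S'$, forcing $y' \in S'$; but $y' \notin \{0^\infty\}$, $y' \notin \orbc{d^*_\beta(1)}$ (as $v$-argument or the pumping changed the periodic structure), and $y'$ is not a companion of an orbit-closure point either. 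This is the contradiction.

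**The delicate point.** The main obstacle is making the pumping produce a point that is provably \emph{outside} $S'$, i.e. ruling out all three pieces of $S'$ simultaneously, and in particular ruling out the companion clause $\{wa0^\infty,\, w(a-1)d^*_\beta(1)\}$, which is an infinite family with no obvious finite-state obstruction. The cleanest route is: first establish the witness word $v$ from non-density so that no point of $\orbc{d^*_\beta(1)}$ contains $v$ anywhere and no companion contains $v$ in its infinite tail; then make sure the pumped-up point $y'$ is engineered to contain $v$ (by pumping a segment of $x$ or $y$ that straddles or introduces the $v$-cylinder — this requires choosing the base point of the pumping so that $v$ appears in a region preserved or duplicated by the pump). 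One subtlety is that the pumping lemma does not let us control \emph{where} the pumped factor $u,v$ sit, so I would instead argue by a counting/length argument: after pumping, $y'$ still has infinitely many nonzero digits (so $\notin \{0^\infty\}$ and $\notin$ companion-with-zero-tail), and $y'$ has a different "eventual pattern" than $d^*_\beta(1)$ because the pump inserts or deletes a block, violating the rigid recurrence structure that all points of $\orbc{d^*_\beta(1)}$ inherit from $d^*_\beta(1)$. Since $d^*_\beta(1)$ is aperiodic, $\orbc{d^*_\beta(1)}$ contains no eventually-periodic point \emph{unless} that point is itself a shift of $d^*_\beta(1)$ sitting densely — and here I'd use aperiodicity together with the fact that pumping from a \emph{finite} automaton necessarily produces points with a periodic "seam", to separate $y'$ from the orbit closure. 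This seam-versus-aperiodicity tension, made precise, is what I expect to carry the proof, and getting the bookkeeping exactly right is where the real work lies.
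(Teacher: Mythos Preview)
Your overall strategy --- pump a pair in $\Delta'$ and argue the result cannot lie in $\Delta_{S_\beta} \cup \Delta' \cup (S' \times S')$ --- matches the paper's, and you correctly identify the witness word from non-density as the key tool. But there is a genuine gap in how you deploy it, and your fallback ``seam-versus-aperiodicity'' argument does not work.

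The concrete problem is your choice of base pair. You propose to pump $(wa0^\infty, w(a-1)d^*_\beta(1))$ where $wa0^\infty \in \orbc{d^*_\beta(1)}$; but then $w$ \emph{does} occur in $d^*_\beta(1)$, so it cannot serve as the witness. You then try to show the pumped tail $y'$ lies outside $\orbc{d^*_\beta(1)}$ by arguing that pumping inserts a ``periodic seam'' incompatible with aperiodicity. This fails: Lemma~\ref{lem:Pumping} produces a point of the form $wuy$ or $wuvvy$ where $y$ is an arbitrary tail of $d^*_\beta(1)$, and such points need not be eventually periodic; and even if they were, $\orbc{d^*_\beta(1)}$ can contain eventually periodic points (e.g.\ $0^\infty$ when $d^*_\beta(1)$ has unbounded runs of zeros). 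You yourself note that the pumping lemma gives no control over where the pumped block sits, and you never resolve this.

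The paper's fix is to anchor the witness word as a \emph{prefix} of both coordinates, so that pumping (which only touches the tail) cannot remove it. Concretely: take $w$ with $[w]$ disjoint from $\orbc{d^*_\beta(1)}$, and start from the $\Delta'$-pair $(w0^m10^m10^\infty,\, w0^m10^m0\,d^*_\beta(1))$ for $m$ large. Apply Lemma~\ref{lem:Pumping} to the closed $\omega$-automatic language $L = \{z : (w0^m10^m10^\infty, w0^m10^m0z) \in K_\beta\}$, obtaining $z \neq d^*_\beta(1)$ with all shifts $\leq d^*_\beta(1)$ (so $w0^m10^m0z \in S_\beta$). The new pair is not in $\Delta_{S_\beta}$ (different at position $|w|+2m+1$) and not in $\Delta'$ (that would force $z = d^*_\beta(1)$), so both coordinates lie in $S'$. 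By Lemma~\ref{Kgen}, $w0^m10^m0z$ is either in $\orbc{d^*_\beta(1)}$ or a companion of such a point; by the choice of $m$ (large enough that $d^*_\beta(1)$ does not begin with $t0^m1$ for any suffix $t$ of $w$), every such companion still begins with $w$, forcing $w \sqsubset d^*_\beta(1)$ --- the contradiction. The point is that the anchoring makes ruling out the companion clause a finite check on prefixes, rather than the infinite tail analysis you were attempting.
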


\begin{proof}
Again Lemma~\ref{Kgen} applies so $K_\beta = \Delta_{S_\beta} \cup \Delta' \cup (S' \times S')$ where $\Delta'$ is the companion relation and $S'$ contains $0^\omega$ and the companions of the points in the orbit closure of $d^*_\beta(1)$. Suppose $K_\beta$ is sofic in $S_\beta^2$ and let $\mathcal{A}$ be an automaton for it.

Since $d^*_\beta(1)$ is not dense, for some $n\in\N$ there exists a word $w \in A^n$ and a point $x = wy \in S_\beta$ such that $x \notin \orbc{d^*_\beta(1)}$ and $w \neq d^*_\beta(1)_{[0, n-1]}$. We can choose $x = w0^m10^m10^\infty$ for any large enough $m$ by Lemma~\ref{lem:SSU}. Namely, in addition to the $m$ given by the lemma, it is enough to pick $m$ large enough so that the first $n$ shifts are smaller than $d^*_\beta(1)$ in lexicographical order. Pick $m$ also large enough so that $d^*_\beta(1)$ does not begin with $t0^m1$ for any suffix $t$ of $w$.

Now since $\Delta' \subset K_\beta$ we have
\[ (w0^m10^m10^\infty, w0^m10^m0d^*_\beta(1)) \in K_\beta \]
for all large enough $m$. But then we also have
\[ (w0^m10^m10^\infty, w0^m10^m0y) \in K_\beta \]
for some
$y \neq d^*_\beta(1)$,
by applying Lemma~\ref{lem:Pumping} with $L$ the language of those $z$ such that the pair $(w0^m10^m10^\infty, w0^m10^m0z)$ is accepted by the automaton $\mathcal{A}$, noting that $\sigma^n(z) \leq_{\mathrm{lex}} d^*_\beta(1)$ for all $n$ implies $w0^m10^m0z \in S_\beta$ so indeed $(w0^m10^m10^\infty, w0^m10^m0z) \in K_\beta$.

We clearly do not have $(w0^m10^m10^\infty, w0^m10^m0y) \in \Delta_{S_\beta} \cup \Delta'$, so $w0^m10^m10^\infty, w0^m10^m0y \in S'$. Thus a companion of $w0^m10^m0y$ is in the orbit closure of $d^*_\beta(1)$. By the choice of $m$, all companions of $w0^m10^m0y$ necessarily begin with $w$, so $w \sqsubset d^*_\beta(1)$, a contradiction.
\end{proof}

\section{Case study 2: toral automorphisms}
\label{sec:ToralAutomorphisms}

In this section, we study toral automorphisms, a class of expansive systems, in the sofic-per-sofic framework. By expansivity, these systems are ultimately SFT-per-SFT, but we find that nevertheless the more general framework is useful. In particular, the final kernel we obtain is the composition of two SFT relations, and SFT relations are not in general closed under transitivity (Example~\ref{ex:SFTNotTrans}), so sofic shifts are a better category for the intermediate constructions.

\subsection{\ttfrac{SFT}{SFT}-systems on the two-dimensional torus}

In the two-dimensional case, the structure of expansive toral automorphisms is well-understood. We prove that they can be fully described by finite-state automata.

\begin{theorem}
\label{thm:TorusClassificationProof}
Every expansive homeomorphism on the two-dimensional torus is conjugate to a \ttfrac{$\Z$-SFT}{SFT}. The class of \ttfrac{$\Z$-SFT}{SFT} dynamical systems (represented as pairs $\ccfrac{Y}{Z}$) that are systems on the two-dimensional torus is recursively enumerable.
\end{theorem}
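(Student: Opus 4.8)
The plan is to prove the two claims of Theorem~\ref{thm:TorusClassificationProof} in order, relying on the classical structure theory of expansive homeomorphisms of $\T^2$. First I would recall that, up to topological conjugacy, the expansive homeomorphisms of the two-torus are precisely the hyperbolic toral automorphisms together with their compositions with involutions induced by $\pm\id$ and coordinate swaps (this follows from the classification of Anosov diffeomorphisms in dimension two and the fact that expansive homeomorphisms of $\T^2$ are conjugate to Anosov ones); concretely each such system is conjugate to the action of some matrix $A \in \mathrm{GL}(2,\Z)$ with $|\mathrm{tr}(A)| > 2$ on $\T^2 = \R^2/\Z^2$. For the first claim, I would then invoke the existence of a Markov partition for any hyperbolic toral automorphism (as in \cite{Ad98,SiVe98}): this gives an SFT $Y$ and an almost $1$-$1$ factor map $\pi : Y \to \T^2$ commuting with the dynamics, so $\T^2 \cong Y/K$ where $K = \{(y,y') \mid \pi(y) = \pi(y')\}$ is a subshift equivalence relation. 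By Theorem~\ref{thm:ExpansivityCharacterizationProof} (Fried's observation, Lemma~\ref{lem:IffRelativeSFT}), since the toral automorphism is expansive, $K$ is automatically a relative SFT in $Y^2$. Hence the system is \ttfrac{$\Z$-SFT}{SFT}, proving the first sentence.

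For the recursive enumerability statement, the idea is to produce an algorithm that enumerates pairs $\ccfrac{Y}{Z}$ known to be conjugate to a toral automorphism. The cleanest route is: run a double loop, first over all matrices $A \in \mathrm{GL}(2,\Z)$ with $|\mathrm{tr}(A)| > 2$ (these are effectively enumerable), and second, for each such $A$, run the explicit Markov-partition construction --- the same one used in the proof of Theorem~\ref{thm:TorusExample} and detailed in Appendix~\ref{sec:ToralAutoCode} --- to compute an SFT cover $Y_A$ together with the kernel relation $K_A$. The construction is finitary: the topological partition arises from the stable/unstable eigendirections of $A$ (which have algebraic, hence computable, slopes), the transition rules of $Y_A$ are read off from how the partition rectangles map under $A$, and the kernel's forbidden patterns are computed by the automata-theoretic manipulations of Section~\ref{sec:AutomataTheory} (in particular Lemma~\ref{lem:DecidableThings}: one builds a $\Z$-automaton for the ``same image'' relation by tracking which boundary pieces of the partition two points lie on, then extracts its minimal forbidden patterns since the relation is an SFT by the previous paragraph). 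Enumerating all these $\ccfrac{Y_A}{K_A}$ (possibly also applying the effective coordinate-swap and sign-flip symmetries to cover the non-orientation-preserving expansive homeomorphisms) gives a recursively enumerable set, and by the first claim every finitely presented system on $\T^2$ appears on this list up to conjugacy.

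The main obstacle I anticipate is \emph{not} the enumeration itself but making precise the claim that this list is exactly the set of \ttfrac{$\Z$-SFT}{SFT} presentations ``that are systems on the two-dimensional torus'': one must be careful about whether we enumerate presentations up to conjugacy or literally (the theorem says ``represented as pairs $\ccfrac{Y}{Z}$''), and a presentation $\ccfrac{Y}{Z}$ whose underlying space happens to be $\T^2$ need not be literally one of the $\ccfrac{Y_A}{K_A}$. I expect the intended reading is that the \emph{set of conjugacy classes} is recursively enumerable, or equivalently that there is a semi-algorithm which, given an arbitrary $\ccfrac{Y}{Z}$, halts iff it is conjugate to a toral automorphism --- this follows by combining the enumeration above with the semidecidability of conjugacy of finitely presented systems (Theorem~\ref{lem:SFTSFTConjugacySemidecidable}): dovetail the enumeration of the $\ccfrac{Y_A}{K_A}$ with the conjugacy-checking semialgorithm. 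The remaining routine work is verifying that the Markov partition construction is genuinely effective in the input matrix $A$ (computing eigendirections, intersecting rectangles, reading off transitions) --- this is exactly what Appendix~\ref{sec:ToralAutoCode} does for the golden-mean example, and the general case is the same computation with $A$ as a parameter; I would state this as a lemma and defer the bookkeeping.
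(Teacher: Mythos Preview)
Your proposal is correct and follows essentially the same route as the paper: reduce to hyperbolic toral automorphisms via the Lewowicz--Manning classification, build an effective \ttfrac{SFT}{SFT} presentation for each matrix, then dovetail the enumeration of matrices with the semidecidability of conjugacy (Theorem~\ref{lem:SFTSFTConjugacySemidecidable}) to enumerate \emph{all} presentations rather than just the canonical ones. Your anticipated ``main obstacle'' and its resolution match the paper's argument exactly.

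Two small points worth tightening. First, the hyperbolicity condition $|\mathrm{tr}(A)|>2$ is only correct when $\det A = 1$; for $\det A = -1$ the eigenvalues are real with product $-1$, so hyperbolicity is equivalent to $\mathrm{tr}(A)\neq 0$. This does not affect the proof structure (the set of hyperbolic matrices is still decidable, hence enumerable). Second, the paper builds its effective cover not via a geometric Markov partition but via the homoclinic-point method: it takes a full shift $\Sigma_n^\Z$ as cover and computes the kernel by an arithmetic automaton recognizing when two digit strings represent the same torus point (Theorem~\ref{thm:Carry}, Lemma~\ref{lem:Kkernel}). Your Markov-partition route is equally valid and perhaps more familiar, but the homoclinic approach has the advantage that effectiveness reduces to a clean statement about $\omega$-automatic recognizability of equality in $\Q(\lambda)$, avoiding the geometric bookkeeping you flagged as ``routine work''.
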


\begin{proof}
By Theorem~\ref{thm:TorusExpansiveClassification}, every such homeomorphism is conjugate to an Anosov diffeomorphism, and by Theorem~\ref{thm:Manning}, every Anosov diffeomorphism on the torus is conjugate to a hyperbolic toral automorphism. Every hyperbolic toral automorphism is \ttfrac{SFT}{SFT} by Theorem~\ref{thm:HyperbolicToralSFTSFT}. This proves the first claim.

For the second, observe that the set of invertible integer matrices defining hyperbolic toral automorphisms is recursively enumerable, by simply enumerating matrices and checking the eigenvalue condition. By Theorem~\ref{thm:HyperbolicToralSFTSFT}, we can then list a sequence of systems $X_1/K_1, X_2/K_2, X_3/K_3, \dots$, such that for every hyperbolic toral automorphism, some \ttfrac{SFT}{SFT} representation of it is listed.

To list all \ttfrac{SFT}{SFT} systems conjugate to expansive homeomorphism on the two-dimensional torus, it is then enough to list all \ttfrac{SFT}{SFT} systems which are conjugate to one of the $X_i/K_i$. Conjugacy of \ttfrac{SFT}{SFT} systems is semidecidable by Lemma~\ref{lem:SFTSFTConjugacySemidecidable}, which concludes the proof.
\end{proof}

Our quest is to recursively enumerate every \ttfrac{SFT}{SFT} (as a finite list of words defining the subshift and the underlying relation) which represents a dynamical system on the two-dimensional torus. These are all expansive by Fried's result.

\begin{theorem}[\cite{Le89}]
\label{thm:TorusExpansiveClassification}
If a homeomorphism $f:\T^2\to\T^2$ is expansive, then it is conjugate to an Anosov diffeomorphism on the torus.
\end{theorem}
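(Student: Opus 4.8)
The plan is to follow Lewowicz's analysis of expansive homeomorphisms of surfaces~\cite{Le89} (see also Hiraide), specializing the topology to the torus. The first and substantive step is to show that an expansive homeomorphism $f$ of a compact surface admits a \emph{local product structure}: for a small expansivity constant $\epsilon$, the local stable and unstable sets $W^s_\epsilon(x) = \{y : d(f^n x, f^n y) \le \epsilon \text{ for all } n \ge 0\}$ and $W^u_\epsilon(x)$ are arcs through $x$, varying continuously with $x$, transverse at $x$, and such that for $x,y$ close enough $W^s_\epsilon(x) \cap W^u_\epsilon(y)$ is a single point. The proof of this goes through a Lyapunov-type function adapted to $f$ — the map $y \mapsto \sup_n d(f^n x, f^n y)$ decreases strictly off $W^s_\epsilon(x)$, symmetrically off $W^u_\epsilon(x)$ — together with a topological argument \emph{specific to dimension two} showing these level sets can be no worse than arcs. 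This is where the real work lies.

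Granting the local product structure, the global stable sets $W^s(x) = \bigcup_n f^{-n} W^s_\epsilon(f^n x)$ and unstable sets $W^u(x)$ assemble into two transverse $f$-invariant \emph{singular foliations} $\mathcal{F}^s,\mathcal{F}^u$ of the surface $\Sigma$, with $f$ uniformly contracting along $\mathcal{F}^s$-leaves and expanding along $\mathcal{F}^u$-leaves. The admissible singularity types are $p$-prongs; a local analysis of expansivity near a singularity rules out $1$-prongs, so every singularity is a $p$-prong with $p \ge 3$, of Euler index $1 - \tfrac{p}{2} \le -\tfrac12$. Applying the Euler--Poincar\'e--Hopf relation $\sum_{\text{sing}}(1 - \tfrac{p_i}{2}) = \chi(\Sigma)$ and specializing to $\Sigma = \T^2$, where $\chi(\T^2) = 0$, forces the sum of the nonpositive indices to vanish, hence there are no singularities: $\mathcal{F}^s,\mathcal{F}^u$ are genuine transverse (nonsingular) foliations of $\T^2$.

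It then remains to recognize the linear model. Neither foliation has a closed leaf (a closed $\mathcal{F}^s$-leaf would be a circle contracted by $f$, impossible), and since every leaf carries a well-defined transverse contraction or expansion there are no Reeb components, so on $\T^2$ each of $\mathcal{F}^s,\mathcal{F}^u$ is, up to topological conjugacy, a linear foliation by lines of irrational slope, the two slopes distinct. Lifting to $\R^2$, the leaves become a transverse pair of line foliations permuted by a lift of $f$; reading off the induced action on $H_1(\T^2;\Z) \cong \Z^2$ gives a matrix $A \in GL(2,\Z)$ preserving the two asymptotic directions, and expansivity forces $A$ to have no eigenvalue on the unit circle (otherwise an $f$-invariant direction would keep some pair of orbits $\epsilon$-close), i.e.\ $A$ is hyperbolic. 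Using the leaf coordinates one builds a homeomorphism of $\T^2$ conjugating $f$ to $x \mapsto Ax$, an Anosov diffeomorphism; this is the theorem.

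The main obstacle is precisely the passage from abstract expansivity to the local product structure of stable/unstable arcs: this is genuinely two-dimensional and is the hard part of~\cite{Le89} (and of Hiraide's independent treatment), whereas the index computation and the identification of the linear model on $\T^2$ are comparatively soft once that structure is available. For our purposes we simply cite~\cite{Le89} for the statement and invoke it as a black box in the proof of Theorem~\ref{thm:TorusClassificationProof}.
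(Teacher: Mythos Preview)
Your proposal is correct, and indeed matches the paper's treatment exactly: the paper does not prove this theorem at all but simply cites \cite{Le89} and uses it as a black box in the proof of Theorem~\ref{thm:TorusClassificationProof}, precisely as you conclude in your final paragraph. Your sketch of Lewowicz's argument (local product structure, singular foliations, Euler--Poincar\'e--Hopf on $\T^2$ forcing no singularities, identification with a hyperbolic toral automorphism) is a faithful summary of the literature but is extra content the paper does not include.
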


\begin{theorem}[\cite{Ma74}]
\label{thm:Manning}
If $f:\T^2\to\T^2$ is an Anosov diffeomorphism, then it is conjugate to a hyperbolic toral automorphism.
\end{theorem}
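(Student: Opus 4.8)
The final statement is Manning's theorem for the $2$-torus, so I would follow Manning's argument \cite{Ma74}; here is the plan. Write $A \in \mathrm{GL}(2,\Z)$ for the automorphism induced by $f$ on $\pi_1(\T^2) \cong \Z^2$, and let $f_A : \T^2 \to \T^2$ be the linear automorphism it defines. The goal is to produce a homeomorphism $h : \T^2 \to \T^2$, homotopic to the identity, with $h \circ f = f_A \circ h$; since a continuous bijection of a compact Hausdorff space is a homeomorphism, it is enough to get such an $h$ bijective.

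First I would pass to the universal cover: lift $f$ to a homeomorphism $\tilde f : \R^2 \to \R^2$ with $\tilde f(x + v) = \tilde f(x) + Av$ for $v \in \Z^2$, so that $\phi := \tilde f - A$ is $\Z^2$-periodic, hence bounded. Next I would check that $A$ is hyperbolic, i.e.\ has no eigenvalue on the unit circle: this follows from the Anosov condition by controlling the growth of $\|d\tilde f^n\|$ on the stable and unstable cone fields and comparing it with the growth of $A^n$ (equivalently, an element of $\mathrm{GL}(2,\Z)$ with an eigenvalue of modulus one is, up to conjugacy, $\pm$ a power of a shear or of finite order, and one checks directly that no such linearization can be a bounded perturbation of an Anosov diffeomorphism).

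Then I would solve the conjugacy equation on the cover by a contraction argument. Looking for $\tilde h = \mathrm{id} + u$ with $u : \R^2 \to \R^2$ bounded continuous and $\Z^2$-periodic, the equation $\tilde h \circ \tilde f = A \circ \tilde h$ becomes $\phi(x) + u(\tilde f x) = A\,u(x)$. Splitting $\R^2 = E^s_A \oplus E^u_A$ into the $A$-invariant subspaces, the unstable component is $u^u(x) = -\sum_{k \geq 0} A_u^{-(k+1)} \phi^u(\tilde f^k x)$ and the stable one is $u^s(x) = -\sum_{k \geq 1} A_s^{\,k-1} \phi^s(\tilde f^{-k} x)$; both series converge uniformly because $\|A_u^{-1}\| < 1$, $\|A_s\| < 1$ and $\phi$ is bounded, using that $\tilde f$ is invertible (as $f$ is a diffeomorphism). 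This gives a unique bounded continuous, automatically $\Z^2$-periodic, solution $u$, hence a continuous $h$ homotopic to the identity with $h f = f_A h$; having degree one, $h$ is surjective. Finally $\tilde h$ commutes with $\Z^2$-translations because $u$ is $\Z^2$-periodic, so $\tilde h$ does descend to such an $h$.

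The remaining, and main, point is injectivity of $\tilde h$. If $\tilde h(x) = \tilde h(y)$ then $\tilde h(\tilde f^n x) = A^n \tilde h(x) = A^n \tilde h(y) = \tilde h(\tilde f^n y)$ for all $n \in \Z$, and since $u$ is bounded this forces $|\tilde f^n x - \tilde f^n y| \leq 2\|u\|_\infty$ for all $n \in \Z$: the two $\tilde f$-orbits stay within a fixed bounded distance forever. I would then conclude $x = y$ from the hyperbolic structure: $\tilde f$ carries stable and unstable foliations lifting those of $f$, on $\R^2$ any stable leaf meets any unstable leaf in exactly one point (global product structure), and $\tilde h$ maps stable leaves onto affine translates of $E^s_A$ and unstable leaves onto affine translates of $E^u_A$, injectively on each leaf; a point whose forward and backward $\tilde f$-orbits stay boundedly close to those of $x$ must therefore lie on both the stable leaf and the unstable leaf of $x$, hence equals $x$. (Alternatively one can invoke expansivity of the Anosov diffeomorphism $f$ together with a shadowing argument to upgrade the bounded-distance condition.) I expect this injectivity step to be the real obstacle, since it is precisely where the Anosov geometry and expansivity are indispensable; the construction of the semiconjugacy and the hyperbolicity of $A$ are comparatively routine.
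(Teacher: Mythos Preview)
The paper does not prove this theorem at all: it is stated with a citation to Manning's paper \cite{Ma74} and used as a black box in the proof of Theorem~\ref{thm:TorusClassificationProof}. So there is no ``paper's own proof'' to compare against.

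Your sketch is a reasonable outline of the classical Franks--Manning argument (Franks for the semiconjugacy via the contraction on bounded functions, Manning for the injectivity via the global product structure of the lifted foliations). As a sketch it is fine; the one place where you are a bit quick is the verification that the linearization $A$ is hyperbolic --- this is not entirely trivial and is usually done by comparing periodic point counts or growth rates, not by the informal cone argument you gesture at. But since the paper merely cites the result, any correct outline is already more than what the paper provides.
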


Since every hyperbolic toral automorphism has an SFT cover, we see that the class of \ttfrac{SFT}{SFT}-systems on $\T^2$ is (up to conjugacy) exactly the set of hyperbolic toral automorphisms on $\T^2$. Using homoclinic points it is simple to construct SFT covers for such automorphisms. We recall this construction, following the exposition of \cite{Ei01}.

Let $T:\T^2\to \T^2$ be a hyperbolic toral automorphism and let $A\in GL(2,\Z)$ be a matrix which determines $T$. It has eigenvalues $\lambda,\mu$ ($\abs{\lambda}>1$ and $\abs{\mu}<1$) with associated eigenlines $l_\lambda,l_\mu$. These lines have irrational slopes, i.e. they intersect $\Z^2$ only at the origin, because a hypothetical lattice point intersecting an eigenline would get arbitrarily close to the origin under action of $T$, yet must stay an integer vector by integrality of the matrix.

Let $\pi:\R^2\to (\R/\Z)^2$ be the usual projection from the plane to the torus. Let $v_\lambda\in l_\lambda$ and $v_\mu\in l_\mu$ be vectors such that $w^\Delta\doteq v_\lambda=\left(\begin{smallmatrix} 0 \\ 1 \end{smallmatrix}\right)+v_\mu$. For every $i\in\Z$, $n\in\Z$ the following hold:
\begin{flalign*}
&T^i(\pi(nw^\Delta))=T^i(\pi(nv_\lambda))=\pi(A^i nv_\lambda)=\pi(n\lambda^i v_\lambda) \\
&T^i(\pi(nw^\Delta))=T^i(\pi(\left(\begin{smallmatrix} 0 \\ n \end{smallmatrix}\right)+nv_\mu))=\pi(A^i(\left(\begin{smallmatrix} 0 \\ n \end{smallmatrix}\right)+ nv_\mu))=\pi(A^i nv_\mu)=\pi(n\mu^i v_\mu)
\end{flalign*}
The point $\pi(nw^\Delta)$ is a homoclinic point of the map $T$, because by the first equation $T^i(\pi(nw^\Delta))$ tends to zero when $i$ tends to $-\infty$ and by the second equation $T^i(\pi(nw^\Delta))$ tends to zero when $i$ tends to $\infty$.

Let $\Sigma\subset\Z$ be a finite alphabet and for every $x\in\Sigma^\Z$ define
\[\phi(x)=\sum_{i\in\Z}T^{-i}(\pi(x_i w^\Delta))=\pi\left(\sum_{i<0}x_i\mu^{-i}v_\mu+\sum_{i\geq 0}x_i\lambda^{-i}v_\lambda\right)\]
For any subshift $X\subset \Sigma^\Z$ the restriction $\phi:X\to \T^2$ is clearly continuous and satisfies $\phi\circ\sigma=T\circ\phi$.

\begin{lemma}
\label{lem:FullShiftFactorMap}
Let $I\subset\R$ be a nondegenerate interval, $0\in I$. If $\pi(Iv_\lambda)\subset \phi(X)$, then $\phi:(X,\sigma)\to (\T^2,T)$ is a factor map.
\end{lemma}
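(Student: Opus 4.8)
The plan is to exploit the fact that $\phi$ already commutes with the dynamics and is continuous, so the only thing missing for a factor map is surjectivity onto $\T^2$; we will bootstrap surjectivity from the hypothesis that the image contains a small arc $\pi(Iv_\lambda)$ along the expanding eigendirection. The key point is that $T$ expands along $l_\lambda$ by the factor $|\lambda|>1$, so iterating $T$ on this arc produces arbitrarily long arcs in the $v_\lambda$-direction inside $\phi(X)$; combined with the fact that the closure of $\phi(X)$ is $T$-invariant and closed, and that a sufficiently long arc in an irrational direction on the torus is dense, we will conclude $\phi(X) = \T^2$.

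First I would observe that $\phi(X)$ is compact (continuous image of a compact set) and $T$-invariant (from $\phi\circ\sigma = T\circ\phi$ and $T$ being a bijection, $T(\phi(X)) = \phi(\sigma(X)) = \phi(X)$). Next, since $0 \in I$ and $\pi(Iv_\lambda) \subset \phi(X)$, applying $T^n$ gives $T^n(\pi(Iv_\lambda)) = \pi(\lambda^n I v_\lambda) \subset \phi(X)$ for all $n \geq 0$; because $|\lambda| > 1$, the intervals $\lambda^n I$ exhaust $\R$ (their union is all of $\R$, as $I$ is a nondegenerate interval containing $0$). Hence $\pi(\R v_\lambda) \subset \phi(X)$. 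But $l_\lambda = \R v_\lambda$ has irrational slope (it meets $\Z^2$ only at the origin, as recalled in the text), so the line $\pi(\R v_\lambda)$ is a dense subset of $\T^2$ by Weyl's equidistribution / the classical fact that an irrational-slope line projects to a dense subgroup of the torus. Since $\phi(X)$ is closed and contains this dense set, $\phi(X) = \T^2$.

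Finally, with surjectivity established, $\phi : X \to \T^2$ is a continuous surjection commuting with the $\Z$-actions $\sigma$ and $T$, which is exactly the definition of a factor map; hence $\phi : (X,\sigma) \to (\T^2, T)$ is a factor map, as claimed.

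\textbf{Main obstacle.} The only genuinely nontrivial input is the density of the irrational-slope line in the torus, but this is entirely standard (it follows from the irrationality of the slope, already established in the surrounding discussion via the integrality-of-$A$ argument). The mild care needed is in the first step: one must check that $\bigcup_{n \geq 0}\lambda^n I = \R$ when $I$ is a nondegenerate interval with $0 \in I$ and $|\lambda|>1$ — if $\lambda < 0$ one uses that $I$ contains a two-sided neighborhood of $0$ only if it does; in general $I$ might be, say, $[0,\epsilon)$, but then $\lambda^n I$ still sweeps out a half-line, and combined with the fact that $\phi(X)$ is also invariant under $T^{-1}$ (it is $T$-invariant and $T$ is a homeomorphism), one recovers the full line $\R v_\lambda$. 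So the argument goes through regardless of the sign of $\lambda$ and regardless of whether $I$ is one- or two-sided; this is the one place to be slightly careful, but it is routine.
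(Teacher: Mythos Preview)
Your proposal is correct and follows essentially the same route as the paper: show $\phi(X)$ is closed and $T$-invariant, iterate $T$ on the arc $\pi(Iv_\lambda)$ to produce (at least) a half-line in the $v_\lambda$-direction inside $\phi(X)$, invoke irrationality of the slope for density, and conclude. The only minor difference is that the paper is content with $\pi(\R_+ I\, v_\lambda)\subset\phi(X)$, since a half-line with irrational slope is already dense in $\T^2$; this sidesteps your ``main obstacle'' about one-sided $I$ and the sign of $\lambda$ entirely.
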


\begin{proof}We need to show that $\phi(X)=\T^2$. First we see that
\begin{flalign*}
\phi(X)&=\bigcup_{i\in\N}\phi(\sigma^i(X))=\bigcup_{i\in\N}T^i(\phi(X)) \\
&\supseteq \bigcup_{i\in\N}T^i(\pi(Iv_\lambda))=\bigcup_{i\in\N}\pi(A^i(Iv_\lambda))=\bigcup_{i\in\N}\pi(\lambda^i Iv_\lambda)\supseteq\pi(\R_+ Iv_\lambda).
\end{flalign*}
Because the eigenline $l_\lambda$ has an irrational slope, it follows that $\phi(X)$ is dense in $\T^2$. By continuity $\phi(X)$ is also closed, so $\phi(X)=\T^2$.
\end{proof}

From the lemma it follows that $\phi:(X,\sigma)\to(\T^2,T)$ is a factor map at least when $X=\Sigma_n^\Z$ and $\Sigma_n=\{0,1,\dots,n-1\}$ with any sufficiently large $n$. In the following we denote $\pi_\lambda(x)=\sum_{i\geq 0}x_i\lambda^{-i}$ and $\pi_\mu(x)=\sum_{i<0}x_i\mu^{-i}$.

The kernel relation is $K\doteq\{(x,y)\in\Sigma_n^\Z\mid \phi(x)=\phi(y)\}$. Equivalently, $(x,y)\in K$ if and only if $\pi_\lambda(x-y)v_\lambda+\pi_\mu(x-y)v_\mu\in\Z^2$, where $x-y$ denotes coordinatewise subtraction of digits in configurations. We wish to find effectively a finite set of forbidden patterns which determines the relation $K$. We begin by giving an effective construction of a finite automaton that determines $K$. The following theorem is has been proved in the case $\beta>1$ and $\xi=0$ as Propositions 2.3.30 and 2.3.33 in \cite{BeRi10}.

\begin{theorem}
\label{thm:Carry}
Let $\beta\in\R$ be a root of a monic polynomial with integral coefficients whose conjugates have absolute values strictly less than $1$, $\abs{\beta}>1$ and let $\xi\in\Q(\beta)$. 
Then the language
\[L_{\beta,\xi}=\left\{x\in(\Sigma_n\cup(-\Sigma_n))^{\N_+}\mid \sum_{i>0}x_i\beta^{-i}=\xi\right\}\subset (\Sigma_n\cup(-\Sigma_n))^{\N_+}\]
is effectively $\omega$-automatic.
\end{theorem}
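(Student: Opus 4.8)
The plan is to construct a deterministic finite $\omega$-automaton that reads digit sequences $x \in (\Sigma_n \cup (-\Sigma_n))^{\N_+}$ from left to right while maintaining, as its internal state, the "remainder" $\xi - \sum_{i=1}^{k} x_i \beta^{-i}$ after having read the prefix $x_1 \cdots x_k$, suitably renormalized by multiplying by $\beta^k$. Concretely, after reading $x_1,\dots,x_k$ the quantity $r_k = \beta^k\bigl(\xi - \sum_{i=1}^k x_i\beta^{-i}\bigr) = \beta^k \xi - \sum_{i=1}^k x_i \beta^{k-i}$ lies in $\Z[\beta,\beta^{-1},\xi] \subset \Q(\beta)$, and the recursion is $r_0 = \xi$, $r_{k+1} = \beta r_k - x_{k+1}$. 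The accepting condition is that $r_k \to 0$, equivalently (since the tails $\sum_{i>k} x_i\beta^{k-i}$ are uniformly bounded by a geometric series in $1/|\beta|$... wait, $|\beta|>1$, so actually $\sum_{i>k}|x_i||\beta|^{k-i} \le (n-1)\sum_{j\ge 1}|\beta|^{-j} < \infty$ is bounded) the condition $\sum_{i>0} x_i\beta^{-i} = \xi$ is equivalent to $r_k$ staying in a bounded region forever, which by discreteness will mean it lands in a finite set.

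The key steps, in order: (1) Show that the set $S$ of values $r$ that are reachable as $r_k$ for some prefix of some sequence in $L_{\beta,\xi}$ (i.e.\ such that some valid continuation drives the remainder to zero) is \emph{finite}. This is where the hypotheses on $\beta$ are used: because $\beta$ is an algebraic integer whose Galois conjugates all have modulus $< 1$, applying any field embedding $\tau: \Q(\beta)\to\C$ to the relation $r_k = \beta^k\xi - \sum_{i=1}^k x_i\beta^{k-i}$ and using that a valid sequence has $r_k\to 0$ (so $\tau$ applied gives a convergent expression too) bounds $|\tau(r_k)|$ for every conjugate; together with a common-denominator argument ($\xi \in \Q(\beta)$ so everything lives in a fixed finitely generated $\Z$-module, a lattice in $\R^{\deg\beta}$), boundedness of all conjugates forces $r_k$ into a finite set. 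This is essentially the standard argument that such $\beta$ are Pisot/Salem-like and that $\beta$-expansions with respect to them are "sofic". (2) Build the automaton with state set $S$, initial state $\xi$ (discard this if $\xi\notin S$, giving the empty language), transitions $r \xrightarrow{a} \beta r - a$ whenever $a \in \Sigma_n\cup(-\Sigma_n)$ and $\beta r - a \in S$, and Büchi/Muller acceptance condition selecting runs whose set of states visited infinitely often is contained in $\{0\}$ — or more carefully, the set of runs along which $r_k\to 0$, which since $S$ is finite and $0$ is the only possible limit is exactly "eventually always in state $0$", an $\omega$-regular (indeed deterministic co-Büchi) condition. (3) Verify correctness: a sequence $x$ is accepted iff $r_k\to 0$ iff $\sum_{i>0}x_i\beta^{-i}=\xi$, using step (1) to see that $\sum_{i>0}x_i\beta^{-i}=\xi$ forces $r_k$ into the finite set $S$ and then into $\{0\}$ eventually (since once $r_k = 0$ and the tail sums to $0$... hmm, actually one must be careful: $r_k = 0$ does not by itself mean the tail is zero, but $r_k$ bounded and the tail bounded and their sum being $\xi\beta^k - $ integer... let me just say the bookkeeping is routine once finiteness is in hand). (4) Effectivity: one can compute $S$ by a fixpoint/saturation procedure — start from $0$, close under the inverse transitions $r \mapsto \beta^{-1}(r + a)$, intersect with the explicitly computable lattice bound coming from the conjugate estimates, and this terminates because $S$ is finite and the bounds are explicit in $\beta, n, \xi$; then the automaton above is an explicit finite object, so $L_{\beta,\xi}$ is effectively $\omega$-automatic.

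The main obstacle I expect is step (1): proving $S$ is finite cleanly and with effective bounds. The heart of it is the Pisot-type estimate — one needs that $\sum_{i=1}^k x_i\beta^{k-i}$, viewed in the ring of integers (or the relevant $\Z$-lattice after clearing the denominator of $\xi$), has bounded image under all archimedean embeddings: the embedding sending $\beta\mapsto\beta$ is controlled because $r_k$ is the difference of $\beta^k\xi$ and the partial sum and equals minus the tail $\sum_{i>k}x_i\beta^{k-i}$ which is a convergent geometric-type series \emph{only for accepting sequences}, so finiteness of $S$ really is about the \emph{reachable-and-coreachable} states, not all states; the embeddings sending $\beta$ to a conjugate $\beta'$ with $|\beta'|<1$ make $\sum_{i=1}^k x_i(\beta')^{k-i}$ automatically bounded (geometric in $|\beta'|$). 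Combining: all archimedean absolute values of $r_k$ are bounded, so $r_k$ ranges over the intersection of a fixed lattice with a fixed box, hence a finite set. Citing Propositions 2.3.30 and 2.3.33 of \cite{BeRi10} (which handle $\beta>1$, $\xi = 0$) as the template, the generalization to $\xi\in\Q(\beta)$ and to possibly negative or complex... no, $\beta$ real here but $|\beta|$ could correspond to $\beta<-1$; in any case the argument is uniform, only the clearing of $\xi$'s denominator and tracking the embeddings is extra bookkeeping. I would present the proof as: reduce to showing $S$ finite via conjugate embeddings (the one genuinely substantive lemma), then assemble the automaton and check acceptance and effectivity, the latter three being routine.
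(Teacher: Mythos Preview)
Your approach is essentially the paper's: track the renormalized remainder $r_k$ (the paper uses $s_k=-r_k$, initial state $-\xi$, transition $s\mapsto\beta s+a$, so only a sign convention differs) and prove the set of reachable states is finite by bounding all conjugate embeddings and invoking discreteness of the lattice $\tfrac{1}{q}\Z[\beta]$. That part is correct and matches the paper almost line for line.

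The genuine error is your acceptance condition. The correct equivalence is \emph{not} ``$r_k\to 0$'' but ``$r_k$ stays in the bounded interval $[-\tfrac{n-1}{|\beta|-1},\tfrac{n-1}{|\beta|-1}]$ forever''. If $\sum_{i>0}x_i\beta^{-i}=\xi$ then $r_k=\sum_{i>k}x_i\beta^{k-i}$, which is bounded by $\tfrac{n-1}{|\beta|-1}$ but need not tend to $0$: take $\beta=2$, $\xi=1$, $x=111\cdots$, giving $r_k=1$ for all $k$. Conversely the paper checks directly (and you can too, via $|r_{k+1}|\ge|\beta||r_k|-(n-1)$) that once $|r_k|$ exceeds $\tfrac{n-1}{|\beta|-1}$ it stays outside forever, so no continuation can sum to $\xi$. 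Hence the automaton is a pure \emph{safety} automaton --- accept iff the run never leaves the finite state set, no B\"uchi or co-B\"uchi machinery required. You actually had this right in your opening paragraph (``$r_k$ staying in a bounded region forever'') and your parenthetical unease in step~(3) was well-founded; just drop the ``$r_k\to 0$'' formulation from steps (2)--(3) and the proof goes through.
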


\begin{proof}
Let $\Sigma=\Sigma_n\cup(-\Sigma_n)$, let $q>0$ be an integer such that $\xi\in \Z[\beta]/q$ and let $Q=\Z[\beta]/q\cap\left[-\frac{n-1}{\abs{\beta}-1},\frac{n-1}{\abs{\beta}-1}\right]$. For given $w=w_0\dots w_{k-1}\in\Sigma^*$ and $x\in\Sigma^{\N_+}$ we will denote $\rho(w)=w_0\beta^{k-1}+\dots+w_{k-1}$ and $\rho(x)=\sum_{i=1}^\infty x_i\beta^{-i}$. Define the automaton $A^\infty=(\Z[\beta]/q,\Sigma,E,\{-\xi\})$ ($\Z[\beta]/q$ is the state set, $\Sigma$ is the input alphabet, $E$ is the set of transitions and $\{-\xi\}$ is the initial state) with transitions $s\overset{a}{\to} t$ if and only if $t=\beta s+a$. Let $A=A_{\beta,\xi}$ be the restriction of $A^\infty$ to the accessible part $Q_0$ of the state set $Q$. We say that $A$ accepts the sequence $x\in \Sigma^{\N_+}$ if it determines an infinite path in $A$ starting from the state $-\xi$. If $-\xi\notin Q$, the interpretation is that $A$ rejects all input sequences.

We can show that the word $w$ labels a path $-\xi\to s$ in $A^\infty$ if and only if $\rho(w)=\xi \beta^\abs{w}+s$ by induction on the length of $\abs{w}$. The case $\abs{w}=0$ is clear, so let us assume that the claim holds for a path $w$ and consider a word $wa$ ($w\in\Sigma^*,a\in\Sigma$) such that $w$ labels a path from $-\xi$ to $s$ and $a$ labels an edge from $s$ to $r$. Then $\rho(wa)=\rho(w)\beta+\rho(a)=(\xi\beta^{\abs{w}}+s)\beta+a=(\xi\beta^{\abs{w}}+s)\beta+(r-\beta s)=\xi\beta^{\abs{wa}}+r$.

Now assume that $x$ labels an infinite computation in $A$. For every $k\geq 1$, $\abs{\rho(x_0\dots x_{k-1})-\xi\beta^{k}}=\abs{s}\leq\frac{n-1}{\abs{\beta}-1}$ for some $s\in Q$ and $\abs{\beta^{-k}\rho(x_0\dots x_{k-1})-\xi}\leq \frac{(n-1)\abs{\beta^{-k}}}{\abs{\beta}-1}$, from which it follows that $\rho(x)=\lim_{k\to\infty}\beta^{-k}\rho(x_0\dots x_{k-1})=\xi$.

Conversely, let $x$ be a path which does not label a computation in $A$, i.e. there is a prefix $w$ of $x$ which determines a path from $-\xi$ to $t$ on $A^\infty$ with $\abs{t}>\frac{n-1}{\abs{\beta}-1}$. Let us assume that $\beta^{-\abs{w}}t>\beta^{-\abs{w}}\frac{n-1}{\abs{\beta}-1}$ (the case $\beta^{-\abs{w}}t<-\beta^{-\abs{w}}\frac{n-1}{\abs{\beta}-1}$ being symmetric). If $\beta>1$ let $z=(-n+1)^\infty$. If $\beta<-1$ and $\abs{w}$ is even let $z=((n-1)(-n+1))^\infty$. If $\beta<-1$ and $\abs{w}$ is odd let $z=((-n+1)(n-1))^\infty$. We have
\begin{flalign*}
\rho(x)&\geq \rho(wz)=\beta^{-\abs{w}}(\rho(w)+\rho(z)) \\
&\geq\beta^{-\abs{w}}\left(\xi \beta^\abs{w}+t-\frac{n-1}{\abs{\beta}-1}\right)>\xi.
\end{flalign*}
Thus $A$ accepts exactly those sequences that represent the number $\xi$.

Finally, we will show that the accessible part $Q_0$ is finite. Let $M$ be the minimal polynomial of $\beta$ and let $\beta_1=\beta,\beta_2,\dots,\beta_g$ be the roots of $M$. On the discrete lattice $(\Z/q)[X]/\left<M\right>\simeq (\Z/q)[\beta]$ we may define a norm by $\vert\vert P(X)\vert\vert=\max_{i}\abs{P(\beta_i)}$. Let $s\in Q_0$ and let $w$ be a path from $-\xi$ to $s$ in $A$. Then $s=s(\beta)=\rho(w)-\xi\beta^{\abs{w}}$ with $s(X)\in(\Z/q)[X]/\left<M\right>$ and $\abs{s}\leq \frac{n-1}{\abs{\beta}-1}$. By assumption $\abs{\beta_i}<1$ for $i\neq 1$ and therefore $\abs{s(\beta_i)}\leq \abs{\frac{n-1}{1-\abs{\beta_i}}}+\abs{\xi\beta_i^\abs{w}}\leq \abs{\frac{n-1}{1-\abs{\beta_i}}}+\abs{\xi}$. Thus the states of $Q_0$ are bounded in norm and so $Q_0$ is finite.
\end{proof}

For the following, note that $\lambda$ and $\mu^{-1}$ are roots of monic polynomials with integer coefficients and they have absolute values $>1$.

\begin{lemma}
The subshift
\[L=\left\{x \in (\Sigma_n\cup(-\Sigma_n))^\Z\mid \pi_\lambda(x)v_\lambda+\pi_\mu(x)v_\mu\in \Z^2\right\}\subset (\Sigma_n\cup(-\Sigma_n))^\Z\]
is effectively sofic.
\end{lemma}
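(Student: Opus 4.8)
The statement is that the set
\[ L=\left\{x \in (\Sigma_n\cup(-\Sigma_n))^\Z\;\middle|\; \pi_\lambda(x)v_\lambda+\pi_\mu(x)v_\mu\in \Z^2\right\} \]
is effectively sofic, where $\pi_\lambda(x) = \sum_{i \geq 0} x_i \lambda^{-i}$ reads the right half of $x$ and $\pi_\mu(x) = \sum_{i < 0} x_i \mu^{-i}$ reads the left half. The key idea is that the ``integer lattice'' condition $\pi_\lambda(x)v_\lambda+\pi_\mu(x)v_\mu\in \Z^2$ decouples into finitely many conditions, each of which is of the form handled by Theorem~\ref{thm:Carry}, and that $L$ is topologically closed and shift-invariant, so that by Lemma~\ref{lem:AutomaticSoficConnection} it suffices to show $L$ is effectively $\Z$-automatic (equivalently $\zeta$-automatic, since it is shift-invariant).

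First I would observe that the condition $\pi_\lambda(x)v_\lambda+\pi_\mu(x)v_\mu\in \Z^2$ says precisely that $(\pi_\lambda(x), \pi_\mu(x))$ lies in the lattice $\Lambda = \{(s,t) \in \R^2 \mid s v_\lambda + t v_\mu \in \Z^2\}$. Since $v_\lambda, v_\mu$ are linearly independent, $\Lambda$ is a full-rank lattice in $\R^2$; writing $B$ for the matrix with columns $v_\lambda, v_\mu$, we have $\Lambda = B^{-1}\Z^2$, so membership is equivalent to a pair of $\Q(\lambda,\mu)$-linear relations $a\,\pi_\lambda(x) + b\,\pi_\mu(x) = k$, $c\,\pi_\lambda(x) + d\,\pi_\mu(x) = \ell$ for $(k,\ell) \in \Z^2$ and fixed algebraic coefficients $a,b,c,d \in \Q(\lambda,\mu)$ (the entries of $B^{-1}$). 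Since the digits of $x$ are bounded, $\pi_\lambda(x)$ ranges over a bounded set, hence only finitely many values of $k$ (and of $\ell$) can occur; so $L$ is a finite union, over these finitely many $(k,\ell)$, of sets defined by two equations of the form $a\pi_\lambda(x) + b\pi_\mu(x) = k$. Next I would split the left and right halves: using the standard homeomorphism that zips the left and right ends of a bi-infinite sequence into a one-sided sequence (as in Theorem~\ref{thm:soficsoficautomatic}), the equation $a\pi_\lambda(x) + b\pi_\mu(x) = k$ becomes a condition on a one-sided input, where we track the partial sums of the $\lambda^{-i}$-part and the $\mu^{-i}$-part in parallel. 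Each of $\pi_\lambda(x) = \sum_{i>0} x_i \lambda^{-i}$ (after reindexing) and $\pi_\mu(x) = \sum_{i>0} x_{-i}(\mu^{-1})^{-i}$ is a base-$\lambda$ (resp.\ base-$\mu^{-1}$) expansion with digit alphabet $\Sigma_n \cup (-\Sigma_n)$, and both $\lambda$ and $\mu^{-1}$ are roots of monic integer polynomials with all conjugates of absolute value $<1$ (this is noted just before the lemma), so Theorem~\ref{thm:Carry} applies: the set of one-sided words with $\sum_i x_i \lambda^{-i} = \xi$ is effectively $\omega$-automatic for any $\xi \in \Q(\lambda)$.

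The slight subtlety is that the constraint is not ``$\pi_\lambda(x) = \xi$'' for a \emph{fixed} $\xi$, but rather a single linear relation coupling $\pi_\lambda(x)$ and $\pi_\mu(x)$. The way I would handle this is to run, in the state of the automaton, the carry for the $\lambda$-side and the carry for the $\mu^{-1}$-side simultaneously (this is exactly the $\Z[\beta]/q$-valued carry bookkeeping in the proof of Theorem~\ref{thm:Carry}, done twice in parallel), together with keeping track of the \emph{current partial value} of the linear combination $a(\text{partial }\pi_\lambda) + b(\text{partial }\pi_\mu)$ rescaled appropriately — but in fact it is cleaner to observe that the relation $a\pi_\lambda(x)+b\pi_\mu(x)=k$ can be rewritten, after clearing denominators, as $\pi_\lambda(x) = (k - b\pi_\mu(x))/a$, and since $\pi_\mu(x)$ only depends on the left half (which, after zipping, is read on a separate track), one can first guess $\pi_\mu(x) = \eta$ among the finitely many possible values $\eta \in \Z[\mu^{-1},\lambda]/q$ in the relevant bounded window (there are finitely many by the same norm-boundedness argument as in Theorem~\ref{thm:Carry}), then verify $\pi_\mu(x)=\eta$ via a $\mu^{-1}$-side automaton and $\pi_\lambda(x) = (k-b\eta)/a$ via a $\lambda$-side automaton, both effectively $\omega$-automatic by Theorem~\ref{thm:Carry}. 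Taking the (effective) intersection over the two defining equations, the union over the finitely many $(k,\ell)$ and the finitely many guesses $\eta$, and using that $\omega$-automatic sets are effectively closed under finite union and intersection, we get that $L$ is effectively $\omega$-automatic. Finally, $L$ is shift-invariant (the lattice condition $\pi_\lambda(\sigma x)v_\lambda + \pi_\mu(\sigma x) v_\mu = T(\pi_\lambda(x)v_\lambda + \pi_\mu(x)v_\mu)$ and $T$ preserves $\Z^2$) and topologically closed (it is a countable intersection of conditions cut out by continuous functions valued in the discrete set $\Z^2$, via the convergent series defining $\pi_\lambda, \pi_\mu$), so by Lemma~\ref{lem:AutomaticSoficConnection} $L$ is effectively sofic.

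\textbf{Main obstacle.} The routine part is the automata-theoretic plumbing (parallel carries, finite unions and intersections); the one genuinely delicate point is making the ``coupling'' between the $\lambda$-side and the $\mu^{-1}$-side precise and bounded-state. The reduction I sketched — guess the finitely many possible values of $\pi_\mu(x)$ and reduce to fixed-target instances of Theorem~\ref{thm:Carry} — is what I expect to work, but one must check carefully that the set of attainable partial values is indeed finite: this follows from the same argument as in Theorem~\ref{thm:Carry}, where the states $s(\beta)$ are shown to be bounded in the norm $\|P\| = \max_i |P(\beta_i)|$ on $(\Z/q)[\beta]$ because all conjugates of $\lambda$ (resp.\ $\mu^{-1}$) other than the distinguished one have absolute value $<1$, while the distinguished coordinate stays bounded by the geometric-series bound on the digits. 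So the boundedness is inherited, and the construction goes through.
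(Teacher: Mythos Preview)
Your overall strategy matches the paper's: reduce to finitely many lattice points, decouple into left-half and right-half conditions, apply Theorem~\ref{thm:Carry} to each, and conclude via Lemma~\ref{lem:AutomaticSoficConnection}. However, your decoupling step takes an unnecessary detour and its justification contains a real gap.

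You correctly set up the two equations $a\,\pi_\lambda(x) + b\,\pi_\mu(x) = k$ and $c\,\pi_\lambda(x) + d\,\pi_\mu(x) = \ell$. But then you treat each equation in isolation as a coupling between $\pi_\lambda$ and $\pi_\mu$, and propose to \emph{guess} $\pi_\mu(x) = \eta$ among ``finitely many possible values $\eta \in \Z[\mu^{-1},\lambda]/q$'', citing the norm-boundedness argument of Theorem~\ref{thm:Carry}. That argument bounds the \emph{carry states} of the automaton, not the possible values of $\pi_\mu(x)$; for a generic $x$, $\pi_\mu(x)$ is an arbitrary real number in a bounded interval and need not lie in any countable set, let alone a finite one. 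So the stated reason for finiteness is spurious, and if you really had only one equation your scheme would fail.

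The paper's resolution is immediate, and you already have all the ingredients: since $v_\lambda, v_\mu$ are linearly independent, for each fixed $k \in F \subset \Z^2$ the equation $\pi_\lambda(x)\, v_\lambda + \pi_\mu(x)\, v_\mu = k$ has a \emph{unique} solution $(\xi(k,\lambda), \xi(k,\mu)) \in \Q(\lambda)^2$. Hence
\[
L \;=\; \bigcup_{k \in F}\; \{x : \pi_\lambda(x) = \xi(k,\lambda)\}\;\cap\;\{x : \pi_\mu(x) = \xi(k,\mu)\},
\]
and the two halves are genuinely independent: each factor is handled directly by Theorem~\ref{thm:Carry} (the paper reverses the arrows of $A_{\mu^{-1},\xi(k,\mu)}$ for the left half rather than zipping, but either device works). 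No guessing, no coupling, and the ``main obstacle'' you identify simply does not arise.
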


\begin{proof}
The language is clearly closed and shift-invariant, so it is enough to construct a $\Z$-automaton which accepts this language by Lemma~\ref{lem:AutomaticSoficConnection}.

Because $L$ is a set of configurations over a bounded digit set, we can find a finite set $F\subset \Z^2$ such that $\pi_\lambda(x)v_\lambda+\pi_\mu(x)v_\mu\in \Z^2$ if and only if $\pi_\lambda(x)v_\lambda+\pi_\mu(x)v_\mu\in F$. For every $k\in F$ there are unique $\xi(k,\lambda),\xi(k,\mu)\in\Q(\lambda)=\Q(\mu)$ such that $\pi_\lambda(x)v_\lambda+\pi_\mu(x)v_\mu=k$ if and only if $\pi_\lambda(x)=\xi(k,\lambda)$ and $\pi_\mu(x)=\xi(k,\mu)$.

Denote by $A_{\beta,\xi}$ the automaton recognizing the language $L_{\beta,\xi}$ of Theorem~\ref{thm:Carry}. For each $k$ let $A_{k,1}$ be the automaton constructed from $A_{\mu^{-1},\xi(k,\mu)}$ by reversing the direction of all arrows and let $A_{k,2}=A_{\lambda,\lambda^{-1}\xi(k,\lambda)}$. Construct $A_k$ from the union of $A_{k,1}$ and $A_{k,2}$ by adding an $\epsilon$-transition from the initial state of $A_{k,1}$ to the initial state of $A_{k,2}$. The automaton $A_L=\bigcup_{k\in F}A_k$ has $L$ as the set of labels of infinite paths that cross over the $\epsilon$-transition at the zero-coordinate. Since $L$ is a subshift, it is the set of labels of all infinite paths on $A_L$.
\end{proof}

\begin{lemma}
\label{lem:Kkernel}
Let $X \subset (\Sigma_n\cup(-\Sigma_n))^\Z$ be any sofic shift. Then the subshift 
\[ K = \left\{(x,y) \in X^2 \mid \pi_\lambda(x-y)v_\lambda+\pi_\mu(x-y) v_\mu \in \Z^2\right\}\subset X^2 \]
is effectively sofic.
\end{lemma}

\begin{proof}
Let $L$ be the sofic shift in the previous lemma. Then $K = \pi^{-1}(L) \cap X^2$ where $\pi : ((\Sigma_n\cup(-\Sigma_n))^\Z)^2 \to (\Sigma_n\cup(-\Sigma_n))^\Z$ is the block map $\pi(x, y)_i = x_i - y_i$, so the result follows from basic closure properties of sofic shifts.
\end{proof}

\begin{theorem}
\label{thm:HyperbolicToralSFTSFT}
Every hyperbolic toral automorphism $T : \T^2 \to \T^2$ is effectively \ttfrac{SFT}{SFT}.
\end{theorem}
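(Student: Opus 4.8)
I want to show that every hyperbolic toral automorphism $T : \T^2 \to \T^2$ is effectively \ttfrac{SFT}{SFT}. The work has essentially been set up: from a defining matrix $A \in GL(2,\Z)$ I obtain, following the exposition recalled above, the homoclinic vector $w^\Delta$, the eigenvalues $\lambda, \mu$ with $|\lambda| > 1$, $|\mu| < 1$, and the map $\phi : \Sigma_n^\Z \to \T^2$, $\phi(x) = \pi\big(\sum_{i<0} x_i \mu^{-i} v_\mu + \sum_{i \geq 0} x_i \lambda^{-i} v_\lambda\big)$. First I would fix $n$ large enough that $\pi(I v_\lambda) \subset \phi(\Sigma_n^\Z)$ for some nondegenerate interval $I \ni 0$ (this holds for all sufficiently large $n$ since $\phi$ restricted to $0^{(-\infty,-1]}\{0,\dots,n-1\}^{[0,\infty)}$ surjects onto an interval segment of $\pi(\R_{\geq 0} v_\lambda)$ of length growing with $n$, or at least a fixed segment once $n > \lambda$). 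By Lemma~\ref{lem:FullShiftFactorMap}, $\phi : (\Sigma_n^\Z, \sigma) \to (\T^2, T)$ is then a factor map, so $Y := \Sigma_n^\Z$ is an SFT cover — indeed the full shift — and the system is realized as $Y/K$ where $K = \{(x,y) \in Y^2 \mid \phi(x) = \phi(y)\}$.

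**Identifying $K$ as sofic, then SFT.** The next step is to observe that $\phi(x) = \phi(y)$ iff $\pi_\lambda(x-y) v_\lambda + \pi_\mu(x-y) v_\mu \in \Z^2$, where $x - y$ denotes coordinatewise subtraction of digits, living in the bounded digit set $\Sigma_n \cup (-\Sigma_n)$. This is precisely the relation treated in Lemma~\ref{lem:Kkernel} (with $X = Y$ here, viewed inside $(\Sigma_n \cup (-\Sigma_n))^\Z$ via the obvious inclusion), which gives that $K$ is effectively sofic. Now $Y$ is an SFT and $K$ is a sofic equivalence relation on it such that $Y/K \cong \T^2$ is expansive (it is a homeomorphism on a compact metric space, expansive by the classical fact recalled in the introduction). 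By Lemma~\ref{lem:IffRelativeSFT}, $K$ must be a relative SFT in $Y^2$. Hence $\ccfrac{Y}{K}$ is \ttfrac{SFT}{SFT}.

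**Effectivity.** For the effectivity claim I need all of the above to be carried out by an algorithm taking $A$ as input. The eigenvalues, eigenlines and the vector $w^\Delta$ are algebraic data computable from $A$; a suitable $n$ and interval $I$ can be found by searching (the condition $\pi(I v_\lambda) \subset \phi(\Sigma_n^\Z)$ becomes verifiable once one notes $\phi$ of the appropriate cylinder covers a computable segment), so $Y$ is produced effectively. Lemma~\ref{lem:Kkernel}, which in turn rests on Theorem~\ref{thm:Carry} and the two lemmas following it, is stated as "effectively sofic", so we obtain a finite-state automaton for $K$ effectively. Finally, converting a sofic presentation of a relative SFT into an explicit finite list of forbidden patterns is effective by Lemma~\ref{lem:DecidableThings} (checking SFTness and computing minimal forbidden patterns of an automatic set), and deciding that the resulting relation is an equivalence relation on $Y$ is likewise decidable there. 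Assembling these pieces gives an algorithm outputting a pair $\ccfrac{Y}{K}$ in \ttfrac{$\Z$-SFT}{SFT} conjugate to $(\T^2, T)$.

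**Main obstacle.** The conceptually delicate point is verifying that $\phi$ is a genuine factor map for an explicitly computable choice of $n$ — i.e. making Lemma~\ref{lem:FullShiftFactorMap}'s hypothesis $\pi(Iv_\lambda) \subset \phi(X)$ effective rather than merely true for "sufficiently large $n$". Concretely one must exhibit a computable nondegenerate interval $I \ni 0$ and check that every point of $\pi(I v_\lambda)$ arises as $\phi(x)$ for some $x \in \Sigma_n^\Z$; the cleanest route is to take $x$ supported on nonnegative coordinates, so $\phi(x) = \pi\big(\pi_\lambda(x) v_\lambda\big)$ with $\pi_\lambda(x) = \sum_{i \geq 0} x_i \lambda^{-i}$ ranging over $[0, \tfrac{n-1}{1 - 1/|\lambda|}]$ by a base-$\lambda$ greedy-expansion argument (this is where $n > |\lambda|$ enters, ensuring the digit set is rich enough to realize every real in that range). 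Everything else is routine bookkeeping or an appeal to the already-established lemmas on sofic shifts and automatic sets.
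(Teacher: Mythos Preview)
Your proof is correct and follows essentially the same route as the paper: obtain the full-shift cover $\Sigma_n^\Z$ via Lemma~\ref{lem:FullShiftFactorMap}, compute the kernel as an effectively sofic relation via Lemma~\ref{lem:Kkernel}, invoke expansivity (the paper cites Theorem~\ref{thm:ExpansivityCharacterization}, you cite the underlying Lemma~\ref{lem:IffRelativeSFT}) to conclude the kernel is a relative SFT, and extract forbidden patterns via Lemma~\ref{lem:DecidableThings}. Your discussion of the effectivity of choosing $n$ is more careful than the paper's one-line ``concretely $n = \lfloor \beta \rfloor$ suffices'', but the content is the same.
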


\begin{proof}
By Lemma~\ref{lem:FullShiftFactorMap}, $\phi : (X, \sigma) \to (\T^2, T)$ is a factor map for $X = \Sigma^\Z_n$ for any large enough $n$, and concretely $n = \lfloor \beta \rfloor$ suffices. Lemma~\ref{lem:Kkernel} shows that we can effectively compute the kernel of $\phi$, giving an \ttfrac{$\Z$-SFT}{sofic} system conjugate to $(\T^2, T)$. By Theorem~\ref{thm:ExpansivityCharacterization}, the kernel is necessarily SFT, and we can compute its forbidden patterns by Lemma~\ref{lem:DecidableThings}.
\end{proof}

\subsection{An example computation}

We compute an explicit \ttfrac{SFT}{SFT} representation for the action of $\left(\begin{smallmatrix} 1 & 1 \\ 1 & 0 \end{smallmatrix}\right)$ on the torus.

\begin{theorem}
\label{thm:TorusExampleProof}
Let $M = \left(\begin{smallmatrix} 1 & 1 \\ 1 & 0 \end{smallmatrix}\right)$. Then the linear action of $M$ induces an expansive homeomorphism on $\T^2$. This dynamical system is \ttfrac{SFT}{SFT}, namely conjugate to $X/K$, where the minimal forbidden patterns for $X$ are $\{11\}$ and for $K$ they are listed in Fig.~\ref{fig:ptrns}
\end{theorem}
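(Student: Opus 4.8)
The plan is to specialize the general construction developed in Section~\ref{sec:ToralAutomorphisms} to the matrix $M = \left(\begin{smallmatrix} 1 & 1 \\ 1 & 0 \end{smallmatrix}\right)$ and then carry out the concrete computation. First I would verify that $M$ is hyperbolic: its characteristic polynomial is $t^2 - t - 1$, so the eigenvalues are the golden mean $\lambda = (1+\sqrt5)/2 > 1$ and $\mu = (1-\sqrt5)/2$ with $|\mu| < 1$, and neither lies on the unit circle. Hence $M$ defines a hyperbolic toral automorphism, which is expansive, and by Theorem~\ref{thm:HyperbolicToralSFTSFT} it is (effectively) \ttfrac{SFT}{SFT}. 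To pin down the numerator, note $\lfloor \lambda \rfloor = 1$, so the alphabet $\Sigma_n$ with $n = \lfloor\lambda\rfloor + 1$ works, i.e.\ $\{0,1\}$ suffices for the cover $\phi : \Sigma_n^\Z \to \T^2$; but the golden mean relation on binary sequences ($\sum_{i} x_i \lambda^{-i}$ with digits in $\{0,1\}$) means the natural SFT realizing a minimal cover forbids the word $11$, giving exactly the golden mean shift $X$. I would make this precise by recalling that $\lambda^{-1} + \lambda^{-2} = 1$, so that the representation map $\pi_\lambda$ restricted to $\{0,1\}$-sequences with no $11$ is exactly right; the claim that the minimal forbidden patterns for $X$ are $\{11\}$ then follows from the standard theory of $\lambda$-expansions (Theorem~\ref{loth} / Theorem~\ref{thm:LothaireSFTSofic}, since $d^*_\lambda(1) = (10)^\infty$ for $\beta = \lambda$).

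Next I would compute the kernel. By Lemma~\ref{lem:Kkernel}, $K = \{(x,y) \in X^2 \mid \pi_\lambda(x-y)v_\lambda + \pi_\mu(x-y)v_\mu \in \Z^2\}$ is effectively sofic; by Theorem~\ref{thm:ExpansivityCharacterization} (expansivity of the toral automorphism, which we already established) it is in fact a relative SFT in $X^2$, and by Lemma~\ref{lem:DecidableThings} we can extract its minimal forbidden patterns. The bulk of the proof is simply running this algorithm for the specific data $\lambda = (1+\sqrt5)/2$, $\mu = (1-\sqrt5)/2$, with $v_\lambda, v_\mu$ chosen as in the construction so that $w^\Delta = v_\lambda = \left(\begin{smallmatrix} 0 \\ 1\end{smallmatrix}\right) + v_\mu$. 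Concretely this means: (i) find the finite set $F \subset \Z^2$ of possible lattice values $\pi_\lambda(z)v_\lambda + \pi_\mu(z)v_\mu$ for $z \in \{-1,0,1\}^\Z$; (ii) for each $k \in F$ solve for $\xi(k,\lambda), \xi(k,\mu) \in \Q(\lambda)$; (iii) build the automata $A_{\mu^{-1},\xi(k,\mu)}$ and $A_{\lambda,\lambda^{-1}\xi(k,\lambda)}$ from Theorem~\ref{thm:Carry}, join them across the origin, take the union over $k$, intersect with $X^2 = $ (no-$11$)$^2$, and minimize; (iv) complement to get the colanguage and read off the first offenders. I would present the outcome as Figure~\ref{fig:ptrns}, and note that the verification that this finite list of patterns indeed defines $K$ is mechanical (and is carried out by the program in Appendix~\ref{sec:ToralAutoCode}), while the human-readable decomposition of $K$ as a composition of two SFT relations is given separately in Section~\ref{sec:ToralAutomorphisms}.

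I would also include the sanity check that $K$ really is an equivalence relation: reflexivity is clear, symmetry follows since $z \mapsto -z$ preserves the condition ``$\in \Z^2$'', and transitivity holds because the set of $z$ with $\pi_\lambda(z)v_\lambda + \pi_\mu(z)v_\mu \in \Z^2$ is closed under the relevant addition (on configurations this corresponds to digit-wise subtraction but the lattice condition is additive). That $X/K \cong (\T^2, M)$ is exactly Lemma~\ref{lem:FullShiftFactorMap} together with the definition of $K$ as the kernel of $\phi$, composed with the conjugacy $\phi$ itself.

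The main obstacle is not conceptual but computational bookkeeping: getting the eigenvector normalization and the homoclinic point $w^\Delta$ exactly right so that the sign conventions in $\pi_\lambda, \pi_\mu$ match the construction, correctly enumerating the (bounded but not tiny) state sets of the carry automata $A_{\lambda,\xi}$ and $A_{\mu^{-1},\xi}$ over the digit alphabet $\{-1,0,1\}$, and then reliably minimizing the resulting $\Z$-automaton and extracting first offenders without error. This is precisely why the paper offloads the computation to verified code in the appendix rather than doing it by hand; the proof in the body should therefore consist of the reductions above plus a pointer to that computation, with Figure~\ref{fig:ptrns} presented as its output.
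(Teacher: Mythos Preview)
Your proposal is correct in principle but takes a different route from the paper. You plan to apply the general carry-automaton machinery of Theorem~\ref{thm:Carry} and Lemma~\ref{lem:Kkernel} directly: enumerate the finite set $F \subset \Z^2$, build the automata $A_{\mu^{-1},\xi(k,\mu)}$ and $A_{\lambda,\lambda^{-1}\xi(k,\lambda)}$ for each $k\in F$, assemble them into a $\Z$-automaton for $K$, and read off first offenders. The paper does \emph{not} do this. Instead it carries out a detailed geometric/number-theoretic case analysis: it computes the fundamental domain $M = M_0 \cup M_1$, decomposes the boundary-gluing relation $R$ on $M$ into four explicit pieces $R_1,R_2,R_3,R_4$, and then, using Lemma~\ref{multirepbi} (the two-sided analog of the $\beta$-shift multiple-representation lemma), goes through each $R_i$ to show that every kernel pair lies in $\orbc{K_L}\circ\orbc{K_R}$, where $K_L,K_R$ are the explicit ``left tail exchange'' and ``right tail exchange'' sofic relations. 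Only after this structural result $K=\orbc{K_L}\circ\orbc{K_R}$ is established by hand does the paper invoke the computer, and then only to compose two already-known sofic relations and extract minimal forbidden patterns.

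What each approach buys: your black-box route is shorter and relies only on general theory, but yields no human-readable description of $K$. The paper's route is considerably longer but produces the decomposition $K=\orbc{K_L}\circ\orbc{K_R}=\orbc{K_R}\circ\orbc{K_L}$, which is singled out in the Remark as new content (the tail exchanges can be done in either order but not simultaneously) and is what makes the computer step a mere composition of two explicit small automata rather than the construction of carry automata from scratch. The paper itself flags in the Questions section that its proof ``takes place entirely in the number-theoretic realm'' and wonders whether more of it could be shifted to the automaton side; your proposal is essentially that shift. A minor point: in this example the paper takes $w^\Delta = v_\lambda = \left(\begin{smallmatrix}1\\0\end{smallmatrix}\right)+v_\mu$, not $\left(\begin{smallmatrix}0\\1\end{smallmatrix}\right)+v_\mu$ as you wrote; this only affects normalization but would need to be consistent in an actual computation.
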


\begin{remark}
We have chosen this example due to its status as a standard example for the construction of SFT covers. The fact that the golden mean shift covers this particular system is well-known, and theory predicts \cite{Fr87} that the kernel is an SFT. In the spirit of effectivization, we compute the list of forbidden patterns for $K$. Our construction follows the general method described in the previous section, and leads to the same covering map as is used in \cite{Ad98,SiVe98}. Generators for the kernel relation of this map are given in \cite{SiVe98}, in the sense that the relations $\orbc{K_L}$ and $\orbc{K_R}$ defined below, which describe the left and right tail exchanges, are found in this reference. We prove that the kernel is the relation $K_L \circ K_R = K_R \circ K_L$ (the tail exchanges can be done in either order, but they \emph{cannot} be done simultaneously). Since the kernel is an SFT, it seems natural to also compute the forbidden patterns of minimal length in the theorem, as proof-of-concept, and we have done this.
\end{remark}

To do this, we apply the general method. Let $T:\T^2\to \T^2$ be the hyperbolic toral automorphism determined by the matrix $A=\begin{pmatrix}1&1\\1&0\end{pmatrix}$. It has eigenvalues $\lambda=\frac{1+\sqrt{5}}{2}$ and $\mu=\frac{1-\sqrt{5}}{2}=-\lambda^{-1}$ ($\abs{\lambda}>1$ and $\abs{\mu}<1$) that are the zeroes of the characteristic polynomial $x^2-x-1$. The eigenvalues have associated eigenlines $l_\lambda,l_\mu$ with directions $v_\lambda=\sqrt{5}^{-1}(\lambda,1)$ and $v_\mu=\sqrt{5}^{-1}(\mu,1)$ respectively. Let $\pi:\R^2\to (\R/\Z)^2$ be the usual projection from the plane to the torus and let $w^\Delta\doteq v_\lambda=\left(\begin{smallmatrix} 1 \\ 0 \end{smallmatrix}\right)+v_\mu$. From our general considerations we know that the point $\pi(nw^\Delta)$ is a homoclinic point of the map $T$ for every $n\in\Z$. Let $X\subset\{0,1\}^\Z$ be the golden mean shift (i.e. the forbidden clopen set is $[11]$) and let $\phi:X\to \T^2$ be defined for points $x\in X$ by
\[\phi(x)=\sum_{i\in\Z}T^{-i}(\pi(x_i w^\Delta))=\pi\left(\sum_{i<0}x_i\mu^{-i}v_\mu+\sum_{i\geq 0}x_i\lambda^{-i}v_\lambda\right).\]

We show that $\phi:(X,\sigma)\to(\T^2,T)$ is a factor map. To see this, let $X_+=\{x\in X\mid x_i=0\mbox{ for }i<0\}$. It is well known that the $\lambda$-shift is equal to the one-sided golden mean shift and therefore
\[\phi(X)\supseteq\phi(X_+)=\left\{\pi\left(\sum_{i\geq 0}x_i\lambda^{-i}v_\lambda\right)\mid x\in X_+\right\}=\pi([0,\lambda]v_\lambda).\]
Since $[0,\lambda]$ is a non-degenerate interval containing $0$, we reach the conclusion by using lemma \ref{lem:FullShiftFactorMap}.

In the following we denote $\pi_\lambda(x)=\sum_{i\geq 0}x_i\lambda^{-i}$, $\pi_\mu(x)=\sum_{i<0}x_i\mu^{-i}$ and $\real(x)=\pi_\lambda(x)v_\lambda+\pi_\mu(x)v_\mu$ for every $x\in X$, from which it follows that $\phi=\pi\circ\real$. Denote by $X_0\subset X$ (resp. $X_1\subset X$) the set of those configurations that contain the digit $0$ (resp. the digit $1$) at the origin. Now we compute that
\begin{flalign*}
&\pi_\lambda(X_0)\subset \left[0,\sum_{i\geq 0}\lambda^{-(2i+1)}\right]=[0,1] \\
&\pi_\mu(X_0)\subset \left[\sum_{i\leq 0}\mu^{-(2i-1)},\sum_{i<0}\mu^{-(2i)}\right]=[-1,-\mu]=[-1,\lambda^{-1}] \\
&\pi_\lambda(X_1)\subset \left[1,\sum_{i\geq 0}\lambda^{-2i}\right]=[1,\lambda] \\ 
&\pi_\mu(X_1)\subset \left[\sum_{i<0}\mu^{-(2i-1)},\sum_{i<0}\mu^{-(2i)}\right]=[-\mu^2,-\mu]=[-\lambda^{-2},\lambda^{-1}],
\end{flalign*}
and it follows that
\begin{flalign*}
&\real(X_0)\subset [0,1]v_\lambda+[-1,\lambda^{-1}]v_\mu\doteq M_0\subset \R^2 \\
&\real(X_1)\subset [1,\lambda]v_\lambda+[-\lambda^{-2},\lambda^{-1}]v_\mu\doteq M_1\subset\R^2 \\
&\real(X)=\real(X_0)\cup\real(X_1) \subset M_0\cup M_1\doteq M\subset \R^2.
\end{flalign*}
It can be verified that $M_0$ and $M_1$ are rectangles whose union $M$ has area equal to $1$. It must hold that $\real(X)=M$, because otherwise $\real(X)\subsetneq M$ would be a closed set with area strictly less than $1$, contradicting $\phi(X)=\T^2$. We also note that $\pi(M)=\phi(X)=\T^2$ implies that $M$ is a fundamental domain for the projection map $\pi$. $M$ also turns out to be equal to the fundamental domain presented in \cite{Ad98}.

The map $\pi$ determines on $M$ the equivalence relation
\[R=\{(v,v') \in M\times M\mid \pi(v)=\pi(v')\}=\{(v,v') \in M\times M\mid v-v'\in \Z^2\},\]
and because $M$ is a fundamental domain, only points on the boundary of $M$ can belong to a non-trivial equivalence class. Since the basis vectors of $\R^2$ have representations $v_1=(1,0)=v_\lambda-v_\mu$ and $v_2=(0,1)=\lambda^{-1}v_\lambda+\lambda v_\mu$, it is simple to verify that $R$ contains at least the relations
\begin{flalign*}
R_1=&\{(v,v+v_1)\mid v\in [0,\lambda^{-1}]v_\lambda+\lambda^{-1}v_\mu\} \\
&\cup\{(v,v-v_1)\mid v\in[1,\lambda]v_\lambda-\lambda^{-2}v_\mu\} \\
R_2=&\{(v,v-v_2)\mid v\in[\lambda^{-1},\lambda]v_\lambda+\lambda^{-1}v_\mu\} \\
&\cup\{(v,v+v_2)\mid v\in[0,1]v_\lambda-v_\mu\} \\
R_3=&\{(v,v+v_1)\mid v\in[0,\lambda^{-1}]v_\mu\} \\
&\cup\{(v,v-v_1)\mid v\in v_\lambda+[-1,-\lambda^{-2}]v_\mu)\} \\
R_4=&\{(v,v+(v_1+v_2))\mid v\in [-1,0]v_\mu\} \\
&\cup\{(v,v-(v_1+v_2))\mid v\in\lambda v_\lambda+[-\lambda^{-2},\lambda^{-1}]v_\mu\},
\end{flalign*}
and it is not much more difficult to see that $R=\Delta_M\cup R_1\cup R_2\cup R_3\cup R_4$. Here $R_1$, $R_2$ glue boundaries parallel to $v_\lambda$ and $R_3$, $R_4$ glue boundaries parallel to $v_\mu$ in $M$.

\begin{lemma}
\label{multirepbi}If $x,y\in X$ are sequences which differ in some nonnegative coordinate, then $\pi_\lambda(x)=\pi_\lambda(y)$ if and only if $x_{[0,\infty)}$ and $y_{[0,\infty)}$ are of the forms
\[ a_1\dots a_m10^\infty \mbox{ and } a_1\dots a_m0(10)^\infty \]
for some $m\in\N$.

If $x,y\in X$ are sequences which differ in some negative coordinate, then $\pi_\mu(x)=\pi_\mu(y)$ if and only if $x_{(-\infty,-1]}$ and $y_{(-\infty,-1]}$ are of the forms
\[ {}^\infty(01)001a_1\dots a_m \mbox{ and } {}^\infty(10)100a_1\dots a_m \]
for some $m\in\N$.
\end{lemma}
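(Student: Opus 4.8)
## Plan of proof for Lemma~\ref{multirepbi}

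The statement is a pair of one-sided uniqueness results about $\lambda$- and $\mu$-representations, exactly analogous to Lemma~\ref{multirep} for $\beta$-shifts. I would prove the $\pi_\lambda$ statement and then observe that the $\pi_\mu$ statement is the same claim for a different base. The key numerical facts are $\lambda > 1$, $\mu = -\lambda^{-1}$, and the identity $\lambda^{-1} + \lambda^{-2} = 1$, i.e. $\lambda^2 = \lambda + 1$, which is precisely why the two displayed words represent the same number: $1 0^\infty$ contributes $\lambda^{-k}$ at position $k$, while $0(10)^\infty$ contributes $\lambda^{-(k+1)} + \lambda^{-(k+3)} + \cdots = \lambda^{-(k+1)}/(1 - \lambda^{-2}) = \lambda^{-(k+1)} \cdot \frac{\lambda^2}{\lambda^2-1} = \lambda^{-(k+1)} \cdot \frac{\lambda^2}{\lambda} = \lambda^{-k}$. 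So the ``if'' direction is a direct geometric-series computation; I would write that out in one display.

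For the ``only if'' direction: suppose $x, y \in X$ differ in some nonnegative coordinate and $\pi_\lambda(x) = \pi_\lambda(y)$. Let $m$ be the least index $\ge 0$ at which they differ; after shifting we may assume $m = 0$ is the first coordinate of disagreement among the nonnegative ones, and after swapping $x,y$ assume $x_0 > y_0$, so (since digits are in $\{0,1\}$) $x_0 = 1$, $y_0 = 0$. Then
\[
\lambda^{-1} \le x_0 \lambda^{-1} - y_0 \lambda^{-1} = \sum_{i \ge 1} (y_i - x_i)\lambda^{-(i+1)} = \pi_\lambda(\sigma(y)_{\ge 0})\lambda^{-1} - \pi_\lambda(\sigma(x)_{\ge 0})\lambda^{-1},
\]
wait — more cleanly: $\pi_\lambda(x) = \pi_\lambda(y)$ gives $\lambda^{-1} = \sum_{i\ge 1}(y_i - x_i)\lambda^{-i}$. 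The right side is $\le \sum_{i\ge 1} y_i \lambda^{-i} \le \lambda^{-1}$, since in $X$ no two consecutive $1$'s occur so $\sum_{i\ge 1} y_i \lambda^{-i}$ is maximized by $y_{\ge 1} = (10)^\infty$ giving exactly $\lambda^{-1}$. Hence equality throughout forces $x_i = 0$ for all $i \ge 1$ and $y_{\ge 1} = (10)^\infty$. Combined with $x_0 = 1$, $y_0 = 0$ and $x_{[0,-\infty)} = y_{[0,-\infty)}$ below the origin's nonnegative part being equal up to index $m$ — i.e. agreement on coordinates before $m$ — this yields the claimed forms $a_1\cdots a_m 1 0^\infty$ and $a_1\cdots a_m 0 (10)^\infty$. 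I should double-check the extremal-representation fact (the one-sided golden mean shift's $\lambda$-expansions are bounded by $(10)^\infty$, equivalently $d^*_\lambda(1) = (10)^\infty$), which follows from $\lambda^2 = \lambda+1$ and Theorem~\ref{loth} applied to $\beta = \lambda$, or can be checked directly.

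For the $\pi_\mu$ statement: $\pi_\mu(x) = \sum_{i<0} x_i \mu^{-i}$ with $\mu^{-1} = -\lambda$, so reindexing $j = -i$ this is $\sum_{j \ge 1} x_{-j} (-\lambda)^{j}$, a base-$(-\lambda)$ expansion read from the origin leftward. The same argument applies with $\beta = -\lambda$ (the negative base $\beta$-expansion machinery, or directly: use $\lambda^{-1}+\lambda^{-2}=1$ again, which in the $-\lambda$ world produces the period-pattern seen in ${}^\infty(01)001$ versus ${}^\infty(10)100$). The main obstacle I anticipate is bookkeeping the sign alternation in the negative-base case and confirming that the two displayed left-tails indeed have equal $\pi_\mu$-value and that these are the only coincidences — this is where one must be careful, but it reduces to the same geometric-series identity as in the positive case once signs are tracked, and the forbidden-word constraint ($11 \not\sqsubset$) again pins down the extremal tails. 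The cleanest presentation is probably to prove a single lemma about representations in a base $\theta$ with $|\theta| > 1$ and $\theta^{-1} + \theta^{-2} = \pm 1$ over the alphabet $\{0,1\}$ with the golden-mean constraint, and apply it twice with $\theta = \lambda$ and $\theta = \mu = -\lambda^{-1}$ (noting $\mu^{-1} = -\lambda$ so the relevant ``base'' for $\pi_\mu$ is $-\lambda$).
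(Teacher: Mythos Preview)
Your approach is correct and is the same as the paper's: the $\pi_\lambda$ statement is exactly Lemma~\ref{multirep} applied to $\beta=\lambda$ (the paper just cites it, since $d^*_\lambda(1)=(10)^\infty$), and for $\pi_\mu$ the paper verifies the ``if'' direction by a direct geometric-series computation and says the ``only if'' follows by arguing as in Lemma~\ref{multirep}. Two small slips to clean up when you write it out: with the paper's convention $\pi_\lambda(x)=\sum_{i\ge 0}x_i\lambda^{-i}$ the constant in your inequality chain is $1$ rather than $\lambda^{-1}$ (the maximal golden-mean tail $\sum_{i\ge 1}y_i\lambda^{-i}$ equals $1$), and the negative base $\theta=-\lambda$ satisfies $\theta^{-2}-\theta^{-1}=1$ rather than $\theta^{-1}+\theta^{-2}=\pm 1$, so your proposed unified hypothesis needs adjusting.
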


\begin{proof}
The first statement follows by applying Lemma \ref{multirep} to the $\lambda$-shift.

To show ``if'' in the second statement, assume that $x_{(-\infty,-1]}$ and $y_{(-\infty,-1]}$ are of the form ${}^\infty(01)001a_1\dots a_m$ and ${}^\infty(10)100a_1\dots a_m$ and without loss of generality that $m=0$. Then
\begin{flalign*}
&\pi_\mu(x)=\mu+\sum_{i>1}\mu^{2i}=\mu+\mu^4\sum_{i\geq 0}(\mu^2)^i=\mu+\frac{\mu^4}{1-\mu^2}=\mu-\mu^3=-\mu^2 \\
&\pi_\mu(y)=\sum_{i>0}\mu^{2i+1}=\mu^3\sum_{i\geq 0}(\mu^2)^i=\frac{\mu^3}{1-\mu^2}=-\mu^2.
\end{flalign*}

To show ``only if'' in the second statement, we argue as in the proof of Lemma \ref{multirep}.
\end{proof}

The set
\[K=\{(x,y)\in X\times X\mid \phi(x)=\phi(y)\}\subset X\times X\]
is a closed $\sigma$-invariant equivalence relation which makes the map $\sigma_K$ continuous and $\phi$ induces a conjugacy between TDSs $(X/K,\sigma_K)$ and $(\T^2,T)$. The inclusion $(x,y)\in K$ holds exactly if $(\real(x),\real(y))\in R$

We will write the relation $K$ more explicitly. We define sets of one-way infinite sequences $B_r=\{10^\infty,(01)^\infty\}$ and $B_\ell=\{{}^\infty(10)100,{}^\infty(01)001\}$. In any case $K$ contains all pairs $(x,x)$, so in the following we will always assume that $(x,y)\in K$ and $x\neq y$. The first possibility is that $\real(x)=\real(y)$. By Lemma \ref{multirepbi} this can occur exactly if $(x,y)\in(w_1.vB_r,w_1.vB_r)\cup(B_\ell u.w_2,B_\ell u.w_2)\cup(B_\ell u.v B_r,B_\ell u.v B_r)$ for any one-way infinite words $w_1,w_2$ and finite words $u,v$. Let us denote the relation containing these pairs and the diagonal by $K_0$. Denote the shift closure of $K_0$ by $\orb{K_0}$ and the topological closure of the shift closure of $K_0$ by $\orbc{K_0}$. We will show that $K=\orbc{K_0}\circ\orbc{K_0}$, or even more precisely that $K=\orbc{K_L}\circ\orbc{K_R}$, where $K_L$ is the relation containing the diagonal and the pairs $(B_\ell u.w_2,B_\ell u.w_2)$ and $K_R$ is the relation containing the diagonal and the pairs $(w_1.vB_r,w_1.vB_r)$. Note that $K_L, K_R \subset K_0$ and $K_0$ is contained in the join of $K_L$ and $K_R$, so $K$ is also precisely the join of $K_L$ and $K_R$.

The next possibility is that $(\real(x),\real(y))\in R_1$. Let us assume that $\pi_\mu(x)=\lambda^{-1}$, the case $\pi_\mu(y)=\lambda^{-1}$ being symmetric. Then $\pi_\mu(y)=-1+\pi_\mu(x)=-1/\lambda^2$ and $\pi_\lambda(y)=1+\pi_\lambda(x)$. Necessarily $x_{(-\infty,-1]}={}^\infty(10)$ and $y_{(-\infty,-1]}\in B_\ell$. If $\pi_\lambda(x)>0$, then from $\pi_\lambda(y)=1+\pi_\lambda(x)$ it follows that $x_0=0$ and $y_0=1$ and $\pi_\lambda(\sigma(x))=\pi_\lambda(\sigma(y))$. This can occur exactly if $(x,y)\in({}^\infty(10).0w,{}^\infty(10)100.1w)\cup({}^\infty(10).0uB_r,{}^\infty(10)100.1uB_r)$ for any one-way infinite $w$ and finite $u$. It can be seen that all such pairs belong to $\orb{K_L}\circ\orb{K_R}$. In the remaining case $\pi_\lambda(x)=0$, $\pi_\lambda(y)=1$ it is also possible that $(x,y)\in({}^\infty(10).0^\infty,B_\ell.(01)^\infty)$. It can be seen that all such pairs belong to $\orb{K_L}\circ\orb{K_R}$.

The next possibility is that $(\real(x),\real(y))\in R_2$, i.e. $\phi(x),\phi(y)\in \pi([0,1]v_\lambda-v_\mu)$. Assume first that $\phi(x)\in \pi((0,1)v_\lambda-v_\mu)$ and consider instead the configurations $x'=\sigma^{-1}(x),y'=\sigma^{-1}(y)$. Note that
\begin{flalign*}
&T^{-1}(\pi((0,1)v_\lambda-v_\mu))=\pi((0,\lambda^{-1})v_\lambda+\lambda v_\mu)=\pi(v_1+(0,\lambda^{-1})v_\lambda+\lambda v_\mu) \\
&=\pi((1,\lambda)v_\lambda+\lambda^{-1}v_\mu),
\end{flalign*}
and because the case $(\real(x'),\real(y')\in\Delta_M$ has been covered above, we may assume that we have $(\real(x'),\real(y'))\in R_2$ and $\real(x')\in(1,\lambda)v_\lambda+\lambda^{-1}v_\mu$. Then $\pi_\mu(x')=\lambda^{-1}$, $\pi_\mu(y')=-\lambda+\pi_\mu(x')=-1$ and $\pi_\lambda(y')=-\lambda^{-1}+\pi_\lambda(x')$. Necessarily $x'_{(-\infty,-1]}={}^\infty(10)$ and $y'_{(-\infty,-1]}={}^\infty(01)$. Since $\pi_\lambda(x')>1$ and $1+\pi_\lambda(y')=\lambda^{-2}+\pi_\lambda(x')<1$, it follows that $x'_0=1$ and $y'_0=0$ and $\pi_\lambda(\sigma(x'))+\lambda^{-1}=\pi_\lambda(\sigma(y'))$. This can occur exactly if $(x',y')\in({}^\infty(10).100w,{}^\infty(01).001w)\cup({}^\infty(10).100uB_r,{}^\infty(01).001uB_r)$ for any one-way infinite $w$ and finite $u$, and all such pairs belong to $\orb{K_L}\circ\orb{K_R}$.

For the remaining cases of $(\real(x),\real(y))\in R_2$, assume first that $\phi(x),\phi(y)\in\pi(-v_\mu)$ and without loss of generality that $\real(x)=\lambda^{-1}v_\lambda+\lambda^{-1}v_\mu$, $\real(y)=-v_\mu$. Necessarily $x\in{}^\infty(10).0B_r$, $y\in{}^\infty(01).0^\infty$ and such pairs $(x,y)$ are included in $\orb{K_L}\circ\orb{K_R}$. The case $\phi(x),\phi(y)\in\pi(v_\lambda-v_\mu)$ remains, so assume that $\real(x)=\lambda v_\lambda+\lambda^{-1}v_\mu$, $\real(y)=v_\lambda-v_\mu$. Necessarily $x={}^\infty(10).(10)^\infty$, $y={}^\infty(01).(01)^\infty$ and such a pair $(x,y)$ is included in $\orbc{K_L}$.

The next possibility is that $(\real(x),\real(y))\in R_4$, i.e.  $\phi(x),\phi(y)\in \pi([-1,0]v_\mu)$. Assume first that $\phi(x)\in \pi([-1,0)v_\mu)$. By considering instead the configurations $x'=\sigma^{-2n}(x)$, $y'=\sigma^{-2n}(y)$ for sufficiently large $n\in\N$ we may assume that $\phi(x')=\phi(y')\in\pi([-1,-\lambda^{-2})v_\mu)$. The case $(\real(x'),\real(y'))\in\Delta_M\cup R_2$ has already been covered, so we may assume that $(\real(x'),\real(y'))\in R_4$, $\real(x')\in [-1,-\lambda^{-2})v_\mu$ and $\real(y')\in\lambda v_\lambda+(-\lambda^{-2},\lambda^{-3})v_\mu$.
Then necessarily $x'_{[0,\infty)}=0^\infty$, $y'_{[0,\infty)}=(10)^\infty$ and from $\pi_\mu(x')\in[-1,-\lambda^{-2})$ it follows that, $x'_{[-2,0]}=010$ and $x'_{(-\infty,-1]}\neq{}^\infty(01)001$. This can occur exactly if $(x',y')\in({}^\infty w01.0^\infty,{}^\infty w00.(10)^\infty)\cup(B_\ell u01.0^\infty,B_\ell u00.(10)^\infty)$ for any one-way infinite $w$ or finite $u$, and all such pairs belong to $\orb{K_L}\circ\orb{K_R}$. The case $\phi(x)=\pi(0,0)$ remains, so let us assume that $\real(x)=(0,0)$ and $\real(y)=\lambda v_\lambda+\lambda^{-1}v_\mu$. Then necessarily $x={}^\infty 0.0^\infty$ and $y={}^\infty(10).(10)^\infty$ and such a pair $(x,y)$ is already included in $\orbc{K_R}$.

The final possibility is that $(\real(x),\real(y))\in R_3$, i.e. $\phi(x),\phi(y)\in \pi([0,\lambda^{-1}]v_\mu)$. Assume first that $\phi(x)\in\pi((0,\lambda^{-1}]v_\mu)$. Let us consider instead the configurations $x'=\sigma(x)$, $y'=\sigma(y)$. Then $\phi(x')=\phi(y')\in\pi([-\lambda^{-2},0)v_\mu)$ and $(\real(x'),\real(y'))\in\Delta_M\cup R_4$, which possibilities have been covered earlier. The case $\phi(x)=\pi(0,0)$ remains, so let us assume that $\real(x)=(0,0)$ and $\real(y)=v_\lambda-v_\mu$. Necessarily $x={}^\infty 0.0^\infty$ and $y={}^\infty(01).(01)^\infty$ and such a pair is already included in $\orbc{K_R}$.

From the case analysis we can draw the conclusion that $K=\orbc{K_L}\circ\orbc{K_R}$.

\begin{proof}[Proof of Theorem~\ref{thm:TorusExample}]
We have shown that the action of $\left(\begin{smallmatrix} 1 & 1 \\ 1 & 0 \end{smallmatrix}\right)$ on $\T^2$ is conjugate to $X/K$ where $K=\orbc{K_0}\circ\orbc{K_0}$ where $K_0$ is the smallest equivalence relation on $X$ containing the pairs
\[(w_1.vB_r,w_1.vB_r)\cup(B_\ell u.w_2,B_\ell u.w_2)\cup(B_\ell u.v B_r,B_\ell u.v B_r). \]
Clearly $\orbc{K_0}$ is an effectively sofic relation on $X$. Given two sofic equivalence relations, their composition is effectively sofic, and given a finite-state automaton defining a relation $K \subset X^2$, we can compute the forbidden patterns in finite time. This is a programming exercise, which we performed using the finite-state automaton library of SAGE \cite{Sage}, yielding the claimed patterns for $K$. The details are given in Appendix~\ref{sec:ToralAutoCode}.
\end{proof}

\section{Questions}

All our questions are about the acting monoids $\N$ and $\Z$, so we omit the acting group.

\begin{question}
Is the conjugacy of \ttfrac{sofic}{sofic} systems semidecidable?
\end{question}

Semi-deciding the conjugacy of sofically presented systems is at least as hard as semi-deciding the homeomorphism of two given simplicial complexes. We do not know if this is semi-decidable \cite{MOSCHomeo}. If these problems are not semidecidable, one can ask how high conjugacy relation of \ttfrac{sofic}{sofic} systems lies in the lightface hierarchy.

Much about the possible dynamics of \ttfrac{sofic}{sofic} systems stays open. We do now know what these systems actually are, or even what their entropies are.

\begin{question}
Is there a natural intrinsic characterization of \ttfrac{sofic}{sofic} dynamical systems?
\end{question}

\begin{question}
Is the topological entropy of a \ttfrac{sofic}{sofic} system computable from the \ccfrac{\text{sofic}}{\text{sofic}} representation (in some natural sense)? What type of number is it, in particular is it always the logarithm of a root of a Perron number?
\end{question}

It would also be interesting to know what can be said about invariant measures. In light of \cite{Ba91}, it would be interesting to know whether a thermodynamical formalism can be developed for them.

Our study of hyperbolic graph systems, automatic spaces and metrization of \ttfrac{sofic}{sofic} systems leaves many open questions.

\begin{question}
Is the metric defined in Theorem~\ref{thm:MetrizingHyperbolics} computable from an oracle for the graph system and two points? In a \ccfrac{\text{sofic}}{\text{sofic}} system presented as a hyperbolic graph system, given two eventually periodic points (in the numerator system), what type of number is their distance, and can it be computed exactly?
\end{question}

Automatic spaces seem to be an interesting class of topological systems worth separate study.

\begin{question}
Is there an intrinsic characterization of automatic spaces?
\end{question}

By \cite{BoFiFi02} (see Theorem~\ref{thm:TrivialDynamics}), \ttfrac{subshift}{subshift} systems can have arbitrarily large dimension, while \ttfrac{sofic}{sofic} systems cannot. We know that the identity map on an infinite space is not \ttfrac{sofic}{subshift}, but we do not know many other restrictions for this class either.

\begin{question}
\label{q:SubshiftSoficFinDim}
Are all \ttfrac{subshift}{sofic} (resp. \ttfrac{sofic}{subshift}) systems finite-dimensional?
\end{question}

We know that not all \ttfrac{subshift}{subshift} systems are \ttfrac{sofic}{subshift} up to conjugacy (see e.g.\ Section~\ref{sec:TrivDyn}), but we do not know any dynamical restrictions on \ttfrac{subshift}{sofic}; dimension would be a natural (even topological) restriction.

\begin{question}
\label{q:SubshiftZeta}
What can be said about the number of periodic points in a \ttfrac{sofic}{sofic}? Are there finitely many periodic points of a given period? Is the Artin-Mazur $\zeta$-function rational?
\end{question}

Besides questions about metric and dynamical properties of the abstract systems, we also lack a basic understanding of how the presentations relate to the systems presented. In particular, we do not know to what extent \ttfrac{sofic}{sofic} systems have canonical \ccfrac{sofic}{sofic} presentations. A natural first question is whether we always take the kernel to have finite equivalence classes, as in this case, it is easy to show there indeed are only finitely many periodic points of each period.

Many questions are undecidable about the relations generated by a sofic relation. A natural one that we have not solved is the following:

\begin{question}
Given a sofic relation $R \subset (A^2)^\Z$, is it decidable whether $R^* = (A^2)^\Z$?
\end{question}

Finally, in the proof of Theorem~\ref{thm:ExpansivityCharacterizationProof} we used two different covers.

\begin{question}
Is there a uniform construction of a subshift cover that works for both $\N$- and $\Z$-actions, and which at least in the group case gives meagerness of the points where the cover is not injective? 
\end{question}

In particular, can the small orbit subshift always be used (by picking a suitable cover so it works in the $\N$-case)? Or can the large orbit subshift be used (by picking a suitable cover so it gives the meagerness statement in the $\Z$-case)? Is the medium orbit subshift useful for anything?

Finally, it would be interesting to study concrete examples in more depth. The denominator we obtained for the toral translation $\left(\begin{smallmatrix} 1 & 1 \\ 1 & 0 \end{smallmatrix}\right)$ has a rather complicated set of forbidden patterns Figure~\ref{fig:ptrns}. Of course, this is partially due simply to the minimal forbidden patterns being a somewhat inefficient way to present an SFT, but nevertheless we found it surprising -- given that the numerator is the golden mean shift, we originally expected this to be something recognizable as well. Are there other presentations where the denominator is simpler?

We also note that while we found the relation calculus allowed by the computer quite fascinating (see Section~\ref{sec:MultiplicationTable}), and the computer is essentially used to produce the minimal forbidden patterns for the kernel, the fact that the transitive closure of $\orbc{K_0}$ is equal to its square does not actually prove that it is the correct kernel. The proof of Theorem~\ref{thm:TorusExample} in fact takes place entirely in the number-theoretic realm. It would be of interest to know whether more of this argument can be shifted on the side of manipulation of sofic presentations.

It would also be interesting to find more natural non-expansive systems which are \ttfrac{sofic}{sofic}. Such systems popped up naturally for $\N$-actions, in the case of $\beta$-shifts, but in the case of $\Z$-actions we are not aware of interesting ``classical'' examples.

\section*{Acknowledgements}

We thank Mike Boyle for helpful reference suggestions and questions. We were delighted to learn that Boyle had previously studied Question~\ref{q:SubshiftZeta} (but not leading to a publication). Example~\ref{ex:Boyle} was constructed in response to his questions.

\bibliographystyle{plain}
\bibliography{bib}{}

\appendix
\newpage

\section{Rays in groups -- an alternative deduction}
\label{sec:RaysAlternative}

We include an alternative proof of Lemma~\ref{lem:Rays} using Stallings theorem and results of Halin instead of those of Woess \cite{Wo89}. A \emph{ray} in a graph is a one-way infinite simple path, i.e. a sequence $x_1, x_2, x_3, \dots$ of vertices of the graph with no repetitions. Two paths are disjoint if the sets of vertices they visit are disjoint.

\begin{theorem}
\label{thm:RaysApp}
Let $G$ be a finitely generated group, and $\mathcal{G}$ a right Cayley graph of $G$ with respect to a finite symmetric generating set $A \subset G$. The following are equivalent.
\begin{itemize}
\item $G$ is virtually cyclic.
\item there does not exist an infinite set of disjoint rays in $\mathcal{G}$.
\end{itemize}
\end{theorem}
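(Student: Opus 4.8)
\textbf{Proof plan for Theorem~\ref{thm:RaysApp}.}

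The plan is to prove the contrapositive-style equivalence by establishing two directions, exactly as in the proof of Lemma~\ref{lem:Rays} but with the thick-end input replaced by classical graph theory and group theory. For the direction ``$G$ virtually cyclic $\Rightarrow$ no infinite family of disjoint rays'', I would argue as in Lemma~\ref{lem:Rays}: a virtually cyclic group has linear growth, so the ball of radius $n$ in $\mathcal{G}$ has size $O(n)$; if there were infinitely many pairwise disjoint rays $\rho_1, \rho_2, \dots$, then picking one vertex at distance $\leq n$ on each of $\rho_1,\dots,\rho_k$ (possible once $n$ is large) forces the ball of radius $n$ to contain at least $k$ vertices for every $k$, a contradiction. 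This is independent of the generating set since growth type is a quasi-isometry invariant, or one can simply note that every Cayley graph of a virtually cyclic group is quasi-isometric to $\Z$ (or to a point).

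For the converse, ``$G$ not virtually cyclic $\Rightarrow$ infinitely many disjoint rays'', I would split into cases according to the number of ends of $G$ (Freudenthal--Hopf: $0$, $1$, $2$, or $\infty$). A finite group has $0$ ends and is virtually cyclic, so this case is vacuous; a group with exactly $2$ ends is virtually $\Z$, hence virtually cyclic, again vacuous. If $G$ has infinitely many ends, then by Stallings' theorem \cite{St68,St71} $G$ splits nontrivially over a finite subgroup as an amalgam or HNN extension, and its Bass--Serre tree action produces a subtree of $\mathcal{G}$ (quasi-isometrically embedded) that is an infinite tree with infinitely many ends; such a tree contains an infinite family of pairwise disjoint rays (e.g.\ a binary subtree does), which pull back to disjoint rays in $\mathcal{G}$. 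Alternatively, and perhaps cleaner, one observes directly that a graph with infinitely many ends contains infinitely many pairwise disjoint rays: pick rays $\rho_1,\rho_2,\dots$ going to distinct ends, and inductively truncate and reroute them using disjointness of the ends' neighborhood systems.

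The remaining, and main, case is $G$ one-ended and not virtually cyclic. Here I would invoke a theorem of Halin \cite{Ha65}: in any connected infinite graph, for every $k$ either there is a set of $k$ pairwise disjoint rays, or there is a finite set of vertices meeting all rays; and more strongly, a connected graph has an infinite set of pairwise disjoint rays unless it has a ``finite separator'' behaviour forcing linear structure. Concretely, Halin's theorem on the existence of ``ends of infinite vertex-degree'' (thick ends) states that a connected graph with no infinite family of disjoint rays has a spanning structure of bounded path-width type, forcing at most linear growth; combined with the fact that a one-ended finitely generated group with a linearly growing Cayley graph would be virtually $\Z$ (contradicting one-endedness together with not-virtually-cyclic, or directly contradicting Gromov's polynomial growth / the classification of groups of linear growth), we conclude $\mathcal{G}$ has an infinite disjoint family of rays. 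The hard part will be citing the precise form of Halin's result needed and cleanly ruling out the degenerate growth behaviour in the one-ended case without circularity; I expect to phrase this as: if $\mathcal{G}$ has no infinite disjoint family of rays, then by Halin it has a thin end structure implying all but finitely many vertices lie within bounded distance of a single ray, whence $G$ has linear growth, whence $G$ is virtually cyclic by the classification of groups of linear growth (a consequence of Gromov's theorem), contradicting the hypothesis. The reduction to the one-ended case via Stallings' theorem can, as noted in the excerpt, be replaced by the Tointon--Yadin argument \cite{ToYa16}, which I would mention as an alternative to keep the proof self-contained modulo standard references.
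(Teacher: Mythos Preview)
Your overall architecture matches the paper's: same growth-counting argument for the easy direction, same case split by number of ends for the hard direction, and in the one-ended case an appeal to Halin~\cite{Ha65}. The handling of the infinitely-many-ends case is more elementary in the paper than in your plan: you do not need Stallings' splitting or a Bass--Serre tree at all --- simply choose one ray into each end and trim overlaps (your ``alternatively'' clause). Stallings is only invoked (and only in its easy Freudenthal--Hopf part) to know the end count lies in $\{0,1,2,\infty\}$, and Tointon--Yadin only to know ``two-ended $\Rightarrow$ virtually $\Z$''.

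The substantive divergence is in the one-ended case, and here your plan has a soft spot. You want to argue: no infinite disjoint ray family $\Rightarrow$ (Halin) thin/chain structure $\Rightarrow$ linear growth $\Rightarrow$ (Gromov or Tointon--Yadin) virtually $\Z$. The weak link is ``Halin's chain structure $\Rightarrow$ linear growth''. Halin's theorem gives that $\mathcal G$ is \emph{$m$-fach kettenf\"ormig}: a decomposition $\mathcal G = \bigcup_i \mathcal G_i$ with $|V(\mathcal G_i)\cap V(\mathcal G_{i+1})| = m$ and no other overlaps. But nothing bounds the sizes of the pieces $\mathcal G_i$, so this alone does not yield linear growth or ``all but finitely many vertices lie within bounded distance of a single ray''. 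Getting such a bound requires using vertex-transitivity in an essential way --- and once you are doing that, you are close to the paper's actual argument.

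The paper sidesteps the growth route entirely. From the $m$-fach kettenf\"ormig structure it extracts, for each $i$, a cutset $T_i$ of size $m$ separating a long path $p_i$ from an infinite ray $q_i$. Translating by $v_i^{-1}$ (vertex-transitivity) moves $T_i$ to a set $A_i$ near the identity; passing to a subsequential limit produces a finite set $A$ and two disjoint rays that cannot be joined off $A$, so $\mathcal G$ has at least two ends --- contradicting the one-ended hypothesis directly. No Gromov, no linear-growth classification. If you want to keep your route, you will need to make precise exactly which form of Halin's results yields a growth bound for vertex-transitive graphs; otherwise, the paper's compactness-plus-transitivity argument is both shorter and avoids the heavier citation.
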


\begin{proof}
If $G$ is virtually cyclic, suppose $\mathcal{G}$ is infinite (as the finite case is trivial). Then $G$ has $\Z$ as a subgroup of finite index, so in particular it is quasi-isometric to $\Z$. Thus, if $\mathcal{G}$ had infinitely many disjoint rays, $\Z$ would have infinitely many disjoint rays with respect to some generating set (whose size is given by the quasi-isometry constants), which is not possible by a counting argument ($\Z$ has linear growth rate as a graph).

Suppose then that $G$ is not virtually cyclic. By Stallings theorem, $\mathcal{G}$ has either $0$, $1$, $2$ or $\infty$ ends, and $0$ and $2$ characterize the virtually cyclic cases. If $\mathcal{G}$ has $\infty$ ends, then obviously there are infinitely many disjoint rays (pick a ray in each end, go through them in order and drop common prefixes).

Suppose finally that $\mathcal{G}$ has one end. If $\mathcal{G}$ does not have an infinite set of rays, then by \cite{Ha65} there is a bound $m$ on how many disjoint rays one can find inside $\mathcal{G}$, and then the graph is ``\emph{$m$-fach kettenf\"ormig}'' in the terminology of Halin, meaning there are subgraphs $\mathcal{G}_i, T_i$ such that
\[ V(\mathcal{G}) = V(\mathcal{G}_1) \cup V(\mathcal{G}_2) \cup V(\mathcal{G}_3) \cup \dots, \]
$V(\mathcal{G}_i) \cap V(\mathcal{G}_{i+1}) = V(T_i)$, $|V(T_i)| = m$, all edges of $\mathcal{G}$ are fully inside at least one of the subgraphs $\mathcal{G}_i$, there are $m$ vertex disjoint paths from $T_i$ to $T_{i+1}$ in $\mathcal{G}_{i+1}$, and the graphs $\mathcal{G}_i$ and $\mathcal{G}_j$ don't have common vertices or edges between them, if $|i - j| \geq 2$.

Let us now show that actually $\mathcal{G}$ is at least two-ended, contradicting the assumption that it is one-ended. Without loss of generality (by joining the first few sets $G_i$ and removing the cutsets $T_i$ between them), we may suppose $1_G \in \mathcal{G}_1$ and the distance from $1_G$ to $T_1$ is at least $2$.

Now, observe that cutting out $T_i$ from $\mathcal{G}$ gives us a path $p_i$ from the identity $1_G$ to some vertex $u_i$ such that for some $v_i \in T_i$, there is an edge $(u_i, v_i)$ in $\mathcal{G}$, and such that the path does not contain any vertex from $T_i$. This path from $1_G$ to $u_i$ is of length at least $i$. From $v_i$, we find at least one path $q_i$ which does not return to $T_i \cup \mathcal{G}_{i-1}$ after the first step.

Since $\mathcal{G}$ is a Cayley graph, in particular it is vertex-transitive, so let $g_i$ be the automorphism of $\mathcal{G}$ mapping $g_i v_i = 1_G$ (of course $g_i = v_i^{-1}$), and thus we find for each $i$ a subset $A_i = g_i T_i \subset V(\mathcal{G})$ of $m$ vertices such that there exist disjoint simple paths
$g_i p_i$ and $g_i q_i$
where $q_i$ is a ray, $p_i$ is of length at least $i$, and there is no path from a vertex of $p_i$ to a vertex of $q_i$ which does not visit one of the elements of $A_i$.

Now, let $i \rightarrow \infty$, and restrict to a subsequence $j_i$ so that $p_{j_i} \rightarrow p$, $q_{j_i} \rightarrow q$, $A_i \rightarrow A$ where the first two convergences are in the usual Cantor topology of paths, and the latter is convergence of the characteristic functions in the Cantor topology. Then $|A| \leq m$, $p$ and $q$ are infinite paths starting from the unit sphere around $1_G$ in $\mathcal{G}$, and no two of their vertices can be connected with a path that avoids $A$. Then $A$ witnesses that $\mathcal{G}$ has at least two ends.
\end{proof}

The part of Stallings theorem stating that a group has $0$, $1$, $2$ or $\infty$ ends with any generating set is much easier to prove than the full algebraic statement. We do need that $2$-ended groups are precisely the virtually cyclic ones; for this one can use the simpler argument from \cite{ToYa16} which shows that linear growth is equal to virtually $\Z$, together with the elementary observation that a $2$-ended group has linear growth.

\section{Finding the kernel in Section~\ref{sec:ToralAutomorphisms}}
\label{sec:ToralAutoCode}

Of course, once we have presented the kernel as the composition of two sofic relations, we have solved the problem of characterizing the kernel. However, the composition of two sofic relations is slightly computationally non-trivial. In this appendix, we explain how the forbidden patterns shown in Figure~\ref{fig:ptrns} were computed in the proof of Theorem~\ref{thm:TorusExample}.

Recall that they are the minimal forbidden patterns for the SFT relation $K = \orbc{K_0}\circ\orbc{K_0} \subset X^2$, where $X$ is the golden mean shift and $K_0$ is the smallest subshift equivalence relation on $X$ containing the diagonal pairs and the tuples in
\[ (w_1.vB_r,w_1.vB_r) \cup (B_\ell u.w_2,B_\ell u.w_2) \cup (B_\ell u.v B_r,B_\ell u.v B_r). \]

We used SageMath \cite{Sage} for this computation. The basic algorithms we used for dealing with languages of sofic subshifts and relations can be found in the sage.combinat.finite_state_machine package, though some additional work was needed to implement product languages and quantifier elimination (used for relation composition).

In Section~\ref{sec:Relations}, we compute some relations generated by $K_L, K_R$: at the end of this program, \texttt{L} and $\texttt{R}$ contain automata for the languages of $\orbc{K_L}$ and $\orbc{K_R}$, while the variables \texttt{RR} and \texttt{LR} contain $\orbc{K_R} \circ \orbc{K_R}$ and $\orbc{K_L} \circ \orbc{K_R}$, respectively.

In Section~\ref{sec:MultiplicationTable}, mainly as a sanity check, we prove that the relations 
\[ \orbc{K_L}, \;\; \orbc{K_R}, \;\; \orbc{K_R} \circ \orbc{K_R}, \;\; \orbc{K_L} \circ \orbc{K_R} \]
form a semigroup under composition of relations, by computing the complete multiplication table for these four relations. It is clear from the table that $\orbc{K_L} \circ \orbc{K_R}$ is a zero element of the semigroup, so we must have $K = \orbc{K_L} \circ \orbc{K_R}$ (as indeed the theory predicts).

In Section~\ref{sec:Minimals}, we compute the minimal patterns for $\orbc{K_L} \circ \orbc{K_R} = K$, from which the correct list is obtained once the forbidden patterns $11$ of $X$ are eliminated.

These snippets use the standard library and functionality found in the file \texttt{generalfunctionality.py}. For completeness and easy testing, we include all of the code as ancillary files. The easiest way to test them is to copy and paste them into a SAGE notebook.

\subsection{The relations}
\label{sec:Relations}

\lstinputlisting[language=Python]{ourrelations.py}

\vspace{0.3cm}
\begin{center}...with output...\end{center}
\vspace{0.3cm}

\lstinputlisting[language=TeX]{ourrelationsoutput.txt}

\subsection{The multiplication table}
\label{sec:MultiplicationTable}

\lstinputlisting[language=Python]{multiplicationtable.py}

\vspace{0.3cm}
\begin{center}...with output...\end{center}
\vspace{0.3cm}

\lstset{literate={ö}{$\circ$}1}
\begin{lstlisting}
L ö L = L
L ö R = LR
L ö RR = LR
L ö LR = LR
R ö L = LR
R ö R = RR
R ö RR = RR
R ö LR = LR
RR ö L = LR
RR ö R = RR
RR ö RR = RR
RR ö LR = LR
LR ö L = LR
LR ö R = LR
LR ö RR = LR
LR ö LR = LR

ö   |   L   R   RR  LR
----+-----------------
L   |   L   LR  LR  LR
R   |   LR  RR  RR  LR
RR  |   LR  RR  RR  LR
LR  |   LR  LR  LR  LR
\end{lstlisting}

\subsection{The minimal patterns}
\label{sec:Minimals}

\lstinputlisting[language=Python]{minimals.py}

\vspace{0.3cm}
\begin{center}...with output...\end{center}
\vspace{0.3cm}

\lstinputlisting[breaklines=true,breakatwhitespace=false,breakindent=0pt]{minimalsoutput.txt}

\end{document}